


\documentclass[reqno,11pt]{amsart}
\usepackage[margin=1.1in]{geometry}
\usepackage{amsthm,amsmath,amssymb,dsfont}
\usepackage{mathrsfs,amsfonts,functan,extarrows}



\usepackage{hyperref}
\hypersetup{hidelinks}

 \usepackage{mathtools}


\usepackage{xcolor}
\usepackage{cite}
\usepackage{indentfirst, latexsym, amssymb, enumerate,amsmath,graphicx}
\usepackage{float}
\usepackage{relsize}
\usepackage{marginnote}
\usepackage{stmaryrd}
\usepackage{esint}
\usepackage{graphicx}
\usepackage{bm}
\usepackage{caption}
\usepackage{subfigure}
\usepackage{cite}
\usepackage{graphicx}
\usepackage{subfigure}
\usepackage{float}
\usepackage{paralist}
\usepackage{indentfirst}
\usepackage{cite}
\allowdisplaybreaks 
\theoremstyle{plain}
\newtheorem{thm}{Theorem}[section]

\newtheorem{lem}[thm]{Lemma}
\newtheorem{prop}[thm]{Proposition}

\theoremstyle{remark}
\newtheorem{rem}{Remark}[section]

\numberwithin{equation}{section}


\usepackage{appendix}
\usepackage{xcolor}


\newcommand{\eps}{{\varepsilon}}

\begin{document}

\title[Navier-Stokes-Vlasov-Fokker-Planck equations]
{The incompressible inhomogeneous Navier-Stokes-Vlasov-Fokker-Planck equations: global well-posedness and  inviscid limit}

\author[F.  Li]{Fucai Li}
\address{School of Mathematics, Nanjing University, Nanjing
 210093, P. R. China}
\email{fli@nju.edu.cn}

\author[J.  Ni]{Jinkai Ni} 
\address{School  of Mathematics, Nanjing University, Nanjing
 210093, P. R. China}
\email{jni@smail.nju.edu.cn}

\author[L.-Y. Shou]{Ling-Yun Shou} 
\address{School of Mathematics and Ministry of Education Key Laboratory of NSLSCS, Nanjing Normal University, and Key Laboratory of Jiangsu Provincial Universities of FDMTA, Nanjing
 210023, P. R. China} 
\email{shoulingyun11@gmail.com}

\author[D. Wang]{Dehua Wang} 
\address{Department of Mathematics, University of Pittsburgh, Pittsburgh, PA 15260, USA}
\email{dhwang@pitt.edu} 

\begin{abstract}

The global well-posedness and inviscid limit are investigated for the fluid-particle interaction system, described by the Navier-Stokes equations for the inhomogeneous incompressible viscous flows coupled with the Vlasov-Fokker-Planck equation for particles through a density-dependent nonlinear friction force in three-dimensional space. It is challenging to establish the inviscid limit over large time periods for the incompressible Euler equations under the influence of the weak dissipative mechanism generated by the friction force. We first prove the global stability of the equilibrium, in the sense that initial perturbations with appropriate Besov spatial regularity lead to global well-posedness and uniform regularity estimates with respect to the viscosity coefficient for strong solutions of the inhomogeneous Navier-Stokes-Vlasov-Fokker-Planck equations. In particular, we establish the optimal rates of convergence to equilibrium uniformly in Navier-Stokes. Then, we construct global solutions to the inhomogeneous Euler-Fokker-Planck equations via the vanishing viscosity limit. Furthermore, by capturing the dissipation arising from two-phase interactions, we rigorously justify the global-in-time strong convergence of the inviscid limit process, with a convergence rate that is in sharp contrast to that in the pure incompressible fluid case. To achieve this global convergence, novel ideas and new techniques are developed in the analysis and may be applied to other significant problems.

%



\end{abstract}

\keywords{Inhomogeneous incompressible viscous flows, Vlasov-Fokker-Planck equation, fluid-particle interactions, vanishing viscosity limit, global existence.}

\subjclass[2020]{76D05, 35Q30, 35Q84, 76N10.}
 
 \date{\today}

\maketitle

\setcounter{equation}{0}
 \indent \allowdisplaybreaks

\section{Introduction}
Fluid-particle models are important 
in various fields, such as combustion theory \cite{Wfa-1985,Wfa-1958}, medical biosprays \cite{BBJM-esaim-2005}, spray dynamics \cite{BBBDLLT-irma-2005}, and diesel engine operation \cite{RM-1952,RM-1952-a}. In this paper, we are concerned with the following fluid-particle system:
\begin{equation}\label{I-1}
\left\{\begin{aligned}
&\partial_{t} \rho+{\rm div}\,(\rho u)=0,  \\
&\partial_{t}(\rho u)+\mathrm{div}(\rho u\otimes u)+\nabla P -\mu\Delta u=-\int_{\mathbb{R}^{3}}\kappa(\rho) (u-v)F {\rm d}v,\\
&{\rm div}\, u=0,\\
&\partial_{t}F+v\cdot\nabla F=\mathrm{div}_{v}\big(-\kappa(\rho)(u-v)F+\lambda(\rho)\nabla_{v}F\big),\\
 \end{aligned}
 \right.
\end{equation}
which governs the evolution of the distribution function $F=F(t,x,v)\geq 0$ for the particles that are
located at the position $x\in \mathbb{R}^3$ with velocity $v\in\mathbb{R}^3$ and the time $t>0$, coupled with the density $\rho=\rho(t,x)\geq 0 $, the velocity $u=u(t,x)\in \mathbb{R}^3$ and the pressure $P=P(t,x)\geq 0$ for the fluid. Here,  {$\mu>0$ is the viscosity coefficient,} $\kappa(\rho)>0$ is the friction coefficient, and $\lambda(\rho)>0$ is the coefficient related to the collision between particles. The system \eqref{I-1} (abbreviated as the inhomogeneous NS-VFP system) describes the Vlasov-Fokker-Planck equation \eqref{I-1}$_4$ coupled with the inhomogeneous incompressible Navier-Stokes equations \eqref{I-1}$_1$-\eqref{I-1}$_3$ through the nonlinear friction force $\kappa(\rho)(u-v)$. In many physical settings, the coefficients $\kappa(\rho)$ and $\lambda(\rho)$ depend on the density $\rho$ (see \cite{
BD-JHDE-2006,Rouke1,CGL-JCP-2008}), and 
for simplicity we take
\begin{align}
\kappa(\rho)=\lambda(\rho)=\rho.\label{1.2}
\end{align}
We are interested in the global stability of solutions of the Cauchy problem for \eqref{I-1} supplemented with the initial data:
\begin{equation}\label{I-20}
(\rho,u,F)|_{t=0}=( \rho_{0}(x),u_{0}(x),F_{0}(x,v)),\quad    (x, v)\in \mathbb{R}^3\times \mathbb{R}^3,
\end{equation}
around the equilibrium state:
\begin{align}
(\bar{\rho},\bar{u},M_{[\bar{n},\bar{u}]}),\label{1.6}
\end{align}
where $\bar{\rho}, \bar{n}>0$ are constants and $\bar{u}\in \mathbb{R}^3$ is a  constant vector, and the Maxwellian $M_{[n,u]}$ is defined by
$$
M_{[n,u]}=\frac{n}{(2\pi)^{\frac{3}{2}}}e^{-\frac{|v-u|^2}{2}}.
$$

When the viscosity term $\mu\Delta u$ is dropped off, the system \eqref{I-1}--\eqref{1.2} becomes the inhomogeneous incompressible Euler-Vlasov-Fokker-Planck equations (abbreviated as the inhomogeneous Euler-VFP system):
\begin{equation}\label{I-2}
\left\{\begin{aligned}
&\partial_{t} \rho+{\rm div}\,(\rho u)=0,  \\
&\partial_{t}(\rho u)+\mathrm{div}(\rho u\otimes u)+\nabla P= -\int_{\mathbb{R}^{3}}\rho (u-v)F {\rm d}v,\\
&{\rm div}\, u=0,\\
&\partial_{t}F+v\cdot\nabla F=\mathrm{div}_{v}\big(-\rho(u-v)F+\rho\nabla_{v}F\big).
 \end{aligned}
 \right.
\end{equation}
Formally, we can also obtain the inviscid system \eqref{I-2} by taking the limit $\mu\rightarrow 0$ in  the inhomogeneous NS-VFP system \eqref{I-1}--\eqref{1.2}.

The mathematical study of fluid-particle interaction  models has  attracted significant  attention since the works of  Anoshchenko and Boutet de Monvel-Berthier \cite{AB-mmas-1998} and Hamdache \cite{Hk-JJIAM-1998}.  
Goudon, Jabin, and Vasseur \cite{GJV-IUMJ-2004,GJV-2004-2-IUMJ}  investigated the hydrodynamic limit problems for the Navier-Stokes-Vlasov system in fine and light particle regimes, respectively.  Boudin,  Desvillettes, Grandmont, and  Moussa \cite{BDGM-DIE-2009} constructed global weak solutions for the Navier-Stokes-Vlasov equations in spatial periodic domains (torus) (see also the case of bounded domains \cite{Yc-JMPA-2013} or time-dependent domains \cite{BGM-JDE-2017}). Chae, Kang and Lee \cite{CKL-JDE-2011} established the global existence of weak solutions to the incompressible Navier-Stokes equations coupled with the Vlasov-Fokker-Planck equations in  $\mathbb{R}^n(n=2$ or $3)$ and studied the uniqueness and regularity issues in $\mathbb{R}^2$. Wang and Yu \cite{WY-JDE-2015} established the existence of global weak solutions to the inhomogeneous Navier-Stokes-Vlasov  system   with the density-dependent friction coefficient $\kappa(\rho)=\rho$ in a bounded domain.  

Note that in the periodic domain $\mathbb{T}^3:=[-\pi,\pi]^3$ case, the incompressible fluid-particle coupled systems with the Fokker-Planck equation admit a regular equilibrium $(u=0, F=\hat M)$  {with $\hat M=(2\pi)^{-\frac{3}{2}}|\mathbb{T}|^{-3}e^{-\frac{|v|^2}{2}}$.}  Goudon, He, Moussa, and Zhang \cite{GHMZ-2010-SIMA} demonstrated the existence and exponential
 stability of global strong solutions to incompressible NS-VFP equations close to the equilibrium in the periodic domain $\mathbb{T}^3$.  Meanwhile, Carrillo, Duan, and Moussa \cite{CDM-krm-2011} observed the dissipation structure from the Fokker-Planck term and the friction force and  extended this investigation to the inviscid case in $\mathbb{R}^3$, establishing the existence and algebraic
decay of global strong solutions for the incompressible Euler-VFP equations. Recently, 
Li, Ni and Wu \cite{LNW-24-1} obtained the global existence and large time behavior of classical solutions to the incompressible inhomogeneous fluid-particle model with heat conductivity  and  a linear friction force $F_d=u-v$ in $\mathbb{R}^3$.
For the initial boundary value problem of the incompressible NS-VFP equations, interested readers can refer to 
 \cite{LLY-JSP-2022}.

When the diffusion $\lambda$ for the particles is absent, the Fokker-Planck equation \eqref{I-1} reduces to the Vlasov equation, and the large-time behavior in this case is quite different: one expects a monokinetic behavior in velocity for the distribution function (concentration to a Dirac mass). The first conditional large-time behavior result is demonstrated by  Choi and Kwon \cite{CK-Nonlinearity-2015} in periodic domains. Glass, Han-Kwan and Moussa \cite{GHM-ARMA-2018} studied the stability of non-trivial equilibria in a 2D pipe with partially absorbing boundary conditions. Han-Kwan, Moussa and Moyano \cite{Han-Kwan:2020} justified the global boundedness conditions proposed in \cite{CK-Nonlinearity-2015} and proved the convergence of the distribution function to a Dirac mass in velocity with exponential rate in $\mathbb{T}^3\times\mathbb{R}^3$. The method used in \cite{Han-Kwan:2020} has been applied to the large-time asymptotics in different domains \cite{EDM-Nonlinearity-2021,Han-Kwanwhole,SWYZ-JDE-2024,LSZ-KRM-2025,Danchin-2024-kato} and the hydrodynamic limit issues \cite{HM-MAMS-2024}. Among them, the authors in \cite{LSZ-KRM-2025} studied the exponential stability of the three-dimensional inhomogeneous Navier-Stokes-Vlasov system with $\kappa(\rho)=1$ in the presence of vacuum in $\mathbb{R}^3\times\mathbb{R}^3$ and used an energy argument based on Lyapunov inequalities, which was developed further by Danchin \cite{Danchin-2024-kato} to study the stability of Fujita-Kato type solutions.

We also mention recent progress on 
 compressible fluid-particle systems. In the case that the friction coefficient $\kappa(\rho)$ is constant,
 Mellet and Vasseur \cite{MV-MMMAS-2007, MV-CMP-2008} proved the existence of global weak solutions with finite energy and justified the large-friction limit for the compressible NS-VFP system to a one-velocity two-phase flow model. Chae, Kang, and Lee \cite{CKL-JHDE-2013} established the global existence and exponential decay of solutions for compressible flows in $\mathbb{T}^3\times\mathbb{R}^3$. Considering the compressible Euler-VFP system, Carrillo and Goudon \cite{CG-CPDE-2006} carried out asymptotic analysis and hydrodynamical expansions in both flowing and bubbling regimes. Duan and Liu \cite{DL-krm-2013} studied the global well-posedness of solutions to the compressible Euler-VFP equations for small initial perturbations around the equilibrium $(1,0,M)$.  
 Subsequently,
  Li,  Wang and Wang \cite{LWW-ARMA-2022} investigated the asymptotic stability of the planar rarefaction wave and justified the vanishing viscosity limit from the compressible NS-VFP system to the compressible Euler-VFP system in a finite time interval at the rate of $\mathcal{O}(\mu^{\frac{1}{2}})$. When the density-friction coefficient $\kappa(\rho)$ depends on the density, the main challenge arises from additional nonlinear terms involving $\rho$,  which have been addressed by Li, Mu and Wang \cite{LMW-SIAM-2017} by capturing the dissipation for $\rho$ caused by the pressure in the perturbative framework (see also the recent improvement by Li, Ni and Wu \cite{LNW-2025-preprint}). Baranger and Desvillettes \cite{BD-JHDE-2006} established the local existence of classical solutions to the compressible Vlasov-Euler equations in $\mathbb{R}^N\times \mathbb{R}^N$ $(N\geq 1)$.  Note that in the one-dimensional case, the global well-posedness and convergence to equilibrium hold for arbitrarily large data (see \cite{LS-JDE-2021,LS-CMR-2023,CJ-SIAM-2021}). 
 

So far, there have been no results on the global stability of the incompressible Euler-VFP system with variable density, i.e.\,\eqref{I-2}. Compared with the homogeneous incompressible NS-VFP system \cite{CDM-krm-2011,GHMZ-2010-SIMA,LLY-JSP-2022} or the compressible setting \cite{CKL-JHDE-2013,DL-krm-2013,LMW-SIAM-2017}, the new challenge lies in the lack of dissipation for the variable fluid density $\rho$.  
Furthermore, the inviscid limit problem for the three-dimensional pure incompressible Euler equations has been well studied in short times {\rm\cite{Swann1,Const:invisid,Const:invisid1,Kato:inviscid,mas:inviscid,MB-Book-2002}}. 
A natural question thereby emerges: is it possible to establish the inviscid limit over large time periods for the incompressible Euler equations under the influence of the weak dissipative mechanism generated by the friction force? This question serves as the main motivation for our investigation in this paper.
 

More precisely, the purpose of this paper is to establish the {\emph{global-in-time}} validity of the viscous approximation \eqref{I-1} for the inhomogeneous Euler-VFP system \eqref{I-2} via the vanishing viscosity limit. To that end, we first prove that for a fixed $\mu\in(0,1)$, the Cauchy problem for the inhomogeneous NS-VFP system \eqref{I-1} admits a unique global strong solution satisfying uniform regularity estimates with respect to $\mu$, provided that the initial data is uniformly close to the equilibrium state \eqref{1.6}. These uniform estimates allow us to rigorously justify the convergence toward the inhomogeneous Euler-VFP system \eqref{I-2} in the vanishing viscosity limit. Furthermore, we establish the sharp convergence rate of $\mathcal{O}(\mu)$ in the limit process, which achieves a  better convergence  rate
than those found in \cite{LWW-ARMA-2022}, where the convergence rate is $\mathcal{O}(\mu^{\frac12})$ .

\subsection{Main results}

 Without loss of generality, we normalize $\bar{\rho},\bar{u}$ and $M_{[\bar{n},\bar{u}]}$ as the following:
\begin{align*}
\bar{\rho}=1, \quad \bar{u}=0, \quad  M=M_{[1,0]}=\frac{1}{(2\pi)^{\frac{3}{2}}}e^{-\frac{|v|^2}{2}}.
\end{align*}
For the Euler-VFP system \eqref{I-2},  
applying the transformations 
$$
\rho = 1 + \varrho, \quad F = M + \sqrt{M} f,
$$
we can reformulate it  as
\begin{equation}\left\{
\begin{aligned}\label{rEVFP}
&\partial_{t}\varrho+u\cdot \nabla\varrho=0,\\ 
&\partial_{t}u+u\cdot\nabla u+\nabla P+u-b=\frac{\varrho}{1+\varrho}\nabla P{-au},\\
&{\rm div}\, u=0,\\
&\partial_{t}f+v\cdot\nabla_x f-u\cdot v\sqrt{M}-\mathcal{L}f\\
&\quad =\,{\frac{1}{2}}u\cdot vf-u\cdot\nabla_{v}f+\varrho\Big(\mathcal{L}f-u\cdot\nabla_v f+\frac{1}{2}u\cdot vf+u\cdot v\sqrt{M}\Big), 
\end{aligned}\right.
\end{equation}
with the initial data
\begin{align}\label{I-4}
(\varrho,u,f)|_{t=0}=\,& (\varrho_0(x),u_0(x),f_0(x,v))\nonumber\\
:=\,& \bigg( \rho_0(x)-1,u_0(x),\frac{F_0(x,v)-M}{\sqrt{M}}\bigg),\quad (x,v)\in \mathbb{R}^3\times \mathbb{R}^3.
\end{align}
In \eqref{rEVFP}, the linearized Fokker-Planck operator $\mathcal{L}$ is defined as 
\begin{align*}
\mathcal{L}f:=\frac{1}{\sqrt{M}}{\rm div}_v\bigg[M\nabla_v\Big(\frac{f}{\sqrt{M}}    \Big)    \bigg].
\end{align*}
Furthermore, $a=a^f$ and $b=b^f$ represent the moments of $f$, which are given by
\begin{align}
a^f(t,x):=\int_{\mathbb{R}^3}\sqrt{M}f(t,x,v)\mathrm{d}v,\quad  b^f(t,x):=\int_{\mathbb{R}^3}v\sqrt{M}f(t,x,v)\mathrm{d}v.  \label{ab}
\end{align}

For the solution to the NS-VFP system \eqref{I-1} with initial data \eqref{I-20}, to emphasize the dependence of $\mu$, we denote it by
 $(\rho^\mu,u^\mu,F^\mu).$ Similar to the Euler-VFP case, introducing the perturbations 
$$
\rho^\mu = 1 + \varrho^\mu,\quad F^\mu = M + \sqrt{M} f^\mu,
$$
 we can reformulate the NS-VFP system \eqref{I-1} by
\begin{equation}\label{rNSVFP}
\left\{
\begin{aligned}
&\partial_{t}\varrho^\mu+u^\mu\cdot \nabla\varrho^\mu=0,\\ 
&\partial_{t}u^\mu+u^\mu\cdot\nabla u^\mu+\nabla P^\mu+u^\mu-b^{f^\mu}=\frac{\mu \varrho^\mu }{1+\varrho^\mu}\Delta u^\mu+\frac{\varrho^\mu}{1+\varrho^\mu}\nabla P^\mu{-a^{f^\mu}u^\mu},\\
&{\rm div}\, u^\mu=0,\\
&\partial_{t}f^\mu+v\cdot\nabla_x f^\mu-u^\mu\cdot v\sqrt{M}-\mathcal{L}f^\mu\\
&\quad ={\frac{1}{2}}u^\mu\cdot vf^\mu-u^\mu\cdot\nabla_{v}f^\mu+\varrho^\mu\Big(\mathcal{L}f^\mu-u^\mu\cdot\nabla_v f^\mu+\frac{1}{2}u^\mu\cdot vf^\mu+u^\mu\cdot v\sqrt{M}\Big), 
\end{aligned}\right.
\end{equation}
with the initial data
\begin{align}\label{NJKI-10}
(\varrho^{\mu},u^{\mu},f^{\mu})|_{t=0}=\,& (\varrho^\mu_0(x),u^\mu_0(x),f^\mu_0(x,v))\nonumber\\
:=\,& \bigg( \rho^\mu_0(x)-1,u^\mu_0(x),\frac{F^\mu_0(x,v)-M}{\sqrt{M}}\bigg),\quad (x,v)\in \mathbb{R}^3\times \mathbb{R}^3. 
\end{align}

Our first result concerns the global existence and large-time behavior of solutions to the NS-VFP system \eqref{rNSVFP}. In particular, we establish the regularity estimates uniform with respect to both $\mu$ and time.

\begin{thm}\label{T1.1} 
Let $0<\mu<1$ and $1<s\leq \frac{3}{2}$. Assume that  $(\varrho^\mu_0, u^\mu_0,f^\mu_0)$
satisfy $F^\mu_0=M+$ $\sqrt{M} f^\mu_0 \geq 0$, $(\varrho^\mu_0, u^\mu_0)\in H^3\cap\dot{B}^{-s}_{2,\infty}$ and $f^\mu_0\in L_v^2(H^3\cap\dot{B}^{-s}_{2,\infty})$. Then, there exists a constant $\eps_0>0$ independent of $\mu$ such that if
\begin{align}
\mathcal{E}_0^\mu:=\|(\varrho^\mu_0, u^\mu_0)\|_{H^3\cap\dot{B}^{-s}_{2,\infty}}^2+\|f^\mu_0\|_{ L_v^2(H^3\cap\dot{B}^{-s}_{2,\infty})}^2\leq \eps_0,\label{small1}
\end{align}
then the Cauchy problem of  the inhomogeneous NS-VFP system   \eqref{rNSVFP}--\eqref{NJKI-10}   admits a unique global strong solution 
  $(\varrho^\mu, u^\mu,P^\mu, f^\mu)$ satisfying $\rho^\mu=1+\varrho^\mu>0$, $F^\mu=M^\mu+\sqrt{M} f^\mu \geq 0$,
\begin{align} 
  &\sup_{t\geq0}\big(\|(\varrho^\mu, u^\mu)(t)\|_{H^3}^2+\|f^\mu(t)\|_{L_{v}^2(H^3)}^2\big)\nonumber\\
  &\quad+\int_0^t \mathcal{D}(u^\mu,P^\mu,f^\mu)(\tau) {\rm d}\tau+\mu\int_0^t \|\nabla u^\mu(\tau)\|_{H^3}^2 {\rm d}\tau \leq C_0\mathcal{E}_0^\mu,\label{uniform1}
\end{align}
and 
\begin{equation}\label{1.13}
\left\{
\begin{aligned}
&\| u^\mu(t)\|_{L^{2}}+\|f^\mu(t)\|_{L_v^2(L^{2})}+\|\nabla P^\mu(t)\|_{L^2}\leq\, C_0\eps_0^{\frac{1}{2}} (1+t)^{-\frac{s}{2}},\\
&\| \nabla u^\mu(t)\|_{L^{2}}+\|\nabla f^\mu(t)\|_{L_v^2(L^2)}+\|\nabla^2 P^\mu(t)\|_{L^2}\leq   C_0\eps_0^{\frac{1}{2}}  (1+t)^{-\frac{s}{2}-\frac{1}{2}} ,         \\
&\| \nabla^2 u^\mu(t)\|_{H^{1}}+\|\nabla^2 f^\mu(t)\|_{L_v^2(H^1)} \leq   C_0\eps_0^{\frac{1}{2}}   (1+t)^{-\frac{s}{2}-\frac{1}{2}},
\end{aligned}
\right.
\end{equation}
for all $t\geq 0$. 
Here $C_0>0$ is a constant independent of $\mu$, the time and initial data. 
\end{thm}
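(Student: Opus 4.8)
The plan is to establish the global solution via a continuity argument built on an energy-dissipation functional that is carefully engineered to be uniform in $\mu$. First I would set up the \emph{a priori} estimates. The natural energy is
\[
\mathcal{E}(t)\sim \|(\varrho^\mu,u^\mu)\|_{H^3}^2+\|f^\mu\|_{L^2_v(H^3)}^2,
\]
with associated dissipation $\mathcal{D}(u^\mu,P^\mu,f^\mu)$ coming from two sources: the linearized Fokker--Planck operator $\mathcal{L}$, which is coercive on the orthogonal complement of $\sqrt{M}$ and $v\sqrt{M}$ in $v$ (controlling $\|(\mathrm{I}-\mathbb{P})f^\mu\|$ in a suitable velocity-weighted norm), and the friction terms $u^\mu-b^{f^\mu}$ and $-au^\mu$ appearing in the momentum equation. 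The delicate point, already flagged in the introduction, is that there is \emph{no} parabolic smoothing for $\varrho^\mu$: it is merely transported by $u^\mu$, so $\|\varrho^\mu\|_{H^3}$ cannot decay. This forces me to treat $\varrho^\mu$ purely at the level of propagation of regularity (commutator estimates for the transport equation, using $\mathrm{div}\,u^\mu=0$ so that $\|\varrho^\mu(t)\|_{L^\infty}=\|\varrho^\mu_0\|_{L^\infty}$, and $H^3$-energy estimates with the standard logarithmic-free commutator bound since $H^3\hookrightarrow W^{1,\infty}$ in $\mathbb{R}^3$), while extracting decay only for $(u^\mu,f^\mu,\nabla P^\mu)$.

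The second ingredient is the low-frequency analysis producing the decay rates \eqref{1.13}. Here the $\dot B^{-s}_{2,\infty}$ assumption on the data enters: one shows this negative Besov norm is propagated (again using transport structure for $\varrho^\mu$ and the damped/diffusive structure for $(u^\mu,f^\mu)$), and then the combination of $\dot B^{-s}_{2,\infty}$ control at low frequencies with $H^3$ control at high frequencies yields, by a Fourier-splitting / Schonbek-type argument adapted to the hypocoercive structure, the algebraic decay $(1+t)^{-s/2}$ for $\|u^\mu\|_{L^2}+\|f^\mu\|_{L^2_v(L^2)}$, with one extra half power per derivative. The crucial feature is that \emph{all} constants in these estimates must be traced to be $\mu$-independent: the viscous term $\mu\Delta u^\mu$ only \emph{helps}, contributing the nonnegative $\mu\int_0^t\|\nabla u^\mu\|_{H^3}^2$ on the left of \eqref{uniform1}, and the only place $\mu$ could hurt is the term $\frac{\mu\varrho^\mu}{1+\varrho^\mu}\Delta u^\mu$ on the right of \eqref{rNSVFP}$_2$; but an integration by parts moves one derivative onto $\varrho^\mu$ or absorbs it into $\mu\|\nabla u^\mu\|^2$ times a small factor coming from $\|\varrho^\mu\|_{L^\infty}\lesssim\varepsilon_0$, so it is controllable uniformly. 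The pressure $\nabla P^\mu$ is recovered elliptically from $\mathrm{div}\,u^\mu=0$: applying $\mathrm{div}$ to the momentum equation gives $-\Delta P^\mu=\mathrm{div}(\text{everything else})$, and the density-dependent coefficient $\frac{\varrho^\mu}{1+\varrho^\mu}$ makes this a \emph{variable-coefficient} elliptic problem, handled by a fixed-point/perturbation argument off the constant-coefficient Laplacian since $\|\varrho^\mu\|_{L^\infty}$ is small.

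The third step is to upgrade the \emph{a priori} estimates to an actual global solution: local well-posedness (for fixed $\mu>0$, a standard Friedrichs or iteration scheme gives a short-time strong solution in the stated regularity class, using the parabolic regularization of $u^\mu$ and the hypoelliptic smoothing of the Fokker--Planck equation to close the $L^2_v(H^3)$ estimates for $f^\mu$), then the continuity argument: define $T^*$ as the maximal time on which $\mathcal{E}(t)\le 2C_0\varepsilon_0$; on $[0,T^*)$ the \emph{a priori} bound \eqref{uniform1} together with smallness of $\varepsilon_0$ shows $\mathcal{E}(t)\le C_0\varepsilon_0$, strictly improving the bootstrap assumption, so $T^*=\infty$. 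Positivity $\rho^\mu=1+\varrho^\mu>0$ follows from $\|\varrho^\mu\|_{L^\infty}<1$, and $F^\mu\ge 0$ from the maximum principle for the Vlasov--Fokker--Planck equation (rewriting the $f^\mu$ equation back in terms of $F^\mu$ and noting the transport-diffusion structure preserves nonnegativity). Uniqueness is obtained by an energy estimate on the difference of two solutions in a lower-regularity norm (say $L^2\times L^2_v(L^2)$ for $(u,f)$ and $L^2$ or $L^\infty$ for $\varrho$), using Gr\"onwall.

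The main obstacle I anticipate is the simultaneous management of \emph{no dissipation for $\varrho^\mu$} and the need for \emph{$\mu$-uniform decay}: the transport equation keeps $\|\varrho^\mu\|_{H^3}$ of order $\varepsilon_0^{1/2}$ for all time, so every nonlinear term of the form $\varrho^\mu\times(\text{fluid or kinetic quantity})$ on the right-hand sides contributes a non-decaying factor, and one must show these are always paired with a genuinely decaying or dissipative factor — in particular the term $\varrho^\mu\mathcal{L}f^\mu$ and $\varrho^\mu\,u^\mu\cdot v\sqrt M$ in \eqref{rNSVFP}$_4$ require splitting $f^\mu=\mathbb{P}f^\mu+(\mathrm{I}-\mathbb{P})f^\mu$ and using that $\mathcal{L}$ controls the microscopic part while the macroscopic part is tied to $(a^{f^\mu},b^{f^\mu})$, whose decay is coupled back to $u^\mu$ through the momentum equation. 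Closing this loop — extracting enough dissipation from the friction and Fokker--Planck terms to beat the $\varrho^\mu$-induced nonlinearities, \emph{uniformly in $\mu$ and globally in time} — is the heart of the argument and is where the "novel ideas" advertised in the abstract must be deployed; the rest is (lengthy but) routine harmonic-analysis and energy bookkeeping.
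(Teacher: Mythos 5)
Your skeleton matches the paper's architecture (macro--micro decomposition, dissipation of $b^{f^\mu}-u^\mu$ and $\{\mathbf{I}-\mathbf{P}\}f^\mu$, transport treatment of $\varrho^\mu$, propagation of $\dot B^{-s}_{2,\infty}$, bootstrap, maximum principle, low-norm uniqueness), and replacing the paper's Guo--Wang interpolation against $\dot B^{-s}_{2,\infty}$ by a Fourier-splitting argument is an acceptable cosmetic substitution at the level of the \emph{basic} energy. But there is a genuine gap exactly where you wave at ``novel ideas to be deployed'': the decay ``with one extra half power per derivative'' is asserted, not obtained, and it is the pivot of the entire proof. The basic Lyapunov inequality (energy $\sim\|u^\mu\|_{H^3}^2+\|f^\mu\|_{L^2_v(H^3)}^2$, dissipation controlling only first and higher derivatives plus the microscopic part) combined with either Fourier splitting or interpolation yields only $\|\nabla u^\mu\|_{H^2}\lesssim(1+t)^{-s/2}$ with $s/2\le 3/4$, which is \emph{not} integrable in time. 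Without $\int_0^\infty\|\nabla u^\mu\|_{H^2}\,\mathrm dt<\infty$ your own treatment of the density, $\|\varrho^\mu(t)\|_{H^3}\le e^{C\int_0^t\|\nabla u^\mu\|_{H^2}\mathrm d\tau}\|\varrho^\mu_0\|_{H^3}$, does not stay of order $\varepsilon_0^{1/2}$, and the propagation of the $\dot B^{-s}_{2,\infty}$ norm does not close either, since its nonlinear terms are likewise bounded by $\sup_t(\text{Besov norms})\int_0^t\|\nabla(a,b,u)\|_{H^2}\,\mathrm d\tau$. So the statement ``the transport equation keeps $\|\varrho^\mu\|_{H^3}$ of order $\varepsilon_0^{1/2}$ for all time'' begs precisely the estimate that is missing.

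The paper's fix, which your proposal does not supply, is a \emph{derivative-only} Lyapunov functional $\mathcal E_1(t)\sim\|\nabla u^\mu\|_{H^2}^2+\|\nabla f^\mu\|_{L^2_v(H^2)}^2$ satisfying $\frac{\mathrm d}{\mathrm dt}\mathcal E_1+c\big(\|\nabla^2u^\mu\|_{H^2}^2+\|\nabla^2(a,b)\|_{H^1}^2+\|\nabla\{\mathbf{I}-\mathbf{P}\}f^\mu\|_{L^2_{v,\nu}(H^2)}^2+\cdots\big)\le 0$ with \emph{no} lower-order dissipation available on the left; closing it forces every nonlinearity involving the non-decaying $\varrho^\mu$ (notably $\varrho^\mu\mathcal Lf^\mu$, $\varrho^\mu u^\mu f^\mu$, and the pressure terms) to be absorbed by second-order dissipation, and the first-order ($|\alpha|=1$) contributions can only be handled by exploiting the solution's negative Besov regularity inside the nonlinear estimates, e.g. $\|\varrho^\mu\|_{\dot H^{-1}}\lesssim\|\varrho^\mu\|_{\dot B^{-s}_{2,\infty}}+\|\varrho^\mu\|_{H^3}$ for $1<s\le\frac32$ and interpolations $\|\nabla g\|_{L^2}\lesssim\|g\|_{\dot B^{-s}_{2,\infty}}^{1/(2+s)}\|\nabla^2g\|_{L^2}^{(1+s)/(2+s)}$. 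Only this yields $\|\nabla u^\mu\|_{H^2}\lesssim(1+t)^{-\frac s2-\frac12}$ uniformly in $\mu$, whence (since $s>1$) the $L^1$-in-time bound that controls $\varrho^\mu$ and closes the $\dot B^{-s}_{2,\infty}$ evolution, after which the bootstrap runs as you describe. In short: your plan is sound in outline, but it omits the one quantitative mechanism (the higher-order Lyapunov inequality absorbing the density-dependent terms via negative-Besov interpolation) without which the scheme you wrote down cannot be closed; a Schonbek-type splitting applied only to the basic energy does not produce the needed derivative gain in this nonlinear, non-dissipative-density setting, and Duhamel/pointwise-spectral routes are obstructed by the velocity weights and derivatives in $\varrho^\mu\mathcal Lf^\mu$.
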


\begin{rem} In Theorem \ref{T1.1}, the dissipation functional $\mathcal{D}(u^\mu,P^\mu,f^\mu)(t)$ is defined as\begin{align}\label{D}
\mathcal{D}(u^\mu,P^\mu,f^\mu)(t):=\,& \|(b^{f^\mu}-u^\mu)(t)\|_{H^3}^2
+\|\{\mathbf{I}-\mathbf{P}\}f^\mu(t)\|_{L^2_{v,\nu}(H^3)}^2\nonumber\\
& +\|\nabla (a^{f^\mu},b^{f^\mu})(t)\|_{H^2}^2+\|\nabla P^\mu(t)\|_{H^2}^2,
\end{align}
where $\{\mathbf{I}-\mathbf{P}\}f^\mu$ is the microscopic part of $f^\mu$, see the definition 
in Section 2.2. 
 And the homogeneous Besov spaces $\dot{B}^s_{p,r}(\mathbb{R}^3)$ and  $\dot{B}^s_{p,\infty}(\mathbb{R}^3)$  
 as well as the Besov norms are defined in Section 2.1.
   
\end{rem}

By virtue of the uniform regularity estimates obtained in Theorem \ref{T1.1}, we have the global existence of the Euler-VFP system \eqref{I-2} via the inviscid limit.

\begin{thm}\label{T1.2}
Let  $(\varrho_0, u_0,f_0)$
satisfy $F_0=M+$ $\sqrt{M} f_0 \geq 0$, $(\varrho_0, u_0)\in H^3\cap\dot{B}^{-s}_{2,\infty}$ and $f_0\in L_v^2(H^3\cap\dot{B}^{-s}_{2,\infty})$ with $1<s\leq \frac{3}{2}$. There exists a constant $\eps_1>0$ such that if we suppose 
\begin{align}
\mathcal{E}_0:=\|(\varrho_0, u_0)\|_{H^3\cap\dot{B}^{-s}_{2,\infty}}^2+\|f_0\|_{ L_v^2(H^3\cap\dot{B}^{-s}_{2,\infty})}^2\leq \eps_1,\label{small2}
\end{align}
then the following statements hold{\rm:}

\begin{itemize}
\item If $\{(\varrho_0^\mu,u_0^\mu,f_0^\mu)\}_{0<\mu<1}$ is a sequence such that $(\varrho_0^\mu,u_0^\mu)\rightarrow (\varrho_0,u_0)$ in $H^3\cap\dot{B}^{-s}_{2,\infty}$ and $f_0^\mu\rightarrow f_0$ in $L^2_v(H^3\cap\dot{B}^{-s}_{2,\infty})$. Then  for all $0<\mu<\mu_0$ with a suitably small $\mu_0\in(0,1)$, there exists a unique global solution $(\varrho^\mu,u^\mu,f^\mu)$ to the Cauchy problem for the inhomogeneous NS-VFP system subject to the initial data  $(\varrho_0^\mu,u_0^\mu,f_0^\mu)$. The sequence $\{(\varrho^\mu,u^\mu,P^\mu,f^\mu)\}_{0<\mu<\mu_0}$ satisfies  the uniform estimates \eqref{uniform1} and \eqref{1.13}.

\item As $\mu\rightarrow 0$, there exists a limit $(\varrho,u,P,f)$ such that up to a subsequence, the following  limits hold 
\begin{equation}\label{convergence0}
\left\{
\begin{aligned}
(\varrho^\mu,u^\mu) &\rightharpoonup (\varrho,u)\quad  \,\, \text{weakly$^*$~ ~~in ~~} L^{\infty}(\mathbb{R}_{+};H^3),\\
\nabla P^\mu &\rightharpoonup \nabla P \quad \quad \text{weakly~ in ~~} L^{2}(\mathbb{R}_{+};H^2),\\
f^\mu &\rightharpoonup f\quad \quad \quad  \,  \text{weakly$^*$~ in ~~} L^{\infty}(\mathbb{R}_{+};L^2_v(H^3))\cap  L^2(\mathbb{R}_{+};L^2_{v,\nu}(H^3)),\\
(\varrho^\mu,u^\mu) & \rightarrow (\varrho,u) \quad  \,\,  \,\, \text{strongly~ in ~~} C_{\rm loc}(\mathbb{R}_{+};H^2_{\rm loc}).
\end{aligned}
\right.
\end{equation}

\item The limit $(\varrho,u,f)$ is the global solution to the Cauchy problem \eqref{rEVFP}--\eqref{I-4} of the inhomogeneous Euler-VFP system. Furthermore, $(\varrho,u,P,f)$ satisfies $\rho=1+\varrho>0$, $F=M+\sqrt{M} f \geq 0$,
\begin{align}  
  &\sup_{t\geq0}\big(\|(\varrho, u)(t)\|_{H^3}^2+\|f(t)\|_{L_{v}^2(H^3)}^2\big)+\int_0^t D(u,P,f)(\tau) {\rm d}\tau \leq C_1 \mathcal{E}_0,\label{uniform2}
\end{align}
and for all $t\geq0$,
\begin{equation}\label{G1.7}
\left\{
\begin{aligned}
&\| u (t)\|_{L^{2}}+\|f (t)\|_{L_v^2(L^{2})}+\|\nabla P (t)\|_{L^2}\leq\, C_1\eps_1^{\frac{1}{2}} (1+t)^{-\frac{s}{2}},\\
&\| \nabla u (t)\|_{L^{2}}+\|\nabla f (t)\|_{L_v^2(L^2)}+\|\nabla^2 P (t)\|_{L^2}\leq   C_1\eps_1^{\frac{1}{2}} (1+t)^{-\frac{s}{2}-\frac{1}{2}} ,         \\
&\| \nabla^2 u (t)\|_{H^{1}}+\|\nabla^2 f (t)\|_{L_v^2(H^1)} \leq   C_1\eps_1^{\frac{1}{2}} (1+t)^{-\frac{s}{2}-\frac{1}{2}},
\end{aligned}
\right.
\end{equation}
{ {where $C_1>0$ is a constant independent of the time and initial data.}} 
\end{itemize}
\end{thm}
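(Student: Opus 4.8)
The plan is to construct the Euler-VFP solution as a vanishing-viscosity limit of the NS-VFP solutions furnished by Theorem~\ref{T1.1}, so that the bulk of the work is a compactness argument. \textbf{First}, I would fix $\eps_1:=\eps_0/4$ with $\eps_0$ the threshold of Theorem~\ref{T1.1}, and use the assumed convergence of the initial data to find $\mu_0\in(0,1)$ such that $|\mathcal E_0^\mu-\mathcal E_0|<\eps_1$ for all $\mu\in(0,\mu_0)$; combined with \eqref{small2} this yields $\mathcal E_0^\mu<2\eps_1\le\eps_0$, so Theorem~\ref{T1.1} produces, for each such $\mu$, the unique global strong solution $(\varrho^\mu,u^\mu,P^\mu,f^\mu)$ of \eqref{rNSVFP} with data $(\varrho_0^\mu,u_0^\mu,f_0^\mu)$, obeying \eqref{uniform1} and \eqref{1.13} with a constant $C_0$ independent of $\mu$ (and with $\mathcal E_0^\mu\le 2\mathcal E_0$). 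Since $\|\varrho^\mu(t)\|_{H^3}$ is uniformly small, $\rho^\mu=1+\varrho^\mu\ge\tfrac12$ uniformly in $\mu$ and $t$, and $F^\mu=M+\sqrt M f^\mu\ge0$; this is the first bullet.

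\textbf{Next}, from \eqref{uniform1}--\eqref{1.13} the family $(\varrho^\mu,u^\mu)$ is bounded in $L^\infty(\mathbb R_+;H^3)$, $\nabla P^\mu$ in $L^2(\mathbb R_+;H^2)$, $f^\mu$ in $L^\infty(\mathbb R_+;L^2_v(H^3))\cap L^2(\mathbb R_+;L^2_{v,\nu}(H^3))$, and $\sqrt\mu\,\nabla u^\mu$ in $L^2(\mathbb R_+;H^3)$, so Banach--Alaoglu gives, along a subsequence, the three weak$^*$/weak convergences of \eqref{convergence0} with limits $(\varrho,u)$, $\nabla P$, $f$. To upgrade the fluid part to strong convergence, I would bound time derivatives from \eqref{rNSVFP}: the $\mathbb R^3$ product law $H^3\cdot H^2\hookrightarrow H^2$ makes $\partial_t\varrho^\mu=-u^\mu\cdot\nabla\varrho^\mu$ bounded in $L^\infty(\mathbb R_+;H^2)$, while each term of the $u^\mu$-equation --- the convection $u^\mu\cdot\nabla u^\mu$, the damping and force $u^\mu-b^{f^\mu}+a^{f^\mu}u^\mu$, the pressure $(1+\varrho^\mu)^{-1}\nabla P^\mu$, and the viscous term $\mu(1+\varrho^\mu)^{-1}\varrho^\mu\Delta u^\mu$ --- is bounded in $L^2_{\rm loc}(\mathbb R_+;H^2)$ uniformly in $\mu$; the last one is in fact $O(\sqrt\mu)$ there, because the dissipation budget $\mu\int_0^\infty\|\nabla u^\mu\|_{H^3}^2\,\mathrm d\tau\le C_0\mathcal E_0^\mu$ forces $\mu\Delta u^\mu=\sqrt\mu(\sqrt\mu\,\Delta u^\mu)$ to be small. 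Then on any slab $[0,T]\times\Omega$ with $\Omega\subset\mathbb R^3$ bounded, the Aubin--Lions--Simon lemma with $H^3(\Omega)\hookrightarrow\hookrightarrow H^{5/2}(\Omega)\hookrightarrow H^2(\Omega)$ gives relative compactness of $(\varrho^\mu,u^\mu)$ in $C([0,T];H^2(\Omega))$, hence the last line of \eqref{convergence0}.

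\textbf{Then} I would pass to the limit $\mu\to0$ in the weak formulation of \eqref{rNSVFP}. Every nonlinearity is a product of a factor converging \emph{strongly} on compact sets --- namely $u^\mu$, $\varrho^\mu$, or the composite $\varrho^\mu/(1+\varrho^\mu)$ (strongly convergent since $1+\varrho^\mu\ge\tfrac12$) --- with a factor converging \emph{weakly}: this covers $u^\mu\cdot\nabla\varrho^\mu$, $u^\mu\cdot\nabla u^\mu$, $\varrho^\mu(1+\varrho^\mu)^{-1}\nabla P^\mu$, $a^{f^\mu}u^\mu$, and, in the kinetic equation, $\tfrac12 u^\mu\cdot v f^\mu$, $u^\mu\cdot\nabla_v f^\mu$ and $\varrho^\mu\big(\mathcal Lf^\mu-u^\mu\cdot\nabla_v f^\mu+\tfrac12 u^\mu\cdot v f^\mu+u^\mu\cdot v\sqrt M\big)$, all of which are \emph{linear} in $f^\mu$, so that the weak convergence of $f^\mu$ and of the moments $a^{f^\mu},b^{f^\mu}$ (which follow from the $L^2_v(H^3)$, $L^2_{v,\nu}(H^3)$ bounds and the Gaussian weight $\sqrt M$) --- together with duality to move $v$-derivatives onto test functions --- suffices, and no strong compactness of $f^\mu$ is needed. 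The viscous term drops out by the $O(\sqrt\mu)$ bound. Hence $(\varrho,u,P,f)$ solves \eqref{rEVFP}--\eqref{I-4} in the distributional sense; its $H^3$ regularity makes it a strong solution, with $\mathrm{div}\,u=0$, the pressure relation, and $F=M+\sqrt M f\ge0$ passing to the limit. A Grönwall estimate on the difference of two solutions --- at the $L^2$ level for $(\varrho,u)$ and the $L^2_v(L^2)$ level for $f$, with the $H^3$ bounds as coefficients --- gives uniqueness in this class, so the full family converges (not merely a subsequence), and combining weak lower semicontinuity of the norms with \eqref{uniform1}, \eqref{1.13} and $\mathcal E_0^\mu\le2\mathcal E_0$, and using the resulting time-continuity of the limit, yields \eqref{uniform2} and \eqref{G1.7} for every $t\ge0$.

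\textbf{The hard part} will be the compactness step: because $\varrho^\mu$ satisfies a pure transport equation (no parabolic smoothing of the density) and the momentum equation carries the a priori singular term $\mu(1+\varrho^\mu)^{-1}\varrho^\mu\Delta u^\mu$, one must extract $\mu$-uniform compactness sufficient for all the strong-times-weak passages. The observation that makes this work is precisely that the viscous dissipation budget $\mu\int_0^\infty\|\nabla u^\mu\|_{H^3}^2\,\mathrm d\tau\le C_0\mathcal E_0^\mu$ renders $\mu\Delta u^\mu$ of order $\sqrt\mu$ in $L^2_tH^2_x$, so this term is not only innocuous for the time-derivative estimate but actually disappears in the limit; the secondary, easier point is that every kinetic nonlinearity is linear in $f^\mu$, which removes the need for any strong compactness of the distribution function.
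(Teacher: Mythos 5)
Your proposal is correct and follows essentially the same route as the paper: apply Theorem \ref{T1.1} uniformly for small $\mu$ after perturbing the data, extract weak/weak$^*$ limits from \eqref{uniform1}, bound $\partial_t\varrho^\mu$ and $\partial_t u^\mu$ and invoke Aubin--Lions for the strong local convergence of the fluid variables, pass to the limit by strong-times-weak arguments (the kinetic nonlinearities being linear in $f^\mu$), and recover \eqref{uniform2}--\eqref{G1.7} by lower semicontinuity/Fatou; your treatment of the viscous term via the dissipation budget (giving $O(\sqrt\mu)$ in $L^2_tH^2$) is an inessential variant of the paper's use of the uniform $L^\infty_tH^3$ bound with $\mu$ small, and the uniqueness/whole-family-convergence claim is extra and not required, since the theorem only asserts subsequential convergence. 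One small repair: your intermediate inequality $\mathcal{E}_0^\mu\le 2\mathcal{E}_0$ is not implied by $|\mathcal{E}_0^\mu-\mathcal{E}_0|<\eps_1$ (e.g.\ if $\mathcal{E}_0$ is much smaller than $\eps_1$); what you actually need, and what the hypotheses give, is $\mathcal{E}_0^\mu\to\mathcal{E}_0$ as $\mu\to0$, which via Fatou yields the bound $C_1\mathcal{E}_0$ in \eqref{uniform2} exactly as in the paper.
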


\begin{rem}
{ { Note that the strong convergence properties stated in Theorem \ref{T1.2} hold locally in time and space and up to a subsequence.
It is worth noting that $\varepsilon_1$ can be selected as $\varepsilon_1=\frac{\varepsilon_0}{2}$, where the constant $\varepsilon_0$ is given in \eqref{small1}. 
Moreover, the strong convergence in Theorem \ref{T1.2} holds only locally in time and space (up to a subsequence), which is guaranteed by the uniform bounds along with the Aubin-Lions compactness lemma. We  emphasize that this convergence will be sharpened in the subsequent theorem (see Theorem \ref{T1.3}).}}
\end{rem}

To justify the validity of the global-in-time inviscid limit from the NS-VFP model \eqref{rNSVFP} to the Euler-VFP model \eqref{rEVFP}, we establish the global error estimates between solutions for \eqref{rNSVFP} and  \eqref{rEVFP}. 

\begin{thm}[Global inviscid limit]\label{T1.3}  
Let $(\varrho^\mu,u^\mu,P^\mu,f^\mu)$ be the global solutions to the Cauchy problem for the inhomogeneous NS-VFP system \eqref{rNSVFP} with the initial data $(f_0^\mu,\varrho_0^\mu,u_0^\mu)$ given by Theorem \ref{T1.1}, and $(\varrho ,u ,P,f )$ be the corresponding
 global solution to the Cauchy problem for the inhomogeneous Euler-VFP system \eqref{rEVFP} with the initial data $(f_0 ,\varrho_0 ,u_0 )$ obtained in Theorem \ref{T1.2}. Assume further that
 \begin{align}
 \|( \varrho_0^\mu-\varrho_0 ,u_0^\mu-u_0 )\|_{H^1}+\|f_0^\mu-f_0\|_{L^2_v(H^1)}\leq \mu.\label{a3}
 \end{align}
 Then there exists a uniform constant $C_2>0$ such that
\begin{align}
  &\sup _{t \geq 0}\big((1+t)^{-1}\|(\varrho^\mu-\varrho)(t)\|_{H^1}^2+ \|( u^\mu-u )(t)\|_{H^1}^2+\|(f^\mu-f )(t)\|_{L_{v}^2(H^1)}^2\big)\nonumber\\
  &\quad+\int_0^t\big(\|\nabla(u^\mu-u)(\tau)\|_{H^1}^2+\|\nabla(P^\mu-P)(\tau)\|_{L^2}^2\big)\,{\rm d}\tau\nonumber\\
  &\quad\quad+ \int_0^t\big(\|\nabla (f^\mu-f )(\tau)\|_{L^2_{x,v}}^2+\|\{\mathbf{I}-\mathbf{P}\}(f^\mu-f )(\tau)\|_{L^2_{v,\nu}(H^1)}^2
\big){\rm d}\tau 
 \leq C_2\mu^2.\label{rate}
\end{align}
\end{thm}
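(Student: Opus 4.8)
The plan is to derive uniform-in-$\mu$ energy estimates for the difference system obtained by subtracting the Euler-VFP system \eqref{rEVFP} from the NS-VFP system \eqref{rNSVFP}. Write $(\sigma,w,P,g):=(\varrho^\mu-\varrho, u^\mu-u, P^\mu-P, f^\mu-f)$. The equations for $(\sigma,w,g)$ have the schematic form: $\partial_t\sigma+u^\mu\cdot\nabla\sigma=-w\cdot\nabla\varrho$; a velocity equation $\partial_tw+u^\mu\cdot\nabla w+\nabla P+w-b^g=\mu\Delta u^\mu+(\text{commutator and product terms quadratic in the difference})$; $\dive w=0$; and a kinetic equation $\partial_tg+v\cdot\nabla_xg-w\cdot v\sqrt M-\mathcal Lg=(\text{transport and collision difference terms})$. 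The crucial observation driving the whole argument is that the \emph{only} non-small forcing is the viscous term $\mu\Delta u^\mu$, which by the uniform bound \eqref{uniform1} satisfies $\mu^2\int_0^\infty\|\nabla u^\mu\|_{H^3}^2\,dt\lesssim\mu\,\mathcal E_0^\mu$, i.e.\ it is $O(\mu^2)$ in $L^2_tH^3$ after pairing (more precisely, $\mu\|\Delta u^\mu\|_{H^1}$ integrated in time squared is $\lesssim\mu\cdot\mu\int\|\nabla u^\mu\|_{H^3}^2\lesssim\mu^2$). Every other term on the right-hand side is either a commutator or a genuine product of a difference quantity with a solution quantity, hence can be absorbed by the left-hand dissipation provided the background norms are small — which is exactly what \eqref{uniform1}--\eqref{1.13} and the analogous \eqref{uniform2}--\eqref{G1.7} supply.

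The key steps, in order, are as follows. \textbf{Step 1 (linearized-collision energy structure).} Reproduce, at the level of the difference system, the basic dissipative energy identity used in Theorem \ref{T1.1}: for $H^1_x$ norms of $(w,g)$ plus an appropriate $L^2_v$-weight, one gets a Lyapunov inequality whose dissipation controls $\|b^g-w\|_{H^1}^2+\|\{\mathbf I-\mathbf P\}g\|_{L^2_{v,\nu}(H^1)}^2+\|\nabla(a^g,b^g)\|_{L^2}^2+\|\nabla P\|_{L^2}^2+\|\nabla w\|_{H^1}^2$, up to the forcing $\mu\Delta u^\mu$ and the quadratic remainders. One needs the hypocoercivity/micro-macro decomposition machinery from Section 2 to extract the $\|\nabla(a^g,b^g)\|$ and $\|\nabla P\|$ pieces — the same mechanism that produced $\mathcal D$ in \eqref{D}. \textbf{Step 2 (handling $\sigma$).} Since $\sigma$ obeys a pure transport equation forced by $w\cdot\nabla\varrho$, there is no dissipation for $\sigma$; a direct $H^1$ estimate gives $\frac{d}{dt}\|\sigma\|_{H^1}^2\lesssim\|\nabla u^\mu\|_{L^\infty\cap\dots}\|\sigma\|_{H^1}^2+\|w\|_{H^1}\|\nabla\varrho\|_{H^1}\|\sigma\|_{H^1}$. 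Because $\|\nabla u^\mu\|$ is only \emph{integrable} in time (decay rate $(1+t)^{-s/2-1/2}$ with $s>1$ gives $\int\|\nabla u^\mu\|\,dt<\infty$), Gr\"onwall produces a constant, but the coupling term $\|w\|\|\nabla\varrho\|$ forces the growth: this is the origin of the $(1+t)^{-1}$ weight on $\|\sigma\|_{H^1}^2$ in \eqref{rate}. One designs a time-weighted Lyapunov functional $\mathcal G(t):=(1+t)^{-1}\|\sigma\|_{H^1}^2+\|w\|_{H^1}^2+\|g\|_{L^2_v(H^1)}^2$ (possibly with cross terms), computes $\frac{d}{dt}\mathcal G+\mathcal D_{\rm diff}\lesssim \mu\|\Delta u^\mu\|_{H^1}\|w\|_{H^1}+(\text{small})\cdot\mathcal D_{\rm diff}+(\text{decaying background})\cdot\mathcal G$. \textbf{Step 3 (closing the estimate).} Absorb the small terms into $\mathcal D_{\rm diff}$; use Cauchy-Schwarz $\mu\|\Delta u^\mu\|_{H^1}\|w\|_{H^1}\le\tfrac14\|\nabla w\|_{H^1}^2+C\mu^2\|\nabla u^\mu\|_{H^1}^2$ (or pair against $\|b^g-w\|$-type dissipation); integrate in time, using $\mu^2\int_0^\infty\|\nabla u^\mu\|_{H^3}^2\lesssim\mu^2$ from \eqref{uniform1} and the initial bound \eqref{a3} which contributes $\mathcal G(0)\lesssim\mu^2$; the decaying-background-times-$\mathcal G$ term is handled by Gr\"onwall since $\int_0^\infty(1+t)^{-s/2-1/2}\,dt<\infty$ for $s>1$. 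This yields \eqref{rate}.

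The main obstacle is \textbf{Step 2}: the density difference $\sigma$ has no dissipation and is driven by $w\cdot\nabla\varrho$, so one cannot hope for a uniform-in-time bound on $\|\sigma\|_{H^1}^2$ at the $O(\mu^2)$ level without a weight — and conversely the weight $(1+t)^{-1}$ must be reconciled with the velocity estimate, where terms like $\sigma\nabla P^\mu/(1+\varrho^\mu)$ (difference of the pressure-correction nonlinearity) couple $\sigma$ back into the $w$-equation. The delicate point is that $\|\nabla P^\mu\|_{L^2}$ decays like $(1+t)^{-s/2}$ by \eqref{1.13}, so $\|\sigma\nabla P^\mu\|_{L^2}\lesssim(1+t)^{-s/2}\|\sigma\|_{H^1}\lesssim(1+t)^{-s/2}(1+t)^{1/2}\mathcal G^{1/2}$; pairing with $w$ gives $(1+t)^{(1-s)/2}\mathcal G$, which is time-integrable precisely because $s>1$ — this is exactly why the hypothesis $1<s\le 3/2$ is needed and why the convergence rate is $O(\mu)$ rather than being spoiled by secular growth. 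A secondary technical nuisance is that several difference terms carry $\nabla^2 u$ or $\nabla^2 f$ at top order (e.g.\ from the commutator $[u^\mu\cdot\nabla,\ ]w$ after one derivative, needing $\|\nabla^2 u^\mu\|$), which is why the estimate is carried out in $H^1$ rather than $H^3$: one has room to spend the extra derivative on the background solution, whose $H^3$ norm and decay are controlled by \eqref{uniform1}, \eqref{1.13}, \eqref{uniform2}, \eqref{G1.7}. Once the time-weighted functional and the bookkeeping of decay rates are set up correctly, the remaining computations are the same product- and commutator-estimate routine already used to prove Theorem \ref{T1.1}.
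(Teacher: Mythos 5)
Your overall strategy (difference energy estimates with micro--macro dissipation, a $(1+t)^{-1}$ weight on the non-dissipated density difference, absorption of all quadratic remainders by smallness) is the same as the paper's, but two of your key quantitative claims are wrong and, as stated, your scheme does not deliver the rate $\mathcal{O}(\mu^2)$ in \eqref{rate}. First, the accounting of the viscous forcing: from \eqref{uniform1} one only gets $\mu\int_0^\infty\|\nabla u^\mu\|_{H^3}^2\,{\rm d}t\lesssim \mathcal{E}_0^\mu\le\eps_0$, hence $\mu^2\int_0^\infty\|\nabla u^\mu\|_{H^3}^2\,{\rm d}t\lesssim\mu\,\eps_0$, which is $\mathcal{O}(\mu)$, not $\mathcal{O}(\mu^2)$; your Cauchy--Schwarz route would therefore only give the error rate $\mathcal{O}(\mu^{1/2})$ of the earlier literature. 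Moreover the inequality you write, $\mu\|\Delta u^\mu\|_{H^1}\|w\|_{H^1}\le\frac14\|\nabla w\|_{H^1}^2+C\mu^2\|\nabla u^\mu\|_{H^1}^2$, cannot be absorbed, because the dissipation of the difference system contains no $\|w\|_{L^2}^2$ term on $\mathbb{R}^3$ (only $\|b^g-w\|^2$, $\|\nabla(a^g,b^g)\|^2$, etc.), so $\|w\|_{H^1}$ is not controlled by $\|\nabla w\|_{H^1}$. The sharp rate requires the \emph{uniform-in-$\mu$} time-decay \eqref{1.13}: since $s>1$, $\|\nabla^2 u^\mu\|_{H^1}\lesssim\eps_0^{1/2}(1+t)^{-\frac{s}{2}-\frac12}$ is integrable in time without any factor of $\mu$, and the paper bounds the viscous term by $\mu\int_0^t\|\nabla^2 u^\mu\|\,{\rm d}\tau\,\sup_\tau\|\widetilde u^\mu\|_{L^2}\lesssim\mu\sup_\tau\|\widetilde u^\mu\|_{L^2}\le C\mu^2+\frac14\sup_\tau\|\widetilde u^\mu\|_{L^2}^2$ (see the treatment of $\widetilde I_3$ and $\widetilde J_4$).

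Second, your handling of the coupling of $\sigma=\varrho^\mu-\varrho$ back into the momentum equation fails. You bound $\|\sigma\nabla P^\mu\|_{L^2}\lesssim(1+t)^{-\frac{s}{2}}(1+t)^{\frac12}\mathcal{G}^{1/2}$ and claim the resulting Gr\"onwall coefficient $(1+t)^{\frac{1-s}{2}}$ is integrable ``precisely because $s>1$''; in fact for $1<s\le\frac32$ this exponent lies in $[-\frac14,0)$, so the coefficient is \emph{not} integrable and your functional $\mathcal{G}$ would grow superpolynomially --- the argument does not close. This is exactly why the paper introduces the additional weighted space--time estimate of Lemma \ref{LL4.2},
\begin{align*}
\int_0^t(1+\tau)\big(\|\nabla P^\mu\|_{H^2}^2+\|u^\mu-b^\mu\|_{H^3}^2+\|\nabla(a^\mu,b^\mu)\|_{H^2}^2+\|\{\mathbf{I}-\mathbf{P}\}f^\mu\|_{L^2_{v,\nu}(H^3)}^2\big)\,{\rm d}\tau\leq C\eps_0,
\end{align*}
obtained by multiplying the Lyapunov inequality \eqref{G3.46} by $(1+t)$ (note this bound does \emph{not} follow from the pointwise decay \eqref{1.13}: $\int(1+\tau)^{1-s}{\rm d}\tau$ diverges for $s\le 2$). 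With it, the troublesome terms are estimated as $\|(1+\tau)^{-\frac12}\widetilde\varrho^\mu\|_{L^\infty_t(H^1)}\,\|(1+\tau)^{\frac12}\nabla P^\mu\|_{L^2_t(H^1)}\,\|\nabla\widetilde u^\mu\|_{L^2_t(L^2)}$, so the $(1+\tau)^{\pm\frac12}$ weights cancel exactly against the $t^{1/2}$ growth of $\|\widetilde\varrho^\mu\|_{H^1}$, rather than leaving a decaying-but-nonintegrable factor. You would need to add this weighted dissipation estimate (or an equivalent device) to make Step 2 and Step 3 of your plan close; the rest of your outline (transport estimate for $\sigma$, micro--macro recovery of $\|\nabla(\widetilde a^\mu,\widetilde b^\mu)\|$, pressure-difference estimate, Gr\"onwall) matches the paper's Lemmas \ref{L:error1}--\ref{L:error4}.
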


\begin{rem}
To the best of our knowledge, Theorem \ref{T1.3} provides the first result concerning the global-in-time  convergence of the inviscid limit for the incompressible Euler equations with weak dissipative effects {\rm(}i.e.,
the dissipation mechanisms of $b^f-u$ from the friction force{\rm)}.  The coupled Euler-VFP system \eqref{I-1} is derived globally-in-time from the NS-VFP system \eqref{I-2} with the  convergence rate of  $\mathcal{O}(\mu)$. 

For the pure incompressible Euler equations in three dimensions, the inviscid limit   converge only locally in time due to possible finite-time blow-up. Regarding the inviscid limit problem of the pure incompressible Euler equations, we refer to the important works {\rm\cite{Swann1,Const:invisid,Const:invisid1,Kato:inviscid,mas:inviscid,MB-Book-2002}} and references therein.
\end{rem}

\begin{rem}
In the proof of our results, one of  the main difficulties arises in the nonlinear friction force $\varrho(u-v)$, which causes some additional nonlinear terms in the Fokker-Planck equations  \eqref{rEVFP}$_3$ and \eqref{rNSVFP}$_3$, for example, $\varrho \mathcal{L}f$, where $\varrho$ is degenerate, in the sense that it does not possess any dissipation due to its transport nature. To overcome this difficulty, we establish the uniform evolution of lower-order energy on the  spatial $\dot{B}^{-s}_{2,\infty}$ {\rm(}$1<s\leq \frac{3}{2}${\rm)} regularity for decay estimates, which ensures the $L^1$ time integrability of $\|\nabla u\|_{H^2}$.
\end{rem}

\begin{rem}
For $\mu \geq 1$, the well-posedness and large-time behavior stated in Theorem \ref{T1.1} still hold. Here we assume that $\mu<1$ to establish uniform estimates with respect to $\mu$ in this regime, 
in order to take the inviscid limit $\mu \rightarrow 0$. 
\end{rem}

\begin{rem}
Theorems \ref{T1.1}--\ref{T1.3} hold true for more general coefficients $\kappa(\rho)$ and $\lambda(\rho)$. For example, one may consider
$$
\kappa(\rho)=\lambda(\rho)=\rho^\alpha,\quad \alpha\geq 0.
$$
\end{rem}

\begin{rem}
The $L^2$ and $\dot{H}^1$ decay rates obtained in Theorems \ref{T1.1}--\ref{T1.2} are optimal since they coincide with the rates of the heat equation, while the $\dot{B}^{-s}_{2,\infty}$ regularity assumption seems to be sharp for these optimal decay rates {\rm(}see {\rm{\cite[Page 610] {brandolesehand}}}{\rm)}. If the $\dot{B}^{-s}_{2,\infty}$ regularity is replaced by the  $L^q$ regularity for some $q\in[1,\frac{6}{5})$, then the following classical $L^q$--$L^2$ type of
the optimal decay estimates hold{\rm:} 
\begin{align*} 
\|\nabla^l u(t)\|_{H^{2-l}}+\|\nabla^lf(t)\|_{L_v^2(H^{2-l})}\leq\,& C_0(1+t)^{-\frac{l+s_q}{2}},    \\ 
\|\nabla u(t)\|_{L^{p}}+\|\nabla f(t)\|_{L_v^2(L^{p})}\leq\,& C_0(1+t)^{-\frac{3}{2}(\frac{1}{2}-\frac{1}{p})-\frac{s_q}{2}},   
\end{align*}
for any $t\geq 0$, $2\leq p\leq 6$ and $l=0,1$.
Here, $s_q:=3(\frac{1}{q}-\frac{1}{2})$. It should be noted that such a range $q\in [1,\frac{6}{5})$ coincides with the significant work by Duan, Liu, Ukai, and  Yang {\rm{\cite{DLUY-2007-JDE}}}. 
\end{rem}

\subsection{Difficulties, strategies and novelty}

We now explain the challenges and main ideas used in the proofs of  the above theorems. The results
in Theorem \ref{T1.1} are based on {\emph{a priori}} estimates that are uniform with respect to both $\mu$ and time (see Proposition \ref{P3.1}). Note that the dissipation effects of the relative quantity $b^{f^\mu}-u^\mu$ and the microscopic part $\{\mathbf{I-P}\}f^\mu$ can be derived from the effects of the friction term and the Fokker-Planck operator $-\mathcal{L}$. On the other hand, using the ideas developed in \cite{Gy-IUMJ-2004,DF-jmp-2010,CDM-krm-2011}, one can capture the degenerate dissipation for $a^{f^\mu}$ and $b^{f^\mu}$ due to the interactions of $v\cdot\nabla_x f^\mu$ and $\mathcal{L} f^\mu$. Consequently, the dissipation for $u^\mu$ and $f^\mu$ can be derived as follows: 
\begin{align}\label{udis}
\|\nabla u^\mu(t)\|_{H^2}^2+\|\nabla f^\mu(t)\|_{L^2_v(H^2)}^2\lesssim \mathcal{D}(u^\mu,P^\mu,f^\mu)(t).
\end{align}
Note that the above inequality is independent of the viscosity $\mu$, which plays a key role in controlling nonlinear terms, e.g., $u^\mu\cdot \nabla u^\mu$, uniformly with respect to both the time and $\mu$. 

However, new observations and ideas are needed to handle the nonlinear terms involving the density perturbation $\varrho^\mu$. Indeed, different from the scenarios in compressible fluid \cite{DL-krm-2013,LMW-SIAM-2017}, the variable $\varrho^\mu$ does not exhibit any dissipation due to its transport nature and the divergence-free condition ${\rm div}\, u^\mu=0$. Consequently, it is only possible to obtain a $L^{\infty}$ time bound for $\varrho^\mu$, while $u^\mu$ and $f^\mu$ must take on all time integrability in the nonlinear analysis.  In view of the transport theorem, establishing the uniform bound for $\varrho^\mu$ relies on the $L^1$ time integrability of $\|\nabla u^\mu\|_{H^2}$:
\begin{align}\label{rhoH3}
    \|\varrho^\mu(t)\|_{H^3}\leq e^{\int_0^t \|\nabla u^\mu\|_{H^2}{\rm d}\tau} \|\varrho_0^\mu\|_{H^3}.
\end{align}
However, the dissipation generated from \eqref{udis} only provides an $L^2$ time integrability, which causes a time growth in the nonlinear analysis. To overcome this challenge, we aim to obtain sufficiently fast decay rates for $u^\mu$ under suitable initial conditions, ensuring the uniform-in-time control of $\|\nabla u^\mu\|_{L^1(0,t;H^2)}$. Due to the fact that there are higher velocity derivatives and weights in the nonlinear terms from the density-dependent friction force, e.g., \(\varrho\mathcal{L} f\), the classical method based on the use of pointwise estimates and Duhamel's principle as in \cite{CDM-krm-2011,DL-krm-2013} may not be applicable here.

We develop a pure energy argument to address the above loss of decay. Instead of relying on the standard $H^3$ energy, a key ingredient in our analysis is to construct a higher-order Lyapunov functional $\mathcal{E}_1(t)\sim \|\nabla u^\mu\|_{H^2}^2+\|\nabla f^\mu\|_{L^2_v(H^2)}^2$ such that
\begin{align}\label{G3.810}
 \frac{{\rm d}}{{\rm d}t}\mathcal{E}_1(t)+c\big(\|\nabla^2 u^\mu\|_{H^2}^2+\|\nabla^2 f^\mu\|_{L^2_v(H^2)}^2\big)\leq 0,
\end{align}
for some uniform constant $c>0$ (see \eqref{G3.81}). Such construction \eqref{G3.810} requires the careful control of every nonlinear term to be absorbed by the dissipation in \eqref{G3.810}. Then, inspired by \cite{GW-CPDE-2012}, we apply the interpolation control in \eqref{G3.810} to derive the time decay associated with the $\dot{B}^{-s}_{2,\infty}$ regularity:
\begin{align}\label{uniformdecay}
&\|\nabla u^\mu\|_{H^2}+\|\nabla f^\mu\|_{L^2_v(H^2)}\nonumber\\
&\quad\lesssim (1+t)^{-\frac{s}{2}-\frac{1}{2}}\Big(\|\nabla u_0^\mu\|_{H^2}+\|\nabla f_0^\mu\|_{L^2_v(H^2)}+\|u^\mu\|_{\dot{B}^{-s}_{2,\infty}}+\|f^\mu\|_{L^2_v(\dot{B}^{-s}_{2,\infty})}\Big),
\end{align}
which is crucial for obtaining the bound \eqref{rhoH3} uniformly in time when $\frac{s}{2}+\frac{1}{2}>1$, i.e., $s>1$. Note that the $\dot{B}^{-s}_{2,\infty}$--assumption was introduced by Sohinger and Strain \cite{SS-adv-2014} for the Boltzmann equation and by Xu and Kawashima \cite{Xu} for partially dissipative hyperbolic systems. A key difference of our work is the  $\dot{B}^{-s}_{2,\infty}$--norm of the solution, which ensures the decay in  \eqref{uniformdecay}, is incorporated into enclosing {\emph{a priori}} estimates to compensate for the lack of dissipation for $\varrho$. In particular, the decay relationship \eqref{uniformdecay} is used in establishing the uniform evolution of the $\dot{B}^{-s}_{2,\infty}$ regularity (see Proposition \ref{P3.9}). The above procedure leads to the uniform {\emph{a priori}} estimates, and along with a bootstrap argument, it completes the proof of Theorem \ref{T1.1}. Building up the global existence and uniform regularity estimates with respect to $\mu$ of solutions to the NS-VFP system \eqref{rNSVFP}, we can perform the compactness argument to obtain the global existence of the Euler-VFP system \eqref{rEVFP} as the viscosity coefficient $\mu\rightarrow0$.

Finally, we establish a uniform-in-time error estimate \eqref{rate} between the solutions to the inhomogeneous NS-VFP system \eqref{rNSVFP} and the inhomogeneous Euler-VFP system \eqref{rEVFP}, based on the existence of solutions obtained in Theorems \ref{T1.1} and \ref{T1.2}. This is accomplished by performing energy estimates on the difference between the two systems and analyzing each nonlinear term involving the error variables, utilizing the uniform regularity estimates \eqref{uniform1} and \eqref{uniform2} (see Lemmas \ref{L:error1}--\ref{L:error4}). It is important to note that the bound on $\varrho^\mu-\varrho$ has to grow over time due to the nonlinear term in the error equation for $\varrho^\mu-\varrho$. To overcome this, we employ additional time-weighted regularity estimates. In particular, the uniform error estimate \eqref{rate} leads to the global-in-time strong convergence of this limiting process, which contrasts sharply with 
the pure incompressible fluid case where only local convergence results are available.

\subsection{Outline of the paper}

The rest of this paper is organized as follows. 
In Section 2, we introduce some notations, state the macro-micro decomposition, the properties of the Fokker-Planck operator \(\mathcal{L}\), and the functional spaces utilized throughout the paper. 
Section 3 is devoted to the proof of Theorem \ref{T1.1} concerning the global existence and uniform regularity estimates of strong solutions for the Cauchy problem of the inhomogeneous NS-VFP system \eqref{rNSVFP}. We first derive the energy estimates for \(f\) and \(u\). 
Using these estimates, we then establish the decay rates associated with the negative Besov evolution for \(f^\mu\) and \(u^\mu\). Subsequently, using a refined energy method, we obtain the uniform evolution of negative Besov norms for \(f^\mu\) and \(u^\mu\). With these decay rates in place, we further derive the \(H^3\) and \(\dot{B}^{-s}_{2,\infty}\) estimates for \(\varrho^\mu\).  In Section 4, we investigate the vanishing viscosity limit of the inhomogeneous NS-VFP system \eqref{rNSVFP} toward the inhomogeneous Euler-VFP system \eqref{rEVFP}  and derive a global-in-time convergence rate, i.e., Theorems \ref{T1.2}-\ref{T1.3}. Finally, in the appendix, we collect several useful technical lemmas.

\medskip
\section{Preliminaries}
In this section, we present some notations, definitions, and fundamental facts that are frequently used throughout this paper.

\subsection{Notations}
The symbol $C$ denotes a generic positive constant  independent of time $t$,
 and it may vary from one instance to another.
And $a\lesssim b$ implies that $a\leq Cb$.  For simplicity,  we
set $\|(g, h)\|_X:=\|g\|_X +\|h\|_X$ for some Banach spaces $X$, where $g = g(x)$ and $h = h(x)$ belong to $X$.
We use $\langle\cdot,\cdot\rangle$ to represent the inner
product over the Hilbert space $L^2_v=L^2(\mathbb{R}^3_v)$, i.e.,
\begin{align*}
\langle g,h\rangle:=\int_{\mathbb{R}^3}g(v)h(v)\mathrm{d}v,\quad g,h\in L_v^2,    
\end{align*}
with its corresponding norm $\|\cdot\|_{L^2}$. Let the weight function be defined as  $\nu(v)= 1+|v|^2$, and denote $\|\cdot\|_{L^2_{v,\nu}}$ by
\begin{align*}
\|g\|_{L^2_{v,\nu}}^2:=\int_{\mathbb{R}^3}\big(|\nabla_v g(v)|^2+\nu(v)|g(v)|^2\big){\rm d}v,
\quad g=g(v).
\end{align*}
When involving the spatial variable, we write $L^2_{x,v}=L^2(\mathbb{R}^3_x\times\mathbb{R}^3_v)$ associated with its norm $\|\cdot\|_{L^2_{x,v}}$. The norm in the space $L^p=L^p(\mathbb{R}^3_x)$ (resp. $L^2_v(L^p)=L^2(\mathbb{R}^3_v;L^p(\mathbb{R}^3_x))$) is denoted by $\|\cdot\|_{L^p}$ (resp. $\|\cdot\|_{L^2_v(L^p_v)}$).

For the two multi-indices $\alpha=(\alpha_1,\alpha_2,\alpha_3)$ and $\beta=(\beta_1,\beta_2,\beta_3)$, we consistently use the notation 
\begin{align*}
\partial^{\alpha}_{\beta}:=\partial^{\alpha_1}_{x_1}\partial^{\alpha_2}_{x_2}\partial^{\alpha_3}_{x_3}\partial^{\beta_1}_{v_1}\partial^{\beta_2}_{v_2}\partial^{\beta_3}_{v_3},    
\end{align*}
where $x=(x_1,x_2,x_3)$ denotes the spatial variable  
and $v=(v_1,v_2,v_3)$ denotes the microscopic velocity of the particles.
For any $s\geq0$, the norm in the Sobolev space $H^s=H^s(\mathbb{R}^3_x)$ is endowed by the norm:
\begin{align*}
\|g\|_{H^s}:=\sum_{|\alpha|\leq s}\|\partial^{\alpha}g\|_{L^2}.  
\end{align*}
We also write the norm of $L^2_v(H^s)=L^2(\mathbb{R}^3_v;H^s(\mathbb{R}^3_x))$ by 
$$
\|g\|_{L^2_v(H^s)}:=\sum_{|\alpha|\leq s}\|\partial^{\alpha}g\|_{L^2_{x,v}}.
$$

Next, let's recall the definition of a homogeneous Besov space. Let $\varphi\in C_0^{\infty}(\mathbb{R}^3)$ be a smooth function such that $\varphi(\xi)=1$ for $|\xi|\leq 1$
and $\varphi(\xi)=0$ for $|\xi|\geq 2$. Let $\phi(\xi)=\varphi(\xi)-\varphi(2\xi)$ and $\phi_j(\xi)=\phi(2^{-j}\xi)$ for $j\in \mathbb{Z}$.
By construction, we have $\sum _{j\in\mathbb{Z}}\phi_j(\xi)=1$ for all
$\xi\ne 0$.
Define  $\dot{\Delta}_j f:=\mathcal{F}^{-1}(\phi_j)*f$, where $\mathcal{F}^{-1}$ denotes the inverse  Fourier transform. Then, we have the Littlewood-Paley decomposition 
$$
u=\sum_{j\in\mathbb{Z}}\dot{\Delta}_{j}u,
$$
which holds in $\mathcal{S}'(\mathbb{R}^3)$ modulo polynomials.
 For any $s\in\mathbb{R}$ and $1\leq p,r\leq \infty$, we define the
 homogeneous Besov space  $\dot{B}^s_{p,r}(\mathbb{R}^3)$
with norm $\|\cdot\|_{\dot{B}^s_{p,r}}$ as follows:
\begin{align*}
\|g\|_{\dot{B}^s_{p,r}}:=\bigg(\sum_{j\in\mathbb{Z}}2^{rsj}\|\dot{\Delta}_j g\|_{L^p}^r \bigg)^{\frac{1}{r}}, 
\end{align*}
and if $r=\infty$, then
\begin{align*}
\|g\|_{\dot{B}^s_{p,\infty}}:=\sup_{j\in\mathbb{Z}}2^{sj}\|\dot{\Delta}_j g\|_{L^p}.   
\end{align*}
For more details on Besov spaces, one can refer to \cite{BCD-Book-2011}.

We shall use the notation $L_v^2(\dot{B}^s_{p,r})$ to denote
the space $L^2(\mathbb{R}^3_v;\dot{B}^s_{p,r})$  equipped with the norm
\begin{align*}
\|g\|_{L_v^2(\dot{B}^s_{p,r})}=\Big(\int_{\mathbb{R}^3} \|g\|_{\dot{B}^s_{p,r}}^2\mathrm{d}v \Big)^{\frac{1}{2}}.
\end{align*}
In particular, we have $\dot{H}^s=\dot{B}^{s}_{2,2}$. Besides, we set the following weighted norm:
\begin{align*}
\|g\|_{L^2_{v,\nu}(\dot{B}^{-s}_{2,\infty})}:=\Big(\int_{\mathbb{R}^3}\Big(\|\nabla_v g\|_{\dot{B}^{-s}_{2,\infty}}^2+\nu(v)\|g\|_{\dot{B}^{-s}_{2,\infty}}^2\Big)\mathrm{d}v \Big)^{\frac{1}{2}}.  
\end{align*}

\subsection{Macro-micro decomposition and properties of \texorpdfstring{\(\mathcal{L}\)}{L}}

We introduce the macro-micro decomposition for the distribution function, which was introduced in \cite{Gy-IUMJ-2004} for the Boltzmann equation and developed by \cite{DF-jmp-2010} for the Fokker-Planck equation (see also  \cite{DL-krm-2013,CDM-krm-2011}).

The function $f(t,x,v)$ is decomposed into
its macroscopic (fluid) component $\mathbf{P}f$ and its microscopic component $\{\mathbf{I}-\mathbf{P}\}f$, such that 
\begin{align}\label{mmd}
f=\mathbf{P}f+\{\mathbf{I}-\mathbf{P}\}f . 
\end{align} 
Here, $\mathbf{P}$ represents the velocity orthogonal projection operator defined by
\begin{align*}
\mathbf{P}: L^2\rightarrow {\rm Span}\big\{\sqrt{M}, v_1\sqrt{M}, v_2\sqrt{M},v_3\sqrt{M}         \big\},    
\end{align*}
and can be further decomposed as
\begin{align*}
\mathbf{P}:= \mathbf{P}_0\oplus \mathbf{P}_1. 
\end{align*}
Define 
\begin{align*}
a_f:=\int_{\mathbb{R}^3}  f \sqrt{M}\,dv,\quad b_f:=\int_{\mathbb{R}^3}  f v\sqrt{M}\,dv.
\end{align*}
Then, we have $\mathbf{P}f=\mathbf{P}_0 f+\mathbf{P}_1 f$ with $\mathbf{P}_0 f:=a^f \sqrt{M}$ and  $\mathbf{P_1}f:=b^fv\sqrt{M}$. 

It is well-known that the Fokker-Planck operator $\mathcal{L}$ is self-adjoint and possesses the following coercive property 
\begin{gather*}
  \left\langle \mathcal{L} f, f\right\rangle=-\int_{\mathbb{R}^3}\left|\nabla_v\left(\frac{f}{\sqrt{M}}\right)\right|^2 M \mathrm{d} v.
  \end{gather*}
 Define
\begin{gather*}
 \operatorname{Ker} \mathcal{L}:=\operatorname{Span}\big\{\sqrt{M}\big\}, \quad 
 \operatorname { Range } \mathcal{L}:=\operatorname{Span}\big\{\sqrt{M}\big\}^{\perp} .
\end{gather*}
Then there is a constant $\lambda_0>0$ such that
\begin{align}
-\left\langle \mathcal{L} f, f\right\rangle \geq \lambda_0 \|\{\mathbf{I}-\mathbf{P}\} f \|_{L^2_{\nu,v}}^2+|b^f|^2 .\label{H2.1}
\end{align}

\medskip
\section{Proof of Theorem \ref{T1.1}}

Throughout this section, we simplify the notations of   solutions to the Cauchy 
problem \eqref{rNSVFP}--\eqref{NJKI-10} by omitting the superscript ``$\mu$". The main task of this section is to give the proof 
of Theorem \ref{T1.1}.
{{To}} achieve this, we first assume  that 
for $1<s\leq\frac{3}{2}$, the
following inequality holds:
\begin{align}\label{G3.1}
\sup_{t\in[0,T)} \Big\{\|(\varrho,u)\|_{H^3\cap \dot{B}_{2,\infty}^{-s}}+\|f\|_{L_v^2(H^3\cap\dot{B}_{2,\infty}^{-s})}\Big\} \leq \sigma, 
\end{align}
for some time $T>0$. Here $\sigma\in(0,1)$ is a universal constant independent of $T$ and $\mu$,  which will be chosen later.
By leveraging the embedding $H^2\hookrightarrow L^{\infty}$ and 
the inequality \eqref{G3.1},
we discover that there exists a generic constant $\sigma_0\in(0,1)$ such that 
\begin{align}\label{G3.4}
\frac{1}{2}\leq \varrho+1 \leq \frac{3}{2}\quad \text{if}\quad \sigma\leq \sigma_0.
\end{align}
 
To show Theorem \ref{T1.1}, the crucial part  
is to establish the uniform {\emph{a priori}} energy estimates for the solution to the NS-VFP system \eqref{rNSVFP}. 
\begin{prop}\label{P3.1}
Let $0<\mu<1$, and let $(\varrho,u,P,f)$ be a smooth solution to the Cauchy problem for \eqref{rNSVFP} with the initial data $(\varrho_0,u_0,f_0)$ on $[0,T)$ with some $T>0$. Assume that \eqref{G3.1} holds with a generic constant $\sigma\in(0,\sigma_0)$. Then, $(\varrho,u,P,f)$ has the uniform estimate:
\begin{align}\label{apropri}
& \|(\varrho,u)(t)\|_{H^3\cap \dot{B}_{2,\infty}^{-s}}^2+\|f(t)\|_{L_{v}^2(H^3\cap \dot{B}_{2,\infty}^{-s})}^2
\nonumber\\
&\quad+\mu\int_0^t\|\nabla u(\tau)\|_{H^3}^2{\rm d}\tau+\int_0^t \mathcal{D}(u,P,f)(\tau) {\rm d}\tau \nonumber\\
&\qquad\leq C^* \Big(\|(\varrho_0,u_0)\|_{H^3\cap \dot{B}_{2,\infty}^{-s}}^2+\|f_0\|_{L_{v}^2(H^3\cap \dot{B}_{2,\infty}^{-s} ) }^2\Big),
\end{align}
for any $t\in[0,T)$. Here $\mathcal{D}(u,P,f)$ is defined by \eqref{D}, and $C^*$ is a positive constant uniform with respect to $T$ and $\mu$. 
\end{prop}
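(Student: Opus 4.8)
The plan is to establish Proposition \ref{P3.1} via a hierarchy of energy estimates organized into four coupled blocks, each of which is closed modulo the smallness assumption \eqref{G3.1} and, crucially, modulo a sufficiently fast decay rate that must itself be proven in the course of the argument. First I would derive the \emph{basic and higher-order energy estimates} for $(u,f)$: testing the $u$-equation \eqref{rNSVFP}$_2$ against $u$ (in $L^2_x$) and its spatial derivatives up to order $3$, and testing the kinetic equation \eqref{rNSVFP}$_3$ against $f$ and $\partial^\alpha f$ in $L^2_{x,v}$, then summing. The coercivity \eqref{H2.1} of $-\mathcal{L}$ produces the dissipation $\|\{\mathbf{I}-\mathbf{P}\}f\|_{L^2_{v,\nu}(H^3)}^2+\|b^f\|_{H^3}^2$, while the friction coupling between $u$ and $b^f$ (after a cancellation of the symmetric cross term $\langle b^f-u, \text{something}\rangle$ against $\int \kappa(\rho)(u-v)F\,\dd v$-type contributions) yields $\|b^f-u\|_{H^3}^2$. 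To recover the \emph{macroscopic} dissipation $\|\nabla(a^f,b^f)\|_{H^2}^2$ and $\|\nabla P\|_{H^2}^2$ I would follow the macro-micro technology of \cite{Gy-IUMJ-2004,DF-jmp-2010,CDM-krm-2011}: write the local conservation/balance laws obtained by projecting \eqref{rNSVFP}$_3$ onto $\mathbf{P}$, solve for $\nabla a^f,\nabla b^f$ in terms of $\partial_t$-terms and $\{\mathbf{I}-\mathbf{P}\}f$, and absorb the time-derivative terms into a correction functional (an ``interaction'' term of the form $\int \nabla a^f\cdot b^f$ etc.) added to the energy with a small coefficient. The pressure estimate $\nabla P$ comes from applying $\dive$ to \eqref{rNSVFP}$_2$ and the elliptic estimate for $-\Delta P = \dive(\text{nonlinear}+u-b^f - \mu\,\dots)$. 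All nonlinear terms here — $u\cdot\nabla u$, $u\cdot\nabla_v f$, $\tfrac12 u\cdot v f$, and especially the $\varrho$-weighted terms $\varrho\mathcal{L}f$, $\varrho u\cdot\nabla_v f$, $\tfrac{\mu\varrho}{1+\varrho}\Delta u$, $\tfrac{\varrho}{1+\varrho}\nabla P$, $a^f u$ — are estimated by Sobolev/Moser product estimates, using $H^2\hookrightarrow L^\infty$, \eqref{G3.4} to control $1/(1+\varrho)$, and \eqref{G3.1} to gain a factor $\sigma$; the viscous term $\mu\Delta u$ is kept and contributes the good term $\mu\int_0^t\|\nabla u\|_{H^3}^2$, with the ``bad'' $\tfrac{\mu\varrho}{1+\varrho}\Delta u$ absorbed by $\sigma\mu\|\nabla^2 u\|_{H^2}^2$ after integration by parts, hence uniformly in $\mu$.

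The delicate point, as the paper's ``Difficulties'' section stresses, is that $\varrho$ has \emph{no dissipation}: the transport identity \eqref{rhoH3} forces us to control $\|\nabla u\|_{L^1_t H^2}$, but the energy dissipation only gives $L^2_t$ integrability. Therefore the second block is to prove the \emph{negative Besov decay}: I would first establish a closed differential inequality for a higher-order Lyapunov functional $\mathcal{E}_1(t)\sim\|\nabla u\|_{H^2}^2+\|\nabla f\|_{L^2_v(H^2)}^2$ (plus small macro-micro corrections) of the form \eqref{G3.810}, i.e. $\tfrac{\dd}{\dd t}\mathcal{E}_1+c(\|\nabla^2 u\|_{H^2}^2+\|\nabla^2 f\|_{L^2_v(H^2)}^2)\le 0$ — this requires that every nonlinear term, after differentiation, carry at least two total $x$-derivatives or be dominated by the already-available dissipation times $\sigma$; the $\varrho$-weighted terms are handled by putting all derivatives on $u$ or $f$ and the $L^\infty$-norm on $\varrho$. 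Then, following the Guo–Wang interpolation scheme \cite{GW-CPDE-2012}, I would propagate the $\dot B^{-s}_{2,\infty}$ norm of $(u,f)$ (block three: test the equations against $\dot\Delta_j$ pieces, sum with weight $2^{-sj}$, and close using \eqref{G3.1} — here one must check that the nonlinear terms map into $\dot B^{-s}_{2,\infty}$, which works because $s\le 3/2<3/2+\dots$ keeps the product estimates in the valid range), and interpolate $\|\nabla u\|_{H^2}\lesssim\|\nabla^2 u\|_{H^2}^{\theta}\|u\|_{\dot B^{-s}_{2,\infty}}^{1-\theta}$ with $\theta=\tfrac{s+2}{s+3}$ to convert \eqref{G3.810} into the algebraic decay $\|\nabla u\|_{H^2}+\|\nabla f\|_{L^2_v(H^2)}\lesssim (1+t)^{-\frac{s}{2}-\frac12}\big(\text{data}\big)$, exactly \eqref{uniformdecay}. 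Since $s>1$ gives exponent $>1$, this yields $\|\nabla u\|_{L^1(0,t;H^2)}\le C$ uniformly in $t$ and $\mu$.

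The fourth and final block feeds this decay back: by \eqref{rhoH3} we get $\|\varrho(t)\|_{H^3}\le e^{C\|\nabla u\|_{L^1_tH^2}}\|\varrho_0\|_{H^3}\lesssim\|\varrho_0\|_{H^3}$, and a parallel transport estimate in $\dot B^{-s}_{2,\infty}$ (using the transport estimate in Besov spaces from \cite{BCD-Book-2011}) controls $\|\varrho\|_{\dot B^{-s}_{2,\infty}}$; these bounds, together with the already-closed $(u,f)$ estimates, give \eqref{apropri}. I expect the \textbf{main obstacle} to be the construction of the pure-energy Lyapunov functional $\mathcal{E}_1$ satisfying \eqref{G3.810} \emph{with a dissipation strong enough to absorb the worst $\varrho$-weighted nonlinearity} $\varrho\mathcal{L}f$ (which is a top-order, kinetically degenerate term): one has to distribute derivatives so that $\varrho$ always sits in $L^\infty$ (or $H^2$) while $\mathcal{L}f$'s derivatives land in a norm controlled by $\|\nabla^2\{\mathbf{I}-\mathbf{P}\}f\|_{L^2_{v,\nu}(H^2)}$-type dissipation, and simultaneously ensure the macro-micro correction terms added to make the left side of \eqref{G3.810} a genuine Lyapunov functional do not destroy the sign of the dissipation — this balancing, uniform in $\mu$, is the technical heart of the proof. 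A secondary subtlety is keeping every estimate uniform as $\mu\to 0$, which is why the viscous terms must only ever be used as favorable dissipation or absorbed by $\sigma$, never exploited quantitatively.
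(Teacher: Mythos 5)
Your proposal is correct and follows essentially the same route as the paper: $H^3$ energy estimates with macro--micro corrections and a weighted pressure estimate, a higher-order Lyapunov functional combined with the Guo--Wang $\dot B^{-s}_{2,\infty}$ interpolation to obtain the $(1+t)^{-\frac{s}{2}-\frac12}$ decay of $\|\nabla u\|_{H^2}+\|\nabla f\|_{L^2_v(H^2)}$, propagation of the negative Besov norms, and finally the transport/Gronwall bound for $\varrho$ in $H^3\cap\dot B^{-s}_{2,\infty}$ using the resulting $L^1_t$-integrability of $\|\nabla u\|_{H^2}$ (the paper obtains $\nabla P$ by testing the momentum equation with $\nabla P$ rather than via an explicit elliptic equation, but this is the same estimate). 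The only point worth flagging is that closing the $\dot B^{-s}_{2,\infty}$ evolution itself requires the decay estimate (not just the smallness assumption \eqref{G3.1}), exactly as the paper does in Proposition \ref{P3.9}, which your ordering already accommodates.
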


The proof of Proposition \ref{P3.1} consists of Propositions
\ref{P3.6}, \ref{P3.9} and \ref{P3.10}, which will be presented in Subsections 3.1--3.3 below.

\subsection{Energy estimates of $\texorpdfstring{H^3}{H3}$ norms}

Before analyzing $\varrho$, $u$ and $f$, we  establish some essential estimates for the pressure function \(P(t,x)\) 
to handle the nonlinear term \(\frac{\varrho}{1+\varrho}\nabla P\) arising in \(\eqref{rNSVFP}_{2}\).
\begin{lem}\label{L3.1} 
Under the conditions of Proposition \ref{P3.1}, we have
\begin{align}
\|\nabla P\|_{L^2}\leq\,& C\big(\|\nabla a\|_{L^{2}}+\|\nabla u\|_{H^1}+\|b-u\|_{L^2}+\mu\|\nabla^2 u\|_{L^{2}}\big), \label{G3.2}\\
\|\nabla^2 P\|_{L^2}\leq \,&C\big( \|\nabla^2 u\|_{H^1}+\|\nabla^2 a\|_{L^2}+\|\nabla (b-u)\|_{L^2}+ \mu\|\nabla^3 u\|_{L^2}\big),\label{G3.6}\\
 \|\nabla^3  P\|_{L^2}\leq \,&C\big(\|\nabla^2 u\|_{H^{1}}+\|\nabla^2 a\|_{L^2}+\|\nabla (b-u)\|_{H^1}+\mu\|\nabla^3 u\|_{H^1}\big),\label{NJKG3.6}
\end{align}
for any $0 \leq t < T$. Here $C>0$ is a constant independent of $T$ and $\mu$. 
\end{lem}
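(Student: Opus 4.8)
The plan is to turn the momentum equation $\eqref{rNSVFP}_2$ into an elliptic problem for the pressure and then invoke the $L^2$-boundedness of Calder\'on--Zygmund operators together with product (Moser) estimates in $H^3$. Applying $\mathrm{div}$ to $\eqref{rNSVFP}_2$ and using the incompressibility $\mathrm{div}\,u=0$ (hence also $\mathrm{div}\,\partial_t u=0$, while $u\cdot\nabla u=\mathrm{div}(u\otimes u)$ and $\mathrm{div}(au)=u\cdot\nabla a$), one gets
\begin{align*}
\Delta P=\mathrm{div}(b-u)-\mathrm{div}\,\mathrm{div}(u\otimes u)+\mu\,\mathrm{div}\Big(\tfrac{\varrho}{1+\varrho}\Delta u\Big)+\mathrm{div}\Big(\tfrac{\varrho}{1+\varrho}\nabla P\Big)-u\cdot\nabla a,
\end{align*}
where I wrote $\mathrm{div}\,b=\mathrm{div}(b-u)$. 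Thus $\nabla P=\nabla\Delta^{-1}\mathrm{div}\,(\cdots)$, and for $k=0,1,2$ the operator $\nabla^{k+1}\Delta^{-1}\mathrm{div}$ (a matrix of homogeneous Fourier multipliers of order $k-1$) reduces $\|\nabla^{k+1}P\|_{L^2}$ to the $\dot H^{k}$-norms of $b-u$, of $u\otimes u$ (with one additional derivative coming from the double divergence), of $\tfrac{\mu\varrho}{1+\varrho}\Delta u$, of $\tfrac{\varrho}{1+\varrho}\nabla P$, and of $au$.

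The three estimates \eqref{G3.2}, \eqref{G3.6}, \eqref{NJKG3.6} are then proved in this order ($k=0,1,2$) by bounding each of these source terms with the help of $H^2\hookrightarrow L^\infty$, $\dot H^1\hookrightarrow L^6$, Gagliardo--Nirenberg interpolation, the \emph{a priori} bound \eqref{G3.1}, and \eqref{G3.4} (which ensure that $\tfrac{\varrho}{1+\varrho}$ is a smooth bounded function of $\varrho$ and that $\|u\|_{L^\infty}+\|u\|_{L^3}+\|\varrho\|_{L^\infty}+\|\nabla\varrho\|_{L^3}+\|\tfrac{\varrho}{1+\varrho}\|_{L^\infty}\lesssim\sigma$). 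The term $b-u$ produces precisely $\|b-u\|_{L^2}$, $\|\nabla(b-u)\|_{L^2}$, $\|\nabla(b-u)\|_{H^1}$; the convective term gives $\|u\otimes u\|_{\dot H^{k+1}}\lesssim\|u\|_{L^\infty}\|\nabla^{k+1}u\|_{L^2}+(\text{lower order})\lesssim\sigma\|\nabla^{k+1}u\|_{L^2}$; the viscous term gives $\mu\|\tfrac{\varrho}{1+\varrho}\Delta u\|_{\dot H^k}\lesssim\mu\|\nabla^{k+2}u\|_{L^2}+\mu\sigma\,(\text{lower order in }u)$; and $\|au\|_{\dot H^k}$ is distributed by H\"older so that $u$ always carries a low $L^3$-norm ($\lesssim\sigma$) while the highest $x$-derivative of $a$ is placed in $L^6$ and controlled through $\dot H^1\hookrightarrow L^6$, which yields exactly $\|\nabla a\|_{L^2}$, $\|\nabla^2 a\|_{L^2}$, $\|\nabla^2 a\|_{L^2}$ in the three cases (plus $\|\nabla^j u\|$-terms already accounted for). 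Finally, $\tfrac{\varrho}{1+\varrho}\nabla P$ contributes $C\sigma\|\nabla^{k+1}P\|_{L^2}$ (the piece in which all derivatives hit $P$) plus strictly lower-order pressure norms; the former is absorbed into the left-hand side for $\sigma$ small, and the latter are controlled via the already-established \eqref{G3.2} and \eqref{G3.6}. This gives \eqref{G3.2}, then \eqref{G3.6}, then \eqref{NJKG3.6}.

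The main obstacle is exactly this self-referential term $\tfrac{\varrho}{1+\varrho}\nabla P$, which keeps $\nabla P$ (and its derivatives) on the right-hand side; the absorption must be performed with a constant $C$ independent of $\mu$ and $T$. This is legitimate because the coefficient $\tfrac{\varrho}{1+\varrho}$ is $\mu$-free, the operator norm of $\nabla^{k+1}\Delta^{-1}\mathrm{div}$ on the relevant spaces is a universal constant, and $\sigma$ is a universal small parameter fixed later in the bootstrap of Theorem \ref{T1.1}, so that $C\sigma<\tfrac12$ uniformly. A secondary, purely technical point is the derivative bookkeeping for the kinetic moments: it is the use of $\mathrm{div}\,u=0$ to rewrite $\mathrm{div}(au)=u\cdot\nabla a$ and $u\cdot\nabla u=\mathrm{div}(u\otimes u)$, combined with the $L^3$--$L^6$ H\"older splitting, that makes the right-hand sides of \eqref{G3.6}--\eqref{NJKG3.6} carry one more $x$-derivative on $a$ and on $b-u$ than that of \eqref{G3.2}.
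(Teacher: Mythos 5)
Your argument is correct, but it takes a different route from the paper. The paper never inverts the Laplacian: it multiplies the momentum equation (and its derivatives $\partial^\alpha$, $|\alpha|\le 2$) by $\partial^\alpha\nabla P$, integrates, and keeps the variable-coefficient quantity $\int\frac{|\partial^\alpha\nabla P|^2}{1+\varrho}\,{\rm d}x$ on the left (bounded below by \eqref{G3.4}), so the self-referential pressure term is handled structurally, and only commutators $\partial^{\alpha-\beta}\big(\tfrac{1}{1+\varrho}\big)\partial^\beta\nabla P$ need to be absorbed by smallness; the time derivative drops out because $\int\partial^\alpha\partial_t u\cdot\partial^\alpha\nabla P\,{\rm d}x=0$ by ${\rm div}\,u=0$. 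You instead take the divergence of the equation, write $\nabla^{k+1}P=\nabla^{k+1}\Delta^{-1}{\rm div}(\cdots)$, use the $L^2$-boundedness (Plancherel) of these homogeneous multipliers, and treat $\tfrac{\varrho}{1+\varrho}\nabla P$ as a perturbation absorbed by $C\sigma<\tfrac12$, iterating $k=0,1,2$ and re-using the lower-order pressure bounds exactly as the paper re-uses \eqref{G3.2} and \eqref{G3.6}. The two schemes need the same ingredients (product/Sobolev estimates, \eqref{G3.1}, \eqref{G3.4}, smallness absorption with constants independent of $\mu$ and $T$), and your H\"older splittings do reproduce the precise right-hand sides, including placing $a$ in $L^3$ (or $\nabla a$ in $L^6$) so that only $\|\nabla a\|_{L^2}$, $\|\nabla^2 a\|_{L^2}$ appear; your approach buys constant-coefficient ellipticity and hence no $\tfrac{1}{1+\varrho}$-commutators on the left, at the small price of having to justify the representation $\nabla P=\nabla\Delta^{-1}{\rm div}(\cdots)$ (uniqueness up to constants of the Poisson equation in the class $\nabla P\in L^2$, which holds by a Liouville argument and is harmless for the smooth a priori solutions of Proposition \ref{P3.1}), whereas the paper's variational form avoids Fourier multipliers altogether.
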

\begin{proof}
Multiplying \eqref{rNSVFP}$_2$ by $\nabla P$, then integrating the result over $\mathbb{R}^3$ and using 
the constraint ${\rm div}\, u=0$, we deduce from  the assumption \eqref{G3.1} and Lemma \ref{LA.1} that
\begin{align}\label{G3.3}
\int_{\mathbb{R}^3} \frac{|\nabla P|^{2}}{1+\varrho}{\rm d}x
=\,&-\int_{\mathbb{R}^3} u\cdot\nabla u\cdot\nabla P{\rm d}x
+\int_{\mathbb{R}^3}\frac{\mu\Delta u\cdot\nabla P}{1+\varrho}{\rm d}x+\int_{\mathbb{R}^3} 
{(b-u-au)\cdot\nabla P}{\rm d}x\nonumber\\
\leq\,& C\big(\|u\|_{L^3}\|\nabla u\|_{L^6}+\mu\|\Delta u\|_{L^2}+\|b-u\|_{L^2}+\|a\|_{L^6}\|u\|_{L^3}\big)\|\nabla P\|_{L^2}\nonumber\\
\leq\,&C\sigma\|\nabla P\|_{L^2}^2+C\sigma\big(\|\nabla^2 u\|_{L^2}^2+\mu^2\|\nabla^2 u\|_{L^2}^2+\|\nabla a\|_{L^2}^2\big)+\frac{1}{4}\|\nabla P\|_{L^2}^2\nonumber\\
&+C(\|b-u\|_{L^2}^2+\|\nabla u\|_{L^2}^2).
\end{align}
Combining \eqref{G3.4} with \eqref{G3.3} and using the fact that $\sigma\leq 1$, we obtain \eqref{G3.2}.

Next, we continue to prove \eqref{G3.6} and \eqref{NJKG3.6}.
Applying $\partial^{\alpha}$  with $1\leq |\alpha|\leq 2$ to \eqref{rNSVFP}$_2$ and
 then multiplying the result by $\partial^{\alpha}\nabla P$,
using the assumption \eqref{G3.1} and integrating over $\mathbb{R}^3$, we arrive at 
\begin{align}\label{G3.7}
&\int_{\mathbb{R}^3}\frac{|\partial^\alpha\nabla P|^2}{1+\varrho}{\rm d}x\nonumber\\
=\,&-\sum_{0\leq|\beta|<|\alpha|}C_{\alpha,\beta}\int_{\mathbb{R}^3}\partial^{\alpha-\beta}\Big(\frac{1}{1+\varrho}\Big)\partial^\beta\nabla P\cdot\partial^\alpha \nabla P{\rm d}x
-\int_{\mathbb{R}^3}\partial^\alpha(u\cdot\nabla u)\cdot\partial^{\alpha}\nabla P{\rm d}x \nonumber\\
&+\mu\sum_{0\leq|\beta|<\alpha}C_{\alpha,\beta}\int_{\mathbb{R}^3}\partial^{\alpha-\beta}\Big(\frac{1}{1+\varrho}\Big)\partial^{\beta}\Delta u\cdot\partial^\alpha\nabla P{\rm d}x+\mu\int_{\mathbb{R}^3}\frac{\partial^\alpha\Delta u\cdot\partial^\alpha\nabla P}{1+\varrho}{\rm d}x\nonumber\\
&+\int_{\mathbb{R}^3}\partial^\alpha(b-u-au)\cdot\partial^\alpha\nabla P\mathrm{d}x\nonumber\\
\equiv:\,&\sum_{j=1}^5 I_j,
\end{align}
where $C_{\alpha,\beta}>0$ are   constants  depending  only on $\alpha$ and 
$\beta$.

Now we handle $I_j(j=1,\dots, 5)$ one by one. For the term $I_1$, when $|\alpha|=1$, according to Lemma \ref{LA.1} and the assumption \eqref{G3.1}, we have
\begin{align}\label{G3.8}
|I_1|
\leq\,& C\Big\|\partial^\alpha\Big(\frac{1}{1+\varrho} \Big) \Big\|_{L^3}\|\nabla P\|_{L^6}\|\nabla^2 P\|_{L^2}\nonumber\\
\leq\,& C\|\varrho\|_{H^3}\|\nabla^2 P\|_{L^2}^2\nonumber\\
\leq\,& C\sigma \|\nabla^2 P\|_{L^2}^2.
\end{align}
When $|\alpha|=2$, applying Lemma \ref{LA.1} guarantees that 
\begin{align}\label{G3.9}
|I_1|
\leq\,& C\Big\|\partial^\alpha\Big(\frac{1}{1+\varrho} \Big) \Big\|_{L^3}\|\nabla P\|_{L^6}\|\partial^\alpha\nabla P\|_{L^2}+C\Big\|\partial^{\alpha-\beta}\Big(\frac{1}{1+\varrho} \Big) \Big\|_{L^3}\|\partial^\beta\nabla P\|_{L^6}\|\partial^\alpha\nabla P\|_{L^2}\nonumber\\
\leq\,& C\|\varrho\|_{H^3}\big(\|\nabla^2 P\|_{L^2}^2+\|\nabla^3 P\|_{L^2}^2\big)\nonumber\\
\leq\,&C\sigma\|\nabla^2 P\|_{L^2}^2+C\sigma\|\nabla^3 P\|_{L^2}^2.
\end{align}
Combining \eqref{G3.8} and \eqref{G3.9}, we obtain
\begin{align}\label{G3.10}
|I_1| \leq    C\sigma\|\nabla^2 P\|_{L^2}^2+C\sigma\|\nabla^3 P\|_{L^2}^2.
\end{align}

For the term  $I_3$, when $|\alpha|=1$, by using the assumption \eqref{G3.1}  and Lemma \ref{LA.1}, we arrive at
\begin{align}\label{G3.11}
|I_3|
\leq\,&C\Big\|\partial^\alpha\Big(\frac{1}{1+\varrho} \Big) \Big\|_{L^3}\mu\|\Delta u\|_{L^6}\|\partial^\alpha\nabla P\|_{L^2}\nonumber\\
\leq\,&C\|\varrho\|_{H^3}\mu\|\nabla^3 u\|_{L^2}\|\nabla^2 P\|_{L^2}\nonumber\\
\leq\,&C\sigma\|\nabla^2 P\|_{L^2}^2+C\sigma \mu^2\|\nabla^3 u\|_{L^2}^2.
\end{align}
When $|\alpha|=2$, using the assumption \eqref{G3.1}  and Lemma \ref{LA.1} gives
\begin{align}\label{G3.12}
|I_3|\leq\,& C\mu\int_{\mathbb{R}^3}\Big|\partial^\alpha\Big(\frac{1}{1+\varrho}       \Big)\Delta u\cdot\partial^\alpha\nabla P    \Big|{\rm d}x\nonumber\\
  &+C \mu\int_{\mathbb{R}^3}\Big|\partial^{\alpha-\beta}\Big(\frac{1}{1+\varrho}       \Big)\partial^\beta\Delta u\cdot\partial^\alpha\nabla P    \Big|{\rm d}x \nonumber\\
\leq\,& C\mu\Big\|\partial^\alpha\Big(\frac{1}{1+\varrho} \Big) \Big\|_{L^3}\|\Delta u\|_{L^6}\|\partial^\alpha\nabla P\|_{L^2}\nonumber\\
&+C\mu\Big\|\partial^{\alpha-\beta}\Big(\frac{1}{1+\varrho} \Big) \Big\|_{L^3}\|\partial^\beta\Delta u\|_{L^6}\|\partial^\alpha\nabla P\|_{L^2}\nonumber\\
\leq\,& C\mu\|\varrho\|_{H^3}\|\nabla^3 u\|_{H^1}\|\nabla^3 P\|_{L^2}\nonumber\\
\leq\,& C\sigma\|\nabla^3 P\|_{L^2}^2+C\sigma \mu^2\|\nabla^3 u\|_{H^1}^2.
\end{align}
Combining \eqref{G3.11} and \eqref{G3.12}, we have
\begin{align}\label{G3.13}
|I_3| \leq    C\sigma\|\nabla^2 P\|_{L^2}^2+C\sigma\|\nabla^3 P\|_{L^2}^2+C\sigma \mu^2\|\nabla^3 u\|_{H^1}^2.
\end{align}

For the terms $I_2, I_4$ and $I_5$, it follows from the assumption \eqref{G3.1} and Lemma \ref{LA.1} that
\begin{align}\label{G3.14}
|I_2|\leq\,& \|\partial^\alpha(u\cdot\nabla u)\|_{L^2}\|\partial^\alpha\nabla P\|_{L^2}\nonumber\\
\leq\,& C\|\nabla u\|_{H^1}\|\nabla^2  u\|_{H^{1}}\|\partial^\alpha \nabla P\|_{L^2}\nonumber\\
\leq\,& C\sigma\|\partial^\alpha\nabla P\|_{L^2}^2+C\sigma\|\nabla^2  u\|_{H^1}^2,\\\label{G3.15}
|I_4|\leq\,&\mu \|\partial^\alpha\Delta u\|_{L^2}\|\partial^\alpha\nabla P\|_{L^2}\nonumber\\
\leq\,&\frac{1}{8}\|\partial^\alpha\nabla P\|_{L^2}^2+C\mu^2 \|\partial^{\alpha}\nabla^2 u\|_{L^2}^2,
\end{align}
and
\begin{align}\label{G3.16}
|I_5|\leq\,& \|\partial^\alpha\nabla P\|_{L^2}\|\partial^\alpha (b-u-au)\|_{L^2}\nonumber\\
\leq\,&\frac{1}{8}\|\partial^\alpha\nabla P\|_{L^2}^2+C\|\partial^\alpha(b-u)\|_{L^2}^2+C\| a\|_{L^3}^2\|\nabla u\|_{L^6}^2+C\|u\|_{L^3}^2\|\nabla a\|_{L^6}^2\nonumber\\
&+C\|a\|_{L^{\infty}}^2\|\nabla^2 u\|_{L^2}^2+C\|u\|_{L^{\infty}}^2\|\nabla^2 a\|_{L^2}^2+C\|\nabla u\|_{L^6}^2\|\nabla a\|_{L^3}^2\nonumber\\
\leq\,&\frac{1}{8}\|\partial^\alpha\nabla P\|_{L^2}^2+C\big(\|\nabla^2 u\|_{L^{2}}^2+\sigma^2\|\nabla^2 a\|_{L^2}^2+\|\partial^{\alpha} (b-u)\|_{L^2}^2\big).
\end{align}
For $|\alpha|=1$, by substituting the estimates \eqref{G3.8}, \eqref{G3.11}, \eqref{G3.14}--\eqref{G3.16} into \eqref{G3.7}, we derive \eqref{G3.6}. For $|\alpha|=2$, by incorporating the estimates \eqref{G3.10}, \eqref{G3.13}, \eqref{G3.14}--\eqref{G3.16}, and \eqref{G3.6} into \eqref{G3.7}, we ultimately obtain \eqref{NJKG3.6}.
\end{proof}

With Lemma \ref{L3.1} in hand, we are ready to derive the $L^2$ energy inequality.

\begin{lem}\label{L3.3}
Under the conditions of Proposition \ref{P3.1}, there exists a positive constant $\lambda_1$ such that
\begin{align}\label{G3.17}
&\frac{\rm d}{{\rm d}t}\big(\|u\|_{L^2}^{2}+\|f\|_{L_{x,v}^2}^{2}\big)+\lambda_1\big(\|b-u\|_{L^2}^{2}
+{\mu}\|\nabla u\|_{L^2}^2+\|\{\mathbf{I}-\mathbf{P}\}f\|_{L^2_{v,\nu}(L^2)}^{2}\big)\nonumber\\
&\quad\leq C\sigma\big(\|\nabla(a,b)\|_{L^2}^2+\|\nabla(b-u)\|_{L^2}^2+\|\nabla u\|_{H^1}^2+\mu\|\nabla^2 u\|_{L^2}^2\big),
\end{align}
for any $t\in[0,T)$.
\end{lem}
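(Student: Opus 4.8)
The plan is to perform the basic $L^2$ energy estimate on the velocity equation $\eqref{rNSVFP}_2$ and on the kinetic equation $\eqref{rNSVFP}_4$ simultaneously, and then combine the two in such a way that the cross terms coupling $u$ with the distribution function $f$ cancel. First I would multiply $\eqref{rNSVFP}_2$ by $u$ and integrate over $\mathbb{R}^3$; using $\dv\, u=0$ the pressure term $\nabla P^\mu\cdot u$ and the convection term $u\cdot\nabla u\cdot u$ vanish, the viscous term $-\mu\Delta u\cdot u$ produces $\mu\|\nabla u\|_{L^2}^2$, the friction term $(u-b^f)\cdot u$ produces the linear dissipation, and the remaining right-hand side contributions $\tfrac{\mu\varrho}{1+\varrho}\Delta u\cdot u$, $\tfrac{\varrho}{1+\varrho}\nabla P\cdot u$ and $-a^f|u|^2$ are placed on the right as nonlinear error terms. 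Next I would multiply $\eqref{rNSVFP}_4$ by $f$ and integrate over $\mathbb{R}^3_x\times\mathbb{R}^3_v$: the transport term $v\cdot\nabla_x f\cdot f$ integrates to zero, the term $-\langle\mathcal{L}f,f\rangle$ yields the coercive lower bound from \eqref{H2.1}, namely $\lambda_0\|\{\mathbf{I}-\mathbf{P}\}f\|_{L^2_{v,\nu}(L^2)}^2+\|b^f\|_{L^2}^2$, the term $-u\cdot v\sqrt{M}\,f$ contributes $-\int u\cdot b^f\,{\rm d}x$ (by definition of $b^f$), and everything involving $\tfrac12 u\cdot vf$, $u\cdot\nabla_v f$, and the whole $\varrho(\cdots)$ block goes to the right-hand side.

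The key algebraic point is the cancellation of the linear coupling. From the $u$-equation one gets a term $+\int (u-b^f)\cdot u\,{\rm d}x = \|u\|_{L^2}^2 - \int b^f\cdot u\,{\rm d}x$, and from the $f$-equation one gets $-\int u\cdot b^f\,{\rm d}x$ together with $\|b^f\|_{L^2}^2$ from $-\langle\mathcal{L}f,f\rangle$. Adding the two estimates, the quadratic form in $(u,b^f)$ becomes $\|u\|_{L^2}^2 - 2\int b^f\cdot u\,{\rm d}x + \|b^f\|_{L^2}^2 = \|b^f-u\|_{L^2}^2 \geq 0$, which is exactly the dissipation $\|b-u\|_{L^2}^2$ appearing on the left of \eqref{G3.17}; together with $\mu\|\nabla u\|_{L^2}^2$ and $\lambda_0\|\{\mathbf{I}-\mathbf{P}\}f\|_{L^2_{v,\nu}(L^2)}^2$ this gives the full dissipation with a suitable $\lambda_1>0$. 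I would need to be slightly careful about the precise form of the sign in $\eqref{rNSVFP}_4$ — the term $-u^\mu\cdot v\sqrt M$ on the left means $+u\cdot v\sqrt M$ moves to… — so checking the signs so the cross terms genuinely add up to the perfect square is the first thing to verify.

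The remaining work, and the main obstacle, is to show that every nonlinear term on the right-hand side is controlled by $C\sigma$ times the quantities on the right of \eqref{G3.17}, i.e. by $C\sigma(\|\nabla(a,b)\|_{L^2}^2+\|\nabla(b-u)\|_{L^2}^2+\|\nabla u\|_{H^1}^2+\mu\|\nabla^2 u\|_{L^2}^2)$, possibly after absorbing a small multiple of the left-hand dissipation. The genuinely delicate ones are: (i) $\int \tfrac{\mu\varrho}{1+\varrho}\Delta u\cdot u\,{\rm d}x$, which after integration by parts becomes $\mu$ times terms like $\int\nabla(\tfrac{\varrho}{1+\varrho})\otimes\nabla u\,u$ and $\int\tfrac{\varrho}{1+\varrho}|\nabla u|^2$, bounded by $C\sigma\mu(\|\nabla u\|_{L^2}^2+\|\nabla^2 u\|_{L^2}^2)$ using $\eqref{G3.1}$, $\eqref{G3.4}$, Sobolev embedding $H^2\hookrightarrow L^\infty$ and Lemma \ref{LA.1}; (ii) $\int\tfrac{\varrho}{1+\varrho}\nabla P\cdot u\,{\rm d}x \leq C\sigma\|\nabla P\|_{L^2}\|u\|_{L^6}\|\varrho\|_{H^2}\cdot(\cdots)$, where one must then invoke the pressure estimate \eqref{G3.2} to replace $\|\nabla P\|_{L^2}$ by $\|\nabla a\|_{L^2}+\|\nabla u\|_{H^1}+\|b-u\|_{L^2}+\mu\|\nabla^2 u\|_{L^2}$, the $\|b-u\|_{L^2}$ contribution being absorbed since it carries a factor $\sigma$; and (iii) the kinetic nonlinear terms $\tfrac12\int u\cdot v f^2$, $\int u\cdot\nabla_v f\, f = -\tfrac12\int (\dv_v u)f^2 = 0$ (actually $u$ is $v$-independent so this integrates by parts cleanly), and the whole $\varrho(\mathcal{L}f - u\cdot\nabla_v f + \tfrac12 u\cdot vf + u\cdot v\sqrt M)$ block tested against $f$, where the worst piece $\int\varrho\,\mathcal{L}f\,f\,{\rm d}x\,{\rm d}v$ is handled by integrating by parts in $v$ to land derivatives on $f$ and using $\|\varrho\|_{L^\infty}\lesssim\sigma$ to produce $C\sigma\|\{\mathbf{I}-\mathbf{P}\}f\|_{L^2_{v,\nu}(L^2)}^2$, absorbable into the left side. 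Throughout, the polynomial velocity weights $v$, $|v|^2$ appearing against the Gaussian $\sqrt M$ are integrable in $v$, so all $v$-moments are finite; the uniformity in $\mu$ is automatic because every $\mu$-dependent term on the right carries an explicit factor of $\mu$ (or $\mu^2\le\mu$) matching the left-hand $\mu\|\nabla u\|_{L^2}^2$ and the right-hand $\mu\|\nabla^2 u\|_{L^2}^2$. Collecting these bounds and choosing $\sigma$ small enough to absorb the borrowed dissipation yields \eqref{G3.17}.
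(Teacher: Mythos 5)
Your overall architecture is the same as the paper's: test $\eqref{rNSVFP}_2$ with $u$ and $\eqref{rNSVFP}_4$ with $f$, use ${\rm div}\,u=0$, obtain the perfect square $\|u\|_{L^2}^2-2\int u\cdot b\,{\rm d}x+\|b\|_{L^2}^2=\|b-u\|_{L^2}^2$ from the friction/coupling terms together with the coercivity \eqref{H2.1}, control the pressure term via \eqref{G3.2}, and absorb with the smallness $\sigma$. That part is fine (the sign bookkeeping you flag does work out), and your treatment of the viscous commutator and of $\int\frac{\varrho}{1+\varrho}\nabla P\cdot u$ matches the paper's $L_1$ and $L_2$.

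The genuine gap is in your item (iii), i.e.\ the kinetic nonlinearities carrying macroscopic moments. Your claim that the worst piece $\int\varrho\,\mathcal{L}f\,f\,{\rm d}x{\rm d}v$ can be bounded, after integrating by parts in $v$ and using $\|\varrho\|_{L^\infty}\lesssim\sigma$, by $C\sigma\|\{\mathbf{I}-\mathbf{P}\}f\|_{L^2_{v,\nu}(L^2)}^2$ is false: since $\mathcal{L}(v\sqrt{M})=-v\sqrt{M}$, the operator $\mathcal{L}$ does not annihilate $\mathbf{P}_1 f$, and the Dirichlet form $\langle\mathcal{L}f,f\rangle=-\int_{\mathbb{R}^3}M|\nabla_v(f/\sqrt{M})|^2{\rm d}v$ contains the macroscopic contribution $-|b|^2$. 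Hence $\int\varrho\langle\mathcal{L}f,f\rangle{\rm d}x$ produces $-\int\varrho|b|^2{\rm d}x$, and neither $\|b\|_{L^2}$ nor $\|u\|_{L^2}$ is a dissipated quantity at this level: the best self-contained bounds are of the type $\sigma\|b\|_{L^2}\|\nabla b\|_{L^2}\lesssim\sigma^2\|\nabla b\|_{L^2}$, which is only linear in the dissipation and cannot be placed on the right-hand side of \eqref{G3.17} nor absorbed on the left. The same difficulty occurs in the terms you list but do not actually estimate: the $\mathbf{P}f$-part of $\tfrac12\int u\cdot\langle vf,f\rangle{\rm d}x$ equals $\int a\,u\cdot b\,{\rm d}x$, which is likewise not controllable on its own. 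The proof closes only through the cancellation structure the paper invokes from \cite{CDM-krm-2011}: pair $\int a\,u\cdot b\,{\rm d}x$ with $-\int a|u|^2{\rm d}x$ to get $\int a\,u\cdot(b-u)\,{\rm d}x$, and pair $\varrho\langle\mathcal{L}\mathbf{P}f,f\rangle=-\varrho|b|^2$ with the term $\int\varrho\,u\cdot b\,{\rm d}x$ coming from $\varrho\,u\cdot v\sqrt{M}$ in the same block, to get $\int\varrho(u-b)\cdot b\,{\rm d}x$; both combinations are quadratic in the dissipated quantities ($\lesssim\sigma(\|b-u\|_{L^2}^2+\|\nabla(a,b)\|_{L^2}^2)$) and can then be absorbed. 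Without making these cancellations explicit (as in the paper's estimates of $L_3+L_4$ and $L_5$), your argument does not close.
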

\begin{proof}
Multiplying \eqref{rNSVFP}$_2$  and \eqref{rNSVFP}$_4$ by $u$ and $f$ respectively,
then integrating over $\mathbb{R}^3$ and $\mathbb{R}^3\times \mathbb{R}^3$, and adding them up, we obtain
\begin{align}\label{G3.18}
&\frac{1}{2}\frac{{\rm d}}{{\rm d}t}\big( \|u\|_{L^2}^{2} +\|f\|_{L_{x,v}^2}^{2}  \big) +\|u-b\|_{L^2}^2+\int_{\mathbb{R}^3}(\varrho+1)\langle-\mathcal{L}\{\mathbf{I}-\mathbf{P}\}f,f\rangle{\rm d}x+\mu\int_{\mathbb{R}^3}\frac{|\nabla u|^2}{1+\varrho}{\rm d}x\nonumber\\
=\,&-\mu\int_{\mathbb{R}^3}\nabla\Big( \frac{1}{1+\varrho}    \Big)\cdot\nabla u\cdot u{\rm d}x +\int_{\mathbb{R}^3}\frac{\varrho}{1+\varrho}\nabla P\cdot u{\rm d}x +\frac{1}{2}\int_{\mathbb{R}^3}u\cdot\langle vf,f\rangle{\rm d}x             \nonumber\\
&-\int_{\mathbb{R}^3}a|u|^2{\rm d}x+\int_{\mathbb{R}^3}\varrho\Big\langle \mathcal{L}\mathbf{P}f-u\cdot\nabla_v f+\frac{1}{2}u\cdot vf+u\cdot v\sqrt{M},f\Big\rangle{\rm d}x\nonumber\\
\equiv:\,&\sum_{j=1}^{5}L_j.
\end{align}
By \eqref{G3.1}  and Lemma \ref{LA.1}, it
follows that
\begin{align}\label{G3.19}
|L_1|\leq \,&\Big\|\nabla\Big(\frac{1}{1+\varrho}  \Big) \Big\|_{L^3}\|\nabla u\|_{L^2}\|u\|_{L^6}
\leq C\|\varrho\|_{H^3}\|\nabla u\|_{L^2}^2\leq\,C\sigma \|\nabla u\|_{L^2}^2,\\\label{G3.20}
|L_2|\leq \, & \|\varrho\|_{H^3}\|\nabla P\|_{L^2}\|u\|_{L^6}\leq C\sigma\|\nabla P\|_{L^2}\|\nabla u\|_{L^2}.
\end{align}
Thanks to the estimate of $\nabla P$ in Lemma \ref{L3.1}, we get
\begin{align}\label{G3.21}
|L_2|\leq C\sigma \big(\|\nabla u\|_{H^1}^2+\mu^2\|\nabla u\|_{H^1}^2+\|\nabla a\|_{L^2}^2+\|b-u\|_{L^2}^2\big).   
\end{align}
To address the lower-order nonlinear terms $L_3$ and $L_4$, we employ the key cancellation mechanism observed in \cite{CDM-krm-2011}. By  the macro-micro  decomposition \eqref{mmd}
  and $\mathbf{P}f=a^f \sqrt{M}+b^fv\sqrt{M}$, we have
\begin{align}\label{G3.22}
|L_3+L_4|
=\Big|\,&\frac{1}{2}\int_{\mathbb{R}^3}u\cdot\langle v\{\mathbf{I}-\mathbf{P}\}f,\{\mathbf{I}-\mathbf{P}\}f\rangle\mathrm{d}x+\int_{\mathbb{R}^3}u\cdot\langle v\mathbf{P}f,\{\mathbf{I}-\mathbf{P}\}f\rangle\mathrm{d}x\nonumber\\
&\quad+\int_{\mathbb{R}^3}au\cdot(b-u)\mathrm{d}x\Big|\nonumber\\
\leq\,&C\|u\|_{L^{\infty}}\|\{\mathbf{I}-\mathbf{P}\}f\|_{L^2_{v,\nu}(L^2)}^2+C\|u\|_{L^3}\|(a,b)\|_{L^6}\|\{\mathbf{I}-\mathbf{P}\}f\|_{L^2_{v,\nu}(L^2)}\nonumber\\
&\quad+C\|b-u\|_{L^2}\|u\|_{L^3}\|a\|_{L^6}\nonumber\\
\leq\,&C\sigma\big(\|b-u\|_{L^2}^2+\|\nabla(a,b)\|_{L^2}^2+ \|\{\mathbf{I}-\mathbf{P}\}f\|_{L^2_{v,\nu}(L^2)}^2        \big).   
\end{align}
For the remaining term $L_{5}$, it follows from the assumption \eqref{G3.1}, Lemma \ref{LA.1} and $\mathbf{P}f=a^f \sqrt{M}+b^fv\sqrt{M}$ that
\begin{align}\label{G3.23}
|L_{5}|=\,&\frac{1}{2}\int_{\mathbb{R}^3}\varrho u\cdot\langle vf,f\rangle\mathrm{d}x+\int_{\mathbb{R}^3}\varrho(u-b)\cdot b\mathrm{d}x\nonumber\\
\leq\,& \frac{1}{2}\int_{\mathbb{R}^3}\varrho u\cdot\langle v\{\mathbf{I}-\mathbf{P}\}f,\{\mathbf{I}-\mathbf{P}\}f\rangle\mathrm{d}x+\int_{\mathbb{R}^3}\varrho u\cdot\langle v\mathbf{P}f,\{\mathbf{I}-\mathbf{P}\}f\rangle\mathrm{d}x\nonumber\\
&+\int_{\mathbb{R}^3}\varrho u \cdot ab\mathrm{d}x+C\|\varrho\|_{L^3}\|u-b\|_{L^2}\|b\|_{L^6}\nonumber\\
\leq\,&C\|\varrho\|_{H^3}\|u\|_{H^3}\|\{\mathbf{I}-\mathbf{P}\}f\|_{L^2_{v,\nu}(L^2)}^2+C\|\varrho\|_{H^3}\|u\|_{H^3}\|\nabla(a,b)\|_{L^2}^2\nonumber\\
&+C\|\varrho\|_{H^3}\|u\|_{H^3}\|(a,b)\|_{L^6}\|\{\mathbf{I}-\mathbf{P}\}f\|_{L^2_{v,\nu}(L^2)}+ C\|\varrho\|_{H^3}\|u-b\|_{L^2}\|\nabla (a,b)\|_{L^2}\nonumber\\
\leq\,&C\sigma \big( \|u-b\|_{L^2}^2+\|\nabla(a,b)\|_{L^2}^2
+\|\{\mathbf{I}-\mathbf{P}\}f\|_{L^2_{v,\nu}(L^2)}^2\big).    
\end{align}
Using  \eqref{H2.1} and \eqref{G3.4}, substituting \eqref{G3.19}--\eqref{G3.23} into \eqref{G3.18},
and then using the   smallness of  $\sigma$ in \eqref{G3.1}, we obtain
\eqref{G3.17}.
\end{proof}

Then, we establish the energy inequality of the derivatives for $u$ and $f$. 

\begin{lem}\label{L3.4}
Under the conditions of Proposition \ref{P3.1}, there exists a positive constant $\lambda_2$ such that
\begin{align}\label{G3.24}
&\frac{\rm d}{{\rm d}t}\sum_{1\leq|\alpha|\leq 3}\big( \|\partial^\alpha u\|_{L^2}^{2}+\|\partial^\alpha f\|_{L_{x,v}^2}^{2}\big)\nonumber\\
&\quad+\lambda_2\sum_{1\leq |\alpha|\leq3}\big(\|\partial^\alpha(b-u)\|_{L^2}^{2}
+{\mu}\|\nabla\partial^\alpha u\|_{L^2}^2+\|\partial^\alpha\{\mathbf{I}-\mathbf{P}\}f\|_{L^2_{v,\nu}(L^2)}^{2}\big)\nonumber\\
&\quad\quad\leq C\sigma(\|\nabla(a,b)\|_{H^2}^2+\|\nabla u\|_{H^2}^2+\mu \|\nabla u\|_{L^2}^2),
\end{align}
for any $t\in[0,T)$ .
\end{lem}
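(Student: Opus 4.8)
The plan is to run a spatial-derivative $L^2$ energy estimate parallel to that of Lemma \ref{L3.3}. For a spatial multi-index $\alpha$ with $1\le|\alpha|\le3$, apply $\partial^\alpha$ to \eqref{rNSVFP}$_2$ and to \eqref{rNSVFP}$_4$, test the first against $\partial^\alpha u$ in $L^2(\mathbb{R}^3_x)$ and the second against $\partial^\alpha f$ in $L^2(\mathbb{R}^3_x\times\mathbb{R}^3_v)$, add the two identities, multiply by $2$, and sum over all such $\alpha$. First I would isolate the dissipative part: since $\partial^\alpha$ acts only in $x$ it commutes with $v\cdot\nabla_x$, $\mathcal{L}$ and $\mathbf{P}$, so $\int\partial^\alpha\nabla P\cdot\partial^\alpha u\,{\rm d}x=0$ by ${\rm div}\,u=0$ and $\int_{x,v}v\cdot\nabla_x\partial^\alpha f\,\partial^\alpha f=0$; the zeroth-order coupling $+u-b$ in \eqref{rNSVFP}$_2$, the term $-u\cdot v\sqrt M$ in \eqref{rNSVFP}$_4$ (which contributes $-\int\partial^\alpha u\cdot\partial^\alpha b\,{\rm d}x$ after integration in $v$), and the coercivity $-\int\langle\mathcal{L}\partial^\alpha f,\partial^\alpha f\rangle\,{\rm d}x\ge\lambda_0\|\partial^\alpha\{\mathbf{I}-\mathbf{P}\}f\|_{L^2_{v,\nu}(L^2)}^2+\|\partial^\alpha b\|_{L^2}^2$ from \eqref{H2.1} combine, after completing the square, into $\|\partial^\alpha(u-b)\|_{L^2}^2+\lambda_0\|\partial^\alpha\{\mathbf{I}-\mathbf{P}\}f\|_{L^2_{v,\nu}(L^2)}^2$; and the viscous term from \eqref{rNSVFP}$_2$, exactly as in the proof of Lemma \ref{L3.3}, produces $\mu\int\frac{|\nabla\partial^\alpha u|^2}{1+\varrho}\,{\rm d}x\ge\tfrac{2}{3}\mu\|\nabla\partial^\alpha u\|_{L^2}^2$ by \eqref{G3.4}. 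Summing in $\alpha$ gives the left-hand side of \eqref{G3.24}, and it remains to bound all nonlinear terms that were moved to the right by $C\sigma$ times a combination of the left-hand dissipation and $\|\nabla(a,b)\|_{H^2}^2+\|\nabla u\|_{H^2}^2+\mu\|\nabla u\|_{L^2}^2$.

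On the fluid side I would estimate, using Lemma \ref{LA.1}, Gagliardo--Nirenberg interpolation and \eqref{G3.1}: the term $\langle\partial^\alpha(u\cdot\nabla u),\partial^\alpha u\rangle$, whose top-order piece $\int u\cdot\nabla\tfrac{1}{2}|\partial^\alpha u|^2\,{\rm d}x$ vanishes and whose commutators are $\le C\sigma\|\nabla u\|_{H^2}^2$; the density-weighted pressure term $\tfrac{\varrho}{1+\varrho}\nabla P$ tested against $\partial^\alpha u$, expanded by Leibniz, with the top-order piece integrated by parts (using ${\rm div}\,u=0$ to move $\nabla$ off $P$, so that only $\|\nabla^k P\|_{L^2}$ with $k\le3$ occurs) and then estimated via Lemma \ref{L3.1}, i.e. \eqref{G3.2}, \eqref{G3.6}, \eqref{NJKG3.6}, and Young's inequality --- each resulting term carries a factor $\varrho$ and is therefore $\le C\sigma$ times the admissible quantities; the viscous term tested against $\partial^\alpha u$, whose top-order piece integrates by parts to $-\mu\int\frac{|\nabla\partial^\alpha u|^2}{1+\varrho}\,{\rm d}x$ (absorbed into the left-hand $\mu$-dissipation) and whose commutators each carry both $\mu$ and a derivative of $(1+\varrho)^{-1}$, hence, using $\mu<1$, are $\le C\sigma\mu\|\nabla\partial^\alpha u\|_{L^2}^2+C\sigma\mu\|\nabla^2u\|_{H^1}^2+C\sigma\mu\|\nabla u\|_{L^2}^2$; and $-au$ tested against $\partial^\alpha u$, treated just like $L_3+L_4$ in Lemma \ref{L3.3} but with derivatives and the split \eqref{mmd}, which is $\le C\sigma\big(\|\partial^\alpha(b-u)\|_{L^2}^2+\|\nabla(a,b)\|_{H^2}^2+\|\nabla u\|_{H^2}^2\big)$.

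On the kinetic side, the top-order velocity transport $u\cdot\nabla_v\partial^\alpha f$ is killed by integration by parts in $v$ (as $u$ is $v$-independent), and the remaining commutators of $\tfrac{1}{2}u\cdot vf$ and $u\cdot\nabla_vf$ are controlled in the weighted norm $\|\cdot\|_{L^2_{v,\nu}}$ after the split $f=\mathbf{P}f+\{\mathbf{I}-\mathbf{P}\}f$, the macroscopic contributions being dominated by $|\partial^\beta(a,b)|$ times fixed Gaussian $v$-weights and the microscopic ones by $\|\partial^\beta\{\mathbf{I}-\mathbf{P}\}f\|_{L^2_{v,\nu}(L^2)}$, all multiplied by $\|u\|_{H^3}\le\sigma$. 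The genuinely delicate term is $\varrho\mathcal{L}f$: writing $\mathcal{L}f=\mathcal{L}\{\mathbf{I}-\mathbf{P}\}f$ and using that $\mathcal{L}$ maps $L^2_{v,\nu}$ continuously into its dual, its top-order contribution $\int\varrho\langle\mathcal{L}\partial^\alpha\{\mathbf{I}-\mathbf{P}\}f,\partial^\alpha\{\mathbf{I}-\mathbf{P}\}f\rangle\,{\rm d}x$ is bounded by $C\|\varrho\|_{L^\infty}\|\partial^\alpha\{\mathbf{I}-\mathbf{P}\}f\|_{L^2_{v,\nu}(L^2)}^2\le C\sigma\|\partial^\alpha\{\mathbf{I}-\mathbf{P}\}f\|_{L^2_{v,\nu}(L^2)}^2$, absorbed into the left-hand Fokker--Planck dissipation; the lower-order commutators $\partial^{\alpha-\beta}\varrho\,\mathcal{L}\partial^\beta f$ with $|\beta|<|\alpha|$ are handled the same way through Lemma \ref{LA.1}, and the remaining bracketed terms $\varrho u\cdot\nabla_vf$, $\tfrac{1}{2}\varrho u\cdot vf$ and $\varrho u\cdot v\sqrt M$ all carry a factor $\varrho$ and are treated like their $\varrho$-free analogues. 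Collecting everything and taking $\sigma$ small enough to absorb the $C\sigma\|\partial^\alpha(b-u)\|_{L^2}^2$, $C\sigma\mu\|\nabla\partial^\alpha u\|_{L^2}^2$ and $C\sigma\|\partial^\alpha\{\mathbf{I}-\mathbf{P}\}f\|_{L^2_{v,\nu}(L^2)}^2$ terms into the left-hand side yields \eqref{G3.24} with a suitable $\lambda_2>0$. I expect the main obstacle to be exactly this pair: the density-weighted kinetic term $\varrho\mathcal{L}f$, which has no dissipation in $\varrho$ and two $v$-derivatives, forcing one to put $\varrho$ in $L^\infty$ via $\|\varrho\|_{H^3}\le\sigma$ and rely on the Fokker--Planck dissipation; and the top-order ($|\alpha|=3$) commutators in the variable-coefficient viscous and pressure terms, which involve one more $x$-derivative of $u$ than the $H^3$ bound provides and are absorbable only thanks to the $\mu$-weighted dissipation $\mu\|\nabla^4u\|_{L^2}^2$ hidden inside $\mu\|\nabla\partial^\alpha u\|_{L^2}^2$.
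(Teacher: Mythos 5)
Your proposal follows essentially the same route as the paper's proof of Lemma \ref{L3.4}: apply $\partial^\alpha$, test with $\partial^\alpha u$ and $\partial^\alpha f$, extract the dissipative block $\|\partial^\alpha(b-u)\|_{L^2}^2+\lambda_0\|\partial^\alpha\{\mathbf{I}-\mathbf{P}\}f\|_{L^2_{v,\nu}(L^2)}^2+\mu\int\frac{|\nabla\partial^\alpha u|^2}{1+\varrho}\,{\rm d}x$ exactly as you describe, kill the pressure and transport top-order terms by ${\rm div}\,u=0$ and antisymmetry, estimate the variable-coefficient viscous and pressure commutators via Lemma \ref{L3.1} and Lemmas \ref{LA.1}--\ref{LA.2}, and absorb everything with the smallness $\sigma$; your remark that the $|\alpha|=3$ pressure/viscous commutators are saved by the $\mu$-dissipation $\mu\|\nabla\partial^\alpha u\|_{L^2}^2$ matches the role of $\mu\|\nabla u\|_{H^3}^2$ in the paper's estimates \eqref{G3.31}--\eqref{G3.32}, and the stray $\mu\|\nabla u\|_{L^2}^2$ on the right of \eqref{G3.24} is accounted for in the same way.

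There is, however, one step that is wrong as written: the identity $\mathcal{L}f=\mathcal{L}\{\mathbf{I}-\mathbf{P}\}f$. Since $\operatorname{Ker}\mathcal{L}=\operatorname{Span}\{\sqrt M\}$ only, the fluid part $\mathbf{P}_1f=b\cdot v\sqrt M$ is \emph{not} annihilated; instead $\mathcal{L}\mathbf{P}_1 f=-b\cdot v\sqrt M$, so $\mathcal{L}f=\mathcal{L}\{\mathbf{I}-\mathbf{P}\}f-b\cdot v\sqrt M$. Your treatment therefore silently drops the contribution $-\int_{\mathbb{R}^3}\partial^\alpha(\varrho\, b)\cdot\partial^\alpha b\,{\rm d}x$ coming from $\varrho\mathcal{L}\mathbf{P}_1f$. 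The omission is harmless for the conclusion: since $1\le|\alpha|\le3$, Lemma \ref{LA.1} gives $\|\partial^\alpha(\varrho b)\|_{L^2}\lesssim\|\varrho\|_{H^3}\|\nabla b\|_{H^2}$, hence this term is bounded by $C\sigma\|\nabla b\|_{H^2}^2$, which is admissible on the right-hand side of \eqref{G3.24} — but the identity must be corrected and the extra term carried. (For comparison, the paper avoids the decomposition here altogether: it keeps $\varrho\mathcal{L}f$ and integrates by parts in $v$ using the divergence form of $\mathcal{L}$, estimating $\|\partial^\alpha(\varrho\sqrt M\nabla_v(f/\sqrt M))\|_{L^2_{x,v}}$ as in \eqref{G3.30}; the $\{\mathbf{I}-\mathbf{P}\}$ splitting with the compensating $\varrho(u-b)\cdot b$ term appears only later, in Lemma \ref{L3.8}.) With this correction your argument closes, since, as you note, $\int\varrho\langle\mathcal{L}\partial^\alpha\{\mathbf{I}-\mathbf{P}\}f,\partial^\alpha f\rangle\,{\rm d}x$ indeed reduces to the pairing against $\partial^\alpha\{\mathbf{I}-\mathbf{P}\}f$ (orthogonality in $v$ is unaffected by the factor $\varrho(x)$) and is absorbed by the Fokker--Planck dissipation.
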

\begin{proof}
Applying $\partial^\alpha$ with $1\leq |\alpha|\leq 3$ to
\eqref{rNSVFP}$_2$ and \eqref{rNSVFP}$_4$, and multiplying the resulting identities by $\partial^\alpha u$ and
 $ \partial^\alpha f$ respectively, and then integrating and summing the outcomes, we have
\begin{align}\label{G3.25}
&\frac12\frac {\rm d}{{\rm d}t}\big(\|\partial^{\alpha}u\|_{L^2}^{2}+\|\partial^{\alpha}f\|_{L_{x,v}^2}^{2}\big)
+\|\partial^{\alpha}(b-u)\|_{L^2}^{2}
 \nonumber\\
&+\int_{\mathbb{R}^3}\left\langle-\mathcal{L}\{\mathbf{I}-\mathbf{P}\}\partial^{\alpha}f,
\partial^{\alpha}f\right\rangle {\rm d}x+\mu \int_{\mathbb{R}^3}\frac{|\nabla\partial^\alpha u|^2}{1+\varrho}  \mathrm{d}x                  \nonumber\\
=\,& -\mu\int_{\mathbb{R}^3}\nabla\Big(\frac{1}{1+\varrho}\Big)\nabla\partial^\alpha u\cdot \partial^{\alpha} u   \mathrm{d}x+\mu\sum_{0\leq|\beta|<|\alpha|}C_{\alpha,\beta}\int_{\mathbb{R}^3}\partial^{\alpha-\beta}\Big(\frac{1}{1+\varrho}\Big)\partial^\beta\Delta u\cdot\partial^\alpha u \mathrm{d}x\nonumber\\
&+\frac{1}{2}\int_{\mathbb{R}^3}
\big\langle\partial^{\alpha}\big((1+\varrho)u\cdot vf\big),\partial^{\alpha}f\big\rangle {\rm d}x+\int_{\mathbb{R}^3}\langle[-\partial^{\alpha},u\cdot\nabla_{v}]f,\partial^{\alpha}f\rangle {\rm d}x\nonumber\\
&+\int_{\mathbb{R}^3}
\big\langle\partial^{\alpha}\big(\varrho(\mathcal{L}f-u\cdot\nabla_v f+u\cdot v\sqrt{M})\big),\partial^{\alpha}f\big\rangle {\rm d}x+\int_{\mathbb{R}^3}\nabla\Big( \frac{1}{1+\varrho}  \Big)\partial^\alpha P\cdot\partial^\alpha u{\rm d}x\nonumber\\
&-\sum_{0\leq|\beta|<|\alpha|}C_{\alpha,\beta}\int_{\mathbb{R}^3}\partial^{\alpha-\beta}\Big( 
 \frac{1}{1+\varrho}  \Big)\partial^\beta\nabla P\cdot\partial^\alpha u{\rm d}x-\int_{\mathbb{R}^3}\partial^{\alpha}(au)\cdot\partial^{\alpha}u\mathrm{d}x\nonumber\\
&+\int_{\mathbb{R}^3}[-\partial^{\alpha},u\cdot\nabla]u\cdot\partial^{\alpha}u {\rm d}x   \nonumber\\
\equiv:\,&\sum_{k=1}^{9} J_k. 
\end{align}
By leveraging \eqref{G3.1} and Lemma \ref{LA.1},
we reach
\begin{align}\label{G3.26}
 |J_1|\leq\,& C\mu\Big\|\nabla\Big(\frac{1}{1+\varrho}  \Big)  \Big\|_{L^3}\|\nabla\partial^\alpha u\|_{L^2}\|\partial^\alpha u\|_{L^6}\nonumber\\
 \leq\,& C\mu\|\varrho\|_{H^3}\|\nabla\partial^\alpha u\|_{L^2}^2\nonumber\\
 \leq\,& C\sigma \mu\|\nabla^2 u\|_{H^2}^2,
 \end{align}
 and
 \begin{align}\label{G3.27}
|J_2|\leq\,& C\mu
\left\{\begin{aligned} 
&\Big\|\partial^{\alpha}\Big(\frac{1}{1+\varrho}\Big)\Big\|_{L^2}\|\Delta u\|_{L^{\infty}}\|\partial^{\alpha}u\|_{L^2}
&\quad(|\beta|=0)\\ 
&\Big\|\partial^{\alpha-\beta}\Big(\frac{1}{1+\varrho}\Big)\Big\|_{L^{3}}\|\partial^{\beta}\Delta u\|_{L^{6}}
\|\partial^{\alpha}u\|_{L^2}&\quad(|\beta|=1)\\ 
&\Big\|\partial^{\alpha-\beta}\Big(\frac{1}{1+\varrho}\Big)\Big\|_{L^{3}}\|\partial^{\beta}\Delta u\|_{L^2}
\|\partial^{\alpha}u\|_{L^6}&\quad(|\beta|= 2)
\end{aligned}\right.\nonumber\\
\leq\,&C \mu\|\varrho\|_{H^3}\|\nabla ^2u\|_{H^2}^2\nonumber\\
\leq\,& C\sigma \mu\|\nabla^2 u\|_{H^2}^2.
\end{align}
Note that 
 $\|v\mathbf{P}f\|_{L_{x,v}^2}+\|\nabla_{v}\mathbf{P}f\|_{L_{x,v}^2}\lesssim \|a\|_{L^2}+\|b\|_{L^2}$. 
 It follows from the macro-micro decomposition
\eqref{mmd}, the assumption \eqref{G3.1} and Lemma \ref{LA.1} that
\begin{align}\label{G3.28}
|J_3|\leq\,& C\big\|\nabla\big((1+\varrho)u\big)\big\|_{H^2}\|\nabla f\|_{L_v^2(H^2)}\|v\partial^\alpha f\|_{L_{x,v}^2}  \nonumber\\
\leq\,& C(1+\|\varrho\|_{H^3})\|\nabla u\|_{H^2}\big(\|\partial^\alpha\{\mathbf{I}-\mathbf{P}\}f\|_{L^2_{v,\nu}(L^2)}^2+\|\nabla (a,b)\|_{H^2}^2\big)\nonumber\\
\leq\,& C\sigma\big(\|\partial^\alpha\{\mathbf{I}-\mathbf{P}\}f\|_{L^2_{v,\nu}(L^2)}^2+\|\nabla (a,b)\|_{H^2}^2\big).
\end{align}
From Lemma \ref{LA.2}, it follows that
\begin{align}\label{G3.29}
|J_4|\leq\,& C\|\partial^\alpha f\|_{L_{x,v}^2}\big(\|\nabla u\|_{L^{\infty}}\|\nabla_v f\|_{L_v^2(H_x^2)}+\|\nabla_v f\|_{L_v^2(L^{\infty})}\|\partial^\alpha u\|_{L^2}\big) \nonumber\\
\leq\,& C\|f\|_{L_v^2(H_x^3)}\big(\|\partial^\alpha\{\mathbf{I}-\mathbf{P}\}f\|_{L^2_{v,\nu}(L^2)}^2+\|\nabla (a,b)\|_{H^2}^2\big)\nonumber\\
\leq\,& C\sigma\big(\|\partial^\alpha\{\mathbf{I}-\mathbf{P}\}f\|_{L^2_{v,\nu}(L^2)}^2+\|\nabla (a,b)\|_{H^2}^2\big).
\end{align}
For the term $J_5$, we can apply integration by parts, the assumption   \eqref{G3.1}, and  Lemmas \ref{LA.1}--\ref{LA.2} to obtain
\begin{align}\label{G3.30}
|J_5|\leq \,& C\bigg\|\partial^\alpha\bigg(\varrho\sqrt{M}\nabla_v\Big(\frac{f}{\sqrt{M}}  \Big)    \bigg)\bigg\|_{L_{x,v}^2}\bigg\|\partial^\alpha\bigg(\sqrt{M}\nabla_v\Big(\frac{f}{\sqrt{M}}  \Big)    \bigg)\bigg\|_{L_{x,v}^2}\nonumber\\
&+C\|\partial^\alpha(\varrho u f)\|_{L_{x,v}^2}\|\nabla_v\partial^\alpha f\|_{L_{x,v}^2}
+C\|\partial^\alpha(\varrho u)\|_{L^2}\|\partial^\alpha b\|_{L^2}\nonumber\\
\leq\,& C(\|\varrho\|_{H^3}+\|\varrho\|_{H^3}\|u\|_{H^3})\Big(\|\nabla (a,b,u)\|_{H^2}^2
+\sum_{1\leq |\alpha|\leq3}\|\partial^\alpha\{\mathbf{I}-\mathbf{P}\}f\|_{L^2_{v,\nu}(L^2)}^{2}\Big)\nonumber\\
\leq\,& C\sigma \Big(\|\nabla (a,b,u)\|_{H^2}^2
+\sum_{1\leq |\alpha|\leq3}\|\partial^\alpha\{\mathbf{I}-\mathbf{P}\}f\|_{L^2_{v,\nu}(L^2)}^{2}\Big).
\end{align}
For the term $J_6$, by utilizing the estimates of $\nabla P$ derived in
Lemma \ref{L3.1}, we obtain
\begin{align}\label{G3.31}
|J_6|\leq\,& C\Big\|\nabla\Big(\frac{1}{1+\varrho}     \Big)\Big\|_{L^3}\|\nabla P\|_{L^6}\|\nabla u\|_{L^2}+C\Big\|\nabla\Big(\frac{1}{1+\varrho}     \Big)\Big\|_{L^{\infty}}
\|\nabla^2 P\|_{H^1}\|\nabla u\|_{H^2}\nonumber\\
\leq\,& C\|\varrho\|_{H^3}\|\nabla^2 P\|_{H^1}\|\nabla u\|_{H^2}\nonumber\\
\leq\,& C\sigma\big( \mu\|\nabla u\|_{H^3}^2+\|\nabla u\|_{H^2}^2+\|\nabla (a,b)\|_{H^2}^2+\|\nabla (b-u)\|_{H^1}^2\big).
\end{align}
Similar computations lead to
\begin{align}\label{G3.32}
|J_7|\leq\,& C
\left\{\begin{aligned}
&\Big\|\partial^{\alpha}\Big(\frac{1}{1+\varrho}\Big)\Big\|_{L^2}\|\nabla P\|_{L^{6}}\|\partial^{\alpha}u\|_{L^3}
&\quad(|\beta|=0)\\
&\Big\|\partial^{\alpha-\beta}\Big(\frac{1}{1+\varrho}\Big)\Big\|_{L^{3}}\|\partial^{\beta}\nabla P\|_{L^{6}}
\|\partial^{\alpha}u\|_{L^2}&\quad(|\beta|=1)\\
&\Big\|\partial^{\alpha-\beta}\Big(\frac{1}{1+\varrho}\Big)\Big\|_{L^{\infty}}\|\partial^{\beta}\nabla P\|_{L^2}
\|\partial^{\alpha}u\|_{L^2}&\quad(|\beta|= 2)\end{aligned}\right.\nonumber\\
\leq\,&C \|\varrho\|_{H^3}\|\nabla^2 P\|_{H^1}\|\nabla u\|_{H^2}\nonumber\\
\leq\,& C\sigma\big(\mu\|\nabla u\|_{H^3}^2 +\|\nabla u\|_{H^2}^2+\|\nabla(a,b)\|_{H^2}^2+\|\nabla(b-u)\|_{H^1}^2\big),
\end{align}
and
\begin{align}\label{H3.35}
|J_8+J_9|&\leq\, C\|\partial^\alpha u\|_{L^2}\|\nabla a\|_{H^2}\|\nabla u\|_{H^2}+C\|\partial^\alpha u\|_{L^2}\|\nabla u\|_{L^{\infty}}\|\partial^\alpha u\|_{L^2}\nonumber\\
&\leq\, C\|u\|_{H^3}\|\nabla u\|_{H^2}^2+C\|u\|_{H^2}\|\nabla u\|_{H^2}^2\nonumber\\
&\leq\, C\sigma \|\nabla u\|_{H^2}^2.
\end{align}
Substituting the estimates \eqref{G3.26}--\eqref{H3.35} into \eqref{G3.25},
we ultimately obtain \eqref{G3.24}.
\end{proof}

To absorb the terms on the right-hand side of \eqref{G3.17} and \eqref{G3.23}, the next step in the energy estimates is to derive the energy dissipation rate $\|\nabla (a,b)\|_{H^2}$.

\begin{lem}\label{L3.5}
Under the conditions of Proposition \ref{P3.1},  there exists a uniform constant $\lambda_3>0$ and a functional $\mathcal{E}_{0}$ satisfying $|\mathcal{E}_{0}|\lesssim \|f\|_{L^2_v(H^3)}^2$  such that
\begin{align}\label{G3.40}
\frac{\rm d}{{\rm d}t}\mathcal{E}_{0}(t)+\lambda_3\|\nabla(a,b)\|_{H^{2}}^{2}
\leq&\, C\big(\|\{\mathbf{I}-\mathbf{P}\}f\|_{L_{v}^{2}(H^{3})}^{2}+\|b-u\|_{H^{2}}^{2}\big), 
\end{align} 
for any $t\in[0,T)$.
\end{lem}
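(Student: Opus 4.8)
The plan is to recover the macroscopic dissipation $\|\nabla(a,b)\|_{H^2}^2$ from the mutual interaction of the transport term $v\cdot\nabla_x f$ and the Fokker–Planck operator $\mathcal{L}$, following the macro–micro framework of \cite{Gy-IUMJ-2004,DF-jmp-2010,CDM-krm-2011}. Projecting the equation \eqref{rNSVFP}$_4$ for $f$ onto ${\rm Span}\{\sqrt M\}$ and onto ${\rm Span}\{v_i\sqrt M\}$ yields the fluid-type local conservation laws for $a=a^f$ and $b=b^f$: schematically $\partial_t a+{\rm div}_x b=(\text{source})$ and $\partial_t b+\nabla_x a+(\text{moments of }\{\mathbf{I}-\mathbf{P}\}f)=b-u+(\text{source})$, where the ``source'' terms collect the nonlinearities $\tfrac12 u\cdot vf-u\cdot\nabla_v f+\varrho(\cdots)$ tested against $\sqrt M$ and $v_i\sqrt M$. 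First I would write these macroscopic equations carefully, isolating the terms $\nabla_x a$, $\nabla_x b$, and the moments $\int v_iv_j\sqrt M\,\{\mathbf{I}-\mathbf{P}\}f\,dv$ which is controlled by $\|\{\mathbf{I}-\mathbf{P}\}f\|_{L^2_{v,\nu}}$.

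Next I would construct the interaction functional. The standard device is to introduce, for each multi-index $\alpha$ with $|\alpha|\le 2$, a quantity of the form
\begin{align*}
\mathcal{E}_0(t)=\sum_{|\alpha|\le 2}\int_{\mathbb{R}^3}\nabla\partial^\alpha a\cdot\partial^\alpha b\,{\rm d}x
\end{align*}
(possibly with an extra auxiliary term built from higher moments of $\{\mathbf{I}-\mathbf{P}\}f$, as in \cite{DF-jmp-2010}, to close the estimate), whose time derivative, after substituting the macroscopic equations and integrating by parts, produces the good term $-c\|\nabla(a,b)\|_{H^2}^2$ together with controllable remainders. The bound $|\mathcal{E}_0|\lesssim\|f\|_{L^2_v(H^3)}^2$ is immediate from Cauchy–Schwarz since $a,b$ are moments of $f$ and $\nabla\partial^\alpha a$ with $|\alpha|\le 2$ costs at most three spatial derivatives. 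Differentiating in time and using $\partial_t a,\partial_t b$ from the macroscopic system, the leading contribution $\int|\nabla\partial^\alpha a|^2+\int|\nabla\partial^\alpha b|^2$ appears with a favorable sign; the terms generated by the moments of $\{\mathbf{I}-\mathbf{P}\}f$ and by $b-u$ are absorbed into the right-hand side of \eqref{G3.40} after a Young inequality (this is where the factors $\|\{\mathbf{I}-\mathbf{P}\}f\|_{L^2_v(H^3)}^2$ and $\|b-u\|_{H^2}^2$ enter), and the genuinely nonlinear source terms are estimated by $C\sigma(\cdots)$ using \eqref{G3.1}, the embedding $H^2\hookrightarrow L^\infty$, and Lemma \ref{LA.1}, hence are harmless after choosing $\sigma$ small — though I expect the statement of Lemma \ref{L3.5} as written folds such $C\sigma$ terms into the already-present dissipation, so effectively they contribute nothing new.

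The main obstacle is the careful bookkeeping of the nonlinear source terms in the macroscopic equations, in particular those carrying the degenerate density factor $\varrho$ (e.g. $\varrho\mathcal{L}f$, $\varrho\,u\cdot\nabla_v f$), because after projecting onto $\sqrt M$ and $v_i\sqrt M$ and taking up to two spatial derivatives, one must verify that every resulting term is either quadratic-and-small (bounded by $C\sigma\|\nabla(a,b,u)\|_{H^2}^2$ plus $C\sigma\|\{\mathbf{I}-\mathbf{P}\}f\|_{L^2_v(H^3)}^2$) or already of the form permitted on the right-hand side of \eqref{G3.40}. A secondary technical point is that $\mathcal{E}_0$ alone is not sign-definite, so one does not get a Lyapunov inequality from it in isolation; it is designed to be added, with a small coefficient, to the energy inequalities of Lemmas \ref{L3.3}–\ref{L3.4} in the subsequent combination step. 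With the macroscopic equations in hand and the remainders controlled as above, \eqref{G3.40} follows directly; I would close by choosing the small parameters (the weight on the auxiliary term in $\mathcal{E}_0$, then $\sigma$) so that the good term dominates.
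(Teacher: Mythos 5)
Your overall framework --- macro--micro decomposition, the local conservation laws for $(a,b)$, and an interaction functional to be added later with a small weight to the $H^3$ energy --- matches the paper's, and your remarks on $|\mathcal{E}_0|\lesssim\|f\|_{L^2_v(H^3)}^2$ and on the non-sign-definiteness of $\mathcal{E}_0$ are fine. However, there is a genuine gap at the central step: the functional you actually write, $\sum_{|\alpha|\le2}\int\nabla\partial^\alpha a\cdot\partial^\alpha b\,{\rm d}x$, cannot produce the dissipation of $\nabla b$. Differentiating it in time and substituting $\partial_t a=-{\rm div}\, b$ and the $b$-equation gives, after integration by parts, the good term $\|\nabla\partial^\alpha a\|_{L^2}^2$ together with the \emph{wrong-sign} term $-\|{\rm div}\,\partial^\alpha b\|_{L^2}^2$, plus controllable remainders; no $\|\nabla\partial^\alpha b\|_{L^2}^2$ appears at all. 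So your claim that ``the leading contribution $\int|\nabla\partial^\alpha a|^2+\int|\nabla\partial^\alpha b|^2$ appears with a favorable sign'' is false for the $b$-part, and with your functional alone the inequality \eqref{G3.40} does not close.

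In the paper the $\nabla b$ dissipation comes from a separate mechanism, which your parenthetical ``possibly with an extra auxiliary term'' leaves unspecified even though it is the heart of the lemma. One uses the second-order moment identity $\partial_i b_j+\partial_j b_i-(1+\varrho)(u_ib_j+u_jb_i)=-\partial_t\Gamma_{ij}(\{\mathbf{I}-\mathbf{P}\}f)+\Gamma_{ij}(\mathfrak{l}+\mathfrak{r}+\mathfrak{s})$ and includes in $\mathcal{E}_0$ the coupling $\sum_{|\alpha|\le2}\sum_{i,j}\int\partial^\alpha(\partial_ib_j+\partial_jb_i)\,\partial^\alpha\Gamma_{ij}(\{\mathbf{I}-\mathbf{P}\}f)\,{\rm d}x$; testing the identity against $\partial^\alpha(\partial_ib_j+\partial_jb_i)$ and using $\sum_{i,j}\|\partial^\alpha(\partial_ib_j+\partial_jb_i)\|_{L^2}^2=2\|\nabla\partial^\alpha b\|_{L^2}^2+2\|{\rm div}\,\partial^\alpha b\|_{L^2}^2$ generates the $\nabla b$ dissipation, and the extra divergence term is exactly what absorbs the bad $-\|{\rm div}\,\partial^\alpha b\|_{L^2}^2$ coming from your $a$--$b$ coupling. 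Moreover, when $\partial_t$ falls on $\Gamma_{ij}(\{\mathbf{I}-\mathbf{P}\}f)$ one substitutes the $b$-equation again, which regenerates $\nabla a$ and must be absorbed by a fraction of the $a$-dissipation already obtained --- a sign and bookkeeping constraint your sketch does not address. With this missing coupling supplied (and the $\varrho$-dependent sources handled via \eqref{G3.1} and Lemma \ref{LA.1}, as you indicate), the argument becomes the paper's proof.
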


\begin{proof}
To capture the dissipation of $a$ and $b$, we shall  make full use of the interactions of $v\cdot\nabla \mathbf{P}f$ and $\mathcal{L}f$ as in \cite{CDM-krm-2011,Gy-IUMJ-2004}. To that end, from $\eqref{rNSVFP}_4$, we proceed by the following 
macroscopical equations involving $a$ and $b$:
\begin{equation}\label{G3.36}
\left\{\begin{aligned}
&\partial_{t}a+{\rm div}\, b=0,\\
&\partial_{t} b_i+\partial_{i} a+\sum_{j=1}^3\partial_j\Gamma_{ij}(\{\mathbf{I}-\mathbf{P}\}f)=(1+\varrho)(u_i-b_i)+(1+\varrho)u_ia,  \\
&\partial_{i}b_j+\partial_j b_i-(1+\varrho)(u_ib_j+u_jb_i)=-\partial_t \Gamma_{ij}(\{\mathbf{I}-\mathbf{P}\}f)+\Gamma_{ij}(\mathfrak{l}+\mathfrak{r}+\mathfrak{s}),  
 \end{aligned}
 \right.
\end{equation}
for $1\leq i,j\leq 3$, 
where $\Gamma_{ij}(\cdot)$ is the  moment functional
\begin{align}\label{G3.35}
\Gamma_{ij}(\cdot):=\langle(v_iv_j-1)\sqrt{M},\cdot\rangle,
\end{align}
and $\mathfrak{l}$, $\mathfrak{r}$ and $\mathfrak{s}$ are given by
\begin{align*}
\mathfrak{l}\,&:=\mathcal{L}\{\mathbf{I}-\mathbf{P}\}f-v\cdot\nabla\{\mathbf{I}-\mathbf{P}\}f, \nonumber \\
\mathfrak{r}\,&:=-u\cdot\nabla_v\{\mathbf{I}-\mathbf{P}\}f+\frac{1}{2}u\cdot v\{\mathbf{I}-\mathbf{P}\}f,  \nonumber\\
\mathfrak{s}\,&:=\frac{\varrho}{\sqrt{M}}\nabla_v\cdot\Big(\nabla_v\big(\sqrt{M}\{\mathbf{I}
-\mathbf{P}\}f\big)+v\sqrt{M}\{\mathbf{I}-\mathbf{P}\}f-u\sqrt{M}\{\mathbf{I}-\mathbf{P}\}f   \Big).
\end{align*}

We first address the dissipation estimates of $a$. From  \eqref{G3.36}$_1$ and \eqref{G3.36}$_2$, one has
\begin{align}\label{GH3.26}
&\frac{{\rm d}}{{\rm d}t}\sum_{i=1}^3\int_{\mathbb{R}^3}\partial^\alpha\partial_i a\partial^\alpha b_i\mathrm{d}x+ \| \partial^\alpha\nabla a\|_{L^2}^2-\|\partial^\alpha{\rm div}\, b\|_{L^2}^2\nonumber \\
&\quad =\sum_{i=1}^3\int_{\mathbb{R}^3}\partial^\alpha\partial_i a\partial^\alpha\bigg(-\sum_{j=1}^3\partial_j\Gamma_{ij}(\{\mathbf{I}-\mathbf{P}\}f) 
+(1+\varrho)(u_i-b_i)+(1+\varrho)u_ia        \bigg)\mathrm{d}x.
\end{align}
Here the nonlinear term on the right-hand side of \eqref{GH3.26} can be controlled by  
\begin{align}\label{GH3.28}
 &\sum_{i=1}^3\int_{\mathbb{R}^3}\partial^\alpha\partial_i a\partial^\alpha\bigg(-\sum_{j=1}^3\partial_j\Gamma_{ij}(\{\mathbf{I}-\mathbf{P}\}f) 
+(1+\varrho)(u_i-b_i)+(1+\varrho)u_ia        \bigg)\mathrm{d}x\nonumber\\
\leq\,&\frac{1}{4}\|\nabla a\|_{H^2}^2+C\|\nabla\{\mathbf{I}-\mathbf{P}\}f\|_{L_v^2(H^2)}^2+C(1+\|\varrho\|_{H^3})^2\|u-b\|_{H^2}^2\nonumber\\
&+C(1+\|\varrho\|_{H^3})^2\|u\|_{H^2}^2\|\nabla a\|_{H^2}^2\nonumber\\
\leq\,&\Big(\frac{1}{4}+C\sigma\Big)\|\nabla a\|_{H^2}^2+C\|u-b\|_{H^2}^2+C\|\{\mathbf{I}-\mathbf{P}\}f\|_{L_v^2(H^3)}^2.
\end{align}
where we have  used the assumption \eqref{G3.1}, Lemma \ref{LA.1}  and
 the fact that $\Gamma_{ij}(\cdot)$ can absorb any velocity derivative
and velocity weight.

Thanks to the identity $$\sum_{i,j=1}^3\|\partial^\alpha(\partial_ib_j+\partial_jb_i)\|_{L^2}^2=2\|\nabla\partial^\alpha b\|_{L^2}^2+2\|{\rm div}\,\partial^\alpha b\|_{L^2}^2,$$ 
the dissipation of $b$ can be derived from \eqref{G3.36}$_3$:
\begin{align}\label{GH3.21}
&\frac{{\rm d}}{{\rm d}t}\sum_{i,j=1}^3\int_{\mathbb{R}^3}\partial^\alpha(\partial_ib_j+\partial_jb_i)\partial^\alpha\Gamma_{ij}(\{\mathbf{I}-\mathbf{P}\}f)\mathrm{d}x+2\|\nabla\partial^\alpha b\|_{L^2}^2+2\|{\rm div}\,\partial^\alpha b\|_{L^2}^2\nonumber\\
&\quad =\sum_{i,j=1}^3\int_{\mathbb{R}^3}\partial^\alpha(\partial_i\partial_t b_j+\partial_j\partial_t b_i)\partial^\alpha\Gamma_{ij}(\{\mathbf{I}-\mathbf{P}\}f)\mathrm{d}x\nonumber\\
&\qquad+\sum_{i,j=1}^3\int_{\mathbb{R}^3}\partial^\alpha(\partial_ib_j+\partial_jb_i)\partial^\alpha
\big((1+\varrho)(u_ib_j+u_jb_i)+\Gamma_{ij}(\mathfrak{l}+\mathfrak{r}+\mathfrak{s})   \big)\mathrm{d}x.
\end{align}
It follows from the assumption \eqref{G3.1} and Lemma \ref{LA.1}  that
\begin{align}\label{GH3.22}
&\sum_{i,j=1}^3\int_{\mathbb{R}^3}\partial^\alpha(\partial_i\partial_t b_j+\partial_j\partial_t b_i)\partial^\alpha\Gamma_{ij}(\{\mathbf{I}-\mathbf{P}\}f)\mathrm{d}x \nonumber\\ 
=\,& -2\sum_{i,j=1}^3\int_{\mathbb{R}^3}\partial^\alpha\partial_t b_i
\partial^\alpha\partial_j\Gamma_{ij}(\{\mathbf{I}-\mathbf{P}\}f)\mathrm{d}x\nonumber\\
=\,& 2\sum_{i,j=1}^3\int_{\mathbb{R}^3}\partial^\alpha\bigg(\partial_i a+\sum_{m=1}^3\partial_m\Gamma_{im}(\{\mathbf{I}-\mathbf{P}\}f)       \bigg)\partial^\alpha\partial_j\Gamma_{ij}(\{\mathbf{I}-\mathbf{P}\}f)\mathrm{d}x\nonumber\\
&-2\sum_{i,j=1}^3\int_{\mathbb{R}^3}\partial^\alpha\big((1+\varrho)(u_i-b_i)+(1+\varrho)u_i a  \big)\partial^\alpha\partial_j\Gamma_{ij}(\{\mathbf{I}-\mathbf{P}\}f)\mathrm{d}x\nonumber\\
\leq\,& \frac{1}{4}\|\nabla a\|_{H^2}^2+C\|\nabla \{\mathbf{I}-\mathbf{P}\}f \|_{L_v^2(H^2)}^2+C(1+\|\varrho\|_{H^{3}})^2\|u-b\|_{H^2}^2\nonumber\\
&+C(1+\|\varrho\|_{H^3})^2\|u\|_{H^2}^2\|\nabla a\|_{H^2}^2\nonumber\\
\leq\,&\Big(\frac{1}{4}+C\sigma\Big)\|\nabla a\|_{H^2}^2+C\|\nabla \{\mathbf{I}-\mathbf{P}\}f \|_{L_v^2(H^2)}^2+C\|u-b\|_{H^2}^2.
\end{align}
By a similar computation to that  in \eqref{GH3.28}, one has
\begin{align}\label{GH3.23}
&\sum_{i,j=1}^3\int_{\mathbb{R}^3}\partial^\alpha(\partial_ib_j+\partial_jb_i)
\partial^\alpha\big((1+\varrho)(u_ib_j+u_jb_i)+\Gamma_{ij}(\mathfrak{l}+\mathfrak{r}+\mathfrak{s})      \big)\mathrm{d}x\nonumber\\
\quad \leq\, &\frac{1}{2}\sum_{i,j=1}^3\|\partial^\alpha(\partial_ib_j+\partial_jb_i)\|_{L^2}^2+C\sum_{i,j=1}^3
\big\|\partial^\alpha\big((1+\varrho)(u_ib_j+u_jb_i)\big)\big\|_{L^2}^2\nonumber\\
\qquad& +C\sum_{i,j=1}^3\big(\|\partial^\alpha\Gamma_{ij}(\mathfrak{l})\|_{L ^2}^2
+\|\partial^\alpha\Gamma_{ij}(\mathfrak{r})\|_{L ^2}^2+\|\partial^\alpha\Gamma_{ij}(\mathfrak{s})\|_{L ^2}^2                  \big)\nonumber\\
 \quad \leq\,& C\|(1+\|\nabla\varrho\|_{H^2})^2\|u\otimes b\|_{H^2}^2+C\| \{\mathbf{I}-\mathbf{P}\}f \|_{L_v^2(H^3)}^2\nonumber\\
 \qquad&+C\|u\|_{H^2}^2\|\nabla  \{\mathbf{I}-\mathbf{P}\}f \|_{L_v^2(H^2)}^2+C(1+\|\varrho\|_{H^3})^2\|\nabla u\|_{H^2}^2\|\nabla  \{\mathbf{I}-\mathbf{P}\}f \|_{L_v^2(H^2)}^2\nonumber\\
\quad\leq\, & C\sigma\|\nabla b\|_{H^2}^2+C\sigma\| \{\mathbf{I}-\mathbf{P}\}f \|_{L_v^2(H^3)}^2,
\end{align}
where we  have used the assumption \eqref{G3.1}, Lemma \ref{LA.1}  and
 the fact that $\Gamma_{ij}(\cdot)$ can absorb any velocity derivative
and velocity weight.

We then denote the temporal functional $\mathcal{E}_0(t)$ as
\begin{align}\label{G3.39}
\mathcal{E}_0(t):=\sum_{|\alpha|\leq 2}\sum_{i,j=1}^3\int_{\mathbb{R}^3}\partial^\alpha(\partial_ib_j+\partial_jb_i)\partial^{\alpha}\Gamma_{ij}(\{\mathbf{I}-\mathbf{P}\}f)\mathrm{d}x-\sum_{|\alpha|\leq 2}\int_{\mathbb{R}^3}\partial^\alpha a\partial^{\alpha}{\rm div}\, b\mathrm{d}x,
\end{align}
which clearly fulfills the inequality $|\mathcal{E}_0(t)|\lesssim  \|f\|_{L^2_v(H^3)}^2$. 
Putting  \eqref{GH3.26}--\eqref{GH3.23} together, we get the desired  \eqref{G3.40}.
\end{proof}

Then, we establish the uniform $H^3$ estimates of $u$ and $f$ as follows.

\begin{prop}\label{P3.6}
Under the assumption \eqref{G3.1} on $[0,T)$ with some time $T>0$, we have
\begin{align}\label{H3.43}
& \| u(t)\|_{H^3}^2+\|f(t)\|_{L_{v}^2(H^3)}^2 +\mu\int_0^t \|\nabla u(\tau)\|_{H^3}^2 {\rm d}\tau\nonumber\\
&\quad +\int_0^t\Big(\|(b-u)(\tau)\|_{H^3}^2
+\|\nabla(a,b)(\tau)\|_{H^2}^2+\|\nabla P(\tau)\|_{H^2}^2+\|\{\mathbf{I}-\mathbf{P}\}f(\tau)\|_{L^2_{v,\nu}(H^3)}^2
\Big){\rm d}\tau \nonumber\\
\,&\qquad \leq C\big(  \|(\varrho_0,u_0)\|_{H^3}^2+\|f_0\|_{L^2_{v}(H^3)}^2\big),
\end{align}
for any $t\in [0,T)$.
\end{prop}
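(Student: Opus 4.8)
The plan is to assemble the four lemmas just proved into a single Lyapunov functional for $(u,f)$. Introduce the abbreviations $E_3(t):=\|u(t)\|_{H^3}^2+\|f(t)\|_{L^2_v(H^3)}^2$ for the $H^3$ energy and $D_3(t):=\|(b-u)(t)\|_{H^3}^2+\mu\|\nabla u(t)\|_{H^3}^2+\|\{\mathbf{I}-\mathbf{P}\}f(t)\|_{L^2_{v,\nu}(H^3)}^2$ for the dissipation visible directly from the friction force and the Fokker--Planck operator. First I would add the $L^2$ estimate \eqref{G3.17} and the higher-order estimate \eqref{G3.24}: the left-hand side produces $\frac{\rm d}{{\rm d}t}E_3+c_1D_3$ for some uniform $c_1>0$, while the right-hand side is bounded by $C\sigma\big(\|\nabla(a,b)\|_{H^2}^2+\|\nabla u\|_{H^2}^2+D_3\big)$. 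The point that makes this closeable despite $u$ carrying no dissipation of its own is the elementary splitting $\partial^\alpha u=\partial^\alpha(u-b)+\partial^\alpha b$ for $1\le|\alpha|\le 3$, which yields $\|\nabla u\|_{H^2}^2\lesssim\|b-u\|_{H^3}^2+\|\nabla(a,b)\|_{H^2}^2\lesssim D_3+\|\nabla(a,b)\|_{H^2}^2$; thus, after choosing $\sigma$ small enough to absorb $C\sigma D_3$ into $c_1 D_3$, the only surviving term is $C\sigma\|\nabla(a,b)\|_{H^2}^2$, giving $\frac{\rm d}{{\rm d}t}E_3+\tfrac{c_1}{2}D_3\le C\sigma\|\nabla(a,b)\|_{H^2}^2$.

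Next I would feed in the degenerate macroscopic dissipation of Lemma~\ref{L3.5}. Define $\mathcal{L}(t):=E_3(t)+\eta\,\mathcal{E}_0(t)$ with $\eta>0$ to be chosen. Since $|\mathcal{E}_0(t)|\lesssim\|f(t)\|_{L^2_v(H^3)}^2\le E_3(t)$, for $\eta$ sufficiently small $\mathcal{L}\sim E_3$, so $\mathcal{L}$ is a genuine energy. Adding $\eta$ times \eqref{G3.40} to the inequality from the previous step, and noting that its right-hand side $\|\{\mathbf{I}-\mathbf{P}\}f\|_{L^2_v(H^3)}^2+\|b-u\|_{H^2}^2$ is controlled by $D_3$, we reach
\[
\frac{\rm d}{{\rm d}t}\mathcal{L}+\tfrac{c_1}{2}D_3+\eta\lambda_3\|\nabla(a,b)\|_{H^2}^2\le C\sigma\|\nabla(a,b)\|_{H^2}^2+\eta C D_3 .
\]
Fixing $\eta$ small so that $\eta C\le\tfrac{c_1}{4}$, and then $\sigma$ small so that $C\sigma\le\tfrac12\eta\lambda_3$, produces a closed differential inequality $\frac{\rm d}{{\rm d}t}\mathcal{L}+c_2\big(D_3+\|\nabla(a,b)\|_{H^2}^2\big)\le 0$ with a uniform $c_2>0$.

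Integrating over $[0,t]$ and using $\mathcal{L}\sim E_3$ together with $\mathcal{L}(0)\lesssim\|(\varrho_0,u_0)\|_{H^3}^2+\|f_0\|_{L^2_v(H^3)}^2$ bounds $E_3(t)$ and the time integrals of $D_3$ and of $\|\nabla(a,b)\|_{H^2}^2$, which is every term on the left of \eqref{H3.43} except $\int_0^t\|\nabla P\|_{H^2}^2\,{\rm d}\tau$. That last term I would recover at the end from Lemma~\ref{L3.1}: summing \eqref{G3.2}, \eqref{G3.6}, \eqref{NJKG3.6} and using $\mu<1$ gives $\|\nabla P\|_{H^2}^2\lesssim\|\nabla(a,b)\|_{H^2}^2+\|b-u\|_{H^2}^2+\|\nabla u\|_{H^2}^2+\mu\|\nabla u\|_{H^3}^2\lesssim D_3+\|\nabla(a,b)\|_{H^2}^2$, whose integral is already under control; this yields \eqref{H3.43}.

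The genuine difficulty is not this bookkeeping but the ordering of the dissipative mechanisms that makes the Lyapunov construction close: $u$ has no dissipation (the viscous term $\mu\|\nabla u\|_{H^3}^2$ degenerates as $\mu\to 0$), the friction controls only $b-u$, and coercivity in the purely hydrodynamic direction $\|\nabla(a,b)\|_{H^2}^2$ is available only through the non-sign-definite cross functional $\mathcal{E}_0$ of Lemma~\ref{L3.5}. One must therefore verify that the small parameters can be arranged hierarchically as $\sigma\ll\eta\ll 1$, with $\eta$ depending only on $c_1,\lambda_3$ and the universal constants (hence independent of $\mu$ and $T$) — which in particular fixes the universal smallness threshold $\sigma$ in \eqref{G3.1}. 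No new nonlinear analysis is needed, since every nonlinear term has already been shown in Lemmas~\ref{L3.3}--\ref{L3.5} to carry a factor $\sigma$.
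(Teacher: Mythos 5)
Your proposal is correct and follows essentially the same route as the paper: the paper also forms the Lyapunov functional $\mathcal{E}(t)=\|u\|_{H^3}^2+\|f\|_{L^2_v(H^3)}^2+\tau_1\mathcal{E}_0(t)$ (your $\eta\mathcal{E}_0$), uses the splitting $\|\nabla u\|_{H^2}^2\lesssim\|b-u\|_{H^3}^2+\|\nabla b\|_{H^2}^2$ to absorb the $\sigma$-small right-hand sides of Lemmas \ref{L3.3}--\ref{L3.5}, obtains the closed inequality \eqref{G3.46}, and integrates, with the pressure contribution supplied by Lemma \ref{L3.1} exactly as you do. The only cosmetic difference is that the paper packages $\|\nabla P\|_{H^2}^2$ inside the dissipation functional $\mathcal{D}$ from the start rather than recovering it at the end.
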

\begin{proof}
We now define the temporal energy functional $\mathcal{E}(t)$ as follows:
\begin{align*} 
\mathcal{E}(t):=\,&\|u(t)\|_{H^3}^2+\|f(t)\|_{{L}_{v}^2(H^3)}^2
+\tau_{1}\mathcal{E}_0(t).
\end{align*}
Note that the dissipation term of $u$ can be derived from the coupling of $b-u$ and $f$:
\begin{align}\label{G3.480}
    \|\nabla u\|_{H^2}^2\leq \|b-u\|_{H^3}^2+\|\nabla b\|_{H^2}^2\leq \mathcal{D}(t).
\end{align}
Here and below in this section, we denote $\mathcal{D}(t)=\mathcal{D}(u,f,P)(t)$ as 
defined by \eqref{D}.  Making use of \eqref{G3.1}, \eqref{G3.480} and Lemmas \ref{L3.1}--\ref{L3.5} and choosing a uniform constant $0<\tau_1\ll 1$, we have 
\begin{align}\label{G3.45}
\mathcal{E}(t)\backsim \|u\|_{H^3}^2+\|f\|_{L^2_{v}(H^3)}^2.
\end{align}
and the Lyapunov inequality
\begin{align}\label{G3.46}
\frac{{\rm d}}{{\rm d}t}\mathcal{E}(t)+\lambda_4\big(\mathcal{D}(t)+\mu\|\nabla u(t)\|_{H^{3}}^{2}\big)\leq 0,    
\end{align}
for a positive constant $\lambda_4$ satisfying $0<\lambda_4\leq \frac{1}{2}\min\{\lambda_1,\lambda_2,\tau_1\lambda_3 \}$.
Thus, \eqref{G3.46} gives rise to 
\begin{align}\label{G3.47}
\mathcal{E}(t)+\lambda_4\int_0^t\mathcal{D}(s){\rm d}s\leq \mathcal{E}(0),
\end{align}
for any $0\leq t< T$. This, together with \eqref{G3.45}, yields \eqref{H3.43}.
\end{proof}

\subsection{Decay rates associated with the negative Besov evolution}

To address the variable $\varrho$, our idea is to establish suitable decay rates of $(u,f)$.  Motivated by \cite{GW-CPDE-2012}, using Lemma \ref{LA.5}, we have, for $l=0,1,2$,
\begin{align}\label{G3.48}
\|\nabla^l u\|_{L^2}\lesssim \|u\|_{\dot{B}^{-s}_{2,\infty}}^{\frac{1}{1+l+s}}\|\nabla^{l+1}u\|_{L^2}^{\frac{l+s}{1+l+s}},
\end{align}
and
\begin{align}\label{G3.49}
\|\nabla^l f\|_{L_{x,v}^2}\lesssim \|f\|_{L_v^2(\dot{B}^{-s}_{2,\infty})}^{\frac{1}{1+l+s}}\|\nabla^{l+1}f\|_{L_{x,v}^2}^{\frac{l+s}{1+l+s}},
\end{align}
for any $s>0$. Combining \eqref{G3.480}, \eqref{G3.48} and \eqref{G3.49}, we arrive at
\begin{align*} 
\mathcal{E}(t)\lesssim \mathcal{D}(t)^{\frac{s}{1+s}}\Big(\|u\|_{\dot{B}^{-s}_{2,\infty}}+ \|f\|_{L_v^2(\dot{B}^{-s}_{2,\infty})}\Big)^{\frac{2}{1+s}},
\end{align*}
which implies that  
\begin{align}\label{GG3.50}
\mathcal{E}(t)^{1+\frac{1}{s}}\Big(\|u\|_{\dot{B}^{-s}_{2,\infty}}+ \|f\|_{L_v^2(\dot{B}^{-s}_{2,\infty})}+\epsilon\Big)^{-\frac{2}{s}}\lesssim \mathcal{D}(t). 
\end{align}

Consequently, putting \eqref{GG3.50} into \eqref{G3.47}, we derive the following inequality:
\begin{align}\label{G3.50}
\frac{{\rm d}}{{\rm d}t}\mathcal{E}(t)+ \lambda_5 \Big(\epsilon+\|u\|_{\dot{B}^{-s}_{2,\infty}}+ \|f\|_{L_v^2(\dot{B}^{-s}_{2,\infty})}+\|u\|_{H^3}
+\|f\|_{L^2_v(H^3)}\Big)^{-\frac{2}{s}}\mathcal{E}(t)^{1+\frac{1}{s}}\leq 0,
\end{align}
for some uniform constant $\lambda_5 > 0$ and any $\epsilon > 0$.
Setting $\epsilon\rightarrow  0$ in \eqref{G3.50}, we have 
\begin{align}\label{G3.51}
 \mathcal{E}(t)\leq\,& C \bigg\{\mathcal{E}(0)^{-\frac{2}{s}}+\frac{\lambda_5 t}{s}{\Big(\|u\|_{\dot{B}^{-s}_{2,\infty}}+\|f\|_{L_v^2(\dot{B}^{-s}_{2,\infty})}+ \|u\|_{H^3}+\|f\|_{L^2_v(H^3)}\Big)^{-\frac{2}{s}}}                 \bigg\}^{-s} \nonumber\\  
 \leq\,& C \Big(\mathcal{E}(0)+\|u\|_{\dot{B}^{-s}_{2,\infty}}^2
 +\|f\|_{L_v^2(\dot{B}^{-s}_{2,\infty})}^2+\|u\|_{H^3}^2+\|f\|_{L^2_v(H^3)}^2\Big)(1+t)^{-s}\nonumber\\
  \leq\,& C \Big(\|u\|_{\dot{B}^{-s}_{2,\infty}}^2
  +\|f\|_{L_v^2(\dot{B}^{-s}_{2,\infty})}^2+\|u_0\|_{H^3}^2+\|f_0\|_{L^2_v(H^3)}^2\Big)(1+t)^{-s}.
\end{align}
Thus,  from \eqref{G3.51}, we can deduce that
\begin{align}\label{G3.54}
\|u\|_{H^3}+\|f\|_{L_v^2(H^3)}\leq  C \Big(\|u\|_{\dot{B}^{-s}_{2,\infty}}+\|f\|_{L_v^2(\dot{B}^{-s}_{2,\infty})}
+\|u_0\|_{H^3}+\|f_0\|_{L^2_v(H^3)}\Big)(1+t)^{-\frac{s}{2}},
\end{align}
where $1<s\leq\frac{3}{2}$. However, the decay rate presented in \eqref{G3.54} is not fast enough as $\frac{s}{2}<1$ even in the endpoint $s=\frac{3}{2}$ associated with the $L^1$ assumption.

To improve the decay rate and establish the $L^1$ time integrability for $\|\nabla u\|_{H^2}$ in \eqref{rhoH3}, we establish a higher-order Lyapunov inequality, which relies on the introduction of a new temporal energy functional $\mathcal{E}_1(t)$ 
and its corresponding dissipation rate $\mathcal{D}_1(t)$:
\begin{align*} 
\mathcal{E}_1(t):=\,&\|\nabla u(t)\|_{H^2}^2+\|\nabla f(t)\|_{{L}_{v}^2(H^2)}^2
+\tau_{2}\mathcal{E}_0^{\prime}(t),\nonumber\\
\mathcal{D}_1(t):=\,&\|\nabla(b-u)(t)\|_{H^2}^2+\|\nabla^2(a,b)(t)\|_{H^1}^2+{\mu}\|\nabla^2 u(t)\|_{H^{2}}^{2}\\
& +\|\nabla^2 P(t)\|_{H^1}^2+\|\nabla \{\mathbf{I}-\mathbf{P}\}f(t)\|_{L^2_{v,\nu}(H^2)}^2,
\end{align*}
where the temporal functional $\mathcal{E}_0^{\prime}(t)$ is given by
\begin{align*} 
\mathcal{E}_0^{\prime}(t):=\sum_{1\leq|\alpha|\leq 2}\sum_{i,j=1}^3\int_{\mathbb{R}^3}
\partial^\alpha(\partial_ib_j+\partial_jb_i)\partial^{\alpha}\Gamma_{ij}(\{\mathbf{I}-\mathbf{P}\}f)\mathrm{d}x
-\sum_{1\leq|\alpha|\leq 2}\int_{\mathbb{R}^3}\partial^\alpha a\partial^{\alpha}{\rm div}\, b\mathrm{d}x, 
\end{align*}
where the uniform constant $0<\tau_2\ll 1$  is small enough, and $\Gamma_{i,j}(\cdot)$ is given by \eqref{G3.35}.

\begin{lem}\label{L3.8}
Let $1<s\leq\frac{3}{2}$. Under the conditions of Proposition \ref{P3.1}, there exists a universal constant $\lambda_6>0$ such that
\begin{align}\label{G3.81}
 \frac{{\rm d}}{{\rm d}t}\mathcal{E}_1(t)+\lambda_6\mathcal{D}_1(t)\leq 0. 
\end{align}
Consequently, we have
\begin{align}\label{G3.82}
 \sqrt{\mathcal{E}_1(t)}&\sim \|\nabla u\|_{H^2}+\|\nabla f\|_{L_v^2(H^2)}\nonumber\\
 &\leq C\big(\|\nabla u_0\|_{H^2}+\|\nabla f_0\|_{L^2_v(H^2)}+\|u\|_{\dot{B}^{-s}_{2,\infty}}
 +\|f\|_{L_v^2(\dot{B}^{-s}_{2,\infty})}\big)(1+t)^{-\frac{s}{2}-\frac{1}{2}},
\end{align}
for any $t\in[0,T)$. Here $C>0$ is independent of $\mu$ and $T$.
\end{lem}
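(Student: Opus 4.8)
The plan is to construct the higher-order Lyapunov functional $\mathcal{E}_1(t)$ so that the ``leading'' part $\|\nabla u\|_{H^2}^2+\|\nabla f\|_{L^2_v(H^2)}^2$ dominates the correction $\tau_2\mathcal{E}_0'(t)$ for $\tau_2$ small, exactly as in Proposition \ref{P3.6}, and then to show that differentiating $\mathcal{E}_1$ in time produces the dissipation $\mathcal{D}_1(t)$ with a good sign. First I would differentiate $\|\nabla u\|_{H^2}^2+\|\nabla f\|_{L^2_v(H^2)}^2$: this means revisiting the energy identity \eqref{G3.25}, but now applying $\partial^\alpha$ only for $1\le|\alpha|\le 3$ (so that the derivatives hitting $u,f$ are genuinely $\ge 1$), multiplying by $\partial^\alpha u,\partial^\alpha f$ and summing. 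The diffusion term gives $\mu\|\nabla^2 u\|_{H^2}^2$ after using \eqref{G3.4}, the friction term gives $\|\nabla(b-u)\|_{H^2}^2$, and $-\langle\mathcal{L}\{\mathbf I-\mathbf P\}\partial^\alpha f,\partial^\alpha f\rangle$ gives $\|\nabla\{\mathbf I-\mathbf P\}f\|_{L^2_{v,\nu}(H^2)}^2$ plus $\|\nabla b\|_{H^2}^2$ via \eqref{H2.1}. The key point, compared with Lemma \ref{L3.4}, is that every nonlinear term on the right must now be bounded by $C\sigma\,\mathcal{D}_1(t)$ rather than by $C\sigma(\|\nabla(a,b)\|_{H^2}^2+\cdots)$; this is possible because each nonlinear term carries at least one extra spatial derivative (coming either from the commutator structure, from $\nabla\varrho$, or from $\nabla u$), so the interpolation room is available. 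I would re-run the estimates \eqref{G3.26}--\eqref{H3.35} tracking that gain of one derivative, and similarly re-derive the macroscopic estimates of Lemma \ref{L3.5} with the summation restricted to $1\le|\alpha|\le 2$, producing $\mathcal{E}_0'(t)$ and the dissipation $\|\nabla^2(a,b)\|_{H^1}^2$.

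Next I would combine: summing the differentiated leading energy, $\tau_2$ times the differentiated $\mathcal{E}_0'$-identity, and invoking the pressure estimates of Lemma \ref{L3.1} (applied to $\partial^\alpha$, $1\le|\alpha|\le 2$) to close the $\|\nabla^2 P\|_{H^1}^2$ contribution. Choosing $\tau_2$ small enough that the $\|\nabla^2(a,b)\|_{H^1}^2$ produced by the $\mathcal{E}_0'$-term absorbs the $\frac14\|\nabla^2 a\|_{H^1}^2$-type borrowings, and then using the smallness of $\sigma$ in \eqref{G3.1} to absorb all the remaining $C\sigma\,\mathcal{D}_1(t)$ terms, yields \eqref{G3.81} with some $\lambda_6>0$ and the equivalence $\sqrt{\mathcal{E}_1(t)}\sim\|\nabla u\|_{H^2}+\|\nabla f\|_{L^2_v(H^2)}$. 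This last equivalence also uses the auxiliary bound $\|\nabla^2 u\|_{H^1}^2\lesssim\|\nabla(b-u)\|_{H^2}^2+\|\nabla^2 b\|_{H^1}^2\lesssim\mathcal{D}_1(t)$, the analogue of \eqref{G3.480}.

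For the decay bound \eqref{G3.82}, I would apply the interpolation inequalities \eqref{G3.48}--\eqref{G3.49} with $l=1,2,3$ to get $\mathcal{E}_1(t)\lesssim\mathcal{D}_1(t)^{\frac{1+s}{2+s}}\big(\|u\|_{\dot B^{-s}_{2,\infty}}+\|f\|_{L^2_v(\dot B^{-s}_{2,\infty})}\big)^{\frac{2}{2+s}}$, hence $\mathcal{E}_1(t)^{1+\frac{1}{1+s}}\big(\|u\|_{\dot B^{-s}_{2,\infty}}+\|f\|_{L^2_v(\dot B^{-s}_{2,\infty})}+\epsilon\big)^{-\frac{2}{1+s}}\lesssim\mathcal{D}_1(t)$; plugging this into \eqref{G3.81}, letting $\epsilon\to0$, and solving the resulting ODE inequality (as in \eqref{G3.50}--\eqref{G3.51}, now with exponent $1+\frac{1}{1+s}$ in place of $1+\frac1s$) gives $\mathcal{E}_1(t)\lesssim(1+t)^{-(1+s)}\big(\mathcal{E}_1(0)+\|u\|_{\dot B^{-s}_{2,\infty}}^2+\|f\|_{L^2_v(\dot B^{-s}_{2,\infty})}^2\big)$, from which \eqref{G3.82} follows after taking square roots and controlling the $\dot B^{-s}_{2,\infty}$-norms of the solution by those of the data (a fact proved separately in Proposition \ref{P3.9}, which may be quoted here, or bootstrapped jointly). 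The main obstacle I anticipate is the careful bookkeeping in the nonlinear terms of the differentiated leading energy: one must verify that the density-dependent contributions such as $\partial^\alpha(\varrho\,\mathcal{L}f)$ and $\partial^\alpha(\varrho\,u\cdot\nabla_v f)$, when $|\alpha|$ ranges up to $3$, never cost more than one ``uncontrolled'' derivative, so that they genuinely land inside $C\sigma\,\mathcal{D}_1(t)$ rather than inside something like $\|\nabla u\|_{H^2}^2$ that $\mathcal{D}_1$ does not dominate; this is exactly where the restriction $s>1$ and the structure $\mathcal{D}_1\ni\|\nabla^2 u\|_{H^2}^2$ (with the $\mu$ weight) are used.
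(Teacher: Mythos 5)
Your overall architecture coincides with the paper's: differentiate the higher-order energy, add $\tau_2\mathcal{E}_0'$, absorb every nonlinearity into $\sigma\mathcal{D}_1$, and then run the same interpolation/ODE argument as in \eqref{G3.480}--\eqref{G3.54} to get the $(1+t)^{-\frac{s}{2}-\frac12}$ decay; that last part of your plan is essentially the paper's (up to the minor point that, as in \eqref{G3.50}, the bracket in the interpolation step should also carry the bounded $H^3$ norms of the solution so that all pieces of $\mathcal{E}_1$ can be unified under a single exponent).

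The gap is in the core step, the analogue of the paper's claim \eqref{G3.59}. Your justification — ``each nonlinear term carries at least one extra spatial derivative \dots so the interpolation room is available,'' together with an appeal to the $\mu$-weighted term $\mu\|\nabla^2 u\|_{H^2}^2$ in $\mathcal{D}_1$ — does not hold as stated. First, at $|\alpha|=1$ several terms in \eqref{G3.25} produce genuinely first-order quantities that $\mathcal{D}_1$ does \emph{not} dominate: e.g.\ $J_8=-\int\partial^\alpha(au)\cdot\partial^\alpha u\,{\rm d}x$ yields $\|\nabla(a,u)\|_{L^2}^2$-type factors, the transport term $J_4$ yields $\|\nabla u\|_{L^2}\|\nabla f\|_{L^2_{x,v}}$, and the piece $J_{5_{31}}=-\int\partial^\alpha(\varrho\{\mathbf I-\mathbf P\}f)\cdot\partial^\alpha b\,{\rm d}x$ carries only one derivative on $b$, whereas $\mathcal{D}_1$ contains $\|\nabla^2(a,b)\|_{H^1}^2$ but not $\|\nabla(a,b)\|_{L^2}^2$ or $\|\nabla u\|_{H^2}^2$. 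Second, the $\mu$-weighted dissipation cannot be used to absorb $\mu$-independent nonlinearities without destroying uniformity in $\mu$ (the paper only uses it against terms that themselves carry a factor $\mu$, as in \eqref{G3.26}--\eqref{G3.27}). The mechanism that actually closes these cases is the negative-regularity structure: for $u,f,a,b$ one interpolates via Lemma \ref{LA.5}, $\|\nabla g\|_{L^2}\lesssim\|\nabla^2 g\|_{L^2}^{\frac{1+s}{2+s}}\|g\|_{\dot B^{-s}_{2,\infty}}^{\frac{1}{2+s}}$ (as in \eqref{G3.70}), and for the density terms $J_{5_{31}},J_{5_{32}}$ one integrates by parts and uses the duality bound $\|\varrho\|_{\dot H^{-1}}\lesssim\|\varrho\|_{\dot B^{-s}_{2,\infty}}+\|\varrho\|_{H^3}$ (valid since $s\geq1$, via Lemma \ref{LA.7}) to shift one derivative onto $\nabla b$ and $\nabla\{\mathbf I-\mathbf P\}f$, which then land in $\mathcal{D}_1$. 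You gesture at the $\dot B^{-s}_{2,\infty}$ regularity and correctly identify this absorption as the main obstacle, but the reasons you give for why it succeeds are the wrong ones, and without the interpolation/$\dot H^{-1}$-duality device the estimate \eqref{G3.59}, and hence \eqref{G3.81}--\eqref{G3.82}, does not close.
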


\begin{proof}
By a direct calculation similar to that in Lemma \ref{L3.5}, we can easily obtain
\begin{align}\label{G3.57}
\frac{\rm d}{{\rm d}t}\mathcal{E}^{\prime}_{0}(t)+\lambda_7\|\nabla^2(a,b)\|_{H^{1}}^{2}
\leq&\, C\big(\|\nabla\{\mathbf{I}-\mathbf{P}\}f\|_{L_{v}^{2}(H^{2})}^{2}+\|\nabla(b-u)\|_{H^{2}}^{2}\big), 
\end{align} 
Now we claim the following derivative estimate:
\begin{align}\label{G3.59}
&\frac{\rm d}{{\rm d}t}\sum_{1\leq|\alpha|\leq 3}\big(\|\partial^\alpha u\|_{L^2}^{2}+\|\partial^\alpha f\|_{L_{x,v}^2}^{2}\big)\nonumber\\
& \quad+\lambda_8\sum_{1\leq |\alpha|\leq3}\big(\|\partial^\alpha(b-u)\|_{L^2}^{2}
+{\mu}\|\nabla\partial^\alpha u\|_{L^2}^2+\|\partial^\alpha\{\mathbf{I}-\mathbf{P}\}f\|_{L^2_{v,\nu}(L^2)}^{2}\big)\nonumber\\
&\qquad \leq C\sigma\|\nabla^2(a,b)\|_{H^1}^2.
\end{align}
Here $\lambda_7,\lambda_8>0$ are uniform constants. 

Thus, in accordance with \eqref{G3.57} and \eqref{G3.59}, the Lyapunov inequality \eqref{G3.81} is shown. Arguing similarly to those in \eqref{G3.480}--\eqref{G3.54}, we infer from \eqref{G3.81} that \eqref{G3.82} holds. 

Now we justify the claim \eqref{G3.59}, which essentially relies on an elaborate analysis of the nonlinear terms  $J_3,...,J_9$ in \eqref{G3.25}. Indeed, we need to control $J_3,...,J_9$ by higher-order dissipation terms such that these can be absorbed by $\mathcal{D}_1(t)$. This can be achieved by taking advantage of the $\dot{B}^{-s}_{2,\infty}$ regularity of the solution.

Notice that 
$$
J_3=\frac{1}{2}\int_{\mathbb{R}^3}
\big\langle\partial^{\alpha}(u\cdot vf),\partial^{\alpha}f\big\rangle {\rm d}x+\frac{1}{2}\int_{\mathbb{R}^3}
\big\langle\partial^{\alpha}(\varrho u\cdot vf),\partial^{\alpha}f\big\rangle {\rm d}x=:J_{3_1}+J_{3_2}.
$$
We now analyze $J_{3_1}$ and $J_{3_2}$ separately. When $|\alpha|=2,3$, one has
\begin{align*}
|J_{3_1}|\leq\,& C\|\partial^\alpha(uf)\|_{L_{x,v}^2}\|v\partial^\alpha f\|_{L_{x,v}^2}\nonumber\\
\leq\, & C\big(\|\partial^\alpha u\|_{L^2}\|f\|_{H^3_{x,v}}+\|\nabla u\|_{L^6}\|f\|_{H^3_{x,v}}+\|\nabla^2 u\|_{L^2}\|f\|_{H^3_{x,v}}+\|\partial^\alpha f\|_{L^2_{x,v}}\|u\|_{H^3}\big)\nonumber\\
\,&\times \big(\|\partial^\alpha\{\mathbf{I}-\mathbf{P}\}f\|_{L^2_{v,\nu}(L^2)}+\|\nabla^2 (a,b)\|_{H^1}\big)\nonumber\\
\leq\,& C\big(\|u\|_{H^3}+\|f\|_{L^2_v(H^3)}\big)\big(\|\nabla^2 u\|_{H^1}^2+\|\nabla^2 (a,b)\|_{H^1}^2+\|\nabla\{\mathbf{I}-\mathbf{P}\}f\|_{L^2_{v,\nu}(H^2)}^2 \big).
\end{align*}
The lower-order case $|\alpha|=1$ requires an interpolation argument. It follows from Lemmas \ref{LA.1} and \ref{LA.5} that
\begin{align*}
|J_{3_1}|\leq\,& C\big(\|\nabla u\|_{L^2}+\|\nabla f\|_{L_{x,v}^2}\big)\big(\|u\|_{L^3}+\|f\|_{L_v^2(L^3)}\big)\|\nabla(v\partial^\alpha f)\|_{L_v^2(L^6)}  \nonumber\\
\leq\,& C\Big(\|\nabla u\|_{L^2}^{\frac{3}{2}}+\|\nabla f\|_{L_{x,v}^2}^{\frac{3}{2}}\Big)\Big(\| u\|_{L^2}^{\frac{1}{2}}+\| f\|_{L_{x,v}^2}^{\frac{1}{2}}\Big)\big(\|\nabla^2 (a,b)\|_{H^1}+\|\nabla\{\mathbf{I}-\mathbf{P}\}f\|_{L^2_{v,\nu}(H^2)}   \big)\nonumber\\
\leq\,& C\Big(\|\nabla^2 u\|_{L^2}^{\frac{3}{2}\times\frac{1+s}{2+s}}+\|\nabla^2 f\|_{L_{x,v}^2}^{\frac{3}{2}\times\frac{1+s}{2+s}}\Big)
\Big(\| u\|_{\dot{B}^{-s}_{2,\infty}}^{\frac{3}{2}\times\frac{1}{2+s}}+\|\nabla^2 f\|_{L_{v}^2(\dot{B}^{-s}_{2,\infty})}^{\frac{3}{2}\times\frac{1}{2+s}}\Big)\Big(\| u\|_{L^2}^{\frac{1}{2}}+\| f\|_{L_{x,v}^2}^{\frac{1}{2}}\Big)\nonumber\\
\,&\times \big(\|\nabla^2 (a,b)\|_{H^1}+\|\nabla\{\mathbf{I}-\mathbf{P}\}f\|_{L^2_{v,\nu}(H^2)}             \big)\nonumber\\ 
\leq\,& C\Big(\|u\|_{H^3\cap\dot{B}^{-s}_{2,\infty}}+\|f\|_{L^2_v(H^3\cap\dot{B}^{-s}_{2,\infty})}\Big)
\big(\|\nabla^2 (a,b)\|_{H^1}^2+\|\nabla\{\mathbf{I}-\mathbf{P}\}f\|_{L^2_{v,\nu}(H^2)}^2+\|\nabla^2 u\|_{H^1}^2              \big).
\end{align*}
From the above estimates of $J_{3_1}$ and \eqref{G3.1}, for $|\alpha|=1,2,3$, it yields 
\begin{align*}
|J_{3_1}|\leq C    \sigma\big(\|\nabla^2 u\|_{H^1}^2+\|\nabla^2 (a,b)\|_{H^1}^2+\|\nabla\{\mathbf{I}-\mathbf{P}\}f\|_{L^2_{v,\nu}(H^2)}^2   \big).
\end{align*}
Now we turn to $J_{3_2}$. For $|\alpha|=1,2$, as $|v\partial^\alpha f|\lesssim |\partial^\alpha a|+|\partial^\alpha b|+\|\partial^\alpha\{\mathbf{I-P}\}f\|_{L^2_{v,\nu}}$, it holds
\begin{align*}
|J_{3_2}|\leq\, & C\|\varrho\|_{H^3}\big(\|u\|_{L^6}\|f\|_{L_v^2(L^6)}\|v\partial^\alpha f\|_{L_v^2(L^6)}+\|\nabla u\|_{L^6}\|f\|_{L_v^2(L^6)}\|v\partial^\alpha f\|_{L_v^2(L^6)}\big) \nonumber\\
&+C\|\varrho\|_{H^3}\big(\|u\|_{L_v^2(L^6)}\|\nabla f\|_{L_v^2(L^6)}\|v\partial^\alpha f\|_{L_v^2(L^6)}+\|\nabla u\|_{H^1}\|\nabla^2f\|_{L_v^2(H^1)}\|v\partial^\alpha f\|_{L_v^2(L^6)}\big)\nonumber\\
&+C\|\varrho\|_{H^3}\|\nabla^2 u\|_{H^1}\|\nabla f\|_{L_v^2(H^1)}\|v\partial^\alpha f\|_{L_v^2(L^6)}\nonumber\\
\leq\,& C\sigma \big( \|\nabla u\|_{L^2}^2+\|\nabla f\|_{L_{x,v}^2}^2   \big)\|\nabla(v\partial^\alpha f)\|_{L_{x,v}^2}+C\sigma\|\nabla^2 (a,b)\|_{H^1}^2             \nonumber\\
&+C\sigma\|\nabla^2 u\|_{H^1}^2+C\sigma\|\nabla\{\mathbf{I}-\mathbf{P}\}f\|_{L^2_{v,\nu}(H^2)}^2\nonumber\\
\leq\,& C\sigma\Big(\|\nabla^2 u\|_{L^2}^{2\times\frac{1+s}{2+s}}+\|\nabla^2 f\|_{L_{x,v}^2}^{2\times\frac{1+s}{2+s}}\Big)
\Big(\| u\|_{\dot{B}^{-s}_{2,\infty}}^{2\times\frac{1}{2+s}}+\|\nabla^2 f\|_{L_{v}^2(\dot{B}^{-s}_{2,\infty})}^{2\times\frac{1}{2+s}}\Big)\nonumber\\
&\times \big(\|\nabla^2 (a,b)\|_{H^1}+\|\nabla\{\mathbf{I}-\mathbf{P}\}f\|_{L^2_{v,\nu}(H^2)}             \big)\nonumber\\
&+C\sigma\big(\|\nabla^2 u\|_{H^1}^2+\|\nabla^2 (a,b)\|_{H^1}^2+\|\nabla\{\mathbf{I}-\mathbf{P}\}f\|_{L^2_{v,\nu}(H^2)}^2 \big)\nonumber\\
\leq\,& C\sigma\big(\|\nabla^2 u\|_{H^1}^2+\|\nabla^2 (a,b)\|_{H^1}^2+\|\nabla\{\mathbf{I}-\mathbf{P}\}f\|_{L^2_{v,\nu}(H^2)}^2 \big).
\end{align*}
Similarly, for $|\alpha|=3$, we  have
\begin{align*}
|J_{3_2}|\leq \,&C\big(\|\nabla^3\varrho\|_{L^2}\|u\|_{L^{\infty}}\|f\|_{L_v^2(L^{\infty})} +C\|\varrho\|_{H^3}\|u\|_{L^6}\|\nabla f\|_{L_v^2(L^{6})}\big)\|v\partial^\alpha f\|_{L_{x,v}^2}\nonumber\\
&+ C\|\varrho\|_{H^3}\big(\|\nabla u\|_{L^6}\|f\|_{L_v^2(L^{6})}\|v\partial^\alpha f\|_{L_{x,v}^2}+\|\nabla u\|_{H^1}\|\nabla^2f\|_{L_v^2(H^{1})}\|v\partial^\alpha f\|_{L_{x,v}^2}\big)  \nonumber\\
&+C\|\varrho\|_{H^3}\|\nabla^2 u\|_{H^1}\|\nabla f\|_{L_v^2(H^{1})}\|v\partial^\alpha f\|_{L_{x,v}^2}\nonumber\\
\leq\,& C\sigma\big(\|\nabla^2 u\|_{H^1}^2+\|\nabla^2 (a,b)\|_{H^1}^2+\|\nabla\{\mathbf{I}-\mathbf{P}\}f\|_{L^2_{v,\nu}(H^2)}^2 \big).
\end{align*}
By combining the above estimates for $J_{3_1}$ and  $J_{3_2}$, we arrive at
\begin{align}\label{G3.63}
|J_3|\leq C\sigma\big(\|\nabla^2 u\|_{H^1}^2+\|\nabla^2 (a,b)\|_{H^1}^2+\|\nabla\{\mathbf{I}-\mathbf{P}\}f\|_{L^2_{v,\nu}(H^2)}^2  \big).    
\end{align}
For the term  $J_4$, we split the proof into the cases  $|\alpha|=1$ and $|\alpha|=2,3$. As for $|\alpha|=1$, we employ the $\dot{B}^{-s}_{2,\infty}$ regularity to obtain
\begin{align*}
|J_4|\leq\,& C\|\nabla u\|_{L^3}\|\{\mathbf{I}-\mathbf{P}\}\nabla_vf\|_{L_v^2(L^6)}\|\nabla f\|_{L_{x,v}^2}+C\|\nabla u\|_{L^2}\|(a,b)\|_{L^3} \|\nabla f\|_{L_v^2(L^6)} \nonumber\\
\leq\,& C\|\nabla u\|_{L^2}^{\frac{1}{2}}\|\nabla^2 u\|_{L^2}^{\frac{1}{2}}\|\nabla\{\mathbf{I}-\mathbf{P}\}f\|_{L^2_{v,\nu}(L^2)}\|\nabla f\|_{L_{x,v}^2}\nonumber\\
&+C
\|(a,b)\|_{L^2}^{\frac{1}{2}}\|\nabla(a,b)\|_{L^2}^{\frac{1}{2}}\|\nabla u\|_{L^2}\|\nabla^2 f\|_{L_{x,v}^2}\nonumber\\
\leq\,& C\|\nabla^2 u\|_{L^2}^{\frac{1}{2}\times \frac{1+s}{2+s}}\|u\|_{\dot{B}_{2,\infty}^{-s}}^{\frac{1}{2}\times\frac{1}{2+s}}
\|\nabla\{\mathbf{I}-\mathbf{P}\}f\|_{L^2_{v,\nu}(L^2)}\|\nabla^2 f\|_{L_v^2(\dot{B}^{-s}_{2,\infty})}^{\frac{1+s}{2+s}}
\|f\|_{L_v^2(\dot{B}^{-s}_{2,\infty})}^{\frac{1}{2+s}}\nonumber\\
&+C\|(a,b)\|_{L^2}^{\frac{1}{2}}\|\nabla^2(a,b)\|_{L^2}^{\frac{1}{2}\times\frac{1+s}{2+s}}
\|(a,b)\|_{\dot{B}^{-s}_{2,\infty}}^{\frac{1}{2}\times\frac{1}{2+s}}\|\nabla^2 u\|_{L^2}^{ \frac{1+s}{2+s}}\|u\|_{\dot{B}_{2,\infty}^{-s}}^{\frac{1}{2+s}}\|\nabla^2 f\|_{L_{x,v}^2}\nonumber\\
\leq\,& C\sigma\big(\|\nabla^2 u\|_{H^1}^2+\|\nabla^2 (a,b)\|_{H^1}^2+\|\nabla\{\mathbf{I}-\mathbf{P}\}f\|_{L^2_{v,\nu}(H^2)}^2   \big).
\end{align*}
The higher-order  cases $|\alpha|=2,3$ can  easily addressed similar to that in  \eqref{G3.27}.
Consequently, for $|\alpha|=1,2,3$, we have
\begin{align}\label{G3.65}
|J_4|\leq  C\sigma\big(\|\nabla^2 u\|_{H^1}^2+\|\nabla^2 (a,b)\|_{H^1}^2+\|\nabla\{\mathbf{I}-\mathbf{P}\}f\|_{L^2_{v,\nu}(H^2)}^2   \big).    
\end{align}
For the term  $J_6$, similarly to \eqref{G3.31}, we obtain
\begin{align}\label{G3.66}
|J_6|\leq C\sigma\big( \|\nabla^2 u\|_{H^1}^2+\mu\|\nabla^2 u\|_{H^2}^2+\|\nabla^2(a,b)\|_{H^1}^2+\|\nabla (b-u)\|_{H^2}^2\big),\quad |\alpha|=2,3.
\end{align}
When $|\alpha|=1$, using the pressure estimates \eqref{G3.6} and \eqref{NJKG3.6}, one also has
\begin{align*}
|J_6|\leq\,& C\|\varrho\|_{L^3}\big(\|\nabla^2 P\|_{L^2}+\|\nabla P\|_{L^6}\big)
\big(\|\nabla u\|_{L^6}+\|\nabla^2 u\|_{L^2}\big) \nonumber\\
\leq\,& C\sigma\|\nabla^2 P\|_{L^2}\|\nabla^2 u\|_{L^2}\nonumber\\
\leq\,&C\sigma\big( \|\nabla^2 u\|_{H^1}^2+\mu \|\nabla^2 u\|_{H^2}^2+\|\nabla^2(a,b)\|_{H^1}^2+\|\nabla (b-u)\|_{H^1}^2\big).\quad |\alpha|=1.
\end{align*}
Similarly, we derive
\begin{align}\label{G3.680}
|J_7|\leq\,&C\|\varrho\|_{H^3}\|\nabla^2 P\|_{H^1}\|\nabla^2 u\|_{H^1}\nonumber\\
\leq\,&C\sigma\big(\|\nabla^2 u\|_{H^1}^2+\mu \|\nabla^2 u\|_{H^2}^2+\|\nabla^2 (a,b)\|_{H^1}^2+\|\nabla (b-u)\|_{H^2}^2  \big),\quad \quad |\alpha|=2,3,
\end{align}
and 
\begin{align}\label{G3.68}
|J_7|\leq \,& C\|\varrho\|_{L^2}\|\nabla^2 P\|_{L^2}\|\nabla u\|_{L^{\infty}}+C\|\varrho\|_{L^3}\|\nabla P\|_{L^6}\|\nabla^2 u\|_{L^2}\nonumber\\
\leq\,& C\sigma\|\nabla^2 P\|_{L^2}\|\nabla^2 u\|_{H^1}\nonumber\\
\leq\,& C\sigma\big(\|\nabla^2 u\|_{H^1}^2+\mu\|\nabla^2 u\|_{H^2}^2+\|\nabla^2 (a,b)\|_{H^1}^2+\|\nabla (b-u)\|_{H^1}^2     \big),\quad |\alpha|=1.
\end{align}
For the term $J_8$, we have
\begin{align}\label{G3.69}
|J_8|\leq\,& C\|\partial^\alpha u\|_{L^2}\big(\|\nabla^2(a,u)\|_{H^1}\|(a,u)\|_{L^{\infty}}+\|\nabla^2(a,u)\|_{L^2}\|(a,u)\|_{H^3}\big)   \nonumber\\
\leq\,& C\sigma\big(\|\nabla^2 u\|_{H^1}^2+\|\nabla^2(a,b)\|_{H^1}^2\big),\quad |\alpha|=2,3.
\end{align}
Also, it follows from the assumption \eqref{G3.1}, Lemma \ref{LA.1} and
Lemma \ref{LA.5} that
\begin{align}\label{G3.70}
|J_8|\leq\,& C\|(a,u)\|_{L^3}\|\nabla u\|_{L^6}\|\nabla(a,u)\|_{L^2}\nonumber\\
\leq\,& C\|(a,u)\|_{L^2}^{\frac{1}{2}}\|\nabla(a,u)\|_{L^2}^{\frac{3}{2}}
\|\nabla^2 u\|_{L^2}\nonumber\\
\leq\,& C\|(a,u)\|_{L^2}^{\frac{1}{2}}
\|\nabla^2(a,u)\|_{L^2}^{\frac{3}{2}\times\frac{1+s}{2+s}}\|\nabla(a,u)\|_{\dot{B}_{2,\infty}^{-s}}^{\frac{3}{2}
\times\frac{1}{2+s}}\|\nabla^2 u\|_{L^2}\nonumber\\
\leq\,& C\sigma\big(\|\nabla^2 u\|_{H^1}^2+\|\nabla^2(a,b)\|_{H^1}^2\big),\quad |\alpha|=1.
\end{align}
For the term $J_9$, the commutator estimate in Lemma \ref{LA.2} as well as Lemmas \ref{LA.1} and \ref{LA.3} 
implies that 
\begin{align}
|J_9|&\leq C\|\partial^\alpha u\|_{L^2}\|\nabla u\|_{L^{\infty}}\|\partial^\alpha u\|_{L^2}\nonumber\\
&\leq C\|\nabla u\|_{H^2}^2\|\nabla^2u\|_{H^1}\nonumber\\
&\leq C\|u\|_{H^3} \|\nabla^2 u\|_{H^1}^2 \leq C\sigma \|\nabla u\|_{H^2}^2.\label{J9}
\end{align}
Finally, we address the remaining term $J_5$, which constitutes the most critical step in this process.
Now, we shall decompose  $J_5$ into three distinct terms.
\begin{align*}
J_5=\,&\int_{\mathbb{R}^3}\langle\partial^\alpha(\varrho uf),\nabla_v\partial^\alpha f\rangle\mathrm{d}x+\int_{\mathbb{R}^3}\langle\partial^\alpha(\varrho \mathcal{L}f),\partial^\alpha f\rangle\mathrm{d}x +\int_{\mathbb{R}^3}\partial^\alpha(\varrho u)\cdot\partial^\alpha b\mathrm{d}x  \nonumber\\
=\,&\int_{\mathbb{R}^3}\langle\partial^\alpha(\varrho uf),\nabla_v\partial^\alpha f\rangle\mathrm{d}x+\int_{\mathbb{R}^3}\partial^\alpha\big(\varrho( u-b)\big)\cdot\partial^\alpha b\mathrm{d}x +\int_{\mathbb{R}^3}\langle\partial^\alpha(\varrho \mathcal{L}\{\mathbf{I}-\mathbf{P}\}f),\partial^\alpha f\rangle\mathrm{d}x  \nonumber\\
=:\,&J_{5_1}+J_{5_2}+J_{5_3}.
\end{align*}
Analogous to the derivation of $J_3$, we can similarly deduce that
\begin{align*} 
|J_{5_1}|\leq  C\sigma\big(\|\nabla^2 u\|_{H^1}^2+\|\nabla^2 (a,b)\|_{H^1}^2+\|\nabla\{\mathbf{I}-\mathbf{P}\}f\|_{L^2_{v,\nu}(H^2)}^2  \big).  
\end{align*}
When $|\alpha|=2,3$, we have
\begin{align*}
|J_{5_2}|\leq C\|\varrho\|_{H^3}\|\nabla (u-b)\|_{H^2}\|\nabla^2 b\|_{H^1}  
\leq C\sigma\Big(\|\nabla^2 (a,b)\|_{H^1}^2+\sum_{1\leq|\alpha|\leq 3}\|\partial^\alpha(b-u)\|_{L^2}^2\Big).
\end{align*}
While for  $|\alpha|=1$, applying integration by parts, we get
\begin{align*} 
|J_{5_2}|\leq\,& C\|\varrho\|_{H^3}(\|u-b\|_{L^6}\|\nabla^2 b\|_{L^2}+\|\nabla(u-b)\|_{L^2}\|\nabla b\|_{L^6})\nonumber\\
\leq\,& C\sigma \Big(\|\nabla^2 (a,b)\|_{H^1}^2+\sum_{1\leq|\alpha|\leq 3}\|\partial^\alpha(b-u)\|_{L^2}^2\Big).
\end{align*}
Therefore, it is evident that
\begin{align*}
|J_{5_2}|  \leq C\sigma\Big(\|\nabla^2 (a,b)\|_{H^1}^2+\sum_{1\leq|\alpha|\leq 3}\|\partial^\alpha(b-u)\|_{L^2}^2\Big).  
\end{align*}
For the term $J_{5_3}$, we shall utilize the   macro-micro decomposition
\eqref{mmd} and  the self-adjoint property of $\mathcal{L}$. First, 
\begin{align*}
J_{5_3}&=-\int_{\mathbb{R}^3}   \partial^\alpha(\varrho\{\mathbf{I}-\mathbf{P}  \}f) \cdot\partial^\alpha b {\rm d}x+\int_{\mathbb{R}^3}\langle\partial^\alpha(\varrho\{\mathbf{I}-\mathbf{P}  \}f),\partial^\alpha\mathcal{L}\{\mathbf{I}-\mathbf{P}\}f\rangle\mathrm{d}x\nonumber\\
&=:J_{5_{31}}+J_{5_{32}}.
\end{align*}
When $|\alpha|=2,3$, we have
\begin{align*}
|J_{5_{31}}|\leq C\|\varrho\|_{H^3}\|\nabla\{\mathbf{I}-\mathbf{P}\}f\|_{L_v^2(H^2)}\|\partial^\alpha b\|_{L^2}\leq C\sigma\big(\|\nabla^2 (a,b)\|_{H^1}^2+\|\nabla\{\mathbf{I}-\mathbf{P}\}f\|_{L^2_{v,\nu}(H^2)}^2              \big).
\end{align*}
When $|\alpha|=1$, by integration by parts, the fact $\dot{B}^{-s}_{2,\infty}\cap H^3\subset\dot{H}^{-1}$ for $1<s\leq\frac{3}{2}$ and Lemma \ref{LA.7} gives
\begin{align*}
|J_{5_{31}}|\leq\,& C\|\varrho\|_{\dot{H}^{-1}}\|\nabla\{\mathbf{I}-\mathbf{P}\}f\cdot\nabla b\|_{L_v^2(\dot{H}^1)}\nonumber\\
\leq\, &C\Big(\|\varrho\|_{\dot{B}^{-s}_{2,\infty}}+\|\varrho\|_{H^{3}}\Big)
\big(\|\nabla^2\{\mathbf{I}-\mathbf{P}\}f\|_{L_v^2(L^3)}\|\nabla b\|_{L^6}+\|\nabla\{\mathbf{I}-\mathbf{P}\}f\|_{L_v^2(L^6)}\|\nabla^2 b\|_{L^3}\big)\nonumber\\
\leq\,&C\sigma\big(\|\nabla^2 (a,b)\|_{H^1}^2+\|\nabla\{\mathbf{I}-\mathbf{P}\}f\|_{L^2_{v,\nu}(H^2)}^2              \big).
\end{align*}
For the term $J_{5_{32}}$, the case $|\alpha|=2,3$ can be easily addressed by 
\begin{align*}
|J_{5_{32}}|\leq C\|\varrho\|_{H^3}\|\partial^\alpha\{\mathbf{I}-\mathbf{P}  \}f\|_{L_v^2(L^{\infty})}\|\partial^\alpha\{\mathbf{I}-\mathbf{P}  \}f\|_{L_{x,v}^2}\leq C\sigma   \|\nabla\{\mathbf{I}-\mathbf{P}\}f\|_{L^2_{v,\nu}(H^2)}^2.  
\end{align*}
When $|\alpha|=1$, similar to the case $|\alpha|=1$ for $J_{5_{31}}$, from integration by parts, interpolation inequality and Lemma \ref{LA.7}, one gets
\begin{align*}
|J_{5_{32}}|\leq\,& C  \|\varrho\|_{\dot{H}^{-1}} 
\|\nabla\{\mathbf{I}-\mathbf{P}\}f\|_{L^2_{v,\nu}(H^2)}^2\nonumber\\
\leq\,& C\Big(\|\varrho\|_{\dot{B}^{-s}_{2,\infty}}+\|\varrho\|_{H^{3}}\Big)
\|\nabla\{\mathbf{I}-\mathbf{P}\}f\|_{L^2_{v,\nu}(H^2)}^2\nonumber\\
\leq\,& C\sigma\|\nabla\{\mathbf{I}-\mathbf{P}\}f\|_{L^2_{v,\nu}(H^2)}^2.
\end{align*}
Therefore, we end up
with
\begin{align}\label{G3.80}
|J_5|\leq C\sigma\big(\|\nabla^2 u\|_{H^1}^2+\|\nabla^2 (a,b)\|_{H^1}^2+\|\nabla\{\mathbf{I}-\mathbf{P}\}f\|_{L^2_{v,\nu}(H^2)}^2 \big).
\end{align}
Owing to \eqref{G3.25}--\eqref{G3.27},  \eqref{G3.63}--\eqref{J9}, and \eqref{G3.80}, we prove the claim \eqref{G3.59} and finish the proof of Lemma \ref{L3.8}.
\end{proof}



\subsection{Evolution of negative Besov norms}

In this subsection, we analyze the uniform evolution of the negative Besov norms associated with  $(u,f)$ required in \eqref{G3.54} and \eqref{G3.82}, based on the Littlewood-Paley theorem. 
\begin{prop}\label{P3.9}
Under the conditions of Proposition \ref{P3.1}, we have 
\begin{align}\label{G3.85}
&  
\|u(t)\|_{\dot{B}^{-s}_{2,\infty}}^2+\|f(t)\|_{L_v^2(\dot{B}^{-s}_{2,\infty})}^2 \nonumber\\
&\quad +\int_0^t \Big(\|(b-u)(\tau)\|_{\dot{B}^{-s}_{2,\infty}}^2+\mu\|\nabla u(\tau)\|_{\dot{B}^{-s}_{2,\infty}}^2
+\|\{\mathbf{I}-\mathbf{P}\}f(\tau)\|_{L^2_{v,\nu}({\dot{B}_{2,\infty}^{-s}})}^{2}\Big){\rm d}\tau\nonumber\\
\,&\qquad \leq C\Big(  \|u_0\|_{H^3\cap\dot{B}^{-s}_{2,\infty}}^2+ \|f_0\|_{L_{v}^2(H^3\cap\dot{B}^{-s}_{2,\infty})}^2\Big),
\end{align}
for any $t\in[0,T)$.
\end{prop}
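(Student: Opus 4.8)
The plan is to run a frequency‑localized energy estimate on the velocity equation $\eqref{rNSVFP}_{2}$ and the kinetic equation $\eqref{rNSVFP}_{4}$, and then pass to the supremum over dyadic blocks. First I would apply the Littlewood--Paley projector $\dot{\Delta}_{j}$ to these two equations, pair the resulting identities in $L^{2}_{x}$ and $L^{2}_{x,v}$ with $\dot{\Delta}_{j}u$ and $\dot{\Delta}_{j}f$ respectively, and add them. The transport terms $u\cdot\nabla u$ and $v\cdot\nabla_{x}f$ contribute only the commutators $[\dot{\Delta}_{j},u\cdot\nabla]u$ and $[\dot{\Delta}_{j},u\cdot\nabla_{v}]f$ to the nonlinear remainder, since the leading pieces vanish after integration by parts using ${\rm div}\,u=0$. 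Invoking the coercivity \eqref{H2.1} of $-\mathcal{L}$ together with the friction/Fokker--Planck cancellation already exploited in Lemma \ref{L3.3} (which assembles the relative‑velocity dissipation from the $+u$ term in the fluid equation, the $-u\cdot v\sqrt{M}$ term in the kinetic equation, and the $|b^{f}|^{2}$ term in \eqref{H2.1}), this produces, for each $j\in\mathbb{Z}$,
\[
\frac{1}{2}\frac{{\rm d}}{{\rm d}t}\big(\|\dot{\Delta}_{j}u\|_{L^{2}}^{2}+\|\dot{\Delta}_{j}f\|_{L^{2}_{x,v}}^{2}\big)+c_{0}D_{j}(t)\le|R_{j}(t)|,
\]
where $D_{j}:=\|\dot{\Delta}_{j}(b^{f}-u)\|_{L^{2}}^{2}+\mu\|\nabla\dot{\Delta}_{j}u\|_{L^{2}}^{2}+\|\{\mathbf{I}-\mathbf{P}\}\dot{\Delta}_{j}f\|_{L^{2}_{v,\nu}}^{2}$ and $R_{j}$ collects the nonlinear terms arising from the density‑dependent friction ($\varrho\mathcal{L}f$, $\varrho\,u\cdot\nabla_{v}f$, $\varrho\,u\cdot vf$, $\varrho\,u\cdot v\sqrt{M}$), the transport commutators, and the corrections $\frac{\varrho}{1+\varrho}\nabla P$, $\frac{\mu}{1+\varrho}\Delta u$, $a^{f}u$, $\tfrac12 u\cdot vf$, $u\cdot\nabla_{v}f$.

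Next I would multiply by $2^{-2sj}$ and estimate $2^{-2sj}|R_{j}|$ using Bony's paraproduct decomposition and the endpoint product laws in $\dot{B}^{-s}_{2,\infty}$ for $0<s\le\frac32$ (see \cite{BCD-Book-2011}), always placing the slowly‑decaying factor into the negative Besov space and the fast‑decaying one into a high Sobolev norm, while the velocity weights and $v$‑derivatives of $f$ are absorbed into the weighted part of $D_{j}$ (their macroscopic contributions entering through $\nabla(a^{f},b^{f})$ via the macro--micro decomposition \eqref{mmd}). The density‑dependent terms are handled using the \emph{a priori} smallness $\|\varrho\|_{H^{3}\cap\dot{B}^{-s}_{2,\infty}}\le\sigma$ from \eqref{G3.1}; the delicate low‑frequency contributions, where one genuinely needs a negative‑regularity gain on $\varrho$, are treated by integration by parts together with the embedding $\dot{B}^{-s}_{2,\infty}\cap H^{3}\hookrightarrow\dot{H}^{-1}$ (valid for $1<s\le\frac32$), exactly as for the term $J_{5}$ in Lemma \ref{L3.8}; the term $\frac{\varrho}{1+\varrho}\nabla P$ additionally needs a Besov‑type elliptic estimate for $\nabla P$ in the spirit of Lemma \ref{L3.1}. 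The net outcome is a bound of the schematic form $2^{-2sj}|R_{j}(t)|\le\frac{c_{0}}{2}\,2^{-2sj}D_{j}(t)+C\,\Phi(t)\,Y(t)$, where $Y(t):=\|u(t)\|_{\dot{B}^{-s}_{2,\infty}}^{2}+\|f(t)\|_{L^{2}_{v}(\dot{B}^{-s}_{2,\infty})}^{2}$ and $\Phi(t)\lesssim\|u\|_{H^{3}}^{2}+\|f\|_{L^{2}_{v}(H^{3})}^{2}+\|\nabla u\|_{H^{2}}+\|\nabla f\|_{L^{2}_{v}(H^{2})}+\sigma$.

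Here the decay estimates \eqref{G3.54} and \eqref{G3.82} are indispensable: they give $\|u\|_{H^{3}}^{2}+\|f\|_{L^{2}_{v}(H^{3})}^{2}\lesssim(1+t)^{-s}$ and $\|\nabla u\|_{H^{2}}+\|\nabla f\|_{L^{2}_{v}(H^{2})}\lesssim(1+t)^{-\frac{s}{2}-\frac{1}{2}}$, both integrable on $[0,\infty)$ \emph{precisely because $s>1$}, so $\Phi\in L^{1}(0,T)$ with a norm uniform in $T$ and $\mu$ (the $\sigma$‑contribution being harmless since $\sigma<1$). Absorbing $\frac{c_{0}}{2}2^{-2sj}D_{j}$ on the left and integrating in time gives, with $X_{j}(t):=2^{-2sj}(\|\dot{\Delta}_{j}u(t)\|_{L^{2}}^{2}+\|\dot{\Delta}_{j}f(t)\|_{L^{2}_{x,v}}^{2})$,
\[
X_{j}(t)+\frac{c_{0}}{2}\int_{0}^{t}2^{-2sj}D_{j}\,{\rm d}\tau\le X_{j}(0)+C\int_{0}^{t}\Phi(\tau)\,Y(\tau)\,{\rm d}\tau\qquad(j\in\mathbb{Z}).
\]
Taking the supremum over $j$ on the left yields $Y(t)\le Y(0)+C\int_{0}^{t}\Phi\,Y$, whence by Gronwall $Y(t)\le Y(0)\exp\!\big(C\|\Phi\|_{L^{1}(0,\infty)}\big)\lesssim\|u_{0}\|_{H^{3}\cap\dot{B}^{-s}_{2,\infty}}^{2}+\|f_{0}\|_{L^{2}_{v}(H^{3}\cap\dot{B}^{-s}_{2,\infty})}^{2}$, uniformly in $t\in[0,T)$ and $\mu\in(0,1)$. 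Inserting this uniform bound back into the displayed dyadic inequality bounds $\int_{0}^{t}2^{-2sj}D_{j}\,{\rm d}\tau$ uniformly in $j$, which, after the supremum over $j$, provides the time‑integrated dissipation terms in \eqref{G3.85}.

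The hardest part will be the treatment of the density‑dependent nonlinearities — above all $\varrho\mathcal{L}f$ and $\varrho\,u\cdot\nabla_{v}f$ — in the negative‑regularity space: $\varrho$ carries no dissipation, so one cannot afford to lose regularity on it, and the only leverage is the smallness from \eqref{G3.1} combined with the embedding $\dot{B}^{-s}_{2,\infty}\cap H^{3}\hookrightarrow\dot{H}^{-1}$; the strict inequality $s>1$ is used twice, for this embedding and, equivalently, for the $L^{1}$‑integrability of $\Phi$. A secondary, purely structural point is that $\sup_{j}$ commutes neither with $\frac{{\rm d}}{{\rm d}t}$ nor with $\int_{0}^{t}$, so one must keep the dyadic inequality intact, integrate, and only then take the supremum; and every constant must be tracked to be independent of $T$ and $\mu$, which holds because the viscosity enters only through the favorably‑signed term $\mu D_{j}$ and because the decay rates \eqref{G3.54}, \eqref{G3.82} are themselves $\mu$‑uniform.
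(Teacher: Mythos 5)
Your overall architecture coincides with the paper's: dyadic energy identities with the same cancellation structure, Besov product laws with the slowly decaying factor in $\dot{B}^{-s}_{2,\infty}$, smallness from \eqref{G3.1}, and the decay \eqref{G3.82} together with $s>1$ to make the gradient norms time-integrable. The genuine gap is in your closing mechanism. The density-dependent terms whose kinetic factor is microscopic — the pairings that produce the paper's $K_4$--$K_7$, e.g. $\dot{\Delta}_j(\varrho\{\mathbf{I}-\mathbf{P}\}f)$ against $\dot{\Delta}_j\mathcal{L}\{\mathbf{I}-\mathbf{P}\}f$, or $(1+\varrho)u\cdot v\,b\sqrt{M}$ against $\dot{\Delta}_j\{\mathbf{I}-\mathbf{P}\}f$ — are estimated by the product laws in terms of the \emph{full frequency-supremum} norms $\|b-u\|_{\dot{B}^{-s}_{2,\infty}}$ and $\|\{\mathbf{I}-\mathbf{P}\}f\|_{L^2_{v,\nu}(\dot{B}^{-s}_{2,\infty})}$, not in terms of the single block at frequency $j$. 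Hence they can neither be absorbed into $\tfrac{c_0}{2}2^{-2sj}D_j(t)$ pointwise in $j$, as you claim, nor be dominated by $\Phi(t)Y(t)$: the weighted microscopic norm is not controlled by $\|f\|_{L^2_v(\dot{B}^{-s}_{2,\infty})}$, which is exactly why a bare ``$+\sigma$'' appears in your $\Phi$. With that constant present, the assertion that $\Phi\in L^1(0,T)$ uniformly in $T$ is false ($\|\Phi\|_{L^1(0,T)}\gtrsim\sigma T$), so your Gronwall factor behaves like $e^{C\sigma t}$ and the uniform-in-time bound \eqref{G3.85} does not follow.

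The repair is the paper's route: integrate the dyadic inequality in time first, then multiply by $2^{-2sj}$ and take the supremum over $j$, keeping those contributions as $C\sigma\int_0^t\big(\|(b-u)\|^2_{\dot{B}^{-s}_{2,\infty}}+\|\{\mathbf{I}-\mathbf{P}\}f\|^2_{L^2_{v,\nu}(\dot{B}^{-s}_{2,\infty})}\big)\,{\rm d}\tau$ and absorbing them into the corresponding dissipation on the left of \eqref{G3.87} by the smallness of $\sigma$. The remaining terms are bounded by $C\sigma\sup_{\tau\in[0,t]}\big(\|u\|_{\dot{B}^{-s}_{2,\infty}}+\|f\|_{L^2_v(\dot{B}^{-s}_{2,\infty})}\big)\int_0^t\|\nabla(a,b,u)\|_{H^2}\,{\rm d}\tau$, where \eqref{G3.82} and $s>1$ give a time integral uniform in $t$, and the resulting quadratic supremum is absorbed by Young's inequality and smallness of $\sigma$ rather than by Gronwall. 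Your use of the embedding $\dot{B}^{-s}_{2,\infty}\cap H^3\hookrightarrow\dot{H}^{-1}$ for the $\varrho$-terms is an acceptable variant of the paper's $\dot{B}^{\frac32}_{2,1}$ product estimates; with the closing step corrected as above, the rest of your argument is sound.
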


\begin{proof}
Applying $\dot{\Delta}_j$ to \eqref{rNSVFP}$_2$--\eqref{rNSVFP}$_4$ yields
\begin{equation}\label{3.474}
\left\{
\begin{aligned}
&\partial_{t} \dot{\Delta}_j 
 u +\nabla \dot{\Delta}_j  P +\dot{\Delta}_j (u -b^{f })-\mu\Delta\dot\Delta_j u\\\
 &\quad =\dot{\Delta}_j\Big(-u \cdot\nabla u -\frac{\mu \varrho  }{1+\varrho }\Delta u +\frac{\varrho }{1+\varrho }\nabla P {-au }\Big),\\
&{\rm div}\, \dot{\Delta}_j  u =0,\\
&\partial_{t}\dot{\Delta}_j f +v\cdot\nabla_x \dot{\Delta}_j f -\dot{\Delta}_j u \cdot v\sqrt{M}-\mathcal{L} \dot{\Delta}_jf \\
&\quad =\dot{\Delta}_j\Big({\frac{1}{2}}u\cdot vf -u \cdot\nabla_{v}f +\varrho \Big(\mathcal{L}f -u \cdot\nabla_v f +\frac{1}{2}u \cdot vf +u \cdot v\sqrt{M}\Big)\Big).
\end{aligned}
\right.
\end{equation}
Taking the inner products of $\eqref{3.474}_1$ and $\eqref{3.474}_3$ by $\dot{\Delta}_j u$ and $\dot{\Delta}_j f$, respectively, we have
\begin{align}\label{G3.86}
&\frac{1}{2}\frac{{\rm d}}{{\rm d}t}\big(\|\dot{\Delta}_j u\|_{L^2}^2+\|\dot{\Delta}_j f\|_{L^2_{x,v}}^2   \big)+\lambda_0\|\dot{\Delta}_j\{\mathbf{I}-\mathbf{P}\}f\|_{L^2_{v,\nu}(L^2)}^2+\|\dot{\Delta}_j(u-b)\|_{L^2}^2
+\mu\|\nabla\dot{\Delta}_j u\|_{L^2}^2\nonumber\\
\,&\quad \leq-\mu\int_{\mathbb{R}^3}\dot{\Delta}_j\Big(\frac{\varrho}{1+\varrho}\Delta u\Big)\cdot\dot{\Delta}_ju\mathrm{d}x-\int_{\mathbb{R}^3}\dot{\Delta}_j(u\cdot\nabla u)\cdot\dot{\Delta}_ju\mathrm{d}x-\int_{\mathbb{R}^3}\dot{\Delta}_j\Big(\frac{\varrho}{1+\varrho}\nabla P\Big)\cdot\dot{\Delta}_ju\mathrm{d}x\nonumber\\
&\qquad -\int_{\mathbb{R}^3}\dot{\Delta}_j(au)\cdot\dot{\Delta}_j(u-b)\mathrm{d}x
+\int_{\mathbb{R}^3}\big\langle\dot{\Delta}_j\big((1+\varrho)u\cdot v b\cdot v\sqrt{M}\big),\dot{\Delta}_j\{\mathbf{I}-\mathbf{P}\}f\big\rangle\mathrm{d}x
\nonumber\\
&\qquad +\int_{\mathbb{R}^3}\dot{\Delta}_j(\varrho
u a)\cdot\dot{\Delta}_jb\mathrm{d}x
+\int_{\mathbb{R}^3}\Big\langle\dot{\Delta}_j\Big((1+\varrho)\Big(\frac{1}{2}u\cdot v\{\mathbf{I}-\mathbf{P}\}f
-u\cdot\nabla_v\{\mathbf{I}-\mathbf{P}\}f\Big)\Big),\dot{\Delta}_j f
\Big\rangle\mathrm{d}x
\nonumber\\
&\qquad +\int_{\mathbb{R}^3}\dot{\Delta}_j (\varrho(u-b
) )\cdot\dot{\Delta}_jb\mathrm{d}x
+\int_{\mathbb{R}^3}\langle\dot{\Delta}_j(\varrho\{\mathbf{I}-\mathbf{P}\}f),\dot{\Delta}_j
\mathcal{L}\{\mathbf{I}-\mathbf{P}\}f\rangle\mathrm{d}x,
\end{align}
where we have used the key facts that
\begin{align*}
& -\int_{\mathbb{R}^3}\dot{\Delta}_j(au)\cdot\dot{\Delta}_ju\mathrm{d}x
+\int_{\mathbb{R}^3}\Big\langle\dot{\Delta}_j\Big((1+\varrho)\Big({\frac{1}{2}}u\cdot vf -u \cdot\nabla_{v}f\Big)\Big), \dot{\Delta}_jf\Big\rangle {\rm d}x\\
&\quad =-\int_{\mathbb{R}^3}\dot{\Delta}_j(au)\cdot\dot{\Delta}_j(u-b)\mathrm{d}x
+\int_{\mathbb{R}^3}\big\langle\dot{\Delta}_j\big(u\cdot v b\cdot v\sqrt{M}\big),\dot{\Delta}_j\{\mathbf{I}-\mathbf{P}\}f\big\rangle\mathrm{d}x\\
&\qquad+\int_{\mathbb{R}^3}\Big\langle\dot{\Delta}_j\Big((1+\varrho)\Big(\frac{1}{2}u\cdot v\{\mathbf{I}-\mathbf{P}\}f
-u\cdot\nabla_v\{\mathbf{I}-\mathbf{P}\}f\Big)\Big),\dot{\Delta}_j f
\Big\rangle\mathrm{d}x,
\end{align*}
and
\begin{align*}
&\int_{\mathbb{R}^3}\langle\dot{\Delta}_j (\varrho\mathcal{L}f) \cdot\dot{\Delta}_jf\rangle\mathrm{d}x+\int_{\mathbb{R}^3}\langle\dot{\Delta}_j (\varrho u\cdot v\sqrt{M}) \cdot\dot{\Delta}_jf\rangle\mathrm{d}x\nonumber\\
=\,& \int_{\mathbb{R}^3}\langle\dot{\Delta}_j(\varrho\{\mathbf{I}-\mathbf{P}\}f),\dot{\Delta}_j\mathcal{L} f\rangle\mathrm{d}x+\int_{\mathbb{R}^3}\langle\dot{\Delta}_j(\varrho \mathcal{L}\mathbf{P} f),\dot{\Delta}_j f\rangle\mathrm{d}x+\int_{\mathbb{R}^3}\dot{\Delta}_j (\varrho u)\cdot\dot{\Delta}_jb\mathrm{d}x   \nonumber\\
=\,& \int_{\mathbb{R}^3}\langle\dot{\Delta}_j(\varrho\{\mathbf{I}-\mathbf{P}\}f),\dot{\Delta}_j
\mathcal{L}\{\mathbf{I}-\mathbf{P}\} f\rangle\mathrm{d}x+\int_{\mathbb{R}^3}\langle
\dot{\Delta}_j(\varrho\{\mathbf{I}-\mathbf{P}\}f),\dot{\Delta}_j\mathcal{L} \mathbf{P} f\rangle\mathrm{d}x\nonumber\\
&+\int_{\mathbb{R}^3}\dot{\Delta}_j\big(\varrho(u-b
)\big)\cdot\dot{\Delta}_jb\mathrm{d}x\nonumber\\
=\,&\int_{\mathbb{R}^3}\dot{\Delta}_j\big(\varrho(u-b
)\big)\cdot\dot{\Delta}_jb\mathrm{d}x+\int_{\mathbb{R}^3}
\langle\dot{\Delta}_j(\varrho\{\mathbf{I}-\mathbf{P}\}f),\dot{\Delta}_j
\mathcal{L}\{\mathbf{I}-\mathbf{P}\}f\rangle\mathrm{d}x.
\end{align*}
Here, we have used the basic fact that $\mathcal{L}\{\mathbf{I}-\mathbf{P}\}f=\{\mathbf{I}-\mathbf{P}\}\mathcal{L}f$ (see  \cite[p. 122]{DF-jmp-2010}).
We then multiply \eqref{G3.86}  by $2^{-2sj}$ and take the supremum over $j\in\mathbb{Z}$ 
to obtain
\begin{align}\label{G3.87}
& \|u(t)\|_{\dot{B}_{2,\infty}^{-s}}^{2}+\|f(t)\|_{L_{v}^2(\dot{B}_{2,\infty}^{-s})}^{2} +\int_0^t \Big(\|(b-u)(\tau)\|_{\dot{B}^{-s}_{2,\infty}}^2+ \mu\|\nabla u(\tau)\|_{\dot{B}^{-s}_{2,\infty}}^2\Big){\rm d}\tau\nonumber\\
&+\int_0^t  \|\{\mathbf{I}-\mathbf{P}\}f(\tau)\|_{L^2_{v,\nu}({\dot{B}_{2,\infty}^{-s}})}^{2} {\rm d}\tau\nonumber\\ 
\lesssim\,&   \int_0^t\Big\|\frac{\varrho}{1+\varrho}\Delta u\Big\|_{\dot{B}_{2,\infty}^{-s}}\|u\|_{\dot{B}_{2,\infty}^{-s}}{\rm d}\tau+\int_0^t\|u\cdot\nabla u\|_{\dot{B}_{2,\infty}^{-s}}\|u\|_{\dot{B}_{2,\infty}^{-s}}{\rm d}\tau\nonumber\\
&+\int_0^t\Big\|\frac{\varrho}{1+\varrho}\nabla P\Big\|_{\dot{B}_{2,\infty}^{-s}}\|u\|_{\dot{B}_{2,\infty}^{-s}}{\rm d}\tau+\int_0^t\|au\|_{\dot{B}_{2,\infty}^{-s}}\|u-b\|_{\dot{B}_{2,\infty}^{-s}}{\rm d}\tau\nonumber\\
&+\int_0^t\|(\varrho+1)u\cdot b\|_{\dot{B}_{2,\infty}^{-s}}\|\{\mathbf{I}-\mathbf{P}\}f\|_{L^2_{v,\nu}({\dot{B}_{2,\infty}^{-s}})}{\rm d}\tau\nonumber\\
&+\int_0^t {\Big \| \|\varrho \{\mathbf{I}-\mathbf{P}\}f \|_{\dot{B}_{2,\infty}^{-s}} \Big\|_{L^2_{v,\nu} } \|\{\mathbf{I}-\mathbf{P}\}f\|_{L^2_{v,\nu}({\dot{B}_{2,\infty}^{-s}})}{\rm d}\tau}\nonumber\\
&+ {\int_0^t \Big\| \|(\varrho+1)u \{\mathbf{I}-\mathbf{P}\}f\|_{\dot{B}_{2,\infty}^{-s}} \Big\|_{L^2_{v,\nu} }\|f\|_{L_v^2({\dot{B}_{2,\infty}^{-s}})}{\rm d}\tau}\nonumber\\
&+\int_0^t\Big(\|\varrho(u-b)\|_{\dot{B}_{2,\infty}^{-s}}+\|\varrho au\|_{\dot{B}_{2,\infty}^{-s}}\Big) \|b\|_{\dot{B}^{-s}_{2,\infty}} {\rm d}\tau+\|u_0\|_{\dot{B}_{2,\infty}^{-s}}^{2}+\|f_0\|_{L_{v}^2(\dot{B}_{2,\infty}^{-s})}^{2} \nonumber\\
=:\,&\sum_{j=1}^8 K_j+\|u_0\|_{\dot{B}_{2,\infty}^{-s}}^{2}+\|f_0\|_{L_{v}^2(\dot{B}_{2,\infty}^{-s})}^{2}.
\end{align}
To address the first term $K_1$, we require the restriction $\mu<1$. Note that $\frac{1
}{2}+\frac{s}{3}\leq 1$ and $2\leq\frac{3}{s}<6$  due to $1<s\leq \frac{3}{2}$. Thus, using Lemma \ref{LA.4} and the assumption \eqref{G3.1}, we have 
\begin{align}\label{G3.88}
K_1\leq\,& C\mu\|\varrho\|_{L_t^{\infty}(L^\frac{3}{s})}\|\nabla^2 u\|_{L_t^1(L^2)}\|u\|_{L_t^{\infty}(\dot{B}_{2,\infty}^{-s})}\nonumber\\
\leq\,& C\|\varrho\|_{L_t^{\infty}(H^3)}\|\nabla^2 u\|_{L_t^1(L^2)}\|u\|_{L_t^{\infty}(\dot{B}_{2,\infty}^{-s})}\nonumber\\
\leq\,& C\sigma \|\nabla^2 u\|_{L_t^1(L^2)}\|u\|_{L_t^{\infty}(\dot{B}_{2,\infty}^{-s})}.
\end{align}
A similar calculation  gives rise to
\begin{align}\label{G3.89}
K_2\leq\,& C\int_0^t\|u
\|_{L^\frac{3}{s}}\|\nabla u\|_{L^2}\|u\|_{\dot{B}_{2,\infty}^{-s}}{\rm d}\tau\nonumber\\
\leq\,& C\|u
\|_{L_t^{\infty}(H^3)}\|\nabla u\|_{L_t^1(L^2)}\|u\|_{L_t^{\infty}(\dot{B}_{2,\infty}^{-s})}\nonumber\\
\leq\,& C\sigma \|\nabla u\|_{L_t^1(L^2)}\|u\|_{L_t^{\infty}(\dot{B}_{2,\infty}^{-s})}.
\end{align}

To estimate the term $K_3$,  we use the estimates of $\nabla P$ in \eqref{G3.6}--\eqref{NJKG3.6}.
Therefore, it follows from the assumption \eqref{G3.1}  and Lemma \ref{LA.7} that
\begin{align}\label{G3.91}
K_3\leq\,& C\|\varrho\|_{L_t^{\infty}(\dot{B}^{-s+\frac{1}{2}}_{2,1})}\|\nabla P\|_{L_t^1(\dot{B}^{1}_{2,\infty})}\|u\|_{L_t^{\infty}(\dot{B}_{2,\infty}^{-s})}\nonumber\\
\leq \,& C\Big(\|\varrho\|_{L_t^{\infty}(\dot{B}_{2,\infty}^{-s})}
+\|\varrho\|_{L_t^{\infty}(\dot{B}_{2,\infty}^{-s+2})}\Big)\|\nabla^2 P\|_{L_t^1(\dot{B}_{2,2}^{0})}\|u\|_{L_t^{\infty}(\dot{B}_{2,\infty}^{-s})}\nonumber\\
\leq\,& C\Big(\|\varrho\|_{L_t^{\infty}(\dot{B}_{2,\infty}^{-s})}
+\|\varrho\|_{L_t^{\infty}(H^3)}\Big)\|\nabla^2 P\|_{L_t^1(L^2)}\|u\|_{L_t^{\infty}(\dot{B}_{2,\infty}^{-s})}\nonumber\\
\leq\,& C\sigma \big(\|\nabla u\|_{L_t^1(H^2)}+\|\nabla (a,b)\|_{L_t^1(H^2)}\big)\|u\|_{L_t^{\infty}(\dot{B}_{2,\infty}^{-s})}.
\end{align}
Note that $\|\cdot\|_{\dot{B}^{\frac{3}{2}}_{2,1}}\lesssim \|\nabla\cdot\|_{H^1}$ due to Lemma \ref{LA.7}. For the terms $K_4$, $K_5$ and $K_6$, one deduces from  \eqref{G3.1} and Lemma \ref{LA.7} that
\begin{align}\label{G3.92}
K_4\leq\,& C\|a\|_{L_t^{2}(\dot{B}^{\frac{3}{2}}_{2,1})}\|u\|_{L_t^{\infty}(\dot{B}_{2,\infty}^{-s})}
\|u-b\|_{L_t^{2}(\dot{B}_{2,\infty}^{-s})}\nonumber\\
\leq\,& C\|\nabla a\|_{L_t^{2}(H^1)}\|u-b\|_{L_t^{2}(\dot{B}_{2,\infty}^{-s})}\|u\|_{L_t^{\infty}(\dot{B}_{2,\infty}^{-s})}\nonumber\\
\leq\,& C\sigma \|\nabla a\|_{L_t^1(H^1)}\|u\|_{L_t^{\infty}(\dot{B}_{2,\infty}^{-s})}+C\sigma
\|u-b\|_{L_t^{2}(\dot{B}_{2,\infty}^{-s})}^2,\\\label{G3.93}
K_5\leq\,& C\|(\varrho+1)u\|_{L_t^{\infty}(\dot{B}_{2,\infty}^{-s})}
\|b\|_{L_t^{2}(\dot{B}_{2,1}^{\frac{3}{2}})}
\|\{\mathbf{I}-\mathbf{P}\}f\|_{L_t^{2}(L^2_{v,\nu}({\dot{B}_{2,\infty}^{-s})})}\nonumber\\
\leq\,& C\Big(1+\|\varrho\|_{L_t^{\infty}(\dot{B}_{2,1}^{\frac{3}{2}})}\Big)\|\nabla b\|_{L_t^2(H^1)}\|\{\mathbf{I}-\mathbf{P}\}f\|_{L_t^2(L^2_{v,\nu}
({\dot{B}_{2,\infty}^{-s}}))}\|u\|_{L_t^{\infty}(\dot{B}_{2,\infty}^{-s})}\nonumber\\
\leq\,& C\sigma \|\nabla b\|_{L_t^1(H^1)}\|u\|_{L_t^{\infty}(\dot{B}_{2,\infty}^{-s})}+C\sigma
\|\{\mathbf{I}-\mathbf{P}\}f\|_{L_t^2(L^2_{v,\nu}({\dot{B}_{2,\infty}^{-s}}))}^2,
\end{align}
and
\begin{align}\label{G3.94}
K_6\leq\,& C\|\varrho\|_{L_t^{\infty}(\dot{B}_{2,1}^{\frac{3}{2}})}
\|\{\mathbf{I}-\mathbf{P}\}f\|_{L_t^{2}(L^2_{v,\nu}({\dot{B}_{2,\infty}^{-s}}))}^2\leq C\sigma\|\{\mathbf{I}-\mathbf{P}\}f\|_{L_t^2(L^2_{v,\nu}({\dot{B}_{2,\infty}^{-s}}))}^2.
 \end{align}
Similarly, we have
\begin{align}\label{G3.95}
K_7\leq\,& C\|(\varrho+1)u\|_{L_t^{2}(\dot{B}_{2,1}^{\frac{3}{2}})}
\|\{\mathbf{I}-\mathbf{P}\}f\|_{L_t^2(L^2_{v,\nu}({\dot{B}_{2,\infty}^{-s}}))} \|f\|_{L_t^{\infty}(L_v^2({\dot{B}_{2,\infty}^{-s}}))}  \nonumber\\
\leq\,& C\big(1+\|\varrho\|_{L_t^{\infty}(H^3)}\big)\|\nabla u\|_{L_t^2(H^1)}\|\{\mathbf{I}-\mathbf{P}\}
f\|_{L_t^{2}(L^2_{v,\nu}({\dot{B}_{2,\infty}^{-s}}))}\|f\|_{L_t^{\infty}
(L_v^2({\dot{B}_{2,\infty}^{-s}}))}\nonumber\\
\leq\,& C\sigma \|\nabla u\|_{L_t^1(H^1)}\|f\|_{L_t^{\infty}(L_v^2({\dot{B}_{2,\infty}^{-s}}))}+C\sigma
\|\{\mathbf{I}-\mathbf{P}\}f\|_{L_t^{2}(L^2_{v,\nu}({\dot{B}_{2,\infty}^{-s}}))}^2,
\end{align}
and
\begin{align}
\label{G3.96}
K_8\leq\,& C\|\varrho\|_{L_t^{\infty}(\dot{B}_{2,\infty}^{-s})}\Big(\|u-b\|_{L_t^{1}
(\dot{B}^{\frac{3}{2}}_{2,1})}
+\|a\|_{L^2_t(\dot{B}^{\frac{3}{2}}_{2,1})}\|u\|_{L^2_t(\dot{B}^{\frac{3}{2}}_{2,1})}\Big) \|b\|_{L_t^{\infty}(\dot{B}_{2,\infty}^{-s})}\nonumber\\
\leq\,& C\|\varrho\|_{L_t^{\infty}(\dot{B}_{2,\infty}^{-s})}\big(\|\nabla (u,b)\|_{L_t^{1}(H^{1})}+\|\nabla a\|_{L^2_t(H^2)}\|\nabla u\|_{L_t^{2}(H^{1})}\big) \|f\|_{L_t^{\infty}(L^2_v(\dot{B}_{2,\infty}^{-s}))}\nonumber\\
\leq\,& C\sigma \|\nabla (a,b,u)\|_{L_t^{1}(H^{1})}
\|f\|_{L_t^{\infty}(L^2_v(\dot{B}_{2,\infty}^{-s}))}.
\end{align}
Substituting \eqref{G3.88}--\eqref{G3.89} and \eqref{G3.91}--\eqref{G3.96}
into \eqref{G3.87} and using  the smallness assumption   \eqref{G3.1}, we eventually obtain 
\begin{align*} 
&\|u\|_{\dot{B}^{-s}_{2,\infty}}^2+\|f\|_{L_v^2(\dot{B}^{-s}_{2,\infty})}^2
+\int_0^t \Big(\|(b-u)(\tau)\|_{\dot{B}^{-s}_{2,\infty}}^2
+\|\{\mathbf{I}-\mathbf{P}\}f(\tau)\|_{L^2_{v,\nu}({\dot{B}_{2,\infty}^{-s}})}^{2}\Big){\rm d}\tau\nonumber\\
&\quad  +\int_0^t \mu\|\nabla u(\tau)\|_{\dot{B}^{-s}_{2,\infty}}^2{\rm d}\tau\nonumber\\
\,&\qquad \leq \|u_0\|_{\dot{B}^{-s}_{2,\infty}}^2+\|f_0\|_{L_v^2(\dot{B}^{-s}_{2,\infty})}^2\nonumber\\
&\quad\qquad +C\sigma\sup_{\tau\in[0,t]}\Big(\|f(\tau)\|_{L_v^2(\dot{B}^{-s}_{2,\infty})}
+\|u(\tau)\|_{\dot{B}^{-s}_{2,\infty}}\Big) \int_0^t
\|\nabla (a,b,u)(\tau)\|_{H^2}{\rm d}\tau,
\end{align*}
which, together with the decay estimate \eqref{G3.82}, leads to  
\begin{align*} 
&\|u\|_{\dot{B}^{-s}_{2,\infty}}^2+\|f\|_{L_v^2(\dot{B}^{-s}_{2,\infty})}^2\nonumber\\
&\quad+\int_0^t \big(\|(b-u)(\tau)\|_{\dot{B}^{-s}_{2,\infty}}^2+\mu\|\nabla u(\tau)\|_{\dot{B}^{-s}_{2,\infty}}^2
+\|\{\mathbf{I}-\mathbf{P}\}f(\tau)\|_{L^2_{v,\nu}({\dot{B}_{2,\infty}^{-s}})}^{2}\big){\rm d}\tau\nonumber\\
\,&\qquad \leq C\sigma(\|u_0\|_{H^3}+\|f_0\|_{L_v^2({H^3})})
\sup_{0\leq\tau\leq t}\big(\|u(\tau)\|_{\dot{B}^{-s}_{2,\infty}}
+\|f(\tau)\|_{L_v^2(\dot{B}^{-s}_{2,\infty})}\big)\int_{0}^t(1+\tau)^{-\frac{s}{2}-\frac{1}{2}}{\rm d}\tau\nonumber\\
&\qquad \quad +C\sigma\sup_{0\leq\tau\leq t}\Big(\|u(\tau)\|_{\dot{B}^{-s}_{2,\infty}}
+\|f(\tau)\|_{L_v^2(\dot{B}^{-s}_{2,\infty})}\Big)^2\int_{0}^t(1+\tau)^{-\frac{s}{2}-\frac{1}{2}}{\rm d}\tau\nonumber\\
&\qquad\quad +C\Big(\|u_0\|_{\dot{B}^{-s}_{2,\infty}}^2+\|f_0\|_{L_v^2(\dot{B}^{-s}_{2,\infty})}^2\Big).
\end{align*}
Thanks to the condition $1<s\leq\frac{3}{2}$, this yields
\eqref{G3.85}.
\end{proof}

\smallskip
From \eqref{G3.85}, by combining \eqref{G3.54} and  \eqref{G3.82}, we   further obtain
\begin{align}
 \|u\|_{H^3}+\|f\|_{L_v^2(H^3)}\leq\,&  C \Big(\|u_0\|_{H^3\cap\dot{B}^{-s}_{2,\infty}}
 +\|f_0\|_{L_{v}^2(H^3\cap\dot{B}^{-s}_{2,\infty})}\Big)(1+t)^{-\frac{s}{2}},\label{G3.99-9}\\
 \|\nabla u\|_{H^2}+\|\nabla f\|_{L_v^2(H^2)}\leq\,&  C \Big(\|u_0\|_{H^3\cap\dot{B}^{-s}_{2,\infty}}
 +\|f_0\|_{L_{v}^2(H^3\cap\dot{B}^{-s}_{2,\infty})}\Big)(1+t)^{-\frac{s}{2}-\frac{1}{2}},\label{G3.100}
\end{align}
for all $t\geq 0$,
where $1<s\leq\frac{3}{2}$.

\smallskip

\subsection{Estimate of the non-dissipative component}

Finally, we give the estimate of $\varrho$ using the theorem of the transport equation and the decay estimates in \eqref{G3.100}.

\begin{prop}\label{P3.10}
Under the assumption \eqref{G3.1} on $[0,T)$ with some time $T>0$, we have
\begin{align}\label{G4.1}
 \|\varrho\|_{H^3\cap\dot{B}^{-s}_{2,\infty}}\leq C \|\varrho_0\|_{H^3\cap\dot{B}^{-s}_{2,\infty}},    
\end{align}
for any $0\leq t< T$.
\end{prop}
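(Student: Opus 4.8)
The plan is to exploit that $\varrho$ solves the pure transport equation $\eqref{rNSVFP}$$_1$ driven by the divergence-free field $u$, together with the decay bound \eqref{G3.100} from Lemma~\ref{L3.8}, which makes $\|\nabla u\|_{H^2}$ integrable in time with a small integral under \eqref{G3.1}. For the $H^3$ estimate I would apply $\partial^\alpha$ with $|\alpha|\le 3$ to $\eqref{rNSVFP}$$_1$, test against $\partial^\alpha\varrho$ in $L^2$, and use ${\rm div}\,u=0$ to remove the transport contribution, leaving $\frac12\frac{{\rm d}}{{\rm d}t}\|\partial^\alpha\varrho\|_{L^2}^2=-\int_{\mathbb{R}^3}[\partial^\alpha,u\cdot\nabla]\varrho\,\partial^\alpha\varrho\,{\rm d}x$. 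The commutator is controlled by the Moser-type estimates of Lemma~\ref{LA.2} together with $H^2\hookrightarrow L^\infty$, yielding $\|[\partial^\alpha,u\cdot\nabla]\varrho\|_{L^2}\le C\|\nabla u\|_{H^2}\|\varrho\|_{H^3}$. Summing over $|\alpha|\le 3$ and applying Grönwall gives exactly \eqref{rhoH3}, $\|\varrho(t)\|_{H^3}\le\|\varrho_0\|_{H^3}\exp\big(C\int_0^t\|\nabla u\|_{H^2}\,{\rm d}\tau\big)$. By \eqref{G3.100} the exponent is finite—indeed $\lesssim\sigma$ under \eqref{G3.1}, since $\frac{s}{2}+\frac12>1$ for $s>1$ makes $(1+\tau)^{-\frac{s}{2}-\frac12}$ integrable—so the exponential is a uniform constant (at most $2$ for $\sigma$ small), whence $\|\varrho(t)\|_{H^3}\le C\|\varrho_0\|_{H^3}$.

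For the homogeneous Besov bound I would apply $\dot{\Delta}_j$ to $\eqref{rNSVFP}$$_1$, test against $\dot{\Delta}_j\varrho$, and again use ${\rm div}\,u=0$ to get $\big|\frac{{\rm d}}{{\rm d}t}\|\dot{\Delta}_j\varrho\|_{L^2}\big|\le\|[\dot{\Delta}_j,u\cdot\nabla]\varrho\|_{L^2}$. The classical Besov commutator estimate gives $2^{-sj}\|[\dot{\Delta}_j,u\cdot\nabla]\varrho\|_{L^2}\le Cc_j\|\nabla u\|_{\dot{B}^{3/2}_{2,1}}\|\varrho\|_{\dot{B}^{-s}_{2,\infty}}$ with $\sup_j c_j\le 1$; multiplying by $2^{-sj}$, taking the supremum over $j\in\mathbb{Z}$, and using Lemma~\ref{LA.7} in the form $\|\nabla u\|_{\dot{B}^{3/2}_{2,1}}\lesssim\|\nabla^2 u\|_{H^1}\le\|\nabla u\|_{H^2}$, I obtain $\frac{{\rm d}}{{\rm d}t}\|\varrho\|_{\dot{B}^{-s}_{2,\infty}}\le C\|\nabla u\|_{H^2}\|\varrho\|_{\dot{B}^{-s}_{2,\infty}}$. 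The same integrating-factor argument as in the $H^3$ case, together with the time integrability of $\|\nabla u\|_{H^2}$, then gives $\|\varrho(t)\|_{\dot{B}^{-s}_{2,\infty}}\le C\|\varrho_0\|_{\dot{B}^{-s}_{2,\infty}}$; adding the two bounds yields \eqref{G4.1}.

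The one delicate point is the Besov commutator estimate at the endpoint $s=\frac32$, where $\dot{B}^{-\frac32}_{2,\infty}(\mathbb{R}^3)$ sits at the left edge of the regularity range in which transport estimates are classically available; this is handled by the Bony decomposition of $u\cdot\nabla\varrho$, using $\nabla u\in\dot{B}^{3/2}_{2,1}\hookrightarrow\dot{B}^{0}_{\infty,1}$ so that the paraproduct, reverse paraproduct, and remainder pieces are each bounded by $\|\nabla u\|_{\dot{B}^{3/2}_{2,1}}\|\varrho\|_{\dot{B}^{-s}_{2,\infty}}$ uniformly in $j$. Everything else is routine, and all constants are manifestly independent of $\mu$ and $T$, since both \eqref{G3.100} and Lemma~\ref{LA.7} are uniform in $\mu$ and $T$.
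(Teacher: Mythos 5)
Your proposal is correct and follows essentially the same route as the paper: the $H^3$ bound via the commutator estimate of Lemma \ref{LA.2} and Gr\"onwall, the $\dot{B}^{-s}_{2,\infty}$ bound via the Littlewood--Paley commutator, and closure through the time-integrable decay \eqref{G3.100} for $\|\nabla u\|_{H^2}$ with $s>1$. The only cosmetic difference is that you re-derive the endpoint Besov commutator bound by a Bony decomposition, whereas the paper simply invokes Lemma \ref{LA.8} (valid for $-\tfrac{3}{2}\leq -s<\tfrac{5}{2}$, hence covering $s=\tfrac{3}{2}$), so no extra argument is needed there.
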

\begin{proof}
Applying $\partial^\alpha $ with $|\alpha|\leq 3$ to \eqref{rNSVFP}$_1$, 
multiplying the result by $\partial^\alpha \varrho$,   then integrating over
$\mathbb{R}^3$, and using the constraint ${\rm div}\, u=0$ and Lemma \ref{LA.2}, one has 
\begin{align}\label{G4.2}
 \frac{1}{2}\frac{{\rm d}}{{\rm d}t} \|\varrho\|_{H^3}^2=\,&-\sum_{|\alpha|\leq 3} \int_{\mathbb{R}^3}\partial^\alpha(u\cdot \nabla \varrho) \partial^\alpha\varrho{\rm d}x\nonumber\\
  =\,&-\sum_{|\alpha|\leq 3} \int_{\mathbb{R}^3}[\partial^\alpha, u\cdot \nabla] \varrho\partial^\alpha\varrho{\rm d}x\nonumber \\
  \leq\,&C\sum_{|\alpha|\leq 3}\|[\partial^\alpha, u\cdot \nabla] \varrho\|_{L^2}\|\partial^\alpha \rho\|_{L^2}    \nonumber\\
 \leq\,& C\sum_{|\alpha|\leq 3} \|\partial^\alpha\varrho\|_{L^2} \|\nabla \varrho\|_{H^2}\|\nabla u\|_{H^2}\nonumber\\
 \leq\,& C\|\varrho\|_{H^3}^2\|\nabla u\|_{H^2}.
\end{align} 
With the aid of Gronwall's inequality and \eqref{G3.100}, we obtain from \eqref{G4.2} that
\begin{align}\label{G4.3}
 \|\varrho\|_{H^3}\leq\,& C\exp\Big\{\int_0^t \|\nabla u(\tau)\|_{H^2}{\rm d}\tau\Big\}   \|\varrho_0\|_{H^3}\nonumber\\
 \leq\,&C\exp\Big\{C\big(\|u_0\|_{H^3\cap\dot{B}^{-s}_{2,\infty}}
 +\|f_0\|_{L_{v}^2(H^3\cap\dot{B}^{-s}_{2,\infty})}\big)\int_0^t (1+\tau)^{-\frac{s}{2}-\frac{1}{2}}{\rm d}\tau\Big\}\|\varrho_0\|_{H^3}\nonumber\\
 \leq\,& C\|\varrho_0\|_{H^3}.
\end{align}

Next, applying $\dot{\Delta}_j $ to \eqref{rNSVFP}$_1$, multiplying the result by $\dot{\Delta}_j \varrho$,  then integrating over $\mathbb{R}^3$ and using ${\rm div}\, u=0$, one reaches
\begin{align}\label{G4.4}
\frac{1}{2}\frac{{\rm d}}{{\rm d}t}\|\dot{\Delta}_j \varrho\|_{L^2}^2=\,&-\int_{\mathbb{R}^3}\dot{\Delta}_j \varrho   \dot{\Delta}_j (u\cdot\nabla \varrho){\rm d}x\nonumber\\
=\,&-\int_{\mathbb{R}^3}\dot{\Delta}_j \varrho [\dot{\Delta}_j,u\cdot\nabla]\varrho {\rm d}x\nonumber\\
\lesssim\,&\|\dot{\Delta}_j\varrho\|_{L^2}\|[\dot{\Delta}_j,u\cdot\nabla]\varrho\|_{L^2}.
\end{align}
 Multiplying \eqref{G4.4}  by $2^{-2sj}$, taking
the supremum over $j\in\mathbb{Z}$ and leveraging Lemma \ref{LA.8}, we infer that
\begin{align*} 
\frac{{\rm d}}{{\rm d}t}\|\varrho\|_{\dot{B}^{-s}_{2,\infty}}^2\leq C\|\varrho\|_{\dot{B}^{-s}_{2,\infty}}^2\|u\|_{\dot{B}^{\frac{5}{2}}_{2,\infty}}\leq C\|\varrho\|_{\dot{B}^{-s}_{2,\infty}}^2\|\nabla u\|_{H^2},  
\end{align*}
which leads to
\begin{align}\label{G4.6}
 \|\varrho\|_{\dot{B}^{-s}_{2,\infty}}\leq\,& C\exp\Big\{C\int_0^t \|\nabla u(\tau)\|_{H^2}{\rm d}\tau\Big\}   \|\varrho_0\|_{\dot{B}^{-s}_{2,\infty}}\nonumber\\
 \leq\,&C\exp\Big\{C\big(\|u_0\|_{H^3\cap\dot{B}^{-s}_{2,\infty}}
 +\|f_0\|_{L_{v}^2(H^3\cap\dot{B}^{-s}_{2,\infty})}\big)\int_0^t (1+\tau)^{-\frac{s}{2}-\frac{1}{2}}{\rm d}\tau\Big\}\|\varrho_0\|_{\dot{B}^{-s}_{2,\infty}}\nonumber\\
 \leq\,& C\|\varrho_0\|_{\dot{B}^{-s}_{2,\infty}}.
\end{align}
Hence, \eqref{G4.3} and \eqref{G4.6} give rise to \eqref{G4.1}.    
\end{proof}

\subsection{Proof of Theorem \ref{T1.1}}
In this section, we  shall integrate  the energy estimates, exploit decay rates
of solutions and negative Besov estimates that  established in Section 3 to
prove Theorem \ref{T1.1}. 
\begin{prop}\label{PP311}
For any $1<s\leq\frac{3}{2}$, there exists a constant $\eps_0^*>0$ and a time $T^{*}>0$ such that if 
\begin{align*}
  &\|(\varrho_0, u_0) \|_{H^3\cap\dot{B}_{2,\infty}^{-s}}^2
  +\|f_0\|_{L_v^2(H^3\cap\dot{B}_{2,\infty}^{-s})}^2\leq \eps_0^*, 
\end{align*}
 then 
there exists a unique solution $(\varrho,u,P,f)$ to the Cauchy problem \eqref{rNSVFP}--\eqref{NJKI-10} on $[0,T^{*})$ satisfying
\begin{align*}
& \|(\varrho,u)(t)\|_{H^3\cap \dot{B}_{2,\infty}^{-s}}^2+\|f(t)\|_{L_{v}^2(H^3\cap \dot{B}_{2,\infty}^{-s})}^2\\
&\quad+\int_0^t\big(\mu\|\nabla u\|_{H^3}^2+\|\nabla P\|_{H^2}^2+\|\{\mathbf{I}-\mathbf{P}\}f(\tau)\|_{L^2_{v,\nu}(H^3)}^2
\big){\rm d}\tau\\
&\qquad\leq 2\|(\varrho_0, u_0) \|_{H^3\cap\dot{B}_{2,\infty}^{-s}}^2+2\|f_0\|_{L_v^2(H^3\cap\dot{B}_{2,\infty}^{-s})}^2,
\end{align*}
for $t\in [0,T^{*})$. Moreover, $\|(\varrho,u)\|_{H^3\cap\dot{B}_{2,\infty}^{-s}}^2+\|f\|_{L_{v}^2(H^3\cap\dot{B}_{2,\infty}^{-s} )}^2$ is continuous
over $[0,T^{*})$.  
\end{prop}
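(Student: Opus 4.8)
The plan is to prove Proposition \ref{PP311} by a standard linear iteration scheme combined with the energy machinery already set up in Lemmas \ref{L3.1}--\ref{L3.8} and Proposition \ref{P3.9}, but now used only on a short time interval and hence with crude, possibly $\mu$-dependent, time-dependent bounds rather than the sharp decay rates. Since $\mu>0$ is \emph{fixed} at this stage, the velocity equation \eqref{rNSVFP}$_2$ is genuinely parabolic, so no uniformity in $\mu$ is needed here; uniform control will be recovered afterwards from Proposition \ref{P3.1} via a continuation argument. Concretely, I would define a sequence $(\varrho^n,u^n,P^n,f^n)$ with $(\varrho^0,u^0,f^0)\equiv(\varrho_0,u_0,f_0)$ and, given the $n$-th iterate, solve: the linear transport equation $\partial_t\varrho^{n+1}+u^n\cdot\nabla\varrho^{n+1}=0$ for $\varrho^{n+1}$ (via characteristics, using $\dive u^n=0$); the linear Stokes-type system $\partial_t u^{n+1}-\frac{\mu}{1+\varrho^{n+1}}\Delta u^{n+1}+\nabla P^{n+1}=(\text{source in }u^n,f^n,\varrho^{n+1})$ with $\dive u^{n+1}=0$ for $(u^{n+1},P^{n+1})$ (parabolic theory); and the linear Fokker--Planck equation $\partial_t f^{n+1}+v\cdot\nabla_x f^{n+1}-\mathcal L f^{n+1}=(\text{source})$ for $f^{n+1}$ (using the coercivity \eqref{H2.1} and hypoelliptic regularity for $\mathcal L$). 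Each step is solvable in the required regularity class by classical linear theory.

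The second step is a uniform bound on a short time interval. Repeating the estimates of Subsection 3.1 for $(\varrho^{n+1},u^{n+1},P^{n+1},f^{n+1})$ — the pressure bounds of Lemma \ref{L3.1}, the $L^2$ and higher-derivative energy inequalities of Lemmas \ref{L3.3}--\ref{L3.4}, the $(a,b)$-dissipation functional of Lemma \ref{L3.5}, and the Littlewood--Paley argument in the proof of Proposition \ref{P3.9} together with the commutator estimate of Lemma \ref{LA.8} for the $\dot B^{-s}_{2,\infty}$ component — but now keeping all nonlinear contributions on the right-hand side and bounding them crudely by $C(\text{ball radius})\cdot(\text{energy})$, one arrives at a differential inequality of the schematic form $\frac{d}{dt}\mathcal N^{n+1}\lesssim \big(\mathcal N^{n+1}+\mathcal N^{n}\big)^2$, where $\mathcal N^{k}:=\|(\varrho^k,u^k)\|_{H^3\cap\dot B^{-s}_{2,\infty}}^2+\|f^k\|_{L^2_v(H^3\cap\dot B^{-s}_{2,\infty})}^2$, with the dissipation integrals of $\nabla u^{n+1}$, $\nabla P^{n+1}$ and $\{\mathbf I-\mathbf P\}f^{n+1}$ produced on the left. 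A straightforward induction then shows that for $\eps_0^*$ small and $T^*>0$ small (both allowed to depend on $\mu$), every iterate obeys $\sup_{[0,T^*)}\mathcal N^{n+1}+\int_0^{T^*}\big(\mu\|\nabla u^{n+1}\|_{H^3}^2+\|\nabla P^{n+1}\|_{H^2}^2+\|\{\mathbf I-\mathbf P\}f^{n+1}\|_{L^2_{v,\nu}(H^3)}^2\big)\,{\rm d}\tau\le 2\eps_0^*$.

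Finally, subtracting two consecutive iterates and running the analogous estimates in the lower norm $L^2\times L^2\times L^2_{x,v}$ — where the transport term loses no derivative — yields a contraction $\sup_{[0,T^*)}\|(\varrho^{n+1}-\varrho^n,u^{n+1}-u^n,f^{n+1}-f^n)\|_{L^2\times L^2\times L^2_{x,v}}\le\frac12\sup_{[0,T^*)}\|(\varrho^n-\varrho^{n-1},u^n-u^{n-1},f^n-f^{n-1})\|_{L^2\times L^2\times L^2_{x,v}}$, so the sequence converges; the uniform $H^3\cap\dot B^{-s}_{2,\infty}$ bound and interpolation promote the limit to a genuine solution satisfying the stated estimate, and the same difference argument applied to two solutions gives uniqueness. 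For the continuity of $t\mapsto\mathcal N(t)$, one first obtains weak-$*$ continuity into $H^3\cap\dot B^{-s}_{2,\infty}$ directly from the equations (the time derivatives lie in strictly larger spaces), and then upgrades to continuity of the norm through the energy identities, using that $\varrho$ solves a transport equation (so that $\|\varrho(t)\|_{H^3}$ and $\|\varrho(t)\|_{\dot B^{-s}_{2,\infty}}$ depend continuously on $t$ via Lemmas \ref{LA.2} and \ref{LA.8}) together with a standard Lions--Magenes argument for $u$ and $f$. I expect the main technical obstacle to be exactly the $\dot B^{-s}_{2,\infty}$ component: this space is non-separable, so strong time-continuity there is delicate, and the density $\varrho$ carries no smoothing, which forces the entire control of the transported $\dot B^{-s}_{2,\infty}$ norm to come from commutator estimates and from the fact that the $\dot B^{3/2}_{2,1}$-type norms of $u$ needed to close these estimates are available on the short interval from the $H^3$ bound via Lemma \ref{LA.7}.
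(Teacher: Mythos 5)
The paper does not actually write out a proof of Proposition \ref{PP311}; it only remarks that the result follows from a standard iteration argument combined with the local-in-time versions of the energy estimates already established (citing \cite{Gy-IUMJ-2004,LMW-SIAM-2017}), and your proposal is exactly this standard route: a linearized iteration scheme, uniform short-time bounds in a small ball obtained by rerunning the machinery of Lemmas \ref{L3.1}--\ref{L3.5} and Proposition \ref{P3.9}, contraction of the differences in the weaker $L^2\times L^2\times L^2_{x,v}$ topology, and a separate argument for the continuity of the $H^3\cap\dot{B}^{-s}_{2,\infty}$ energy in time. Hence your argument is correct and takes essentially the same approach the paper intends.
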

The proof of Proposition \ref{PP311} follows by a standard iteration argument and local-in-time energy estimates 
obtained above (cf. \cite{Gy-IUMJ-2004,LMW-SIAM-2017}). For brevity, we omit the details here.

With the aid of Proposition \ref{PP311}, we continue to prove Theorem \ref{T1.1}.
Let $\eqref{small1}$ hold with $\eps_0\leq \eps_0^*$ to be chosen later.
Denote 
\begin{align*}
T^{**}=\sup\bigg\{ T\in [0,T^{*})\,\bigg|
 \begin{array}{l}
  \eqref{rNSVFP}\!-\!\eqref{NJKI-10}~\textrm{admits a solution}~ (\varrho,u,P,f) ~ \textrm{on} ~ [0,T) ~ \textrm{satisfying} \\
 \|f(t)\|_{L_{v}^2(H^3\cap\dot{B}^{-s}_{2,\infty})}
 +\|(\varrho,u)(t)\|_{H^3\cap\dot{B}^{-s}_{2,\infty})}\leq \sigma ~ \textrm{for} ~ t\in [0,T).  \\
 \end{array}
\bigg\}.
\end{align*}
Here, the generic constant $\sigma$ is given by Proposition \ref{P3.1}. According to Proposition \ref{PP311}, $T^{**}\in(0,T^*]$ is well-defined. It thus follows from the uniform estimates established in Proposition \ref{P3.1} that the uniform regularity estimate \eqref{apropri} holds true for any $t\in (0,T^{**})$.
Consequently, choosing 
$$
\eps_0:=\frac{1}{2}\min\bigg\{\eps_0^*, \frac{\sigma^2}{ C^*}\bigg\},
$$
 we arrive at
\begin{align*}
&\|f(t)\|_{L_{v}^2(H^3\cap\dot{B}^{-s}_{2,\infty}))}^2+\|(\varrho,u)(t)\|_{H^3\cap\dot{B}^{-s}_{2,\infty})}^2\\
&\quad \leq C^{*}\Big(\|(\rho_0, u_0) \|_{H^3\cap\dot{B}_{2,\infty}^{-s}}^2+\|f_0\|_{L_v^2(H^3\cap\dot{B}_{2,\infty}^{-s})}^2\Big)
\leq \frac{1}{2}\sigma^2,
\end{align*}
for any $t\in (0,T^{**})$. Thus, the standard continuity argument implies that  $T^{**}=T^*$.

We now claim that $T^{**}=T^{*}=+\infty$. Otherwise, assume $T^{*}<+\infty$. According to the local existence theory and the uniform estimate obtained above, the solution is in fact defined on at least some interval $[0,T^{*}+\eta]$ for some $\eta>0$. 
This contradicts the definition of $T^{*}$. Hence, $(\varrho,u,P,f)$ is indeed a global solution to the Cauchy problem \eqref{rNSVFP}--\eqref{NJKI-10}. In particular, Proposition \ref{P3.1} ensures that \eqref{uniform1} is fulfilled. 
From Lemma \ref{L3.1}, \eqref{G3.100} and $\mu<1$, we have
\begin{align*} 
\|\nabla P(t)\|_{L^2}\lesssim\,& (1+\mu)\|u(t)\|_{H^3}+\|f(t)\|_{L_v^2(H^3)}\lesssim  \eps_0^{\frac{1}{2}}(1+t)^{- \frac{s}{2}},  \nonumber\\
\|\nabla^2 P(t)\|_{L^2}\lesssim\,& (1+\mu)\|\nabla u(t)\|_{H^2}+\|\nabla f (t)\|_{L_v^2(H^2)}\lesssim \eps_0^{\frac{1}{2}}(1+t)^{- \frac{s+1}{2}},
\end{align*}
which, combined with \eqref{G3.99-9} and \eqref{G3.100} leads to  \eqref{1.13}. Finally, invoking the maximum principle (cf. \cite{Gy-IUMJ-2004}), we have $F = M + \sqrt{M}f \geq 0$. This completes the proof of Theorem \ref{T1.1}.  \hfill $\Box$


\medskip 
\section{Justification of the Global Inviscid Limit}

In this section, we first establish the global existence of the Euler-VFP system \eqref{rEVFP} in Theorem \ref{T1.2} via the 
vanishing viscosity limit of the NS-VFP system \eqref{rNSVFP}. Then we show that 
this inviscid limit is global-in-time and prove Theorem \ref{T1.3}.

\subsection{Proof of Theorem \ref{T1.2}}

This section is devoted to the proof of Theorem \ref{T1.2} 
regarding the convergence rate of the vanishing viscosity limit. 

Let $(\varrho_0,u_0,f_0)$ satisfy \eqref{small2}. By density, one can choose a sequence $\{(\varrho_0^\mu,u_0^\mu,f_0^\mu)\}_{\mu\in(0,1)}$ such that $(\varrho_0^\mu,u_0^\mu)\rightarrow (\varrho_0,u_0)$ in $H^3\cap\dot{B}^{-s}_{2,\infty}$ and $f_0^\mu\rightarrow f_0$ in $L^2_v(H^3\cap\dot{B}^{-s}_{2,\infty})$. Therefore, recall $\eps_0>0$ given by \eqref{small1}, and let $\mu_0>0$ be such that
\begin{align*}
\|(\varrho_0^\mu-\varrho_0, u_0^\mu-u_0)\|_{H^3\cap\dot{B}^{-s}_{2,\infty}}^2+\|f_0^\mu-f_0\|_{L^2_v(H^3\cap \dot{B}^{-s}_{2,\infty})}^2<\frac{\eps_0}{2}\quad\text{for}\quad 0<\mu<\mu_0.
\end{align*}
This, together with \eqref{small2}, implies that 
\begin{align*}
\mathcal{E}_0^\mu=\|(\varrho_0^\mu, u_0^\mu)\|_{H^3\cap\dot{B}^{-s}_{2,\infty}}^2+\|f_0^\mu\|_{L^2_v(H^3\cap \dot{B}^{-s}_{2,\infty})}^2<\frac{\eps_0}{2}+\mathcal{E}_0\leq \frac{\eps_0}{2}+\eps_1\leq \eps_0,
\end{align*}
provided that we let $\eps_1\leq \frac{\eps_0}{2}$. According to Theorem \ref{T1.1}, for any $0<\mu<\mu_0$, we can construct a unique global solution $(\varrho^\mu,u^\mu,P^\mu,f^\mu)$ for the Cauchy problem of the inhomogeneous NS-VFP system \eqref{rNSVFP}, which satisfies the regularity estimates \eqref{uniform1} and \eqref{1.13}, independent of the viscosity $\mu$ and the time. As a direct consequence of \eqref{uniform1}, there exists a limit $(\varrho,u,P,f)$ such that as $\mu\rightarrow 0$, we have
\begin{equation*}
\begin{aligned}
(\varrho^\mu,u^\mu) &\rightharpoonup (\varrho,u)\quad  \,\, \text{weakly$^*$~ ~~in ~~} L^{\infty}(\mathbb{R}_{+};H^3),\\
\nabla P^\mu &\rightharpoonup \nabla P \quad \quad \text{weakly~ in ~~} L^{2}(\mathbb{R}_{+};H^2),\\
f^\mu &\rightharpoonup f\quad \quad \quad  \,  \text{weakly$^*$~ in ~~} L^{\infty}(\mathbb{R}_{+};L^2_v(H^3))\cap  L^2(\mathbb{R}_{+};L^2_{v,\nu}(H^3)),\\
\end{aligned}
\end{equation*}
up to a subsequence. In order to justify the convergence of   nonlinear terms in the sense of distributions, we need to analyze the time derivatives $\partial_t \varrho^\mu$ and $\partial_t u^\mu$. In fact, it follows from \eqref{rNSVFP} and \eqref{uniform1} that
\begin{align*}
\|\partial_t\varrho^\mu\|_{L^{\infty}_t(H^2)}\leq \|u^\mu \cdot \nabla \varrho^\mu\|_{L^{\infty}_t(H^2)}\leq C\|u^\mu\|_{L^{\infty}_t(H^2)}\|\varrho^\mu\|_{L^{\infty}_t(H^3)}\leq C,
\end{align*}
and
\begin{align*}
\|\partial_t u^\mu\|_{L^{\infty}_t(H^1)}&\leq \|u^\mu\cdot\nabla u^\mu\|_{L^{\infty}_t(H^1)}+\|\nabla P^\mu\|_{L^{\infty}_t(H^1)}+\|u^\mu-b^{f^\mu}\|_{L^{\infty}_t(H^1)}\\
&\quad+ \mu\Big\| \frac{ \varrho^\mu }{1+\varrho^\mu}\Delta u^\mu\Big\|_{L^{\infty}_t(H^1)}+\Big\|\frac{\varrho^\mu}{1+\varrho^\mu}\nabla P^\mu\Big\|_{L^{\infty}_t(H^1)}+\|a^{\mu}u^\mu\|_{L^{\infty}_t(H^1)}\\
&\leq \|u^\mu\|_{L^{\infty}_t(H^2)}\|\nabla u^\mu\|_{L^{\infty}_t(H^2)}+(1+\|\varrho^\mu\|_{L^{\infty}_t(H^2)}) \|\nabla P^\mu\|_{L^{\infty}_t(H^1)}\\
&\quad+\|a^{\mu}\|_{L^{\infty}_t(H^2)} \|\nabla u^\mu\|_{L^{\infty}_t(H^2)} +\mu (1+\|\varrho^\mu\|_{L^{\infty}_t(H^2)}) \|\nabla u^{\mu}\|_{L^{\infty}_t(H^2)}\leq C.
\end{align*}
Here, we have used the smallness of $\mu$.
We then conclude from the Aubin-Lions lemma, up to a subsequence, that 
\begin{align*}
(\varrho^\mu,u^\mu) & \rightarrow (\varrho,u) \quad  \,\,  \,\, \text{strongly~~ in ~~} C_{\rm loc}(\mathbb{R}_{+};H^2_{\rm loc}).
\end{align*}
Thus, it is straightforward to check that $(\rho,u,P,F)$ with $\rho=1+\varrho$ and $F=M+\sqrt{M}f$ solves the system \eqref{I-2} (or equivalently, $(\varrho,u,P,f)$ satisfies the system \eqref{rEVFP}) in the sense of distribution.

Finally, in view of \eqref{uniform1} and \eqref{1.13}, Fatou's property ensures that for $t\geq0$,
\begin{align*} 
&\|(\varrho^\mu, u^\mu)(t)\|_{H^3}^2+\|f^\mu(t)\|_{L_{v}^2(H^3)}^2+ \int_0^t \mathcal{D}(u^\mu,P^\mu,f^\mu)(\tau) {\rm d}\tau \nonumber\\
\quad \lesssim   & \liminf_{\mu\to 0} \Big(\|(\varrho^\mu, u^\mu)(t)\|_{H^3}^2+\|f^\mu(t)\|_{L_{v}^2(H^3)}^2+\int_0^t \mathcal{D}(u^\mu,P^\mu,f^\mu)(\tau) {\rm d}\tau\Big)\\
\quad \lesssim   & \lim_{\mu\to 0}\mathcal{E}_0^\mu=\mathcal{E}_0,
\end{align*}
where $\mathcal{D}(u^\mu,P^\mu,f^\mu)$ is defined by \eqref{D}. Similar to the arguments in the proof of Theorem \ref{T1.1}, one has
\begin{equation*}
\begin{aligned}
&\| u(t)\|_{L^{2}}+\|f(t)\|_{L_v^2(L^{2})}+\|\nabla P(t)\|_{L^2}\lesssim \eps_1^{\frac{1}{2}} (1+t)^{-\frac{s}{2}},\\
&\| \nabla u(t)\|_{L^{2}}+\|\nabla f(t)\|_{L_v^2(L^2)}+\|\nabla^2 P(t)\|_{L^2}\lesssim \eps_1^{\frac{1}{2}}  (1+t)^{-\frac{s}{2}-\frac{1}{2}} ,         \\
&\| \nabla^2 u (t)\|_{H^{1}}+\|\nabla^2 f(t)\|_{L_v^2(H^1)}+\|\nabla^3 P(t)\|_{L^2}\lesssim \eps_1^{\frac{1}{2}}   (1+t)^{-\frac{s}{2}-\frac{1}{2}}.
\end{aligned}
\end{equation*}
Thus, \eqref{convergence0}--\eqref{G1.7} are verified. The proof of Theorem \ref{T1.2} is completed. \hfill $\Box$

\subsection{Proof of Theorem \ref{T1.3}}

Now we are in a position to establish the global-in-time convergence of the inviscid limit.  
For $\mu\in(0,1)$, let $(\varrho^\mu,u^\mu,f^\mu)$ be the global solution to the Cauchy problem \eqref{rNSVFP}--\eqref{NJKI-10} obtained in Theorem \ref{T1.1}, and denote $a^\mu$ and $b^\mu$ the corresponding moments of $f^\mu$ (see \eqref{ab}). The equations satisfied by $(\varrho^\mu,u^\mu,f^\mu,P^\mu)$ are written as
\begin{equation}\left\{
\begin{aligned}\label{I-3mu} 
&\partial_{t}\varrho^\mu+u^\mu\cdot \nabla\varrho^\mu=0,\\ 
&\partial_{t}u^\mu+u^\mu\cdot\nabla u^\mu+\nabla P^\mu+u^\mu-b^\mu=\frac{\mu\Delta u^\mu}{1+\varrho^\mu}+\frac{\varrho^\mu\nabla P^\mu}{1+\varrho^\mu}-a^\mu u^\mu,\\
&{\rm div}\, u^\mu=0,\\
&\partial_{t}f^\mu+v\cdot\nabla_x f^\mu+u^\mu\cdot\nabla_{v}f^\mu-{\frac{1}{2}}u^\mu\cdot vf^\mu-u^\mu\cdot v\sqrt{M}\\
&\quad = \mathcal{L}f^\mu+\varrho^\mu\Big(\mathcal{L}f^\mu-u^\mu\cdot\nabla_v f^\mu+\frac{1}{2}u^\mu\cdot vf^\mu+u^\mu\cdot v\sqrt{M}\Big). 
\end{aligned}\right.
\end{equation}
For the global solution $(\varrho,u,P,f)$ given in Theorem \ref{T1.2}, it also satisfies the system 
\begin{equation}\left\{
\begin{aligned}\label{I-3*} 
&\partial_{t}\varrho +u \cdot \nabla\varrho =0,\\ 
&\partial_{t}u +u \cdot\nabla u +\nabla P +u -b =\frac{\varrho \nabla P }{1+\varrho }-a  u ,\\
&{\rm div}\, u =0,\\
&\partial_{t}f +v\cdot\nabla_x f +u \cdot\nabla_{v}f -{\frac{1}{2}}u \cdot vf -u \cdot v\sqrt{M}\\
&\quad = \mathcal{L}f +\varrho \Big(\mathcal{L}f -u \cdot\nabla_v f +\frac{1}{2}u \cdot vf +u \cdot v\sqrt{M}\Big). 
\end{aligned}\right.
\end{equation}
Define the differences
\begin{equation*}
\begin{aligned}
&(\widetilde{\varrho}^\mu,\widetilde{u}^\mu,\widetilde{P}^\mu,\widetilde{f}^\mu):=(\varrho^\mu-\varrho ,u^\mu-u , P^\mu-P , f^\mu-f ),
\end{aligned}
\end{equation*}
and
\begin{equation*}
\begin{aligned}
(\widetilde{a}^\mu,\widetilde{b}^\mu):=(a^\mu-a , b^\mu-b ).
\end{aligned}
\end{equation*}
By \eqref{I-3mu} and \eqref{I-3*}, we find that $(\widetilde{a}^\mu,\widetilde{b}^\mu)$ satisfies 
\begin{equation}\label{errorEQ}
\left\{
\begin{aligned}
&\partial_t \widetilde{\varrho}^\mu+ u^\mu\cdot \nabla \widetilde{\varrho}^\mu=\widetilde{F}_1,\\
&\partial_{t}\widetilde{u}^\mu+u^\mu\cdot\nabla \widetilde{u}^\mu+\nabla \widetilde{P}^\mu+\widetilde{u}^\mu-\widetilde{b}^\mu=-\widetilde{a}^\mu u^\mu+\frac{\mu\Delta u^\mu}{1+\varrho^\mu}+\widetilde{F}_2+\widetilde{F}_3,\\
&{\rm div}\,\widetilde{u}^\mu=0,\\
&\partial_{t}\widetilde{f}^\mu+v\cdot\nabla_x \widetilde{f}^\mu+ (1+\varrho)u^\mu\cdot\nabla_{v}\widetilde{f}^\mu\\
& \quad +\frac{1}{2}(1+\varrho) {u}^\mu\cdot v \widetilde{f} ^{\mu} -\widetilde{u}^\mu\cdot v\sqrt{M}-\mathcal{L}\widetilde{f}^\mu=\widetilde{F}_4+\widetilde{F}_5,
\end{aligned}\right.
\end{equation}
where $\widetilde{F}_i$, $i=1,2,3,4$, are given by
\begin{equation*}
\begin{aligned}
\widetilde{F}_1:=&\,-\widetilde{u}^\mu\cdot \nabla \varrho ,\\
\widetilde{F}_2:=&\,\Big(\frac{\varrho^\mu}{1+\varrho^\mu}-\frac{\varrho }{1+\varrho }\Big) \nabla P^\mu,\\
\widetilde{F}_3:=&\,-\widetilde{u}^\mu\cdot \nabla u -a^{f} \widetilde{u}^\mu+\frac{\varrho }{1+\varrho }\nabla \widetilde{P}^\mu ,\\
\widetilde{F}_4:=&\,(\varrho^\mu-\varrho )\Big(-u^\mu\cdot\nabla_v f^\mu+\frac{1}{2}u^\mu\cdot vf^\mu\Big) -(\varrho+1)\Big(\widetilde{u}^\mu\cdot\nabla_v f+\frac{1}{2} \widetilde{u}^{\mu}\cdot v {f}   \Big),\\
\widetilde{F}_5:=&\,(\varrho^\mu-\varrho )(\mathcal{L}f^\mu+u^\mu\cdot v\sqrt{M})+\varrho (\mathcal{L}\widetilde{f}^\mu+\widetilde{u}^\mu\cdot v\sqrt{M}).
\end{aligned}
\end{equation*}


We first establish the estimate of $\widetilde{\varrho}^\mu$ due to the transport nature for $\eqref{errorEQ}_1$.
\begin{lem}\label{L:error1}
It holds for all $t\geq0$ that
\begin{align}\label{widerho}
\|\widetilde{\varrho}^\mu(t)\|_{H^1}\leq C\mu+C\eps_1^{\frac{1}{2}} t^{\frac{1}{2}}\|\nabla \widetilde{u}^\mu\|_{L^2_t(L^2)}.
\end{align}
Here $\eps_1$ is given by \eqref{small1}.
\end{lem}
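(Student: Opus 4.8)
The strategy is to perform energy estimates on the transport equation $\eqref{errorEQ}_1$ for $\widetilde\varrho^\mu$ in $H^1$, treating the right-hand side $\widetilde F_1 = -\widetilde u^\mu\cdot\nabla\varrho$ as a forcing term that must be controlled using the uniform bounds \eqref{uniform1}--\eqref{1.13} on the limit solution $(\varrho,u,f)$ and the NS-VFP solution $(\varrho^\mu,u^\mu,f^\mu)$. First I would multiply $\eqref{errorEQ}_1$ by $\widetilde\varrho^\mu$ and integrate over $\mathbb R^3$; since $\operatorname{div}u^\mu=0$, the convection term drops, yielding $\frac12\frac{\dd}{\dd t}\|\widetilde\varrho^\mu\|_{L^2}^2 = \int_{\mathbb R^3}\widetilde F_1\widetilde\varrho^\mu\,\dd x \le \|\widetilde u^\mu\|_{L^2}\|\nabla\varrho\|_{L^\infty}\|\widetilde\varrho^\mu\|_{L^2}$. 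Then apply $\partial^\alpha$ with $|\alpha|=1$ and repeat, using the commutator estimate (Lemma \ref{LA.2}) for $[\partial^\alpha,u^\mu\cdot\nabla]\widetilde\varrho^\mu$, so that
\begin{align*}
\frac{\dd}{\dd t}\|\widetilde\varrho^\mu\|_{H^1}^2 \le C\|\nabla u^\mu\|_{H^2}\|\widetilde\varrho^\mu\|_{H^1}^2 + C\|\widetilde u^\mu\|_{H^1}\|\nabla\varrho\|_{H^1\cap L^\infty}\|\widetilde\varrho^\mu\|_{H^1}.
\end{align*}
Note that $\|\nabla\varrho\|_{H^1}$ is only bounded uniformly in time (it has no decay, since $\varrho$ does not dissipate), which is exactly why the bound in \eqref{widerho} must be allowed to grow.

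Next I would divide through by $\|\widetilde\varrho^\mu\|_{H^1}$ to get $\frac{\dd}{\dd t}\|\widetilde\varrho^\mu\|_{H^1} \le C\|\nabla u^\mu\|_{H^2}\|\widetilde\varrho^\mu\|_{H^1} + C\eps_1^{1/2}\|\widetilde u^\mu\|_{H^1}$, where I used $\|\nabla\varrho\|_{H^1\cap L^\infty}\lesssim \|\varrho\|_{H^3}\lesssim \eps_1^{1/2}$ from \eqref{uniform2}. Applying Gronwall's inequality and recalling that $\|\nabla u^\mu\|_{L^1_t(H^2)}$ is bounded uniformly in $t$ and $\mu$ (this is the crucial $L^1$-time-integrability established via \eqref{G3.100} in the proof of Theorem \ref{T1.1}, valid since $\frac s2+\frac12>1$), the exponential factor $\exp\{C\int_0^t\|\nabla u^\mu\|_{H^2}\,\dd\tau\}$ is bounded by a uniform constant. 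This gives
\begin{align*}
\|\widetilde\varrho^\mu(t)\|_{H^1} \le C\|\widetilde\varrho_0^\mu\|_{H^1} + C\eps_1^{1/2}\int_0^t\|\widetilde u^\mu(\tau)\|_{H^1}\,\dd\tau.
\end{align*}
Using the initial-data assumption \eqref{a3}, $\|\widetilde\varrho_0^\mu\|_{H^1}\le\mu$, and applying the Cauchy–Schwarz inequality in time to the integral, $\int_0^t\|\widetilde u^\mu\|_{H^1}\,\dd\tau \le t^{1/2}\|\widetilde u^\mu\|_{L^2_t(H^1)}$, we arrive at \eqref{widerho}.

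The main obstacle is conceptual rather than computational: one must recognize that the term $\widetilde F_1 = -\widetilde u^\mu\cdot\nabla\varrho$ cannot be closed against any dissipation of $\widetilde\varrho^\mu$ (there is none), so the $H^1$-bound on $\widetilde\varrho^\mu$ is forced to grow like $t^{1/2}\|\nabla\widetilde u^\mu\|_{L^2_t(L^2)}$ — hence the time-weighted factor $(1+t)^{-1}$ attached to $\|(\widetilde\varrho^\mu)(t)\|_{H^1}^2$ in the final estimate \eqref{rate} of Theorem \ref{T1.3}. The delicate point in the present lemma is ensuring the Gronwall exponential stays uniformly bounded, which rests entirely on the uniform-in-$\mu$ and uniform-in-time control of $\|\nabla u^\mu\|_{L^1_t(H^2)}$ inherited from Theorem \ref{T1.1}; without the sharp decay rate $(1+t)^{-s/2-1/2}$ with $s>1$ this step would fail. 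The subsequent lemmas (Lemmas \ref{L:error2}--\ref{L:error4}) will then estimate $\widetilde u^\mu$ and $\widetilde f^\mu$ and feed back into this bound via the time-weighted energy argument.
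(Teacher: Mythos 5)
Your overall route (energy estimate on $\eqref{errorEQ}_1$, commutator lemma for $[\partial^\alpha,u^\mu\cdot\nabla]\widetilde\varrho^\mu$, Gronwall with the exponential controlled by the uniform $L^1_t$-integrability of $\|\nabla u^\mu\|_{H^2}$, then Cauchy--Schwarz in time and \eqref{a3}) is exactly the paper's, but there is a genuine gap in how you bound the forcing $\widetilde F_1=-\widetilde u^\mu\cdot\nabla\varrho$. You estimate it as $C\|\widetilde u^\mu\|_{H^1}\|\nabla\varrho\|_{H^1\cap L^\infty}$, which after Gronwall and Cauchy--Schwarz yields $C\mu+C\eps_1^{1/2}t^{1/2}\|\widetilde u^\mu\|_{L^2_t(H^1)}$ --- this is \emph{not} \eqref{widerho}, which has $\|\nabla\widetilde u^\mu\|_{L^2_t(L^2)}$ on the right. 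The difference is not cosmetic: the quantity $\int_0^t\|\widetilde u^\mu\|_{L^2}^2\,{\rm d}\tau$ is not controlled by the error functional $\widetilde{\mathcal X}^\mu(t)$ in \eqref{4.11} (only $\sup_\tau\|\widetilde u^\mu\|_{H^1}$ and $\int_0^t\|\nabla\widetilde u^\mu\|_{L^2}^2{\rm d}\tau$ are, via \eqref{4.12}), so with your version one would have to bound $\|\widetilde u^\mu\|_{L^2_t(L^2)}\le t^{1/2}\sup_\tau\|\widetilde u^\mu\|_{L^2}$, and the resulting $(1+t)^{-1/2}\|\widetilde\varrho^\mu\|_{H^1}$ would grow like $t^{1/2}\widetilde{\mathcal X}^\mu(t)^{1/2}$; the subsequent closures (e.g.\ \eqref{NJH4.18}, \eqref{NJK4.27}, \eqref{NJK4.34}) then no longer give a bound uniform in time, and \eqref{rate} cannot be recovered.

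The fix is the paper's way of distributing the norms: use the Sobolev embedding $\|\widetilde u^\mu\|_{L^6}\lesssim\|\nabla\widetilde u^\mu\|_{L^2}$ and place the density factors in $L^3$ or $L^\infty$, namely
\begin{align*}
\|\widetilde F_1\|_{H^1}\le \|\nabla\varrho\|_{L^3}\|\widetilde u^\mu\|_{L^6}+\|\nabla\varrho\|_{L^\infty}\|\nabla\widetilde u^\mu\|_{L^2}+\|\nabla^2\varrho\|_{L^3}\|\widetilde u^\mu\|_{L^6}\le C\|\varrho\|_{H^3}\|\nabla\widetilde u^\mu\|_{L^2},
\end{align*}
so that the forcing is expressed entirely through the dissipated quantity $\|\nabla\widetilde u^\mu\|_{L^2}$; Gronwall and $\int_0^t\|\nabla\widetilde u^\mu\|_{L^2}{\rm d}\tau\le t^{1/2}\|\nabla\widetilde u^\mu\|_{L^2_t(L^2)}$ then give exactly \eqref{widerho}. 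With this replacement your argument coincides with the paper's proof; the remaining ingredients you invoke (uniform bound on the Gronwall exponential from \eqref{G3.100} with $s>1$, $\|\varrho\|_{H^3}\lesssim\eps_1^{1/2}$ from \eqref{uniform2}, and \eqref{a3} for the initial error) are used in the same way there.
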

 
\begin{proof}
Performing the energy estimates on $\eqref{errorEQ}_1$, we obtain 
\begin{align*}
 \frac{1}{2}\frac{{\rm d}}{{\rm d}t} \|\widetilde{\varrho}^\mu\|_{H^1}^2
 =\,&-\sum_{0\leq |\alpha|\leq 1} \int_{\mathbb{R}^3}\big( u^{\mu}\cdot \nabla \partial^\alpha\widetilde{\varrho}^\mu+[\partial^\alpha,u^\mu\cdot\nabla]
\widetilde{\varrho}^\mu+\partial^\alpha\widetilde{F}_1\big) \partial^\alpha\widetilde{\varrho}^\mu{\rm d}x\nonumber\\
 \leq\,& C\sum_{0\leq |\alpha|\leq 1}\big(\|{\rm div}\, u^\mu\|_{L^{\infty}} \|\partial^\alpha\widetilde{\varrho}^\mu\|_{L^2}^2+\|\partial^\alpha u^\mu\|_{L^3} \|\nabla \widetilde{\varrho}^\mu\|_{L^{6}}\|\partial^\alpha\widetilde{\varrho}^\mu\|_{L^2}\big)\nonumber\\
 &  +C\|\partial^\alpha\widetilde{F}_1\|_{L^2}\|\partial^\alpha\widetilde{\varrho}^\mu\|_{L^2}\nonumber\\
  \leq\,&C\|\nabla u^\mu\|_{H^2}\|\widetilde{\varrho}^\mu\|_{H^1}^2+\|\widetilde{F}_1\|_{H^1} \|\widetilde{\varrho}^\mu\|_{H^1},
\end{align*}
where Lemma \ref{LA.2}  has been used.

For the nonlinear term $\widetilde{F}_1$, we have
\begin{align*}
\|\widetilde{F}_1\|_{H^1}&\leq  \|\widetilde{F}_1\|_{L^2}+\|\nabla\widetilde{F}_1\|_{L^2}\\
&\leq \|\nabla\varrho \|_{L^3}\|\widetilde{u}^\mu\|_{L^6}+\|\nabla\varrho\|_{L^\infty}\|\nabla \widetilde{u}^\mu\|_{L^2}+\|\nabla^2 \varrho\|_{L^3}\|\widetilde{u}^\mu\|_{L^6}\\
&\leq C\|\varrho \|_{H^3} \|\nabla \tilde{u}^\mu\|_{L^2}.
\end{align*}
It thus holds that
\begin{align*}
\|\widetilde{\varrho}^\mu\|_{H^1}\leq e^{C\int_0^t \|\nabla u^\mu\|_{H^2}  {\rm d}\tau} 
\big( \|\varrho_0^\mu-\varrho_0\|_{H^1}+C\sup_{\tau\in[0,t]} \| \varrho\|_{H^3} \,t^{\frac{1}{2}}\|\nabla \tilde{u}^\mu\|_{L^2_t(L^2)}\big).
\end{align*}
Thus, applying the uniform bounds  \eqref{uniform1}, \eqref{1.13},   \eqref{uniform2} and \eqref{a3} yields \eqref{widerho}.
\end{proof}

To overcome the time growth when addressing $\widetilde{\varrho}^\mu$ in \eqref{widerho}, we first establish some additional time-weighted integrability estimates for the NS-VFP system \eqref{rNSVFP}.

\begin{lem}\label{LL4.2}
It holds that
\begin{equation}\label{weighted:inter}
\int_0^t (1+\tau)\big(\|\nabla P^\mu\|_{H^2}^2+\|u^\mu-b^\mu\|_{H^3}^2 +\|\nabla(a^\mu,b^\mu)\|_{H^2}^2+\|\{\mathbf{I-P}\}f^\mu\|_{L^2_{v,\nu}(H^3 )}^2\big){\rm d}\tau \leq C\eps_0.
\end{equation}
\end{lem}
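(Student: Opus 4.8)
The goal is a time-weighted version of the dissipation estimate already contained in \eqref{apropri}, where each dissipative quantity is integrated against the weight $(1+\tau)$ rather than against $1$. The natural strategy is to revisit the Lyapunov functional $\mathcal{E}(t)$ from Proposition \ref{P3.6} and its decay inequality \eqref{G3.46}, namely $\frac{\rm d}{{\rm d}t}\mathcal{E}(t)+\lambda_4\big(\mathcal{D}(t)+\mu\|\nabla u^\mu\|_{H^3}^2\big)\leq0$ with $\mathcal{E}(t)\sim\|u^\mu\|_{H^3}^2+\|f^\mu\|_{L^2_v(H^3)}^2$, and multiply it by $(1+t)$. This produces
\begin{align*}
\frac{\rm d}{{\rm d}t}\big((1+t)\mathcal{E}(t)\big)+\lambda_4(1+t)\big(\mathcal{D}(t)+\mu\|\nabla u^\mu\|_{H^3}^2\big)\leq \mathcal{E}(t).
\end{align*}
Integrating in time over $[0,t]$, the left-hand side controls exactly the weighted dissipation in \eqref{weighted:inter} (recall $\mathcal{D}(u^\mu,P^\mu,f^\mu)$ is defined by \eqref{D} and dominates all four quantities appearing there, while $\|\nabla P^\mu\|_{H^2}^2$ is part of $\mathcal{D}$ via \eqref{D} together with Lemma \ref{L3.1}), and the right-hand side contributes the term $\int_0^t\mathcal{E}(\tau)\,{\rm d}\tau$.

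\textbf{Key step: integrability of $\mathcal{E}(t)$.} The crux is therefore to show $\int_0^\infty\mathcal{E}(\tau)\,{\rm d}\tau\leq C\eps_0$. This is precisely where the decay rates established earlier enter. By \eqref{G3.99-9} we have $\mathcal{E}(t)\sim\|u^\mu\|_{H^3}^2+\|f^\mu\|_{L^2_v(H^3)}^2\leq C\mathcal{E}_0^\mu(1+t)^{-s}$, and since $s>1$ the function $(1+t)^{-s}$ is integrable on $[0,\infty)$ with $\int_0^\infty(1+t)^{-s}\,{\rm d}t=\frac{1}{s-1}$. Hence $\int_0^t\mathcal{E}(\tau)\,{\rm d}\tau\leq \frac{C}{s-1}\mathcal{E}_0^\mu\leq C\eps_0$, uniformly in $t$ and $\mu$. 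Combining this with the weighted differential inequality above, and discarding the nonnegative boundary term $(1+t)\mathcal{E}(t)$ on the left, yields
\begin{align*}
\lambda_4\int_0^t(1+\tau)\big(\mathcal{D}(\tau)+\mu\|\nabla u^\mu\|_{H^3}^2\big)\,{\rm d}\tau\leq \mathcal{E}(0)+\int_0^t\mathcal{E}(\tau)\,{\rm d}\tau\leq C\eps_0,
\end{align*}
which, upon unpacking $\mathcal{D}$ via \eqref{D} and using Lemma \ref{L3.1} to bound $\|\nabla P^\mu\|_{H^2}$ by the other dissipative quantities plus $\mu\|\nabla u^\mu\|_{H^3}$, gives \eqref{weighted:inter}.

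\textbf{Anticipated obstacle.} There is no serious analytical obstacle here; the estimate is essentially a bookkeeping consequence of the already-proven decay \eqref{G3.99-9} and the Lyapunov structure \eqref{G3.46}. The one point that needs minor care is that $\|\nabla P^\mu\|_{H^2}^2$ is not literally one of the terms one differentiates, so one must invoke Lemma \ref{L3.1} (the pressure estimates \eqref{G3.2}, \eqref{G3.6}, \eqref{NJKG3.6}) to express $\|\nabla P^\mu\|_{H^2}$ in terms of $\|\nabla u^\mu\|_{H^2}$, $\|\nabla(a^\mu,b^\mu)\|_{H^1}$, $\|b^\mu-u^\mu\|_{H^1}$ and $\mu\|\nabla^3 u^\mu\|_{H^1}$, each of which is controlled by $\mathcal{D}(\tau)+\mu\|\nabla u^\mu\|_{H^3}^2$ after using $\mu<1$ and \eqref{G3.480}. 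A second minor point is uniformity in $\mu$: since $\mathcal{E}_0^\mu\leq\eps_0$ and all constants in \eqref{G3.46}, \eqref{G3.99-9}, and Lemma \ref{L3.1} are independent of $\mu$, the final bound $C\eps_0$ is genuinely $\mu$-independent, as claimed.
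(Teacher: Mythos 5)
Your proposal is correct and follows essentially the same route as the paper: multiply the Lyapunov inequality \eqref{G3.46} by $(1+t)$, integrate, control $\int_0^t\mathcal{E}(\tau)\,{\rm d}\tau$ by the decay $(1+\tau)^{-s}$ with $s>1$ from \eqref{G3.99-9}, and recover the pressure term via Lemma \ref{L3.1} together with \eqref{G3.480}. This matches the paper's argument step for step, so no further comment is needed.
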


\begin{proof}
Multiplying the Lyapunov functional inequality \eqref{G3.46} by $(1+t) $, we obtain
\begin{align*} 
\frac{{\rm d}}{{\rm d}t}\big( (1+t)\mathcal{E}(t)\big)+\lambda_4 (1+t)\mathcal{D}(t)\leq \mathcal{E}(t).
\end{align*}
which yields
\begin{align}\label{NJK4.9}
(1+t)\mathcal{E}(t)+\lambda_4\int_0^t (1+\tau) \mathcal{D}(\tau) {\rm d}\tau\leq \mathcal{E}_0^\mu +\int_0^t \mathcal{E}(\tau){\rm d}\tau.
\end{align}
Recall
\begin{align}\label{NJK4.10}
    \int_0^t \mathcal{E}(\tau){\rm d}\tau&\leq C \mathcal{E}_0^\mu\int_0^t (1+\tau)^{-s} {\rm d}\tau  \leq C\mathcal{E}_0^\mu,
\end{align}
for $s>1$.
Combining \eqref{NJK4.9} and \eqref{NJK4.10}, it holds that
\begin{align*}
\int_0^t (1+\tau) \mathcal{D}(\tau) {\rm d}\tau\leq C\mathcal{E}_0^\mu,
\end{align*}
which, combined with \eqref{G3.2}, \eqref{G3.6}--\eqref{NJKG3.6}
 and \eqref{G3.480}, yields \eqref{weighted:inter}.
\end{proof}

Then, we are in a position to establish the $L^2$ estimate of the error $(\widetilde{u}^\mu,\widetilde{f}^\mu)$. To that end, we introduce the functional 
\begin{align}\label{4.11}
\widetilde{\mathcal{X}}^\mu(t) 
:=\,&\sup_{\tau\in[0,t]}\big(\|\widetilde{u}^\mu(\tau)\|_{H^1}^2
+\|\widetilde{f}^\mu(\tau)\|_{L^2_v(H^1)}^2\big)  
\nonumber\\
& +\int_{0}^t\big(  \|\widetilde{b}^\mu-\widetilde{u}^\mu\|_{H^1}^2
+\|\nabla(\widetilde{a}^\mu,\widetilde{b}^\mu)\|_{L^2}^2
+\|\{\mathbf{I-P}\}\widetilde{f}^{\mu}\|_{L^2_{v,\nu}(H^1)}^2 \big) {\rm d}\tau.
\end{align}
It is clear that
\begin{align}\label{4.12}
\int_0^t \big( \|\nabla \widetilde{f}^\mu\|_{L_{x,v}^2}^2+\|\nabla \widetilde{u}^\mu\|_{L^2}^2\big) {\rm d}\tau
\leq C\widetilde{\mathcal{X}}^{\mu}(t).
\end{align}

\begin{lem}\label{L:error2}
It holds that
\begin{align}\label{4.19}
&\sup_{\tau\in[0,t]} \big(\|\widetilde{u}^\mu(\tau)\|_{L^2}^{2}+\|\widetilde{f}^\mu(\tau)\|_{L_{x,v}^2}^{2}\big)\nonumber\\
&\quad +\int_0^{t}\big(\|\widetilde{b}^\mu-\widetilde{u}^\mu\|_{L^2}^{2}
+\|\nabla \widetilde{P}^{\mu}\|_{L^2}^2
+\|\{\mathbf{I}-\mathbf{P}\}\widetilde{f}^\mu\|_{L^2_{v,\nu}(L^2)}^{2}\big) {\rm d}\tau \nonumber\\
 &\qquad \leq C\mu^2+C \big(\eps_0 ^{\frac{1}{2}}+\eps_1 ^{\frac{1}{2}}\big)\widetilde{\mathcal{X}}^\mu(t),
\end{align}
where $C>0$ is a constant independent of $t$ and $\mu$.

\end{lem}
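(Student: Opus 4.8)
The plan is to run an $L^2$-level energy estimate on the error system \eqref{errorEQ}, coupled with a pressure estimate, and to absorb every nonlinear contribution either into $C\mu^2$ (coming from the $O(\mu)$ size of the viscous term and of the initial error \eqref{a3}) or into $C(\eps_0^{\frac12}+\eps_1^{\frac12})\widetilde{\mathcal{X}}^\mu(t)$, using the smallness of the background solutions and — crucially — the fact that the endpoint condition $s>1$ yields background decay of order strictly larger than $1$, together with the time-weighted bound \eqref{weighted:inter}. First I would test $\eqref{errorEQ}_2$ with $\widetilde u^\mu$ and $\eqref{errorEQ}_4$ with $\widetilde f^\mu$ and add. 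The transport terms $u^\mu\cdot\nabla\widetilde u^\mu$, $v\cdot\nabla_x\widetilde f^\mu$, $(1+\varrho)u^\mu\cdot\nabla_v\widetilde f^\mu$ vanish after integration by ${\rm div}\,u^\mu=0$ and the $v$-independence of $u^\mu$, and the leading pressure term $\int_{\mathbb{R}^3}\nabla\widetilde P^\mu\cdot\widetilde u^\mu\,{\rm d}x$ vanishes by ${\rm div}\,\widetilde u^\mu=0$. Using the coercivity \eqref{H2.1} of $-\mathcal{L}$ (which furnishes $\lambda_0\|\{\mathbf{I}-\mathbf{P}\}\widetilde f^\mu\|_{L^2_{v,\nu}(L^2)}^2+\|\widetilde b^\mu\|_{L^2}^2$) and completing the square between $\|\widetilde u^\mu\|_{L^2}^2$, the cross term $-2\langle\widetilde u^\mu,\widetilde b^\mu\rangle$ produced jointly by $-\widetilde b^\mu$ and $-\widetilde u^\mu\cdot v\sqrt M$, and $\|\widetilde b^\mu\|_{L^2}^2$, I obtain a differential inequality $\frac{\rm d}{{\rm d}t}\big(\|\widetilde u^\mu\|_{L^2}^2+\|\widetilde f^\mu\|_{L^2_{x,v}}^2\big)+c\big(\|\widetilde b^\mu-\widetilde u^\mu\|_{L^2}^2+\|\{\mathbf{I}-\mathbf{P}\}\widetilde f^\mu\|_{L^2_{v,\nu}(L^2)}^2\big)\le\mathcal{R}(t)$, where $\mathcal{R}$ collects the viscous term $\mu\int_{\mathbb{R}^3}\frac{\Delta u^\mu}{1+\varrho^\mu}\cdot\widetilde u^\mu\,{\rm d}x$, the residual cancellation terms handled exactly as $L_3+L_4$ in \eqref{G3.22} via the macro-micro decomposition \eqref{mmd}, and the contributions of $\widetilde F_1,\dots,\widetilde F_5$.

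The core is the term-by-term control of $\int_0^t\mathcal{R}\,{\rm d}\tau$, where each piece must carry a genuine smallness factor, since $\|\nabla\widetilde u^\mu\|_{L^2}$, $\|\nabla\widetilde b^\mu\|_{L^2}$ and the $H^1$-dissipation of $\widetilde f^\mu$ are only accessible through $\widetilde{\mathcal{X}}^\mu$ by \eqref{4.12}. After one integration by parts the viscous term splits into $\mu\int\frac{\nabla u^\mu:\nabla\widetilde u^\mu}{1+\varrho^\mu}$ and $\mu\int\nabla(\tfrac1{1+\varrho^\mu})\cdot\nabla u^\mu\,\widetilde u^\mu$; the first is $\lesssim\mu\|\nabla u^\mu\|_{L^2}\|\nabla\widetilde u^\mu\|_{L^2}$ and, after time integration, Young's inequality and $\int_0^t\|\nabla u^\mu\|_{L^2}^2\,{\rm d}\tau\lesssim\eps_0$, is $\lesssim\mu^2+\eps_0^{\frac12}\widetilde{\mathcal{X}}^\mu$, while the second is $\lesssim\mu\|\varrho^\mu\|_{H^3}\|\nabla^2 u^\mu\|_{L^2}\|\widetilde u^\mu\|_{L^2}$, absorbed using $\int_0^\infty\|\nabla^2 u^\mu\|_{L^2}\,{\rm d}\tau\lesssim\eps_0^{\frac12}\int_0^\infty(1+\tau)^{-\frac s2-\frac12}\,{\rm d}\tau<\infty$ by $s>1$ (cf.\ \eqref{G3.100}). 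The term $\widetilde F_2$ is bounded by $\|\widetilde\varrho^\mu\|_{H^1}\|\nabla^2 P^\mu\|_{L^2}$; inserting Lemma \ref{L:error1}, i.e.\ $\|\widetilde\varrho^\mu\|_{H^1}\lesssim\mu+\eps_1^{\frac12}\tau^{\frac12}\|\nabla\widetilde u^\mu\|_{L^2_\tau(L^2)}$, and the decay $\|\nabla^2 P^\mu\|_{L^2}\lesssim\eps_0^{\frac12}(1+\tau)^{-\frac s2-\frac12}$ from \eqref{1.13}, the $\tau^{\frac12}$-growth is killed by $\int_0^\infty\tau(1+\tau)^{-s-1}\,{\rm d}\tau<\infty$ (again $s>1$), giving $\lesssim\mu^2+(\eps_0+\eps_1)\widetilde{\mathcal{X}}^\mu$. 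In $\widetilde F_3$, the non-pressure parts $-\widetilde u^\mu\cdot\nabla u$, $-a^f\widetilde u^\mu$ are controlled by the smallness and decay of $(u,f)$ from \eqref{uniform2}--\eqref{G1.7}, and the piece $\frac{\varrho}{1+\varrho}\nabla\widetilde P^\mu$ is integrated by parts (via ${\rm div}\,\widetilde u^\mu=0$) into $-\int\nabla(\tfrac\varrho{1+\varrho})\cdot\widetilde u^\mu\,\widetilde P^\mu$ and coupled with the pressure estimate below. The kinetic terms $\widetilde F_4,\widetilde F_5$ are the delicate ones: the piece $\varrho\mathcal{L}\widetilde f^\mu$, tested against $\widetilde f^\mu$, is turned by the self-adjointness of $\mathcal{L}$ into $-\int\varrho\,\big|\nabla_v(\widetilde f^\mu/\sqrt M)+\tfrac v2\,\widetilde f^\mu/\sqrt M\big|^2$-type quantities, hence $\lesssim\|\varrho\|_{L^\infty}\|\{\mathbf{I}-\mathbf{P}\}\widetilde f^\mu\|_{L^2_{v,\nu}(L^2)}^2\lesssim\sigma\,(\text{coercive dissipation})$, absorbed on the left; the $\widetilde\varrho^\mu$-pieces such as $\widetilde\varrho^\mu\,\mathcal{L}f^\mu$ and $\widetilde\varrho^\mu\,u^\mu\cdot\nabla_v f^\mu$ are handled by distributing derivatives so that only one $v$-derivative falls on each $\{\mathbf{I}-\mathbf{P}\}$-factor, and then combining Lemma \ref{L:error1} with the time-weighted estimate \eqref{weighted:inter}, whose very purpose is to balance the $\tau^{\frac12}$-growth of $\|\widetilde\varrho^\mu\|_{H^1}$ against the $(1+\tau)$-weighted integrability of the background kinetic dissipation.

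For the pressure I would test $\eqref{errorEQ}_2$ with $\nabla\widetilde P^\mu$; the term $\int\partial_t\widetilde u^\mu\cdot\nabla\widetilde P^\mu$ vanishes after integration by parts and ${\rm div}\,\widetilde u^\mu=0$, leaving, pointwise in time, $\|\nabla\widetilde P^\mu\|_{L^2}\lesssim\|\widetilde b^\mu-\widetilde u^\mu\|_{L^2}+(\text{smallness})\big(\|\nabla\widetilde u^\mu\|_{L^2}+\|\nabla\widetilde a^\mu\|_{L^2}+\|\widetilde f^\mu\|_{L^2_{x,v}}\big)+\mu\|\nabla^2 u^\mu\|_{L^2}+\|\widetilde\varrho^\mu\|_{H^1}\|\nabla^2 P^\mu\|_{L^2}$, where I use $\|a^f\|_{L^2}\lesssim\|f\|_{L^2_{x,v}}$, $\|\widetilde a^\mu\|_{L^6}\lesssim\|\nabla\widetilde a^\mu\|_{L^2}$ and the decay $\|u^\mu\|_{L^\infty}\lesssim\eps_0^{\frac12}(1+\tau)^{-\frac s2-\frac12}$. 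Squaring, integrating in time, and invoking \eqref{4.12}, \eqref{1.13}, \eqref{G3.100}, Lemma \ref{L:error1} and the convergence of $\int_0^\infty\tau(1+\tau)^{-s-1}\,{\rm d}\tau$, I get $\int_0^t\|\nabla\widetilde P^\mu\|_{L^2}^2\,{\rm d}\tau\lesssim\int_0^t\|\widetilde b^\mu-\widetilde u^\mu\|_{L^2}^2\,{\rm d}\tau+C\mu^2+C(\eps_0^{\frac12}+\eps_1^{\frac12})\widetilde{\mathcal{X}}^\mu(t)$. Finally I would integrate the Step-1 inequality over $[0,t]$, bound the initial data by $C\mu^2$ via \eqref{a3}, substitute the bound for $\int_0^t\mathcal{R}$, absorb the leftover $\int\nabla(\tfrac\varrho{1+\varrho})\cdot\widetilde u^\mu\,\widetilde P^\mu$ contribution by Young's inequality against the pressure estimate, and then add a small multiple of the pressure estimate to restore $\int_0^t\|\nabla\widetilde P^\mu\|_{L^2}^2$ on the left, which gives \eqref{4.19}.

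The main obstacle will be the kinetic terms coupling the low-regularity, time-growing error density $\widetilde\varrho^\mu$ with high-order background kinetic quantities ($\mathcal{L}f^\mu$, $\nabla_v f^\mu$): one must simultaneously (i) distribute $x$- and $v$-derivatives so as to stay within the available norms — in particular exploit the self-adjointness of $\mathcal{L}$ to never require two $v$-derivatives of an $\{\mathbf{I}-\mathbf{P}\}$-quantity — and (ii) counteract the $\tau^{\frac12}$-growth of $\|\widetilde\varrho^\mu\|_{H^1}$ furnished by Lemma \ref{L:error1}, which is possible only because the Besov index $s>1$ produces background decay of order $\tfrac s2+\tfrac12>1$ and hence the convergence of the weighted integrals $\int_0^\infty\tau(1+\tau)^{-s-1}\,{\rm d}\tau$ and of \eqref{weighted:inter}. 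A secondary but pervasive point is bookkeeping discipline: since $\|\nabla\widetilde u^\mu\|_{L^2}$ and the $H^1$-dissipation of $\widetilde f^\mu$ are accessed only through $\widetilde{\mathcal{X}}^\mu$, every occurrence of them on the right must come with a factor $\mu$, $\eps_0^{\frac12}$ or $\eps_1^{\frac12}$, and never an $O(1)$ constant.
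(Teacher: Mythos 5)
Your overall architecture coincides with the paper's: an $L^2$-level energy estimate on \eqref{errorEQ} with the coercivity \eqref{H2.1}, the viscous term yielding $O(\mu^2)$ through the time-integrable decay $(1+\tau)^{-\frac s2-\frac12}$ ($s>1$), the $\tau^{\frac12}$-growth of $\|\widetilde\varrho^\mu\|_{H^1}$ from Lemma \ref{L:error1} tamed by the weighted bound \eqref{weighted:inter}, and the pressure estimate obtained by testing $\eqref{errorEQ}_2$ with $\nabla\widetilde P^\mu$ and added with a small coefficient — this is exactly the paper's sequence \eqref{NJK4.14}--\eqref{NJK4.27} plus \eqref{NJK4.21}. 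However, two of your sub-steps fail as written. First, your treatment of the $\varrho\,\mathcal{L}\widetilde f^\mu$ piece of $\widetilde F_5$ is incorrect: the Dirichlet form $\int\varrho\,M\,|\nabla_v(\widetilde f^\mu/\sqrt M)|^2\,{\rm d}v\,{\rm d}x$ is \emph{not} bounded by $\|\varrho\|_{L^\infty}\|\{\mathbf{I}-\mathbf{P}\}\widetilde f^\mu\|^2_{L^2_{v,\nu}(L^2)}$, because the macroscopic part contributes $\int\varrho\,|\widetilde b^\mu|^2{\rm d}x$ (plus cross terms). Neither $\|\widetilde b^\mu\|_{L^2}$ nor $\|\widetilde u^\mu\|_{L^2}$ is square-integrable in time inside $\widetilde{\mathcal{X}}^\mu$, so any Hölder splitting of $\int_0^t\int\varrho\,|\widetilde b^\mu|^2$ (or of the companion term $\int_0^t\int\varrho\,\widetilde u^\mu\cdot\widetilde b^\mu$ coming from $\varrho\,\widetilde u^\mu\cdot v\sqrt M$, if treated separately) leaves a $t^{\frac12}$ growth that cannot be absorbed. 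The paper avoids this by regrouping $\varrho\,\mathcal{L}\mathbf{P}\widetilde f^\mu$ with the drift term so that only the combination $\varrho(\widetilde u^\mu-\widetilde b^\mu)\cdot\widetilde b^\mu$ appears, which is then bounded by $\|\varrho\|_{L^3}\|\widetilde u^\mu-\widetilde b^\mu\|_{L^2}\|\widetilde b^\mu\|_{L^6}$ with both time factors in $L^2_t$ via $\widetilde{\mathcal{X}}^\mu$ (this is the same cancellation already used for $L_5$ in Lemma \ref{L3.3} and in the estimate of $\widetilde I_7$). This regrouping is an essential structural ingredient, not bookkeeping, and your sketch omits it.

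Second, integrating by parts the term $\int\frac{\varrho}{1+\varrho}\nabla\widetilde P^\mu\cdot\widetilde u^\mu\,{\rm d}x$ into $\int\widetilde P^\mu\,\nabla\big(\tfrac{\varrho}{1+\varrho}\big)\cdot\widetilde u^\mu\,{\rm d}x$ is counterproductive: after $\|\widetilde P^\mu\|_{L^6}\lesssim\|\nabla\widetilde P^\mu\|_{L^2}$ you are forced to pair with $\|\widetilde u^\mu\|_{L^2}$ or $\|\widetilde u^\mu\|_{L^3}$, which are only $L^\infty$ in time, and the time integral again grows like $t^{\frac12}$ (or $t^{\frac14}$). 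The direct estimate, $\|\varrho\|_{L^3}\|\nabla\widetilde P^\mu\|_{L^2}\|\widetilde u^\mu\|_{L^6}\lesssim\eps_1^{\frac12}\|\nabla\widetilde P^\mu\|_{L^2}\|\nabla\widetilde u^\mu\|_{L^2}$, keeps both factors in $L^2_t$ and lets the small multiple of $\|\nabla\widetilde P^\mu\|^2_{L^2_t(L^2)}$ be absorbed once the pressure estimate \eqref{NJK4.21} is added; this is precisely what the paper's $\dot H^{-1}$--$\dot H^1$ duality with the product law of Lemma \ref{LA.7} (using $\|\varrho\|_{\dot B^{1/2}_{2,1}}\lesssim\eps_1^{1/2}$ from \eqref{NJKH4.20}) accomplishes in \eqref{I2I3}. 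With these two repairs — the macro/drift cancellation in $\widetilde F_5$ and the non-integrated-by-parts treatment of the pressure nonlinearity — your argument closes along the paper's lines; the remaining deviations (viscous term via $L^2_t$ of $\nabla u^\mu$ rather than $L^1_t$ of $\nabla^2u^\mu$, direct Hölder-plus-decay instead of Besov duality for the benign parts of $\widetilde F_3,\widetilde F_4$) are legitimate.
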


\begin{proof}
It follows from \eqref{errorEQ}$_2$--\eqref{errorEQ}$_4$ and \eqref{H2.1} that
\begin{align}\label{NJK4.14}
&\frac{1}{2}\big( \|\widetilde{u}^\mu\|_{L^2}^2+\|\widetilde{f}^\mu\|_{L_{x
,v}^{2}}^2\big)+\int_0^t\|\widetilde{b}^\mu-\widetilde{u}^\mu\|_{L^2}^{2}{\rm d}x{\rm d}\tau+\bar{\lambda}\int_0^t\|\{\mathbf{I}-\mathbf{P}\}\widetilde{f}^\mu\|_{L^2_{v,\nu}(L^2)}^{2}  {\rm d}x{\rm d}\tau\nonumber\\
\leq\,&\frac{1}{2}\int_0^t\!\int_{\mathbb{R}^3} (1+\varrho)u^\mu\cdot \langle v \widetilde{f}^\mu,\widetilde{f}^\mu\rangle {\rm d}x{\rm d}\tau-\int_0^t\!\int_{\mathbb{R}^3} \widetilde{a}^\mu 
 u^\mu \cdot\widetilde{u}^{\mu} {\rm d}x{\rm d}\tau\nonumber\\
&+\int_0^t\int_{\mathbb{R}^3} \frac{\mu\Delta u^\mu\cdot\widetilde{u}^\mu }{1+\varrho^\mu} {\rm d}x{\rm d}\tau+\int_0^t\!\int_{\mathbb{R}^3} \widetilde{F}_2 \cdot \widetilde{u}^{\mu}{\rm d}x{\rm d}\tau+\int_0^t\!\int_{\mathbb{R}^3} \widetilde{F}_3 \cdot \widetilde{u}^\mu{\rm d}x{\rm d}\tau\nonumber\\
&+\int_0^t\!\int_{\mathbb{R}^3\times\mathbb{R}^3} \widetilde{F}_4 \widetilde{f}^\mu{\rm d}x{\rm d}v{\rm d}\tau+\int_0^t\!\int_{\mathbb{R}^3\times\mathbb{R}^3} \widetilde{F}_5 \widetilde{f}^\mu{\rm d}x{\rm d}v{\rm d}\tau+\frac{1}{2}\big(\|\widetilde{u}^\mu(0)\|_{L^2}^2+\|\widetilde{f}^\mu(0)\|_{L_{x
,v}^{2}}^2\big)\nonumber\\
\equiv:\,&\sum_{j=1}^7\widetilde{I}_j+\frac{1}{2}\big(\|\widetilde{u}^\mu(0)\|_{L^2}^2
+\|\widetilde{f}^\mu(0)\|_{L_{x
,v}^{2}}^2\big).
\end{align}
Following the same lines for those in handling the terms $L_3$ and $L_4$ in Lemma \ref{L3.3}, one has 
\begin{align}\label{NJK4.15}
&|\widetilde{I}_1+\widetilde{I}_2|\nonumber\\
\leq\,& C\Big(1+\sup_{\tau\in[0,t]} \| \varrho\|_{H^3}\Big)\int_0^{t}\big( \|u^\mu\|_{L^{\infty}} \|\{\mathbf{I-P}\}\widetilde{f}^\mu\|_{L^2_{v,\nu}(L^2)}^2
+\|\widetilde{b}^\mu-\widetilde{u}^\mu\|_{L^2}\|u^\mu\|_{L^3}\|\widetilde{a}^\mu\|_{L^6}  \nonumber \\
&\quad +\|u^\mu\|_{L^3} \|(\widetilde{a}^\mu,\widetilde{b}^\mu)\|_{L^6}\|\{\mathbf{I-P}\}\widetilde{f}^\mu\|_{L^2_{v,\nu}(L^2)}\big) {\rm d}\tau+C\int_0^t\|\varrho\|_{H^3}\|u^{\mu}\|_{H^2}\|\nabla (\widetilde{a}^{\mu},\widetilde{b}^{\mu})\|_{L^2}^2{\rm d}\tau\nonumber\\
\leq\,&C  (1+\eps_1^{\frac{1}{2}} )\sup_{\tau\in[0,t]}\|u^\mu(\tau)\|_{H^2}\int_0^t \big( \|\widetilde{b}^\mu-\widetilde{u}^\mu\|_{L^2}^2+\|\nabla(\widetilde{a}^\mu,\widetilde{b}^\mu)\|_{L^2}^2
+\|\{\mathbf{I-P}\}\widetilde{f}^\mu\|_{L^2_{v,\nu}(L^2)}^2\big){\rm d}\tau \nonumber\\
\leq\,& C {\eps_0}^{\frac{1}{2}} \int_0^t \big( \|\widetilde{b}^\mu-\widetilde{u}^\mu\|_{L^2}^2+\|\nabla(\widetilde{a}^\mu,\widetilde{b}^\mu)\|_{L^2}^2
+\|\{\mathbf{I-P}\}\widetilde{f}^\mu\|_{L^2_{v,\nu}(L^2)}^2\big){\rm d}\tau.
\end{align}
Note that the term $\widetilde{I}_3$ causes the convergence rate of the inviscid limit. Indeed, it follows from the decay estimate \eqref{1.13} for $\|\nabla^2u\|_{L^2}$ and the lower bound of $\rho^\mu=1+\varrho^\mu$ that 
\begin{align}\label{NJKH4.17}
|\widetilde{I}_3|\leq \,&C\mu \int_0^t \|\nabla^2 u^\mu \|_{L^2} {\rm d}\tau \sup_{\tau\in[0,t]} \| \widetilde{u}^{\mu}\|_{L^2} \nonumber\\
\leq\, &C\mu \eps_0^{\frac{1}{2}} \int_0^t (1+\tau)^{-\frac{s}{2}-\frac{1}{2}}{\rm d}\tau \,\sup_{\tau\in[0,t]} \| \widetilde{u}^{\mu}\|_{L^2}\nonumber\\
\leq\,& C\mu^2+\frac{1}{4}\sup_{\tau\in[0,t]} \| \widetilde{u}^{\mu}\|_{L^2}^2.
\end{align}
As for the term $\widetilde{I}_4$, since $L^3 (\mathbb{R}^3)\hookrightarrow H^1(\mathbb{R}^3)$ 
and $L^6(\mathbb{R}^3)\hookrightarrow \dot{H}^1(\mathbb{R}^3)$, we make use of the error estimate \eqref{widerho} and the weighted  time integrability \eqref{weighted:inter} to obtain
\begin{align}\label{NJH4.18}
|\widetilde{I}_4|\leq\,& C\Big\|(1+\tau)^{-\frac{1}{2}}
\Big(\frac{\varrho^\mu}{1+\varrho^\mu}-\frac{\varrho}{1+\varrho}\Big)
\Big\|_{L^{\infty}_t(L^2)}\big\|(1+\tau)^{\frac{1}{2}}\nabla P^\mu\|_{L^2_t(L^3)}\big\|\widetilde{u}^\mu\|_{L^2_t(L^6)} \nonumber\\
\leq\,& C\big\|(1+\tau)^{-\frac{1}{2}}\widetilde{\varrho}^\mu\big\|_{L^{\infty}_t(L^2)}\big\|(1+\tau)^{\frac{1}{2}}\nabla P^\mu\big\|_{L^2_t(H^1)}\|\nabla\widetilde{u}^\mu\|_{L^2_t(L^2)} \nonumber\\
\leq\,& C\mu^2+C\eps_1^{\frac{1}{2}}\widetilde{\mathcal{X}}^\mu(t).  
\end{align}
The term $\widetilde{I}_5+\widetilde{I}_6$  can be analyzed by
\begin{align}\label{NJKH4.18}
|\widetilde{I}_5+\widetilde{I}_6|\leq\,& \int_0^t\big(\|\widetilde{F}_3\|_{\dot{H}^{-1}}\|\widetilde{u}^\mu\|_{\dot{H}^{1}}+\| \widetilde{F}_4\|_{L^2_v(\dot{H}^{-1})} \|\widetilde{f}^\mu\|_{L^2_{v}(\dot{H}^{1})} \big){\rm d}\tau \nonumber\\
\leq\,& \|\widetilde{F}_3\|_{L^2_t(\dot{H}^{-1})}\| \widetilde{u}^{\mu}\|_{L^2_t(\dot{H}^1)}+\| \widetilde{F}_4\|_{L^2_t(L^2_v(\dot{H}^{-1}))} \|\widetilde{f}^\mu\|_{L^2_t(L^2_v(\dot{H}^{1} ))}.
\end{align}
Then, as $1<s\leq \frac{3}{2}$, Lemma \ref{LA.7} as well as $L^2=\dot{B}^{0}_{2,2}$ ensure that
\begin{align}\label{I2I3}
\| \widetilde{F}_3\|_{L^2_t(\dot{H}^{-1})}\sim \| \widetilde{F}_3\|_{L^2_t(\dot{B}^{-1}_{2,2})}\leq\, &C\|\widetilde{u}^\mu\|_{L^2_t(\dot{B}^{1}_{2,2})}\|\nabla u\|_{L^{\infty}_t(\dot{B}^{-\frac{1}{2}}_{2,1})}
+C\|a \|_{L^{\infty}_t(\dot{B}^{-\frac{1}{2}}_{2,1})}\|\widetilde{u}^{\mu}\|_{L^2_t(\dot{B}^{1}_{2,2})} \nonumber\\
&+C\Big\|\frac{\varrho}{1+\varrho}
\Big\|_{L^{\infty}_t(\dot{B}^{\frac{1}{2}}_{2,1})}\|\nabla\widetilde{P}^\mu\|_{L^2_t(\dot{B}^{0}_{2,2})} \nonumber\\
\leq\,& C\|\nabla\widetilde{u}^\mu\|_{L^2_t(L^2)} \Big(\|u\|_{L^{\infty}_t(\dot{B}^{\frac{1}{2}}_{2,1})}+\|f\|_{L^{\infty}_t(L^2_v(\dot{B}^{-\frac{1}{2}}_{2,1}))} \Big) \nonumber\\
& +C\|\varrho\|_{L^{\infty}_t(\dot{B}^{\frac{1}{2}}_{2,1})}\|\nabla \widetilde{P}^\mu\|_{L^2_t(L^2)}.
\end{align}
We recall that $(\varrho^\mu,u^\mu,f^\mu)$ satisfies the uniform bounds \eqref{G3.85} and \eqref{G4.1}. From Fatou's property, it implies that
\begin{align}
&\|(\varrho,u)\|_{L^{\infty}_{t}(\dot{B}^{-s}_{2,\infty})}
+\|f\|_{L^{\infty}_t(L_v^2(\dot{B}^{-s}_{2,\infty}))}\leq C\eps_1^{\frac{1}{2}}.\label{lowuniform2}
\end{align}
In view of \eqref{1.13}, \eqref{uniform2}, \eqref{lowuniform2} and the interpolation property (iv) in Lemma \ref{LA.7}, one finds that 
\begin{align}\label{NJKH4.20}
&\|\varrho\|_{L^{\infty}_t(\dot{B}^{\frac{1}{2}}_{2,1})}+\|u\|_{L^{\infty}_t(\dot{B}^{\frac{1}{2}}_{2,1})}
+\|f\|_{L^{\infty}_t(L^2_v(\dot{B}^{-\frac{1}{2}}_{2,1}))}\nonumber\\
&\quad\leq C\|(\varrho,u)\|_{L^{\infty}_t(\dot{B}^{-s}_{2,\infty}\cap H^3)}+\|f\|_{L^{\infty}_t(L^2_v(\dot{B}^{-s}_{2,\infty}\cap H^3))}\leq C\eps_1^{\frac{1}{2}}.
\end{align}
Similarly to \eqref{G3.2}, it can be deduced after multiplying \eqref{errorEQ}$_2$ by $\nabla\widetilde{P}^{\mu}$ that    
\begin{align*}
\|\nabla \widetilde{P}^{\mu}\|_{L^2}^2\lesssim\,&   \|    {u}^{\mu}\cdot\nabla \widetilde{u}^{\mu} \|_{L^{2}}^2+\|\widetilde{b}^{\mu}-\widetilde{u}^{\mu}\|_{L^2}^2+\mu^2\|\nabla^2 u^{\mu}\|_{L^{2}}^2+\|\widetilde{a}^{\mu}u^{\mu}\|_{L^2}^2 \nonumber\\
&+  \Big\|\Big(\frac{\varrho^\mu}{1+\varrho^\mu}-\frac{\varrho }{1+\varrho }\Big)
\nabla P^{\mu}\Big\|_{L^2}^2+ \|    \widetilde{u}^{\mu}\cdot\nabla  {u}^{\mu} \|_{L^{2}}^2+\|a^{f}\widetilde{u}^{\mu}\|_{L^2}^2 \nonumber\\
\lesssim\,& \|u^{\mu}\|_{L^{\infty}}^2\|\nabla\widetilde{u}^{\mu}\|_{L^2}^2
+\|\widetilde{b}^{\mu}-\widetilde{u}^{\mu}\|_{L^2}^2+\|u^{\mu}\|_{L^{\infty}}^2\| \widetilde{a}^{\mu}\|_{L^2}^2+\mu^2\|\nabla^2 u^{\mu}\|_{L^2}^2\nonumber\\
&+\|\varrho^{\mu}-\varrho\|_{H^1}^2\|\nabla P^{\mu}\|_{H^1}^2+\|\widetilde{u}^{\mu}\|_{H^1}^2\|\nabla^2 u^{\mu}\|_{L^2}^2+\|\nabla a\|_{L^2}^2\|\widetilde{u}^{\mu}\|_{H^1}^2\nonumber\\
\lesssim\,& \big(\|\nabla u^{\mu}\|_{H^1}^2+\|\nabla a\|_{L^2}^2\big)\big(\|\widetilde{u}^{\mu}\|_{H^1}^2+\|\widetilde{f}^{\mu}\|_{L_v^2(H^1)}^2\big)\nonumber\\
&+\big\|(1+t)^{\frac{1}{2}}\nabla P^{\mu}\big\|_{L^2}^2\big\|(1+t)^{-\frac{1}{2}}\widetilde{\varrho}^{\mu}\big\|_{H^1}^2
+\|\widetilde{b}^{\mu}-\widetilde{u}^{\mu}\|_{L^2}^2+\mu^2\|\nabla^2 u^{\mu}\|_{L^2}^2,
\end{align*}
which, together with \eqref{uniform1} and \eqref{weighted:inter}, implies that
\begin{align}\label{NJK4.21}
\|\nabla \widetilde{P}^{\mu}\|_{L_t^2(L^2)}\leq \, & C\big(\eps_0^{\frac{1}{2}}+\eps_1^{\frac{1}{2}}\big) \big(\|\widetilde{u}^{\mu}\|_{L^{\infty}_t(H^1)}+\|\nabla\widetilde{u}^{\mu}\|_{L^{2}_t(L^2)}
+\|\widetilde{f}^{\mu}\|_{L^{\infty}_t(L_v^2(H^1))}\big)\nonumber\\
& +C\mu+\|\widetilde{b}^{\mu}-\widetilde{u}^{\mu}\|_{L^2(L^2)}.  
\end{align}
Substituting \eqref{lowuniform2}--\eqref{NJK4.21} into \eqref{I2I3}, we obtain
\begin{align}\label{sgggg}
\| \widetilde{F}_3\|_{L^2_t(\dot{H}^{-1})}&\leq C\big(\eps_0^{\frac{1}{2}}+\eps
_1^{\frac{1}{2}}\big)\widetilde{\mathcal X}^\mu(t)^{\frac{1}{2}}+C\eps_1^{\frac{1}{2}}\mu.
\end{align}
Following the same line, invoking Lemmas \ref{L:error1}--\ref{LL4.2} yields
\begin{align} \label{sgggg2}
&\| \widetilde{F}_4\|_{L^2_t(\dot{H}^{-1})}\nonumber\\
\lesssim\, & \big\|(1+\tau)^{-\frac{1}{2}}\widetilde{\varrho}^\mu\big\|_{L^{\infty}_t(\dot{B}^{\frac{1}{2}}_{2,1})}
\Big(\big\|(1+\tau)^{\frac{1}{2}}u^{\mu}\cdot\nabla_v f^{\mu}\big\|_{L^{2}_t(L_v^2(\dot{B}^{0}_{2,2}))}+\big\|(1+\tau)^{\frac{1}{2}}u^{\mu}\cdot v f^{\mu}\big\|_{L^{2}_t(L_v^2(\dot{B}^{0}_{2,2}))}  \Big)\nonumber\\
&+  \| 1+\varrho \|_{L^{\infty}_t(\dot{B}^{\frac{1}{2}}_{2,1})}\Big(\|\widetilde{u}^\mu\nabla_v f\|_{L^2_t(L_v^2(\dot{B}^{0}_{2,2}))}+\|\widetilde{u}^\mu\cdot vf\|_{L^2_t(L_v^2(\dot{B}^{0}_{2,2}))}\Big) \nonumber\\
\lesssim\,&  \big(\mu+\eps_1^{\frac{1}{2}}\widetilde{\mathcal{X}}^\mu(t)^{\frac{1}{2}}\big) \big( \big\|(1+\tau)^{\frac{1}{2}}\nabla(a^\mu,b^\mu)\big\|_{L_t^2(H^2)} +\big\|(1+\tau)^{\frac{1}{2}}\{\mathbf{I-P}\}f^\mu\big\|_{L_t^2(L^2_{v,\nu}(H^3 ))}   \big)\|u^{\mu}\|_{L_t^{\infty}(L^2)}\nonumber\\
& +\big(1+ \|\varrho\|_{L^{\infty}_t(H^3)}\big)\big( \|\nabla(a^\mu,b^\mu)\|_{L_t^2(H^2)} +\|\{\mathbf{I-P}\}f^\mu\|_{L_t^2(L^2_{v,\nu}(H^3 ))}   \big) \|\widetilde{u}^{\mu}\|_{L_t^{\infty}(H^1)}\nonumber\\
\lesssim\,& \big(\eps_0^{\frac{1}{2}}+\eps_1^{\frac{1}{2}}\big)\mu+\big(\eps_0^{\frac{1}{2}}+\eps_1^{\frac{1}{2}}\big)
\widetilde{\mathcal{X}}^\mu(t)^{\frac{1}{2}}.
\end{align} 
Therefore, substituting \eqref{sgggg}--\eqref{sgggg2} into \eqref{NJKH4.18}, we arrive at
\begin{align}\label{NJK4.26}
|\widetilde{I}_5+\widetilde{I}_6| \lesssim C\big(\eps_0^{\frac{1}{2}}+\eps_1^{\frac{1}{2}}\big) \widetilde{\mathcal{X}}^\mu(t)+(\eps_0+\eps_1)\mu^2.
\end{align}
Concerning the last term $\widetilde{I}_7$, we use    integration by parts and 
the basic fact $v\sqrt{M}\lesssim 1$ to obtain 
\begin{align}\label{NJK4.27}
|\widetilde{I}_7|\lesssim\,& \big\|(1+\tau)^{-\frac{1}{2}}\widetilde{\varrho}^{\mu}\big\|_{L_t^{\infty}(L^3)}\big( \big\|(1+\tau)^{\frac{1}{2}}\nabla(a^\mu,b^\mu)\big\|_{L_t^2(H^2)} +\big\|(1+\tau)^{\frac{1}{2}}\{\mathbf{I-P}\}f^\mu\big\|_{L_t^2(L^2_{v,\nu}(H^3 ))}    \big)\nonumber\\
&\times \big(\| \{\mathbf{I-P}\}\widetilde{f}^\mu\|_{L_t^2(L^2_{v,\nu}(H^1 ))}+\|\nabla(\widetilde{a}^{\mu},\widetilde{b}^{\mu})\|_{L_t^2(L^2)}^2\big)\nonumber\\
&+\big\|(1+\tau)^{-\frac{1}{2}}
\widetilde{\varrho}^{\mu}\big\|_{L_t^{\infty}(L^3)}\big\|(1+\tau)^{\frac{1}{2}}(u^{\mu}-b^{\mu})
\big\|_{L_t^2(L^2)}\|(\widetilde{u}^{\mu},\widetilde{b}^{\mu})\|_{L_t^2(L^6)}\nonumber\\
&+\|\varrho\|_{L_t^{\infty}(L^{\infty})}\big(\| \{\mathbf{I-P}\}
\widetilde{f}^\mu\|_{L_t^2(L^2_{v,\nu}(H^1 ))}^2+\|\widetilde{u}^{\mu}-\widetilde{b}^{\mu}\|_{L_t^2(L^2)}
\|\widetilde{b}^{\mu}\|_{L_t^2(L^6)}\big)\nonumber\\
\lesssim\,&(\eps_0+\eps_1)\big\|(1+\tau)^{-\frac{1}{2}}\widetilde{\varrho}^{\mu}
\big\|_{L_t^{\infty}(H^1)}\widetilde{\mathcal{X}}^\mu(t)^{\frac{1}{2}}+\eps_0
\big\|(1+\tau)^{-\frac{1}{2}}\widetilde{\varrho}^{\mu}\big\|_{L_t^{\infty}(H^1)}
\|(1+\tau)^{\frac{1}{2}}\nabla(\widetilde{u}^{\mu},\widetilde{b}^{\mu})\|_{L_t^2(L^2)}\nonumber\\
&+\|\varrho\|_{L_t^{\infty}(H^3)}\big( \widetilde{\mathcal{X}}^\mu(t)+\widetilde{\mathcal{X}}^\mu(t)^{\frac{1}{2}}
\|\nabla\widetilde{b}^{\mu}\|_{L_t^2(L^2)}  \big)\nonumber\\
\lesssim\,&\big(\eps_0^{\frac{1}{2}}+\eps_1^{\frac{1}{2}}\big)\widetilde{\mathcal{X}}^\mu(t)+\mu^2.
\end{align}
Inserting the estimates \eqref{NJK4.15}--\eqref{NJH4.18}, \eqref{NJK4.26}, and \eqref{NJK4.27} into \eqref{NJK4.14} and adding \eqref{NJK4.21} with a small coefficient, we derive the desired \eqref{4.19} and finish the proof of Lemma \ref{L:error2}.
\end{proof}

\begin{lem}\label{L:error3}
It holds that
\begin{align}\label{NJK4.28}
&\sup_{\tau\in[0,t]} \big(\|\nabla  \widetilde{u}^\mu(\tau)\|_{L^2}^{2}+\| \nabla \widetilde{f}^\mu(\tau)\|_{L_{x,v}}^{2}\big)+\int_0^{t}\big(\|\nabla(\widetilde{b}^\mu-\widetilde{u}^\mu)\|_{L^2}^{2}
+\|\nabla\{\mathbf{I}-\mathbf{P}\}\widetilde{f}^\mu\|_{L^2_{v,\nu}(L^2)}^{2}\big){\rm d}\tau\nonumber\\
&\quad\leq C\mu^2+C\big(\eps_0^{\frac{1}{2}}+\eps_1^{\frac{1}{2}}\big)\widetilde{\mathcal{X}}^\mu(t),
\end{align}
where $C>0$ is a constant independent of $t$ and $\mu$.
\end{lem}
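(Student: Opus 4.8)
The plan is to differentiate the error system \eqref{errorEQ}$_2$--\eqref{errorEQ}$_4$ in the spatial variable, pair $\nabla\eqref{errorEQ}_2$ with $\nabla\widetilde{u}^\mu$ and $\nabla\eqref{errorEQ}_4$ with $\nabla\widetilde{f}^\mu$, and run essentially the same energy argument as in Lemma \ref{L:error2}, now at the level of one derivative. The structural features that make the $L^2$ estimate close — namely the dissipation of $\widetilde{b}^\mu-\widetilde{u}^\mu$ from the friction term, the coercivity \eqref{H2.1} of $-\mathcal{L}$ producing $\|\{\mathbf{I}-\mathbf{P}\}\widetilde{f}^\mu\|_{L^2_{v,\nu}}$, and the self-adjointness of $\mathcal{L}$ — are all differentiation-stable, so after integrating in time one obtains
\[
\sup_{\tau\in[0,t]}\big(\|\nabla\widetilde{u}^\mu\|_{L^2}^2+\|\nabla\widetilde{f}^\mu\|_{L^2_{x,v}}^2\big)
+\int_0^t\big(\|\nabla(\widetilde{b}^\mu-\widetilde{u}^\mu)\|_{L^2}^2+\|\nabla\{\mathbf{I}-\mathbf{P}\}\widetilde{f}^\mu\|_{L^2_{v,\nu}(L^2)}^2\big)\,{\rm d}\tau
\]
bounded by $\|\nabla\widetilde{u}^\mu(0)\|_{L^2}^2+\|\nabla\widetilde{f}^\mu(0)\|_{L^2_{x,v}}^2$ — controlled by $\mu^2$ via \eqref{a3} — plus a sum of error integrals involving $\nabla\partial^\alpha\widetilde F_i$ with $|\alpha|=1$, together with commutator terms $[\nabla,u^\mu\cdot\nabla]\widetilde u^\mu$, $[\nabla,(1+\varrho)u^\mu\cdot\nabla_v]\widetilde f^\mu$, etc. The commutators are handled by Lemma \ref{LA.2} and the uniform bounds \eqref{uniform1}, \eqref{uniform2}, which absorb them into $(\eps_0^{1/2}+\eps_1^{1/2})\widetilde{\mathcal X}^\mu(t)$ exactly as in \eqref{H3.35}.

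The nonlinear source terms $\widetilde F_1,\dots,\widetilde F_5$ are estimated using the same splitting as in Lemma \ref{L:error2} but one Sobolev level higher. For $\widetilde F_3$ and $\widetilde F_4$ I would again put the difference variable $\widetilde{\varrho}^\mu$ in an $L^\infty_t$-based norm, now $\|(1+\tau)^{-1/2}\widetilde{\varrho}^\mu\|_{L^\infty_t(H^1)}$, controlled by \eqref{widerho}, and put the time-growing weight $(1+\tau)^{1/2}$ on the factors $\nabla P^\mu$, $\nabla(a^\mu,b^\mu)$, $\{\mathbf I-\mathbf P\}f^\mu$, $u^\mu-b^\mu$, which are square-integrable against that weight by Lemma \ref{LL4.2} (\eqref{weighted:inter}). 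The term $\widetilde I_3$-analogue, i.e.\ the viscous term $\mu\Delta u^\mu/(1+\varrho^\mu)$ paired with $\nabla\widetilde{u}^\mu$, produces $\mu\|\nabla^3 u^\mu\|_{L^2}$ in the pairing after one integration by parts; using the decay \eqref{1.13} for $\|\nabla^2 u^\mu\|_{H^1}$ this is $\mu\eps_0^{1/2}\int_0^t(1+\tau)^{-s/2-1/2}\,{\rm d}\tau\,\sup\|\nabla\widetilde{u}^\mu\|_{L^2}\lesssim\mu^2+\tfrac14\sup\|\nabla\widetilde{u}^\mu\|_{L^2}^2$ since $s>1$, exactly mirroring \eqref{NJKH4.17}. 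The pressure error $\nabla^2\widetilde P^\mu$ is estimated, as in \eqref{NJK4.21}, by applying $\nabla$ to \eqref{errorEQ}$_2$, multiplying by $\nabla^2\widetilde P^\mu$, using ${\rm div}\,\widetilde u^\mu=0$ and the lower bound \eqref{G3.4}; this gives $\|\nabla^2\widetilde P^\mu\|_{L^2_t(L^2)}\lesssim(\eps_0^{1/2}+\eps_1^{1/2})\widetilde{\mathcal X}^\mu(t)^{1/2}+\mu+\|\nabla(\widetilde b^\mu-\widetilde u^\mu)\|_{L^2_t(L^2)}$, which is then added with a small coefficient to close.

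The main obstacle, as flagged in the text for the $H^3$ theory, is the term $\varrho^\mu\mathcal{L}f^\mu-\varrho\,\mathcal{L}f$ inside $\widetilde F_5$, i.e.\ the difference $\widetilde{\varrho}^\mu\mathcal{L}f^\mu+\varrho\,\mathcal{L}\widetilde f^\mu$, when paired with $\nabla^2\widetilde f^\mu$ (one more $v$-derivative than in Lemma \ref{L:error2}). For the $\varrho\,\mathcal{L}\widetilde f^\mu$ piece I would again split off the macroscopic part $b$ via $\mathcal{L}\widetilde f^\mu=\mathcal{L}\{\mathbf I-\mathbf P\}\widetilde f^\mu$, integrate by parts in $v$ using self-adjointness of $\mathcal{L}$, and use the smallness $\|\varrho\|_{H^3}\lesssim\eps_1^{1/2}$ to absorb the result into $\|\nabla\{\mathbf I-\mathbf P\}\widetilde f^\mu\|_{L^2_{v,\nu}(L^2)}^2$; the delicate low-frequency $\varrho$-in-$\dot H^{-1}$ trick of Lemma \ref{L3.8} is not needed here since we are only at $H^1$ level and $\varrho$ already sits in $H^1\subset\dot H^{-1}$ via the embedding used in \eqref{G3.480}ff., so the product can be controlled in $L^2$ directly. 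For the $\widetilde{\varrho}^\mu\,\mathcal{L}f^\mu=\widetilde{\varrho}^\mu\,\mathcal{L}\{\mathbf I-\mathbf P\}f^\mu$ piece I again use the time-weighted scheme: $\|(1+\tau)^{-1/2}\widetilde{\varrho}^\mu\|_{L^\infty_t(H^1)}$ from \eqref{widerho} against $\|(1+\tau)^{1/2}\{\mathbf I-\mathbf P\}f^\mu\|_{L^2_t(L^2_{v,\nu}(H^3))}$ from \eqref{weighted:inter}, which furnishes the $\mu^2$-order bound. Collecting all terms, choosing $\eps_0,\eps_1$ small and the pressure-coefficient small, and invoking \eqref{a3} for the initial data, yields \eqref{NJK4.28}.
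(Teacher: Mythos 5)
Most of your plan coincides with the paper's proof: apply $\partial^\alpha$ with $|\alpha|=1$ to \eqref{errorEQ}$_2$, \eqref{errorEQ}$_4$, run the same energy scheme as Lemma \ref{L:error2}, treat the viscous term through the decay \eqref{1.13} exactly as in \eqref{NJKH4.17} (this is the paper's $\widetilde{J}_4$, see \eqref{NJK4.33}), put $(1+\tau)^{-1/2}\widetilde{\varrho}^\mu$ in $L^\infty_t(H^1)$ via \eqref{widerho} against the $(1+\tau)^{1/2}$-weighted norms of Lemma \ref{LL4.2}, and handle $\varrho\mathcal{L}\widetilde{f}^\mu$, $\widetilde{\varrho}^\mu\mathcal{L}f^\mu$ through the macro--micro splitting with $\|\varrho\|_{L^\infty}$, $\|\nabla\varrho\|_{L^\infty}$ smallness (the paper's $\widetilde{J}_9$); your remark that the $\dot H^{-1}$ trick of Lemma \ref{L3.8} is unnecessary here is correct in substance, although the justification ``$H^1\subset\dot H^{-1}$'' is not a valid embedding --- what is actually used is only $L^\infty$ control of $\varrho$ and $\nabla\varrho$.

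The genuine gap is your treatment of the pressure part of $\widetilde{F}_3$. You propose to prove a second-order analogue of \eqref{NJK4.21}, namely $\|\nabla^2\widetilde{P}^\mu\|_{L^2_t(L^2)}\lesssim(\eps_0^{1/2}+\eps_1^{1/2})\widetilde{\mathcal{X}}^\mu(t)^{1/2}+\mu+\|\nabla(\widetilde b^\mu-\widetilde u^\mu)\|_{L^2_t(L^2)}$, by applying $\nabla$ to \eqref{errorEQ}$_2$ and testing with $\nabla^2\widetilde{P}^\mu$. As stated this step fails: the differentiated convection term contains $u^\mu\cdot\nabla\nabla\widetilde{u}^\mu$, and its $L^2$ norm requires $\|\nabla^2\widetilde{u}^\mu\|_{L^2}$, which is controlled neither by $\widetilde{\mathcal{X}}^\mu(t)$ (the error functional \eqref{4.11} contains only first-order derivatives) nor at order $\mu$ by the individual bounds \eqref{uniform1}, \eqref{uniform2}, since the error system has no viscous dissipation for $\widetilde{u}^\mu$; bounding $\nabla^2\widetilde{u}^\mu$ by the two solutions separately only gives $O(\eps^{1/2})$, which destroys the $O(\mu^2)$ rate. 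One can rescue a second-order pressure estimate by exploiting the divergence-free structure (taking ${\rm div}$ of the equation so that only $\partial_i u^\mu_j\partial_j\widetilde u^\mu_i$ appears, or by repeated integrations by parts using ${\rm div}\,u^\mu={\rm div}\,\widetilde u^\mu=0$ and absorbing the $\nabla^2\widetilde P^\mu$ remainder by smallness), but this extra idea is missing from your writeup. The paper avoids the issue altogether: in \eqref{NJK4.35} it splits $\widetilde{F}_3=\widetilde{F}_{3,1}+\frac{\varrho}{1+\varrho}\nabla\widetilde{P}^\mu$ and integrates by parts in $\int\partial^\alpha\big(\frac{\varrho}{1+\varrho}\nabla\widetilde{P}^\mu\big)\cdot\partial^\alpha\widetilde{u}^\mu\,{\rm d}x$ using ${\rm div}\,\partial^\alpha\widetilde{u}^\mu=0$, so that only first derivatives of $\widetilde{P}^\mu$ occur and the already established bound \eqref{NJK4.21} suffices --- no estimate of $\nabla^2\widetilde{P}^\mu$ is ever needed.
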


\begin{proof}
Applying  the operator $\partial^{\alpha}$ with $|\alpha|=1$ to \eqref{errorEQ}$_2$ and \eqref{errorEQ}$_4$ respectively, performing $L^2$ energy estimates and using ${{\rm div}\,} \widetilde{u}^{\mu}=0$,
then employing \eqref{H2.1}, we have
\begin{align}\label{NJK4.29}
&\frac{1}{2}\big( \|\partial^{\alpha}\widetilde{u}^\mu\|_{L^2}^2+\|\partial^{\alpha}\widetilde{f}^\mu
\|_{L_{x,v}^2}^2\big)
+\int_0^t\|\partial^{\alpha}(\widetilde{b}^\mu-\widetilde{u}^\mu)\|_{L^2}^{2}{\rm d}x{\rm d}\tau+\bar{\lambda}\int_0^t\|\partial^{\alpha}\{\mathbf{I}-\mathbf{P}\}\widetilde{f}^\mu\|_{L^2_{v,\nu}(L^2)}^{2}  {\rm d}x{\rm d}\tau\nonumber\\
\leq\,&-\int_0^t\!\int_{\mathbb{R}^3} \partial^{\alpha}(\widetilde{a}^\mu  u^\mu) \cdot\partial^{\alpha}\widetilde{u}^{\mu} {\rm d}x{\rm d}\tau+\frac{1}{2}\int_0^t \! \int_{\mathbb{R}^3}
\big\langle\partial^{\alpha}\big((1+\varrho)u^{\mu}\cdot v\widetilde{f}^{\mu}\big),\partial^{\alpha}\widetilde{f}^{\mu}\big\rangle {\rm d}x{\rm d}\tau\nonumber\\
&+\int_0^t\!\int_{\mathbb{R}^3\times \mathbb{R}^3}\partial^{\alpha}\big((1+\varrho) {u}^{\mu}\big)\cdot\nabla_v \widetilde{f}^{\mu}\partial^{\alpha} \widetilde{f}^{\mu}{\rm d}x{\rm d}v{\rm d}\tau+\int_0^t\int_{\mathbb{R}^3} \partial^{\alpha}\Big(\frac{\mu\Delta u^\mu }{1+\varrho^\mu}\Big)\cdot\partial^{\alpha}\widetilde{u}^{\mu} {\rm d}x{\rm d}\tau\nonumber\\
&+\int_0^t\!\int_{\mathbb{R}^3}\partial^{\alpha} u^{\mu}\cdot\nabla\widetilde{u}^{\mu}\cdot\partial^{\alpha}\widetilde{u}^{\mu}{\rm d}x{\rm d}\tau +\int_0^t\!\int_{\mathbb{R}^3} \partial^{\alpha}\widetilde{F}_2 \cdot \partial^{\alpha}\widetilde{u}^{\mu}{\rm d}x{\rm d}\tau+\int_0^t\!\int_{\mathbb{R}^3} \partial^{\alpha}\widetilde{F}_3 \cdot \partial^{\alpha} \widetilde{u}^\mu{\rm d}x{\rm d}\tau\nonumber\\
&+\int_0^t\!\int_{\mathbb{R}^3\times\mathbb{R}^3} \partial^{\alpha}\widetilde{F}_4 \partial^{\alpha}\widetilde{f}^\mu{\rm d}x{\rm d}v{\rm d}\tau+\int_0^t\!\int_{\mathbb{R}^3\times\mathbb{R}^3} \partial^{\alpha}\widetilde{F}_5 \partial^{\alpha}\widetilde{f}^\mu{\rm d}x{\rm d}v{\rm d}\tau\nonumber\\
&+\frac{1}{2}\big( \|\nabla\widetilde{u}^\mu(0)\|_{L^2}^2+\|\nabla\widetilde{f}^\mu(0)\|_{L_{x,v}^2}^2\big) \nonumber\\
\equiv:\,& \sum_{k=1}^9\widetilde{J}_k+\frac{1}{2}\big( \|\nabla\widetilde{u}^\mu(0)\|_{L^2}^2+\|\nabla\widetilde{f}^\mu(0)\|_{L_{x,v}^2}^2\big).
\end{align}
In view of \eqref{4.11} and 
\eqref{4.12}, and the macro-micro decomposition \eqref{mmd}, we obtain
\begin{align}\label{NJK4.30}
|\widetilde{J}_1|\lesssim\,&\|\nabla \widetilde{a}^{\mu}\|_{L_t^2(L^2)}\|u^{\mu}\|_{L_t^{\infty}(L^{\infty})} \|\nabla\widetilde{u}^{\mu}\|_{L_t^2(L^2)} +  \|  \widetilde{a}^{\mu}\|_{L_t^2(L^6)}\|u^{\mu}\|_{L_t^{\infty}(L^{3})} \|\nabla\widetilde{u}^{\mu}\|_{L_t^2(L^2)}\nonumber\\
\lesssim\,&\|u^{\mu}\|_{L_t^{\infty}(H^3)}\widetilde{\mathcal{X}}^\mu(t)\nonumber\\
\lesssim\,&\eps_0^{\frac{1}{2}}\widetilde{\mathcal{X}}^\mu(t),\\\label{NJK4.31}
|\widetilde{J}_2+\widetilde{J}_3|\lesssim\,&  \|\nabla\varrho\|_{L_t^{\infty}(L^6)}\| {u}^{\mu}\|_{L_t^{\infty}(L^6)} \big(\| \{\mathbf{I-P}\}\widetilde{f}^\mu\|_{L_t^2(L^2_{v,\nu}(L^6))}+\| (\widetilde{a}^{\mu},\widetilde{b}^{\mu})\|_{L_t^2(L^6)}  \big)   \nonumber\\
&\quad\times \big(\|\nabla \{\mathbf{I-P}\}\widetilde{f}^\mu\|_{L_t^2(L^2_{v,\nu}(L^2))}+\| \nabla(\widetilde{a}^{\mu},\widetilde{b}^{\mu})\|_{L_t^2(L^2)}       \big) \nonumber\\
&+\big(1+ \| \varrho\|_{L_t^{\infty}(L^{\infty})}\big)\| \nabla{u}^{\mu}\|_{L_t^{\infty}(L^3)} 
\big(\| \{\mathbf{I-P}\}\widetilde{f}^\mu\|_{L_t^2(L^2_{v,\nu}(L^6))} +\| (\widetilde{a}^{\mu},\widetilde{b}^{\mu})\|_{L_t^2(L^6)}        \big) \nonumber\\
&\quad\times \big(\|\nabla \{\mathbf{I-P}\}\widetilde{f}^\mu\|_{L_t^2(L^2_{v,\nu}(L^2))} +\|\nabla(\widetilde{a}^{\mu},\widetilde{b}^{\mu})\|_{L_t^2(L^2)}        \big) \nonumber\\
&+\big(1+ \| \varrho\|_{L_t^{\infty}(L^{\infty})}\big)\|  {u}^{\mu}\|_{L_t^{\infty}(L^{\infty})}
 \big(\| \nabla \{\mathbf{I-P}\}\widetilde{f}^\mu\|_{L_t^2(L^2_{v,\nu}(L^2))} +\|\nabla (\widetilde{a}^{\mu},\widetilde{b}^{\mu})\|_{L_t^2(L^2)}        \big) \nonumber\\
&+\|\nabla\varrho\|_{L_t^{\infty}(L^{\infty})}\|  {u}^{\mu}\|_{L_t^{\infty}(L^{3})} 
\big(\| \{\mathbf{I-P}\}\widetilde{f}^\mu\|_{L_t^2(L^2_{v,\nu}(L^6))} +\| (\widetilde{a}^{\mu},\widetilde{b}^{\mu})\|_{L_t^2(L^6)}        \big) \nonumber\\
&\quad\times \big(\| \nabla\{\mathbf{I-P}\}\widetilde{f}^\mu\|_{L_t^2(L^2_{v,\nu}(L^2))} +\| \nabla(\widetilde{a}^{\mu},\widetilde{b}^{\mu})\|_{L_t^2(L^2)}        \big) \nonumber\\
&+\big(1+ \| \varrho\|_{L_t^{\infty}(L^{\infty})}\big)\|  \nabla{u}^{\mu}\|_{L_t^{\infty}(L^{\infty})} 
\big(\| \{\mathbf{I-P}\}\widetilde{f}^\mu\|_{L_t^2(L^2_{v,\nu}(H^1))}^2+\|\nabla(\widetilde{a}^{\mu},
\widetilde{b}^{\mu})\|_{L_t^2(L^2)}^2       \big) \nonumber\\
\lesssim\,&  \big(1+\| \varrho\|_{L_t^{\infty}(H^{3})}\big)\|u^{\mu}\|_{L_t^{\infty}(H^{3})}  
\big(\| \{\mathbf{I-P}\}\widetilde{f}^\mu\|_{L_t^2(L^2_{v,\nu}(H^1))}^2+\|\nabla(\widetilde{a}^{\mu},
\widetilde{b}^{\mu})\|_{L_t^2(L^2)}^2       \big) \nonumber\\
\lesssim\,&\big(1+\| \varrho\|_{L_t^{\infty}(H^{3})}\big)\|u^{\mu}\|_{L_t^{\infty}(H^{3})}\widetilde{\mathcal{X}}^\mu(t)\nonumber\\
\lesssim\,&\eps_0^{\frac{1}{2}}\widetilde{\mathcal{X}}^\mu(t),
\end{align}
and 
\begin{align}\label{NJK4.32}
|\widetilde{J}_5|\lesssim \|\nabla u^{\mu}\|_{L_t^{\infty}(L^{\infty})}\|\nabla\widetilde{u}^{\mu}\|_{L_t^2(L^2)}^2\lesssim \|u^{\mu}\|_{L_t^{\infty}(H^{3})}  \widetilde{\mathcal{X}}^\mu(t)\lesssim \eps_0^{\frac{1}{2}}\widetilde{\mathcal{X}}^\mu(t).  
\end{align}
For the term $\widetilde{J}_4$, similar to the estimate of $\widetilde{I}_3$ in \eqref{NJKH4.17}, one has
\begin{align}\label{NJK4.33}
|\widetilde{J}_4|\leq\,& C\mu \big(1+\|\nabla\varrho^{\mu}\|_{L_t^{\infty}(L^{\infty})}\big)\eps_0^{\frac{1}{2}}\int_0^t \|\nabla^2 u^\mu \|_{L^2} {\rm d}\tau \sup_{\tau\in[0,t]} \| \nabla\widetilde{u}^{\mu}\|_{L^2} \nonumber\\
&+C\mu \eps_0^{\frac{1}{2}} \int_0^t \|\nabla^3 u^\mu \|_{L^2} {\rm d}\tau \sup_{\tau\in[0,t]} \| \nabla\widetilde{u}^{\mu}\|_{L^2} \nonumber\\
\leq\,&  C\mu \eps_0^{\frac{1}{2}}\int_0^t (1+\tau)^{-\frac{s}{2}-\frac{1}{2}}{\rm d}\tau \,\sup_{\tau\in[0,t]} \| \nabla\widetilde{u}^{\mu}\|_{L^2}\nonumber\\
&\leq\, C\mu^2+\frac{1}{4}\sup_{\tau\in[0,t]} \| \nabla\widetilde{u}^{\mu}\|_{L^2}^2.   
\end{align}
For  the term $\widetilde{J}_6$, similar to the estimate of $\widetilde{I}_4$ in \eqref{NJKH4.18}, it holds
\begin{align}\label{NJK4.34}
|\widetilde{J}_6|\lesssim\, & \Big\|(1+\tau)^{-\frac{1}{2}}\Big(\frac{\varrho^\mu}{1+\varrho^\mu}
-\frac{\varrho}{1+\varrho}\Big)\Big\|_{L^{\infty}_t(L^6)}\big\|(1+\tau)^{\frac{1}{2}}\nabla^2 P^\mu\big\|_{L^2_t(L^3)}\|\nabla\widetilde{u}^\mu\|_{L^2_t(L^2)} \nonumber\\
&+ \big\|(1+\tau)^{-\frac{1}{2}}\nabla\widetilde{\varrho}^{\mu}\big\|_{L^{\infty}_t(L^2)}
\big\|(1+\tau)^{\frac{1}{2}}\nabla  P^\mu\big\|_{L^2_t(L^{\infty})}\|\nabla\widetilde{u}^\mu\|_{L^2_t(L^2)} 
\nonumber\\
&+\big(\|\nabla\varrho\|_{L_t^{\infty}(H^2)}+\|\nabla\varrho^{\mu}\|_{L_t^{\infty}(H^2)}\big) \big\|(1+\tau)^{-\frac{1}{2}} \widetilde{\varrho}^{\mu}\big\|_{L^{\infty}_t(L^6)}
\big\|(1+\tau)^{\frac{1}{2}}\nabla  P^\mu\|_{L^2_t(L^3)}\big\|\nabla\widetilde{u}^\mu\|_{L^2_t(L^2)} \nonumber\\
\lesssim\,&\big(1+\eps_0^{\frac{1}{2}}+\eps_1^{\frac{1}{2}}\big) \big\|(1+\tau)^{-\frac{1}{2}}\widetilde{\varrho}^\mu\big\|_{L^{\infty}_t(H^1)}
\big\|(1+\tau)^{\frac{1}{2}}\nabla P^\mu\big\|_{L^2_t(H^2)}\|\nabla\widetilde{u}^\mu\|_{L^2_t(L^2)} \nonumber\\
\lesssim\,&  \mu^2+ \big(\eps_0^{\frac{1}{2}}+\eps_1^{\frac{1}{2}}\big)\widetilde{\mathcal{X}}^\mu(t).      
\end{align}
Notice that for $|\alpha|=1$, owing to integration by parts and the fact   $ \partial^{\alpha}{\rm div}\,\widetilde{u}^{\mu}=0$, we obtain  
\begin{align*}
&\int_0^t\!\int_{\mathbb{R}^3}\partial^{\alpha}\Big(\frac{\varrho} {1+\varrho}\nabla\widetilde{P}^{\mu}\Big)\cdot\partial^{\alpha} \widetilde{u}^{\mu}{\rm d}x{\rm d\tau}\nonumber\\
=\,& \int_0^t\!\int_{\mathbb{R}^3}\partial^{\alpha}\Big(\frac{\varrho} {1+\varrho}\Big)\nabla\widetilde{P}^{\mu}\cdot\partial^{\alpha} \widetilde{u}^{\mu}{\rm d}x{\rm d\tau}
+\int_0^t\!\int_{\mathbb{R}^3}\frac{\varrho} {1+\varrho}\nabla\partial^{\alpha}\widetilde{P}^{\mu}\cdot\partial^{\alpha} \widetilde{u}^{\mu}{\rm d}x{\rm d\tau}\nonumber\\
=\,&\int_0^t\!\int_{\mathbb{R}^3}\partial^{\alpha}\Big(\frac{\varrho} {1+\varrho}\Big)\nabla\widetilde{P}^{\mu}\cdot\partial^{\alpha} \widetilde{u}^{\mu}{\rm d}x{\rm d\tau}-\int_0^t\!\int_{\mathbb{R}^3}\nabla\Big(\frac{\varrho} {1+\varrho}\Big) \partial^{\alpha}\widetilde{P}^{\mu}\cdot\partial^{\alpha} \widetilde{u}^{\mu}{\rm d}x{\rm d\tau} \nonumber\\
\lesssim\,& \|\varrho\|_{L_t^{\infty}(H^3)}\|\nabla \widetilde{P}^{\mu}\|_{L_{t}^2(L^2)}\|\nabla \widetilde{u}^{\mu}\|_{L_t^2(L^2)}\nonumber\\
\lesssim\,& (\eps_0^{\frac{1}{2}}+\eps_1^{\frac{1}{2}})\widetilde{\mathcal{X}}^\mu(t)+\mu^2. 
\end{align*}
Therefore, the term $\widetilde{J}_7+\widetilde{J}_8$  can be estimated by
\begin{align}\label{NJK4.35}
|\widetilde{J}_7+\widetilde{J}_8|\lesssim\,&  \int_0^t\big(\|\nabla\widetilde{F}_{3,1}\|_{L^2}\|\nabla\widetilde{u}^\mu\|_{L^{2}}+\| \nabla\widetilde{F}_4\|_{L^2_{x,v} } \|\nabla\widetilde{f}^\mu\|_{L^2_{x,v}}\big){\rm d}\tau \nonumber\\
&+\sum
_{|\alpha|=1}\int_{\mathbb{R}^3}\partial^{\alpha}\Big(\frac{\varrho} {1+\varrho}\nabla\widetilde{P}^{\mu}\Big)\cdot\partial^{\alpha} \widetilde{u}^{\mu}{\rm d}x  {\rm d}\tau \nonumber\\
\lesssim\, &\|\widetilde{F}_{3,1}\|_{L^2_t(\dot{H}^1)}\| \nabla\widetilde{u}^{\mu}\|_{L^2_t(L^2)}+\| \widetilde{F}_4\|_{L^2_t(L^2_v(\dot{H}^{1}))} \|\nabla\widetilde{f}^\mu\|_{L^2_t(L^2_{x,v})}
+\big(\eps_0^{\frac{1}{2}}
+\eps_1^{\frac{1}{2}}\big)\widetilde{\mathcal{X}}^\mu(t)+\mu^2\nonumber\\
\lesssim\,&\big(\|\widetilde{F}_{3,1}\|_{L^2_t(\dot{H}^1)}+\| \widetilde{F}_4\|_{L^2_t(L^2_v(\dot{H}^{1}))}\big)\widetilde{\mathcal{X}}^\mu(t)^{\frac{1}{2}}
+\big(\eps_0^{\frac{1}{2}}+\eps_1^{\frac{1}{2}}\big)\widetilde{\mathcal{X}}^\mu(t)+\mu^2.
\end{align}
Here, we decompose $\widetilde{F}_{3} =\widetilde{F}_{3,1}+\frac{\varrho }{1+\varrho }\nabla \widetilde{P}^\mu $.
By direct calculations, from \eqref{4.11}, \eqref{4.12} and \eqref{NJK4.21}, we have
\begin{align}\label{NJK4.36}
&\|\widetilde{F}_{3,1}\|_{L^2_t(\dot{H}^1)}\nonumber\\\lesssim \,&\|\nabla\widetilde{u}^{\mu}\|_{L_t^2(L^2)}\|\nabla u\|_{L_t^{\infty}(L^{\infty})}+\|\widetilde{u}^{\mu}\|_{L_t^2(L^6)}\|\nabla^2 u\|_{L_t^{\infty}(L^3)}\nonumber\\ 
&+\|\nabla a^{f}\|_{L_t^2(L^3)}\|\widetilde{u}^{\mu}\|_{L_t^{\infty}(L^6)}+\|  a^{f}\|_{L_t^2(L^6)}\|\nabla\widetilde{u}^{\mu}\|_{L_t^{\infty}(L^2)} +\|\nabla\varrho\|_{L_t^{\infty}(L^{\infty})}\|\nabla \widetilde{P}^{\mu}\|_{L_t^2(L^2)}\nonumber\\
\lesssim\,& (\|u\|_{L_t^{\infty}(H^3)}+\|f\|_{L_t^{\infty}(H_{x
,v}^3)})\widetilde{\mathcal{X}}^\mu(t)^{\frac{1}{2}}
+\big(1+\|\nabla\varrho\|_{L_t^{\infty}(L^{\infty})}\big)\|\varrho\|_{L_t^{\infty}(H^3)}\|\nabla \widetilde{P}^{\mu}\|_{L_t^2(L^2)}\nonumber\\
\lesssim\,& \eps_1^{\frac{1}{2}}\widetilde{\mathcal{X}}^\mu(t)^{\frac{1}{2}}+\eps_1^{\frac{1}{2}} \mu,
\end{align}
and
\begin{align}\label{NJK4.37}
 \| \widetilde{F}_4\|_{L^2_t(L^2_v(\dot{H}^{1}))} \lesssim\,&  \big\|(1+\tau)^{-\frac{1}{2}}\widetilde{\varrho}^{\mu}\big\|_{L_t^{\infty}(\dot{H}^1)} \big\|(1+\tau)^{\frac{1}{2}}\nabla(u^{\mu}\cdot\nabla_v f^{\mu})\big\|_{L_t^2(L_v^2(\dot{H}^{-1}))} \nonumber\\
&+ \big\|(1+\tau)^{-\frac{1}{2}}\widetilde{\varrho}^{\mu}\big\|_{L_t^{\infty}(\dot{H}^1)} \big\|(1+\tau)^{\frac{1}{2}}\nabla(u^{\mu}\cdot v f^{\mu})\big\|_{L_t^2(L_v^2(\dot{H}^{-1}))}  \nonumber\\
&+\big\|(1+\tau)^{-\frac{1}{2}}\nabla\varrho\|_{L_t^{\infty}(L^2)} \big\|(1+\tau)^{\frac{1}{2}} (u^{\mu}\cdot\nabla_v f^{\mu})\big\|_{L_t^2(L_{x,v}^2)}\nonumber\\
&+\big\|(1+\tau)^{-\frac{1}{2}}\nabla\varrho\big\|_{L_t^{\infty}(L^2)} 
\big\|(1+\tau)^{\frac{1}{2}} (u^{\mu}\cdot v f^{\mu})\big\|_{L_t^2(L_{x,v}^2)}\nonumber\\
&+\big(1+\|\varrho\|_{L_t^{\infty}(H^3)}\big)\big(\|\widetilde{u}^{\mu}\cdot\nabla_v f\|_{L_t^2(H^1)}+\|\widetilde{u}^{\mu}\cdot v f\|_{L_t^2(H^1)}\big)\nonumber\\
\lesssim\,&\big\|(1+\tau)^{-\frac{1}{2}}\widetilde{\varrho}^{\mu}\big\|_{L_t^{\infty}( {H}^1)} \big\|(1+\tau)^{\frac{1}{2}} (u^{\mu}\cdot\nabla_v f^{\mu})\big\|_{L_t^2(L_{x,v}^2)}\nonumber\\
&+\big\|(1+\tau)^{-\frac{1}{2}}\widetilde{\varrho}^{\mu}\big\|_{L_t^{\infty}( {H}^1)} \big\|(1+\tau)^{\frac{1}{2}} (u^{\mu}\cdot  v f^{\mu})\big\|_{L_t^2(L_{x,v}^2)}\nonumber\\
&+\|\widetilde{u}^{\mu}\|_{L_t^{\infty}(H^1)}\big( \|\nabla(a ,b )\|_{L_t^2(H^2)} +\|\{\mathbf{I-P}\}f \|_{L_t^2(L^2_{v,\nu}(H^3 ))}   \big) \nonumber\\
\lesssim\,&\big(\mu+\big(\eps_0^{\frac{1}{2}}
+\eps_1^{\frac{1}{2}}\big)\widetilde{\mathcal{X}}^\mu(t)^{\frac{1}{2}}\big)\|u^{\mu}\|_{L_t^{\infty}(H^3)}  \big\|(1+\tau)^{\frac{1}{2}}\nabla(a^{\mu} ,b^{\mu} )\big\|_{L_t^2(H^2)} \nonumber\\
&+\big(\mu+\big(\eps_0^{\frac{1}{2}}
+\eps_1^{\frac{1}{2}}\big)\widetilde{\mathcal{X}}^\mu(t)^{\frac{1}{2}}\big)
\|u^{\mu}\|_{L_t^{\infty}(H^3)}\big\|(1+\tau)^{\frac{1}{2}}\{\mathbf{I-P}\}f^{\mu} \big\|_{L_t^2(L^2_{v,\nu}(H^3 ))}\nonumber\\
&+\big(\eps_0^{\frac{1}{2}}+\eps_1^{\frac{1}{2}}\big)\widetilde{\mathcal{X}}^\mu(t)^{\frac{1}{2}}\nonumber\\
\lesssim\,&\big(\eps_0^{\frac{1}{2}}+\eps_1^{\frac{1}{2}}\big)\widetilde{\mathcal{X}}^\mu(t)^{\frac{1}{2}}
+\big(\eps_0^{\frac{1}{2}}+\eps_1^{\frac{1}{2}}\big)\mu.
\end{align}
Inserting \eqref{NJK4.36} and \eqref{NJK4.37} into \eqref{NJK4.35} gives rise to
\begin{align}\label{NJK4.38}
|\widetilde{J}_7+\widetilde{J}_8|\lesssim   \mu^2+ \big(\eps_0^{\frac{1}{2}}+\eps_1^{\frac{1}{2}}\big)\widetilde{\mathcal{X}}^\mu(t).  
\end{align}
For the last term $\widetilde{J}_9$, by using integration
by parts and the fact   $v\sqrt{M}\lesssim 1$, we obtain
\begin{align}\label{NJK4.39}
|\widetilde{J}_9|\lesssim\,& \big\|(1+\tau)^{-\frac{1}{2}}\widetilde{\varrho}^{\mu}\big\|_{L_t^{\infty}(H^1)}
\big( \big\|(1+\tau)^{\frac{1}{2}}\nabla(a^\mu,b^\mu)\big\|_{L_t^2(H^2)} \nonumber\\ &\quad +\big\|(1+\tau)^{\frac{1}{2}}\nabla\{\mathbf{I-P}\}f^\mu\big\|_{L_t^2(L^2_{v,\nu}(H^2 ))}    \big)\nonumber\\
&\times \big(\| \nabla \{\mathbf{I-P}\}\widetilde{f}^\mu\|_{L_t^2(L^2_{v,\nu}(L^2 ))}+\|\nabla(\widetilde{a}^{\mu},\widetilde{b}^{\mu})\|_{L_t^2(L^2)}^2\big)\nonumber\\
&+\big\|(1+\tau)^{-\frac{1}{2}}\widetilde{\varrho}^{\mu}\big\|_{L_t^{\infty}(H^1)}
\big\|(1+\tau)^{\frac{1}{2}}(u^{\mu}-b^{\mu})\big\|_{L_t^2(H^2)}\|\nabla(\widetilde{u}^{\mu},
\widetilde{b}^{\mu})\|_{L_t^2(L^2)}\nonumber\\
&+ \| \varrho\|_{L_t^{\infty}(L^{\infty})} \big(\| \{\mathbf{I-P}\}\widetilde{f}^\mu\|_{L_t^2(L^2_{v,\nu}(H^1 ))}^2+\|\widetilde{u}^{\mu}-\widetilde{b}^{\mu}\|_{L_t^2(H^1)}\|\nabla\widetilde{b}^{\mu}\|_{L_t^2(L^2)}\big)\nonumber\\
&+ \|\nabla \varrho\|_{L_t^{\infty}(L^{\infty})} \big(\| \{\mathbf{I-P}\}\widetilde{f}^\mu\|_{L_t^2(L^2_{v,\nu}(H^1 ))}^2+\|\widetilde{u}^{\mu}-\widetilde{b}^{\mu}\|_{L_t^2(H^1)}\|\nabla\widetilde{b}^{\mu}\|_{L_t^2(L^2)}\big)\nonumber\\
\lesssim\,&(\eps_0+\eps_1 )\big\|(1+\tau)^{-\frac{1}{2}}\widetilde{\varrho}^{\mu}\big\|_{L_t^{\infty}(H^1)}
\widetilde{\mathcal{X}}^\mu(t)^{\frac{1}{2}}\nonumber\\
&+(\eps_0+\eps_1)\big\|(1+\tau)^{-\frac{1}{2}}\widetilde{\varrho}^{\mu}\big\|_{L_t^{\infty}(H^1)}
\|\nabla(\widetilde{u}^{\mu},\widetilde{b}^{\mu})\|_{L_t^2(L^2)}
+\|\varrho\|_{L_t^{\infty}(H^3)} \widetilde{\mathcal{X}}^\mu(t)        \nonumber\\
\lesssim\,&\big(\eps_0^{\frac{1}{2}}+\eps_1^{\frac{1}{2}}\big)\widetilde{\mathcal{X}}^\mu(t)+\mu^2.    
\end{align}
By substituting \eqref{NJK4.30}--\eqref{NJK4.34}, \eqref{NJK4.38}
and \eqref{NJK4.39} into \eqref{NJK4.29},
summing them over $|\alpha|=1$,
we consequently derive the desired \eqref{NJK4.28}.
\end{proof}

\begin{lem}\label{L:error4}
It holds that
\begin{align}\label{nablatildeau}
\int_0^t \|\nabla(\widetilde{a}^\mu,\widetilde{b}^\mu)\|_{L^2}^2{\rm d}\tau\leq C\mu^2+C \widetilde{\mathcal{X}}^\mu(t),
\end{align}
where $C>0$ is a constant independent of $t$ and $\mu$.
\end{lem}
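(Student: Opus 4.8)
The plan is to adapt the degenerate-dissipation argument of Lemma~\ref{L3.5} to the error system~\eqref{errorEQ}, but only at the $L^2$ level of $\nabla(\widetilde a^\mu,\widetilde b^\mu)$ (no spatial derivatives are applied). First I would take the velocity moments of $\eqref{errorEQ}_4$ against $\sqrt M$, $v_i\sqrt M$ and $(v_iv_j-1)\sqrt M$ to obtain, analogously to~\eqref{G3.36}, a macroscopic system
\begin{equation*}
\left\{\begin{aligned}
&\partial_t\widetilde a^\mu+{\rm div}\,\widetilde b^\mu=\widetilde g_0,\\
&\partial_t\widetilde b^\mu_i+\partial_i\widetilde a^\mu+\sum_{j=1}^3\partial_j\Gamma_{ij}(\{\mathbf I-\mathbf P\}\widetilde f^\mu)=\widetilde u^\mu_i-\widetilde b^\mu_i+\widetilde g_i,\\
&\partial_i\widetilde b^\mu_j+\partial_j\widetilde b^\mu_i=-\partial_t\Gamma_{ij}(\{\mathbf I-\mathbf P\}\widetilde f^\mu)+\widetilde h_{ij},
\end{aligned}\right.
\end{equation*}
where $\Gamma_{ij}$ is as in~\eqref{G3.35} and $\widetilde g_0,\widetilde g_i,\widetilde h_{ij}$ collect the lower-order contributions of the transport term $v\cdot\nabla_x\widetilde f^\mu$, of the coupling $(1+\varrho)u^\mu$, and of the remainders $\widetilde F_4,\widetilde F_5$.

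Next, following the scheme of~\eqref{GH3.26}--\eqref{GH3.23}, I would introduce
\[
\widetilde{\mathcal E}_0(t):=\sum_{i,j=1}^3\int_{\mathbb R^3}(\partial_i\widetilde b^\mu_j+\partial_j\widetilde b^\mu_i)\,\Gamma_{ij}(\{\mathbf I-\mathbf P\}\widetilde f^\mu)\,{\rm d}x-\int_{\mathbb R^3}\widetilde a^\mu\,{\rm div}\,\widetilde b^\mu\,{\rm d}x,
\]
which satisfies $|\widetilde{\mathcal E}_0(t)|\lesssim\|\widetilde f^\mu(t)\|_{L^2_v(H^1)}^2\le\widetilde{\mathcal X}^\mu(t)$, and derive a differential inequality of the form
\[
\frac{\rm d}{{\rm d}t}\widetilde{\mathcal E}_0(t)+\lambda\|\nabla(\widetilde a^\mu,\widetilde b^\mu)\|_{L^2}^2\le C\big(\|\{\mathbf I-\mathbf P\}\widetilde f^\mu\|_{L^2_{v,\nu}(H^1)}^2+\|\widetilde b^\mu-\widetilde u^\mu\|_{H^1}^2+\|\nabla\widetilde f^\mu\|_{L^2_{x,v}}^2\big)+\mathcal R(t),
\]
$\mathcal R(t)$ being the nonlinear remainder. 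Integrating in time and using that $|\widetilde{\mathcal E}_0|\lesssim\widetilde{\mathcal X}^\mu(t)$, the first bracket on the right is $\lesssim\widetilde{\mathcal X}^\mu(t)$ directly from~\eqref{4.11}--\eqref{4.12}, so everything reduces to showing $\int_0^t\mathcal R(\tau)\,{\rm d}\tau\lesssim\mu^2+\widetilde{\mathcal X}^\mu(t)$.

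The heart of the matter, and the main obstacle, is the control of $\mathcal R(t)$, which I would split into two types of terms. The genuinely quadratic error contributions — those stemming from $\widetilde u^\mu\cdot\nabla u$, $a^f\widetilde u^\mu$, $(\varrho+1)(\widetilde u^\mu\cdot\nabla_v f+\tfrac12\widetilde u^\mu\cdot v f)$, and the coupling $(1+\varrho)u^\mu\cdot\nabla_v\widetilde f^\mu$, $(1+\varrho)u^\mu\cdot v\widetilde f^\mu$ — are handled by $L^3$--$L^6$ interpolation together with the uniform Sobolev bounds~\eqref{uniform1} and~\eqref{uniform2}, producing factors $\eps_0^{1/2}+\eps_1^{1/2}$ (harmless, since~\eqref{nablatildeau} allows an $O(1)$ constant). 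The dangerous terms are those carrying the factor $\widetilde\varrho^\mu=\varrho^\mu-\varrho$ coming from $\widetilde F_4=\widetilde\varrho^\mu(-u^\mu\cdot\nabla_v f^\mu+\tfrac12u^\mu\cdot v f^\mu)-(\varrho+1)(\widetilde u^\mu\cdot\nabla_v f+\tfrac12\widetilde u^\mu\cdot v f)$ and $\widetilde F_5=\widetilde\varrho^\mu(\mathcal L f^\mu+u^\mu\cdot v\sqrt M)+\varrho(\mathcal L\widetilde f^\mu+\widetilde u^\mu\cdot v\sqrt M)$; since Lemma~\ref{L:error1} only gives $\|\widetilde\varrho^\mu(t)\|_{H^1}\lesssim\mu+\eps_1^{1/2}t^{1/2}\|\nabla\widetilde u^\mu\|_{L^2_t(L^2)}$, which grows in time, I would insert the weight $(1+\tau)^{-1/2}$, use $\|(1+\tau)^{-1/2}\widetilde\varrho^\mu\|_{L^\infty_t(H^1)}\lesssim\mu+\eps_1^{1/2}\widetilde{\mathcal X}^\mu(t)^{1/2}$, and absorb the companion factor $(1+\tau)^{1/2}$ into the time-weighted integrability~\eqref{weighted:inter} of $\|\nabla(a^\mu,b^\mu)\|_{H^2}$ and $\|\{\mathbf I-\mathbf P\}f^\mu\|_{L^2_{v,\nu}(H^3)}$, exactly as in the treatment of $\widetilde I_4,\widetilde I_7$ in Lemma~\ref{L:error2} and $\widetilde J_6,\widetilde J_9$ in Lemma~\ref{L:error3}. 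This yields a bound $\lesssim(\mu+\eps_1^{1/2}\widetilde{\mathcal X}^\mu(t)^{1/2})^2\lesssim\mu^2+\widetilde{\mathcal X}^\mu(t)$ by Young's inequality, the $\mu^2$ also absorbing the $\mu\Delta u^\mu$ contributions inherited through the $\widetilde u^\mu$ bounds. Collecting the estimates and integrating the differential inequality gives~\eqref{nablatildeau}; note that the final smallness absorption of the $\widetilde{\mathcal X}^\mu$-terms is \emph{not} performed here but only when Lemmas~\ref{L:error1}--\ref{L:error4} are combined to close~\eqref{rate}.
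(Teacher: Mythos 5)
Your proposal is correct and follows essentially the same route as the paper: you take the moments of the error kinetic equation, introduce exactly the paper's cross functional $\widetilde{\mathcal{E}}^{\mu}_0(t)=\sum_{i,j}\int(\partial_i\widetilde b^\mu_j+\partial_j\widetilde b^\mu_i)\Gamma_{ij}(\{\mathbf I-\mathbf P\}\widetilde f^\mu)\,{\rm d}x-\int\widetilde a^\mu\,{\rm div}\,\widetilde b^\mu\,{\rm d}x$, and control the $\widetilde\varrho^\mu$-bearing remainders with the $(1+\tau)^{\mp\frac12}$ weights, Lemma \ref{L:error1} and the weighted integrability \eqref{weighted:inter}, exactly as in \eqref{G4.54}--\eqref{G4.55}. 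The only cosmetic difference is your allowance of a source $\widetilde g_0$ in the continuity-type equation, which in fact vanishes identically (as in \eqref{G4.41}$_1$), so nothing is affected.
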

\begin{proof}
Similarly to \eqref{G3.36}, from \eqref{I-3mu} and \eqref{I-3*} we can derive 
the equations of $\widetilde{a}^{\mu} $ and $\widetilde{b}^{\mu}$:
\begin{equation}\label{G4.41}
\left\{\begin{aligned}
&\partial_{t}\widetilde{a}^{\mu}+{\rm div}\, \widetilde{b}^{\mu}=0,\\
&\partial_{t} \widetilde{b}^{\mu}_i+\partial_{i} \widetilde{a}^{\mu}+\sum_{j=1}^3\partial_j\Gamma_{ij} (\{\mathbf{I}-\mathbf{P}\}\widetilde{f}^{\mu})\\
&\quad=\widetilde{\varrho}^{\mu}(u_i^{\mu}-b_i^{\mu})+(1+\varrho)(\widetilde{u}^{\mu}_i-\widetilde{b}_i^{\mu})-\widetilde{\varrho}^{\mu}u_i^{\mu} a^{\mu}+(1+\varrho)(\widetilde{u}^{\mu}_ia^{\mu}+u_i\widetilde{a}^{\mu}),  \\
&\partial_{i}\widetilde{b}^{\mu}_j+\partial_j \widetilde{b}^{\mu}_i-\widetilde{\varrho}^{\mu}(u_i^{\mu}b_j^{\mu}+u_j^{\mu}b_i^{\mu})-(1+\varrho)(\widetilde{u}_i^{\mu}b^{\mu}_j+u_i\widetilde{b}_j^{\mu}+\widetilde{u}_j^{\mu}b^{\mu}_i+u_j\widetilde{b}_i^{\mu})\\
& \quad=-\partial_t \Gamma_{ij}(\{\mathbf{I}-\mathbf{P}\}\widetilde{f}^{\mu})+\Gamma_{ij}(\widetilde{\mathfrak{l}}^{\mu}
+\widetilde{\mathfrak{r}}^{\mu}+\widetilde{\mathfrak{s}}^{\mu}),  
 \end{aligned}
 \right.
\end{equation}
for $1\leq i,j\leq 3$, where $\widetilde{\mathfrak{l}}^{\mu}$, $\widetilde{\mathfrak{r}}^{\mu}$ and $\widetilde{\mathfrak{s}}^{\mu}$ are given by
\begin{align*} 
\widetilde{\mathfrak{l}}^{\mu}:=\,&\mathcal{L}\{\mathbf{I}-\mathbf{P}\}\widetilde{f}^{\mu}-v\cdot\nabla\{\mathbf{I}-\mathbf{P}\}\widetilde{f}^{\mu}, \nonumber \\
\widetilde{\mathfrak{r}}^{\mu}:=\,&-(\widetilde{u}^{\mu}\cdot\nabla_v\{\mathbf{I}-\mathbf{P}\}f^{\mu}+u\cdot\nabla_v\{\mathbf{I}-\mathbf{P}\}\widetilde{f}^{\mu})+\frac{1}{2}(\widetilde{u}^{\mu}\cdot v\{\mathbf{I}-\mathbf{P}\}f^{\mu}+u\cdot v\{\mathbf{I}-\mathbf{P}\}\widetilde{f}^{\mu}),  \nonumber\\
\widetilde{\mathfrak{s}}^{\mu}:=\,&\frac{\widetilde{\varrho}^{\mu}}{\sqrt{M}}\nabla_v\cdot (\nabla_v (\sqrt{M}\{\mathbf{I}-\mathbf{P}\}f^{\mu} )+v\sqrt{M}\{\mathbf{I}-\mathbf{P}\}f^{\mu}-u^{\mu}\sqrt{M}\{\mathbf{I}-\mathbf{P}\}f^{\mu}             )\nonumber\\
&+\frac{ {\varrho} }{\sqrt{M}}\nabla_v\cdot (\nabla_v (\sqrt{M}\{\mathbf{I}-\mathbf{P}\}\widetilde{f}^{\mu} )+v\sqrt{M}\{\mathbf{I}-\mathbf{P}\}\widetilde{f}^{\mu}           )\nonumber\\
&-\frac{ {\varrho} }{\sqrt{M}}\nabla_v\cdot ( \widetilde{u}^{\mu}\sqrt{M}\{\mathbf{I}-\mathbf{P}\}f^{\mu}  +u  \sqrt{M}\{\mathbf{I}-\mathbf{P}\}\widetilde{f}^{\mu}                  ).
\end{align*}

We first capture the dissipation of $\widetilde{a}^{\mu}$.
It follows from \eqref{G4.41}$_1$ and \eqref{G4.41}$_2$ that
\begin{align*}
&\frac{{\rm d}}{{\rm d}t}\sum_{i=1}^3\int_{\mathbb{R}^3} \partial_i \widetilde{a}^{\mu}  \widetilde{b}^{\mu}_i\mathrm{d}x+ \|  \nabla \widetilde{a}^{\mu}\|_{L^2}^2 -\| {\rm div}\,\widetilde{b}^{\mu}\|_{L^2}^2\nonumber\\
\,&\quad =\sum_{i=1}^3\int_{\mathbb{R}^3} \partial_i \widetilde{a}^{\mu} \bigg(-\sum_{j=1}^3\partial_j\Gamma_{ij}(\{\mathbf{I}-\mathbf{P}\}\widetilde{f}^{\mu}) 
+ \widetilde{\varrho}^{\mu} (u^{\mu}_i-b^{\mu}_i)+(1+\varrho)(\widetilde{u}^{\mu}_i-\widetilde{b}_i^{\mu})        \bigg)\mathrm{d}x\nonumber\\
&\quad \quad +\sum_{i=1}^3\int_{\mathbb{R}^3} \partial_i \widetilde{a}^{\mu}\big( -\widetilde{\varrho}^{\mu}u_i^{\mu}a^{\mu}+(1+\varrho)(\widetilde{u}^{\mu}_ia^{\mu}+u_i\widetilde{a}^{\mu})   \big){\rm d}x.    
\end{align*}
By direct calculations, the right-hand side of the above equality can be controlled as follows 
\begin{align*}
&\sum_{i=1}^3\int_{\mathbb{R}^3} \partial_i \widetilde{a}^{\mu} \bigg(-\sum_{j=1}^3\partial_j\Gamma_{ij}(\{\mathbf{I}-\mathbf{P}\}\widetilde{f}^{\mu}) 
+ \widetilde{\varrho}^{\mu} (u^{\mu}_i-b^{\mu}_i)+(1+\varrho)(\widetilde{u}^{\mu}_i-\widetilde{b}_i^{\mu})        \bigg)\mathrm{d}x\nonumber\\
&+\sum_{i=1}^3\int_{\mathbb{R}^3} \partial_i \widetilde{a}^{\mu}\big( -\widetilde{\varrho}^{\mu}u_i^{\mu}a^{\mu}+(1+\varrho)(\widetilde{u}^{\mu}_ia^{\mu}+u_i\widetilde{a}^{\mu})   \big)  {\rm d}x  \nonumber\\
\leq\,& \frac{1}{4}\|\nabla \widetilde{a}^{\mu}\|_{L^2}^2+C\|\nabla\{\mathbf{I}-\mathbf{P}\}\widetilde{f}^{\mu}\|_{L_{x,v}^2}^2+C(1+\|\varrho\|_{H^3})^2\|\widetilde{u}^{\mu}-\widetilde{b}^{\mu}\|_{L^2}^2\nonumber\\
&+C \big\|(1+t)^{-\frac{1}{2}}\widetilde{\varrho}^{\mu}\big\|_{H^1}^2  \big\|(1+t)^{-\frac{1}{2}}\nabla (u ^{\mu}, a^{\mu}, b^{\mu})\big\|_{H^{2}}^2\nonumber\\
&+C\big(1+\|\varrho\|_{H^3}\big)^2\big(\|\widetilde{u}^{\mu}\|_{H^1}^2\|\nabla a^{\mu}\|_{H^2}^2+\|u\|_{H^3}^2\|\widetilde{a}^{\mu}\|_{L^6}^2\big)\nonumber\\
\leq\,&\Big(\frac{1}{4}+C(\eps_0+\eps_1)\Big)\|\nabla \widetilde{a}^{\mu}\|_{L^2}^2+C\|\widetilde{u}^{\mu}-\widetilde{b}^{\mu}\|_{L^2}^2+C\| \{\mathbf{I}-\mathbf{P}\}\widetilde{f}^{\mu} \|_{L_v^2(H^1)}^2\nonumber\\
&+C\|\widetilde{u}^{\mu}\|_{H^1}^2\|\nabla a^{\mu}\|_{L^2}^2+C\|(1+t)^{-\frac{1}{2}}\widetilde{\varrho}^{\mu}\|_{H^1}^2  \|(1+t)^{-\frac{1}{2}}\nabla (u ^{\mu}, a^{\mu}, b^{\mu})\|_{H^{2}}^2.
\end{align*}
Therefore, we arrive at
\begin{align}\label{G4.54}
&-\frac{{\rm d}}{{\rm d}t} \int_{\mathbb{R}^3}\widetilde{a}^{\mu} {\rm div}\, \widetilde{b}^{\mu}\mathrm{d}x+\Big(\frac{3}{4}-C\eps_0\Big)\|\nabla \widetilde{a}^{\mu}\|_{L^2}^2\nonumber\\
\,&\quad \leq\|{\rm div}\, \widetilde{b}^{\mu}\|_{L^2}^2+C\|\widetilde{u}^{\mu}-\widetilde{b}^{\mu}\|_{L^2}^2+C\| \{\mathbf{I}-\mathbf{P}\}\widetilde{f}^{\mu} \|_{L_v^2(H^1)}^2\nonumber\\
&\qquad + C\|\widetilde{u}^{\mu}\|_{H^1}^2\|\nabla a^{\mu}\|_{L^2}^2+C\|(1+t)^{-\frac{1}{2}}\widetilde{\varrho}^{\mu}\|_{H^1}^2  \|(1+t)^{\frac{1}{2}}\nabla (u ^{\mu}, a^{\mu}, b^{\mu})\|_{H^{2}}^2.
\end{align}

Next step is to obtain the dissipation of $\widetilde{b}^\mu$. To this end, we deduce from  \eqref{G4.41}$_2$ and \eqref{G4.41}$_3$ that
\begin{align*} 
&\frac{{\rm d}}{{\rm d}t}\sum_{i,j=1}^3\int_{\mathbb{R}^3} (\partial_i\widetilde{b}^{\mu}_j+\partial_j\widetilde{b}^{\mu}_i) \Gamma_{ij}(\{\mathbf{I}-\mathbf{P}\}\widetilde{f}^{\mu})\mathrm{d}x+\sum_{i,j=1}^3\| \partial_i\widetilde{b}^{\mu}_j+\partial_j\widetilde{b}^{\mu}_i \|_{L^2}^2\nonumber\\
=\,&
\sum_{i,j=1}^3\int_{\mathbb{R}^3} (\partial_i\partial_t \widetilde{b}^{\mu}_j+\partial_j\partial_t \widetilde{b}^{\mu}_i)\partial^\alpha\Gamma_{ij}(\{\mathbf{I}-\mathbf{P}\}\widetilde{f}^{\mu})\mathrm{d}x\nonumber\\
&+\sum_{i,j=1}^3\int_{\mathbb{R}^3} (\partial_i\widetilde{b}^{\mu}_j+\partial_j\widetilde{b}^{\mu}_i) 
\big((1+\varrho)(\widetilde{u}_i^{\mu}b^{\mu}_j+u_i\widetilde{b}_j^{\mu}+\widetilde{u}_j^{\mu}b^{\mu}_i+u_j\widetilde{b}_i^{\mu})+\widetilde{\varrho}^{\mu}(u_i^{\mu}b_j^{\mu}+u_j^{\mu}b_i^{\mu}) \big)\mathrm{d}x\nonumber\\
&+\sum_{i,j=1}^3\int_{\mathbb{R}^3} (\partial_i\widetilde{b}^{\mu}_j+\partial_j\widetilde{b}^{\mu}_i)  \Gamma_{ij}(\widetilde{\mathfrak{l}}^{\mu}+\widetilde{\mathfrak{r}}^{\mu}+\widetilde{\mathfrak{s}}^{\mu})  \mathrm{d}x.
\end{align*}
By \eqref{uniform1} and Lemma \ref{LA.1}, it holds that
\begin{align*}
&\sum_{i,j=1}^3\int_{\mathbb{R}^3} (\partial_i\partial_t \widetilde{b}^{\mu}_j+\partial_j\partial_t \widetilde{b}^{\mu}_i)\partial^\alpha\Gamma_{ij}(\{\mathbf{I}-\mathbf{P}\}\widetilde{f}^{\mu})\mathrm{d}x\nonumber\\
=\,& 2\sum_{i,j=1}^3\int_{\mathbb{R}^3} \bigg(\partial_i \widetilde{a}^{\mu}+\sum_{m=1}^3\partial_m\Gamma_{im}(\{\mathbf{I}-\mathbf{P}\}\widetilde{f}^{\mu})       \bigg) \partial_j\Gamma_{ij}(\{\mathbf{I}-\mathbf{P}\}\widetilde{f}^{\mu})\mathrm{d}x\nonumber\\
&-2\sum_{i,j=1}^3\int_{\mathbb{R}^3} \big((1+\varrho)(\widetilde{u}^{\mu}_i-\widetilde{b}^{\mu}_i)+\widetilde{\varrho}(u_i^{\mu}-b_i^{\mu})  \big) \partial_j\Gamma_{ij}(\{\mathbf{I}-\mathbf{P}\}\widetilde{f}^{\mu})\mathrm{d}x\nonumber\\
&-2 \sum_{i,j=1}^3\int_{\mathbb{R}^3}    \big(-\widetilde{\varrho}^{\mu}u_i^{\mu} a^{\mu}+(1+\varrho)(\widetilde{u}^{\mu}_ia^{\mu}+u_i\widetilde{a}^{\mu})\big)\partial_j\Gamma_{ij}(\{\mathbf{I}-\mathbf{P}\}\widetilde{f}^{\mu})\mathrm{d}x\nonumber\\
\leq\,& \frac{1}{4}\|\nabla \widetilde{a}^{\mu}\|_{L^2}^2+C\|\nabla \{\mathbf{I}-\mathbf{P}\}f \|_{L_{x,v}^2}^2+C(1+\|\varrho\|_{H^{3}})^2\|\widetilde{u}-\widetilde{b}^{\mu}\|_{L^2}^2\nonumber\\
&+C\big\|(1+t)^{-\frac{1}{2}}\widetilde{\varrho}^{\mu}\big\|_{L^2}^2
\big\|(1+t)^{\frac{1}{2}}(u_i^{\mu}-b_i^{\mu})\big\|_{L^{\infty}}^2\nonumber\\
& +C\big\|(1+t)^{-\frac{1}{2}}\widetilde{\varrho}^{\mu}\big\|_{L^6}^2
\big\|(1+t)^{\frac{1}{2}}u_i^{\mu} \big\|_{L^{6}}^2\|a^{\mu}\|_{L^6}^2\nonumber\\
&+C(1+\|\varrho\|_{H^3})^2(\|\widetilde{u}^{\mu}\|_{L^2}^2\|  a^{\mu}\|_{L^{\infty}}^2+\|\widetilde{a}^{\mu}\|_{L^6}^2\|  u\|_{L^3}^2)\nonumber\\
\leq\,&\Big(\frac{1}{4}+C(\eps_0+\eps_1)\Big)\|\nabla \widetilde{a}^{\mu}\|_{L^2}^2+C\|\nabla \{\mathbf{I}-\mathbf{P}\}\widetilde{f}^{\mu} \|_{L_{x,v}^2 }^2+C\|\widetilde{u}^{\mu}-\widetilde{b}^{\mu}\|_{L^2}^2\nonumber\\
&+C(1+ \eps_0+\eps_1 )\big\|(1+t)^{-\frac{1}{2}}\widetilde{\varrho}^{\mu}\big\|_{H^1}^2
\big\|(1+t)^{\frac{1}{2}}\nabla(u ^{\mu},b ^{\mu})\big\|_{H^{1}}^2+C\|\nabla a^{\mu}\|_{L^2}^2\|\widetilde{u}^{\mu}\|_{L^2}^2,
\end{align*}
and
\begin{align*} 
&\sum_{i,j=1}^3\int_{\mathbb{R}^3} (\partial_i\widetilde{b}^{\mu}_j+\partial_j\widetilde{b}^{\mu}_i) 
\big((1+\varrho)(\widetilde{u}_i^{\mu}b^{\mu}_j+u_i\widetilde{b}_j^{\mu}+\widetilde{u}_j^{\mu}b^{\mu}_i+u_j\widetilde{b}_i^{\mu})+\widetilde{\varrho}^{\mu}(u_i^{\mu}b_j^{\mu}+u_j^{\mu}b_i^{\mu}) \big)\mathrm{d}x\nonumber\\
&\quad +\sum_{i,j=1}^3\int_{\mathbb{R}^3} (\partial_i\widetilde{b}^{\mu}_j+\partial_j\widetilde{b}^{\mu}_i)  \Gamma_{ij}(\widetilde{\mathfrak{l}}^{\mu}+\widetilde{\mathfrak{r}}^{\mu}+\widetilde{\mathfrak{s}}^{\mu})  \mathrm{d}x\nonumber\\
 &\qquad \leq\frac{1}{2}\sum_{i,j=1}^3\|\partial_i\widetilde{b}^{\mu}_j+\partial_j\widetilde{b}^{\mu}_i\|_{L^2}^2+C\sum_{i,j=1}^3
 \|  (1+\varrho)(\widetilde{u}_i^{\mu}b^{\mu}_j+u_i\widetilde{b}_j^{\mu}+\widetilde{u}_j^{\mu}b^{\mu}_i+u_j\widetilde{b}_i^{\mu})  \|_{L^2}^2\nonumber\\
&\qquad\quad  +C\sum_{i,j=1}^3\big(\|\partial^\alpha\Gamma_{ij}(\widetilde{\mathfrak{l}}^{\mu})\|_{L_{x,v}^2}^2
+\|\partial^\alpha\Gamma_{ij}(\widetilde{\mathfrak{r}}^{\mu})\|_{L_{x,v}^2}^2
+\|\partial^\alpha\Gamma_{ij}(\widetilde{\mathfrak{s}}^{\mu})\|_{L_{x,v}^2}^2                  \big)\nonumber\\
&\qquad \quad +C\sum_{i,j=1}^3\|\widetilde{\varrho}^{\mu}(u_i^{\mu}b_j^{\mu}+u_j^{\mu}b_i^{\mu})\|_{L^2}^2.
\end{align*}
Direct calculations indicate that 
\begin{align*}
&\sum_{i,j=1}^3 \|(1+\varrho)(\widetilde{u}_i^{\mu}b^{\mu}_j+u_i\widetilde{b}_j^{\mu}
+\widetilde{u}_j^{\mu}b^{\mu}_i+u_j\widetilde{b}_i^{\mu})  \|_{L^2}^2+\sum_{i,j=1}^3\|\widetilde{\varrho}^{\mu}(u_i^{\mu}b_j^{\mu}+u_j^{\mu}b_i^{\mu})\|_{L^2}^2\nonumber\\
&+\sum_{i,j=1}^3\big(\|\partial^\alpha\Gamma_{ij}(\widetilde{\mathfrak{l}}^{\mu})\|_{L_{x,v}^2}^2
+\|\partial^\alpha\Gamma_{ij}(\widetilde{\mathfrak{r}}^{\mu})\|_{L_{x,v}^2}^2
+\|\partial^\alpha\Gamma_{ij}(\widetilde{\mathfrak{s}}^{\mu})\|_{L_{x,v}^2}^2  \big) \nonumber\\
\lesssim\,&  (1+\|\varrho\|_{H^3} )^2\big(\|\widetilde{u}^{\mu}\|_{L^2}^2\|\nabla b^{\mu}\|_{H^1}^2+\|\widetilde{b}^{\mu}\|_{L^2}^2\|\nabla u^{\mu}\|_{H^1}^2 \big)\nonumber\\
&+\big\|(1+t)^{-\frac{1}{2}}\widetilde{\varrho}^{\mu}\big\|_{L^6}^2
\big\|(1+t)^{\frac{1}{2}}u ^{\mu} \big\|_{L^{6}}^2\|b^{\mu}\|_{L^6}^2+
\big(1+\|u\|_{H^3}^2\big)\|\{\mathbf{I}-\mathbf{P}\}\widetilde{f}^{\mu}\|_{L_v^2(H^1)}^2\nonumber\\
&+\|\widetilde{u}^{\mu}\|_{L^6}^2\|\{\mathbf{I}-\mathbf{P}\} {f}^{\mu}\|_{L_v^2(H^3)}^2 +\big(1+\|u^{\mu}\|_{H^3}^2\big)\big\|(1+t)^{-\frac{1}{2}}\widetilde{\varrho}^{\mu}\big\|_{L^6}^2
\big\|(1+t)^{\frac{1}{2}}\{\mathbf{I}-\mathbf{P}\} {f}^{\mu}\big\|_{L_v^2(H^3)}^2\nonumber\\
&+\big(1+\|u\|_{H^3}^2\big)\|\varrho\|_{L^{\infty}}^2
\|\{\mathbf{I}-\mathbf{P}\}\widetilde{f}^{\mu}\|_{L_v^2(H^1)}^2
+\|\varrho\|_{L^{\infty}}^2\|\{\mathbf{I}-\mathbf{P}\} {f}^{\mu}\|_{L_v^2(H^3)}^2\|\widetilde{u}^{\mu}\|_{H^1}^2\nonumber\\
\lesssim\,&\|(\widetilde{u}^{\mu},\widetilde{b}^{\mu})\|_{H^1}^2\big(\|\nabla( {u}^{\mu}, {b}^{\mu})\|_{H^1}^2+\|\{{\mathbf{I}-\mathbf{P}\} {f}^{\mu}\|_{L_v^2(H^3)}^2}\big)+\|\{\mathbf{I}-\mathbf{P}\}\widetilde{f}^{\mu}\|_{L_v^2(H^1)}^2\nonumber\\
&+\big\|(1+t)^{-\frac{1}{2}}\widetilde{\varrho}^{\mu}\big\|_{H^1}^2 
\big(\big\|(1+t)^{\frac{1}{2}}\nabla (u ^{\mu} ,b^{\mu})\big\|_{H^{2}}^2+\big\|(1+t)^{\frac{1}{2}}\{{\mathbf{I}-\mathbf{P}\} {f}^{\mu}\big\|_{L_v^2(H^3)}^2} \big). 
\end{align*}
The above inequality, together with 
$$\sum_{i,j=1}^3\| \partial_i\widetilde{b}^{\mu}_j+\partial_j\widetilde{b}^{\mu}_i \|_{L^2}^2=2\|\nabla  \widetilde{b}^{\mu}\|_{L^2}^2+2\|{\rm div}\,  \widetilde{b}^{\mu}\|_{L^2}^2,$$ 
implies that 
\begin{align}\label{G4.50}
&\frac{{\rm d}}{{\rm d}t} \sum_{i,j=1}^3\int_{\mathbb{R}^3}(\partial_i\widetilde{b}^{\mu}_j+\partial_j\widetilde{b}^{\mu}_i) \Gamma_{ij}(\{\mathbf{I}-\mathbf{P}\}\widetilde{f}^{\mu})\mathrm{d}x+\|\nabla \widetilde{b}^{\mu}\|_{L^2}^2+\|{\rm div}\, \widetilde{b}^{\mu}\|_{L^2}^2\nonumber\\
\,&\quad \leq \Big(\frac{1}{4}+C(\eps_0+\eps_1)\Big)\|\nabla \widetilde{a}^{\mu}\|_{L^2}^2+C\|\widetilde{u}^{\mu}-\widetilde{b}^{\mu}\|_{L^2}^2+C\| \{\mathbf{I}-\mathbf{P}\}\widetilde{f}^{\mu} \|_{L_v^2(H^1)}^2\nonumber\\
&\qquad +C\|(\widetilde{u}^{\mu},\widetilde{b}^{\mu})\|_{H^1}^2\big(\|\nabla( {u}^{\mu}, a^{\mu},{b}^{\mu})\|_{H^2}^2+\|\{{\mathbf{I}-\mathbf{P}\} {f}^{\mu}\|_{L_v^2(H^3)}^2}\big)\nonumber\\
&\qquad +C\big\|(1+t)^{-\frac{1}{2}}\widetilde{\varrho}^{\mu}\big\|_{H^1}^2 \big(\big\|(1+t)^{\frac{1}{2}}\nabla (u ^{\mu} ,b^{\mu})\big\|_{H^{2}}^2+\big\|(1+t)^{\frac{1}{2}}\{{\mathbf{I}-\mathbf{P}\} {f}^{\mu}\big\|_{L_v^2(H^3)}^2} \big).   
\end{align}

We now introduce the new temporal functional $\widetilde{\mathcal{E}}^{\mu}_0(t)$:
\begin{align*} 
\widetilde{\mathcal{E}}^{\mu}_0(t):= \sum_{i,j=1}^3\int_{\mathbb{R}^3} (\partial_i\widetilde{b}^{\mu}_j+\partial_j\widetilde{b}^{\mu}_i) \Gamma_{ij}(\{\mathbf{I}-\mathbf{P}\}\widetilde{f}^{\mu})\mathrm{d}x-  \int_{\mathbb{R}^3}\widetilde{a}^{\mu} {\rm div}\, \widetilde{b}^{\mu}\mathrm{d}x. 
\end{align*}
The summation  of  \eqref{G4.54} and \eqref{G4.50} results in
\begin{align*}
& \frac{{\rm d}}{{\rm d}t}\widetilde{\mathcal{E}}^{\mu}_0(t)+\|\nabla \widetilde{b}^{\mu}\|_{L^2}^2+\Big(\frac{1}{2}-C(\eps_0+\eps_1)  \Big)\|\nabla \widetilde{a}^{\mu}\|_{L^2}^2\nonumber\\
\,& \quad \leq  C\|(\widetilde{u}^{\mu},\widetilde{b}^{\mu})\|_{H^1}^2\big(\|\nabla( {u}^{\mu}, a^{\mu},{b}^{\mu})\|_{H^2}^2+\|\{{\mathbf{I}-\mathbf{P}\} {f}^{\mu}\|_{L_v^2(H^3)}^2}\big)\nonumber\\
&\qquad +C\big\|(1+t)^{-\frac{1}{2}}\widetilde{\varrho}^{\mu}\big\|_{H^1}^2 
\big(\big\|(1+t)^{\frac{1}{2}}\nabla (u ^{\mu} ,b^{\mu})\big\|_{H^{2}}^2+\big\|(1+t)^{\frac{1}{2}}\{{\mathbf{I}-\mathbf{P}\} {f}^{\mu}\big\|_{L_v^2(H^3)}^2} \big)\nonumber\\  
&\qquad +C\| \{\mathbf{I}-\mathbf{P}\}\widetilde{f}^{\mu} \|_{L_v^2(H^1)}^2+C\|\widetilde{u}^{\mu}-\widetilde{b}^{\mu}\|_{L^2}^2,
\end{align*}
which implies that 
\begin{align}\label{G4.55}
&\widetilde{\mathcal{E}}^{\mu}_0(t) +\int_0^t \|\nabla(\widetilde{a}^{\mu},\widetilde{b}^{\mu})(\tau)\|_{L^2}^2{\rm d}\tau\nonumber\\
\leq\,&
C\|(\widetilde{u}^{\mu},\widetilde{b}^{\mu})\|_{L_t^{\infty}(H^1)}^2\big(\|\nabla( {u}^{\mu}, a^{\mu},{b}^{\mu})\|_{L_t^2(H^2)}^2+\|\{{\mathbf{I}-\mathbf{P}\} {f}^{\mu}\|_{L_t^2(L_v^2(H^3))}^2}\big)+\widetilde{\mathcal{E}}^{\mu}_0(0)\nonumber\\
&+C\big\|(1+t)^{-\frac{1}{2}}\widetilde{\varrho}^{\mu}\big\|_{L_t^{\infty}(H^1)}^2 \big(\big\|(1+t)^{\frac{1}{2}}\nabla (u ^{\mu} ,b^{\mu})\big\|_{L_t^2(H^{2})}^2+\big\|(1+t)^{\frac{1}{2}}\{{\mathbf{I}-\mathbf{P}\} {f}^{\mu}\big\|_{L_t^2(L_v^2(H^3))}^2} \big)\nonumber\\
&+C\| \{\mathbf{I}-\mathbf{P}\}\widetilde{f}^{\mu} \|_{L_t^2(L_v^2(H^1))}^2+C\|\widetilde{u}^{\mu}-\widetilde{b}^{\mu}\|_{L_t^2(L^2)}^2\nonumber\\
\leq\,& C(\eps_0+\eps_1) \widetilde{\mathcal{X}}^\mu(t)+C \mu^2+C\| \{\mathbf{I}-\mathbf{P}\}\widetilde{f}^{\mu} \|_{L_t^2(L_v^2(H^1))}^2+C\|\widetilde{u}^{\mu}-\widetilde{b}^{\mu}\|_{L_t^2(L^2)}^2+\widetilde{\mathcal{E}}^{\mu}_0(0).
\end{align}
Since $|\widetilde{\mathcal{E}}^{\mu}_0(t)|\lesssim   \| \widetilde{f}^{\mu}\|_{{L}_{v}^2(H^1)}^2$, 
combining \eqref{4.19}, \eqref{NJK4.28} and \eqref{G4.55}, we justify \eqref{nablatildeau} and finish the proof of Lemma \ref{L:error4}.
\end{proof}

\smallskip
\begin{proof}[Proof of Theorem \ref{T1.3}]  We deduce from Lemmas \ref{L:error1}--\ref{L:error4} that  
\begin{align*}
\sup_{t\geq0}\big( (1+t)^{-1}\|\widetilde{\varrho}^\mu(t)\|_{H^1}^2\big)
+\widetilde{\mathcal{X}}^\mu(t)\leq C\mu^2+ C\big(\eps_0 ^{\frac{1}{2}}+\eps_1^{\frac{1}{2}}\big)\widetilde{\mathcal{X}}^\mu(t).
\end{align*}
As $\eps_0$ and $\eps_1$ are sufficiently small, it follows that
\begin{align*}
\sup_{t\geq0}\big( (1+t)^{-1}\|\widetilde{\varrho}^\mu(t)\|_{H^1}^2\big)+\widetilde{\mathcal{X}}^\mu(t)\leq C\mu^2.    
\end{align*}
Consequently, we derive the desired \eqref{rate}. Thus, the proof of Theorem \ref{T1.3} is completed. 
\end{proof}

\appendix
\section{Analytic Tools}

In this appendix, we present some basic facts which have been used frequently in this paper. 

\begin{lem} [{\!\!{\cite[Lemma 2.1]{CDM-krm-2011}}} and {\cite[Lemmas 2.1--2.2]{Dk-MZ-1992}}]
\label{LA.1}   
There exists  a positive constant C, 
such that for any $g,h\in H^3(\mathbb{R}^3)$ and any multi-index $\alpha$  with $1\leq|\alpha|\leq3$, it holds
\begin{align*}
\|g\|_{L^{\infty}(\mathbb{R}^{3})} \leq\,& C\|\nabla g\|_{L^{2}(\mathbb{R}^{3})}^{\frac{1}{2}}
\|\nabla^{2}g\|_{L^{2}(\mathbb{R}^{3})}^{\frac{1}{2}}, \\
\|gh\|_{H^{2}(\mathbb{R}^{3})} \leq\,& C\|g\|_{H^{2}(\mathbb{R}^{3})}\|\nabla h\|_{H^{2}(\mathbb{R}^{3})}, \\
\|\partial^{\alpha}(gh)\|_{L^{2}(\mathbb{R}^{3})}
 \leq\,& C\|\nabla  g\|_{H^{2}(\mathbb{R}^{3})}\|\nabla  h\|_{H^{2}(\mathbb{R}^{3})},\\
\|g\|_{L^6(\mathbb{R}^3)} \leq\,& C\|\nabla  g\|_{L^2(\mathbb{R}^3)}\leq C\|g\|_{H^1(\mathbb{R}^3)},\\
 \|g\|_{L^q(\mathbb{R}^3)} \leq\,& C\|g\|_{H^1(\mathbb{R}^3)}, 
\end{align*}
for some $2\leq q\leq 6$.
\end{lem}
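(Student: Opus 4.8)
The five inequalities are classical and follow from the Gagliardo--Nirenberg interpolation inequalities, the Sobolev embedding $\dot H^1(\mathbb{R}^3)\hookrightarrow L^6(\mathbb{R}^3)$, and Leibniz-type product estimates; the plan is to establish them in this order so that each step feeds the next.

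First I would prove the $L^\infty$ bound $\|g\|_{L^\infty}\lesssim\|\nabla g\|_{L^2}^{1/2}\|\nabla^2 g\|_{L^2}^{1/2}$ by a frequency-splitting argument: writing $\|g\|_{L^\infty}\le\|\widehat g\|_{L^1}$ and splitting the integral at $|\xi|=R$, Cauchy--Schwarz gives $\|\widehat g\|_{L^1}\lesssim R^{1/2}\|\nabla g\|_{L^2}+R^{-1/2}\|\nabla^2 g\|_{L^2}$ (the weights $\int_{|\xi|\le R}|\xi|^{-2}\,d\xi\sim R$ and $\int_{|\xi|>R}|\xi|^{-4}\,d\xi\sim R^{-1}$ being finite precisely in dimension three), and optimizing over $R>0$ yields the claim; equivalently one invokes Gagliardo--Nirenberg directly with the interpolation exponent $\theta=1/2$ fixed by scaling. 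The Sobolev embeddings in the last two lines are standard: $\|g\|_{L^6}\lesssim\|\nabla g\|_{L^2}$ is the endpoint inequality $\dot H^1\hookrightarrow L^6$ in $\mathbb{R}^3$, $\|\nabla g\|_{L^2}\le\|g\|_{H^1}$ is trivial, and $\|g\|_{L^q}\le\|g\|_{L^2}^{1-\theta}\|g\|_{L^6}^{\theta}\lesssim\|g\|_{H^1}$ for $2\le q\le6$ with $\tfrac1q=\tfrac{1-\theta}{2}+\tfrac{\theta}{6}$ follows by interpolation.

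For the product estimates I would expand by the Leibniz rule, $\partial^\beta(gh)=\sum_{\gamma\le\beta}\binom{\beta}{\gamma}\partial^\gamma g\,\partial^{\beta-\gamma}h$, and bound each term by H\"older's inequality distributing the factors among $L^2,L^3,L^6,L^\infty$. For $\|gh\|_{H^2}$ the worst terms are $\|g\|_{L^\infty}\|\nabla^2 h\|_{L^2}$, $\|\nabla g\|_{L^3}\|\nabla h\|_{L^6}$, $\|\nabla^2 g\|_{L^2}\|h\|_{L^\infty}$ and $\|g\|_{L^3}\|h\|_{L^6}$, each of which is $\lesssim\|g\|_{H^2}\|\nabla h\|_{H^2}$ after using the $L^\infty$ bound just proved together with $H^1\hookrightarrow L^6$ and $H^1\hookrightarrow L^3$; one notes here that $\|\nabla h\|_{H^2}$ dominates $\|h\|_{L^6}$, $\|h\|_{L^\infty}$ and $\|\nabla^2 h\|_{L^2}$. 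For $\|\partial^\alpha(gh)\|_{L^2}$ with $1\le|\alpha|\le3$, the condition $|\alpha|\ge1$ guarantees that at least one derivative always lands on $g$ or on $h$, so no purely undifferentiated factor survives; the extreme cases $\|\nabla^3 g\|_{L^2}\|h\|_{L^\infty}$ and $\|g\|_{L^\infty}\|\nabla^3 h\|_{L^2}$ are controlled by the $L^\infty$ bound, while the intermediate cross terms are handled with $\|\nabla g\|_{L^3}\lesssim\|\nabla g\|_{L^2}^{1/2}\|\nabla^2 g\|_{L^2}^{1/2}$ and $\|\nabla^2 g\|_{L^3}\lesssim\|\nabla^2 g\|_{L^2}^{1/2}\|\nabla^3 g\|_{L^2}^{1/2}$ combined with H\"older, yielding $\lesssim\|\nabla g\|_{H^2}\|\nabla h\|_{H^2}$.

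The only subtlety is bookkeeping: in every Leibniz term one must check that the total derivative count and the H\"older exponents are compatible with dimension three, but all endpoints here are subcritical, so there is no genuine obstacle — I expect the routine but somewhat lengthy case analysis in the two product estimates to be the only point requiring care, and for the full details I would simply refer to \cite{CDM-krm-2011,Dk-MZ-1992}.
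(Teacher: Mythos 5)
Your proposal is correct. The paper itself gives no proof of Lemma \ref{LA.1} --- it is quoted directly from the cited references (Carrillo--Duan--Moussa and Deckelnick) --- and your sketch faithfully reconstructs the standard arguments behind those citations: the Agmon-type $L^\infty$ bound by frequency splitting (or Gagliardo--Nirenberg), the embedding $\dot H^1(\mathbb{R}^3)\hookrightarrow L^6$ with $L^2$--$L^6$ interpolation for $2\le q\le 6$, and Leibniz plus H\"older for the two product estimates, with the key point that undifferentiated factors are always placed in $L^\infty$ (or $L^6$) so that they are controlled by derivative norms alone, which is exactly what makes the right-hand side $\|\nabla g\|_{H^2}\|\nabla h\|_{H^2}$ attainable. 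The only cosmetic imprecision is the phrase ``no purely undifferentiated factor survives'' --- terms like $g\,\partial^\alpha h$ do occur --- but your subsequent treatment of them via the $L^\infty$ bound is the right one, so nothing is missing.
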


\begin{lem}[{\!\!\cite[Appendix]{commutator1}}]\label{LA.2}
Let $m\geq 1$ be an integer and define the commutator 
\begin{align*}
[\nabla^m, g]h=\nabla^m(gh)-g\nabla^mh.    
\end{align*}
Then for $r\in (1,\infty)$, we have 
\begin{align*}
\|[\nabla^m, g]h\|_{L^r}\lesssim \|\nabla g\|_{L^{\infty}}\|\nabla^{m-1}h\|_{L^{r}}+\|\nabla^m g\|_{L^{r}}\|h\|_{L^{\infty}}.   
\end{align*}
\end{lem}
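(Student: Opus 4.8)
The plan is to prove this classical Kato--Ponce/Klainerman--Majda type commutator bound by expanding the commutator via the Leibniz rule and then interpolating each resulting product in the Gagliardo--Nirenberg scale. The case $m=1$ is trivial: there $[\nabla,g]h=(\nabla g)h$, and $\|(\nabla g)h\|_{L^r}\leq\|\nabla g\|_{L^\infty}\|h\|_{L^r}=\|\nabla g\|_{L^\infty}\|\nabla^{m-1}h\|_{L^r}$, which is already one of the terms on the right. So I would assume $m\geq 2$, write out $\nabla^m(gh)$ by the Leibniz rule, and subtract $g\nabla^m h$, which cancels precisely the single term in which all $m$ derivatives land on $h$, leaving (schematically, suppressing how the $m$ partials are distributed among components)
\begin{align*}
\bigl|[\nabla^m,g]h\bigr|\lesssim_m\sum_{j=1}^{m}\bigl|\nabla^j g\bigr|\,\bigl|\nabla^{m-j}h\bigr|.
\end{align*}
It therefore suffices to estimate $\|(\nabla^j g)(\nabla^{m-j}h)\|_{L^r}$ for each $1\leq j\leq m$ by the asserted right-hand side.

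The two endpoint indices require no interpolation: for $j=1$ one bounds $\|(\nabla g)(\nabla^{m-1}h)\|_{L^r}\leq\|\nabla g\|_{L^\infty}\|\nabla^{m-1}h\|_{L^r}$, and for $j=m$, $\|(\nabla^m g)h\|_{L^r}\leq\|\nabla^m g\|_{L^r}\|h\|_{L^\infty}$; these are exactly the two terms in the statement. For an intermediate index $2\leq j\leq m-1$ I would set $\theta_j:=\tfrac{j-1}{m-1}\in(0,1)$ and apply Hölder's inequality with exponents $p_j:=\tfrac{(m-1)r}{j-1}$ and $q_j:=\tfrac{(m-1)r}{m-j}$, which satisfy $\tfrac1{p_j}+\tfrac1{q_j}=\tfrac1r$ and both lie in $[r,\infty]$. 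These are precisely the values for which the Gagliardo--Nirenberg inequalities on $\mathbb{R}^3$,
\begin{align*}
\|\nabla^j g\|_{L^{p_j}}\lesssim\|\nabla^m g\|_{L^r}^{\theta_j}\|\nabla g\|_{L^\infty}^{1-\theta_j},\qquad
\|\nabla^{m-j}h\|_{L^{q_j}}\lesssim\|\nabla^{m-1}h\|_{L^r}^{1-\theta_j}\|h\|_{L^\infty}^{\theta_j},
\end{align*}
hold at the lower endpoint of the admissible interpolation range: apply the first to $\nabla g$ with orders $(j-1,m-1)$ and the second to $h$ with orders $(m-j,m-1)$, and note the two scaling identities add up consistently to $\tfrac1{p_j}+\tfrac1{q_j}=\tfrac1r$. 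Multiplying these bounds via Hölder and then using $A^{\theta_j}B^{1-\theta_j}\leq A+B$ for $A,B\geq 0$ gives
\begin{align*}
\|(\nabla^j g)(\nabla^{m-j}h)\|_{L^r}\lesssim\|\nabla^m g\|_{L^r}\|h\|_{L^\infty}+\|\nabla g\|_{L^\infty}\|\nabla^{m-1}h\|_{L^r}.
\end{align*}
Summing over $1\leq j\leq m$ finishes the proof; all of this may be carried out first for Schwartz functions and extended by density.

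The hard part --- indeed the only place genuine care is needed --- is the borderline Gagliardo--Nirenberg step: checking that the endpoint versions with $L^\infty$ on the lowest-order factor are valid for every $r\in(1,\infty)$ and every intermediate $j$, and that the exponents $p_j,q_j$ forced by scaling really stay in $[r,\infty]$ (they do, by the explicit formulas above, degenerating to $p_j=\infty$ at $j=1$ and $p_j=r$ at $j=m$). For this I would either quote the precise interpolation statement recorded in \cite{commutator1} or reproduce the standard Littlewood--Paley/Fourier-analytic proof of Gagliardo--Nirenberg, which accommodates the $L^\infty$ endpoints without difficulty. Everything else is bookkeeping with the binomial expansion and Young's inequality, and the implied constant depends only on $m$ and $r$.
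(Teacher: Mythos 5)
Your proof is correct, but it follows a different route from the source the paper relies on: the paper gives no argument at all for this lemma and simply cites the appendix of Kato--Ponce, where the estimate is proved for the fractional operator $J^s=(1-\Delta)^{s/2}$ by Fourier-analytic/Coifman--Meyer-type commutator techniques. What you give instead is the classical Moser/Klainerman--Majda integer-order argument: Leibniz expansion, cancellation of the top-order term on $h$, endpoint terms $j=1$ and $j=m$ handled by H\"older alone, and the intermediate terms by H\"older with $\tfrac1{p_j}+\tfrac1{q_j}=\tfrac1r$ combined with the Gagliardo--Nirenberg inequalities applied to $\nabla g$ (orders $j-1$ and $m-1$) and to $h$ (orders $m-j$ and $m-1$), followed by Young's inequality. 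Your exponent bookkeeping checks out: $p_j=\tfrac{(m-1)r}{j-1}$, $q_j=\tfrac{(m-1)r}{m-j}$ both lie in $[r,\infty]$, the interpolation parameters sit exactly at the admissible lower endpoint $a=j'/m'$, and the $L^\infty$-endpoint exceptional cases of Nirenberg's theorem are avoided since $a<1$ for every intermediate $j$; note that the paper's Lemma \ref{LA.3} is $L^2$-based and does not cover these exponents, so you do need the general Gagliardo--Nirenberg statement (or a mollification argument for the functions actually used, rather than a literal density-of-Schwartz argument, since $L^\infty$ is involved). The trade-off is clear: your proof is elementary and self-contained but limited to integer $m$, which is all the paper uses, whereas the cited Kato--Ponce result is stronger (fractional $s$) at the cost of genuine harmonic analysis.
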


\begin{lem}[{\!\!\cite[Lemma A.1]{GW-CPDE-2012}}]\label{LA.3}
Let  $2\leq p\leq+\infty$ and  $0\leq m,k\leq l$; when $p=\infty$ we require further that $m\leq k+1$ and $l\geq k+2$. Then we have,
for any $g\in C_{0}^{\infty}(\mathbb{R}^3)$,
\begin{align*}
\|\nabla^k g\|_{L^p}\lesssim \|\nabla^m g\|_{L^2}^{1-\theta}\|\nabla^l g\|_{L^2}^\theta,    
\end{align*}
where $0\leq\theta\leq1$ and $k$ satisfy
\begin{align*}
\frac{k}{3}-\frac{1}{p}=\Big(\frac{m}{3}-\frac{1}{2}\Big)(1-\theta)+\Big(\frac{l}{3}-\frac{1}{2}\Big)\theta. 
\end{align*}
\end{lem}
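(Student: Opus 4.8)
The inequality is a classical Gagliardo--Nirenberg interpolation estimate on $\mathbb{R}^3$, and the plan is to prove it by the Fourier transform together with Sobolev embedding, treating the three ranges $p=2$, $2<p<\infty$, and $p=\infty$ separately. Throughout, $g\in C_0^\infty(\mathbb{R}^3)$, so $\widehat g$ is Schwartz and every quantity in sight is finite; since $k\ge0$ and the exponents produced below are nonnegative, no low-frequency divergences occur. I will write $|\nabla|^\sigma$ for the Fourier multiplier with symbol $|\xi|^\sigma$, which agrees with $\nabla^\sigma$ up to harmless constants for integer $\sigma$, and at each step I will check that the relation $\frac{k}{3}-\frac1p=\big(\frac m3-\frac12\big)(1-\theta)+\big(\frac l3-\frac12\big)\theta$ is exactly what forces the scaling to match. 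First I would dispose of the endpoint $p=2$, where the hypothesis on $k$ becomes $k=m(1-\theta)+l\theta$; by Plancherel's theorem and Hölder's inequality with conjugate exponents $\frac1{1-\theta}$ and $\frac1\theta$,
$$
\|\nabla^k g\|_{L^2}^2=\int_{\mathbb{R}^3}|\xi|^{2k}|\widehat g(\xi)|^2\,d\xi=\int_{\mathbb{R}^3}\big(|\xi|^{2m}|\widehat g|^2\big)^{1-\theta}\big(|\xi|^{2l}|\widehat g|^2\big)^{\theta}\,d\xi\le\|\nabla^m g\|_{L^2}^{2(1-\theta)}\,\|\nabla^l g\|_{L^2}^{2\theta},
$$
which is the claim when $p=2$ (the cases $\theta=0,1$ being trivial). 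The same computation is valid verbatim for arbitrary nonnegative \emph{real} exponents $k,m,l$, a fact I would reuse below.

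For $2<p<\infty$ I would set $\sigma:=3\big(\tfrac12-\tfrac1p\big)\in(0,\tfrac32)$, so that the homogeneous Sobolev embedding $\dot{H}^\sigma(\mathbb{R}^3)\hookrightarrow L^p(\mathbb{R}^3)$ holds; applying it to $h=|\nabla|^k g$ gives $\|\nabla^k g\|_{L^p}\lesssim\|\nabla^{k+\sigma}g\|_{L^2}$. Rearranging the hypothesis on $k$ shows $k+\sigma=k+\tfrac32-\tfrac3p=m(1-\theta)+l\theta$, so the real-exponent version of the $p=2$ step (with $k+\sigma$ in place of $k$) yields $\|\nabla^{k+\sigma}g\|_{L^2}\lesssim\|\nabla^m g\|_{L^2}^{1-\theta}\|\nabla^l g\|_{L^2}^\theta$, and the two estimates combine to the desired bound.

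The endpoint $p=\infty$ is where the argument must change, since $\sigma=\tfrac32$ is critical and $\dot{H}^{3/2}\not\hookrightarrow L^\infty$. Here I would argue directly from $\|h\|_{L^\infty}\lesssim\|\widehat h\|_{L^1}$ with $h=|\nabla|^k g$ (so $\widehat h=|\xi|^k\widehat g$), splitting the frequency integral at radius $R>0$ and applying Cauchy--Schwarz on each piece to get $\int_{|\xi|\le R}|\xi|^k|\widehat g|\,d\xi\lesssim R^{3/2-(m-k)}\|\nabla^m g\|_{L^2}$ and $\int_{|\xi|>R}|\xi|^k|\widehat g|\,d\xi\lesssim R^{3/2-(l-k)}\|\nabla^l g\|_{L^2}$. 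The low-frequency factor $\int_{|\xi|\le R}|\xi|^{2(k-m)}d\xi$ converges precisely because $m-k<\tfrac32$, i.e.\ because $m\le k+1$, and the high-frequency factor $\int_{|\xi|>R}|\xi|^{2(k-l)}d\xi$ converges precisely because $l-k>\tfrac32$, i.e.\ because $l\ge k+2$. Adding the two bounds and optimizing over $R$ (balancing the terms via $R^{\,l-m}=\|\nabla^l g\|_{L^2}/\|\nabla^m g\|_{L^2}$) produces $\|\nabla^k g\|_{L^\infty}\lesssim\|\nabla^m g\|_{L^2}^{1-\theta}\|\nabla^l g\|_{L^2}^\theta$ with $\theta=\frac{k+3/2-m}{l-m}\in(0,1)$, which is exactly the value dictated by the scaling relation when $1/p=0$. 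The only genuinely delicate point is this last case — keeping track of the convergence of the low- and high-frequency integrals, which is where the auxiliary hypotheses $m\le k+1$ and $l\ge k+2$ are used — while the rest is routine; alternatively one could simply invoke the general Gagliardo--Nirenberg--Sobolev inequality.
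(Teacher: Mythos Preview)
Your argument is correct. The paper does not give its own proof of this lemma; it is stated in the appendix as an analytic tool and simply attributed to \cite[Lemma~A.1]{GW-CPDE-2012}, so there is no in-paper proof to compare against. Your Fourier-analytic treatment --- Plancherel plus H\"older for $p=2$, homogeneous Sobolev embedding $\dot H^{3(1/2-1/p)}\hookrightarrow L^p$ to reduce $2<p<\infty$ to the $L^2$ case, and the Hausdorff--Young bound $\|h\|_{L^\infty}\le\|\widehat h\|_{L^1}$ with a dyadic frequency split for $p=\infty$ --- is a clean self-contained proof, and you correctly isolate where the extra hypotheses $m\le k+1$ and $l\ge k+2$ enter (convergence of $\int_{|\xi|\le R}|\xi|^{2(k-m)}d\xi$ and $\int_{|\xi|>R}|\xi|^{2(k-l)}d\xi$). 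One small point of care: when passing between $\|\nabla^k g\|_{L^p}$ and $\||\nabla|^k g\|_{L^p}$ for $2<p<\infty$ you are implicitly invoking the $L^p$-boundedness of Riesz transforms, and for $p=\infty$ you are using $\|\partial^\alpha g\|_{L^\infty}\le\int|\xi|^{|\alpha|}|\widehat g|\,d\xi$ componentwise --- both are routine, but worth a word if you write this up.
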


\begin{lem}[{\!\!\cite[Lemma 4.6]{SS-adv-2014}}]\label{LA.4}
Suppose that $s>0$ and $1\leq p<2$. We have the embedding $L^p\subset\dot{B}^{-s}_{2,\infty}$ with $\frac{1}{2}+\frac{s}{3}=\frac{1}{p}$. In particular, we have the estimate
\begin{align*}
\|g\|_{\dot{B}^{-s}_{2,\infty}}\lesssim \|g\|_{L^p}.    
\end{align*}
\end{lem}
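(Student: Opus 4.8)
The plan is to prove the embedding directly at the level of Littlewood--Paley blocks, combining the Bernstein inequality with the uniform $L^p$-boundedness of the dyadic projections $\dot{\Delta}_j$. Recall that by definition $\|g\|_{\dot{B}^{-s}_{2,\infty}}=\sup_{j\in\mathbb{Z}}2^{-sj}\|\dot{\Delta}_j g\|_{L^2}$, so it suffices to bound $2^{-sj}\|\dot{\Delta}_j g\|_{L^2}$ by $C\|g\|_{L^p}$ with a constant $C$ independent of $j$.

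First I would record the two elementary facts needed. Since $\dot{\Delta}_j g=\mathcal{F}^{-1}(\phi_j)*g$ with $\mathcal{F}^{-1}(\phi_j)(x)=2^{3j}(\mathcal{F}^{-1}\phi)(2^jx)$, Young's convolution inequality gives
\[
\|\dot{\Delta}_j g\|_{L^p}\le\|\mathcal{F}^{-1}(\phi_j)\|_{L^1}\|g\|_{L^p}=\|\mathcal{F}^{-1}\phi\|_{L^1}\|g\|_{L^p},
\]
a bound uniform in $j$ and valid for every $1\le p\le\infty$ because $\mathcal{F}^{-1}\phi\in\mathcal{S}(\mathbb{R}^3)\subset L^1(\mathbb{R}^3)$. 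Next, since $\dot{\Delta}_j g$ has Fourier support in the annulus $\{|\xi|\sim 2^j\}$, the Bernstein inequality in $\mathbb{R}^3$ yields, for $1\le p\le 2$,
\[
\|\dot{\Delta}_j g\|_{L^2}\lesssim 2^{3j(\frac1p-\frac12)}\|\dot{\Delta}_j g\|_{L^p}.
\]

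Combining these two estimates and using the scaling relation $\frac12+\frac{s}{3}=\frac1p$, which is exactly $3\big(\frac1p-\frac12\big)=s$, I obtain
\[
2^{-sj}\|\dot{\Delta}_j g\|_{L^2}\lesssim 2^{-sj}2^{3j(\frac1p-\frac12)}\|\dot{\Delta}_j g\|_{L^p}=\|\dot{\Delta}_j g\|_{L^p}\lesssim\|g\|_{L^p},
\]
uniformly in $j\in\mathbb{Z}$. Taking the supremum over $j$ gives $\|g\|_{\dot{B}^{-s}_{2,\infty}}\lesssim\|g\|_{L^p}$, which is the claimed estimate, and the inclusion $L^p\subset\dot{B}^{-s}_{2,\infty}$ follows immediately.

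There is no serious obstacle here: the argument is a one-line consequence of Bernstein's inequality once the scaling exponents are matched. The only point requiring mild care is the endpoint $p=1$, where one must avoid Mikhlin--type multiplier bounds (which fail on $L^1$) and instead rely on the convolution estimate above; this is also why the conclusion is formulated only in $\dot{B}^{-s}_{2,\infty}$ and does not improve to a bound in $\dot{B}^{-s}_{2,r}$ for finite $r$, since summing the blocks is not available.
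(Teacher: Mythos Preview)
Your proof is correct and is the standard route: uniform $L^p$-boundedness of $\dot{\Delta}_j$ via Young's inequality, then Bernstein to pass from $L^p$ to $L^2$, with the scaling relation $3(\tfrac1p-\tfrac12)=s$ killing the $2^{js}$ weight. The paper does not supply its own proof of this lemma; it simply quotes the result from \cite{SS-adv-2014}, so there is nothing to compare against beyond noting that your argument is exactly the one underlying that reference.
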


\begin{lem}[{\!\!\cite[Lemma 4.5]{SS-adv-2014}}]\label{LA.5}
Suppose $m\geq 0$ and $s>0$, then we have    
\begin{align*}
\|\nabla^m g\|_{L^2}\lesssim\|\nabla^{m+1}g\|_{L^2}^{1-\theta}\|g\|_{\dot{B}^{-s}_{2,\infty}}^{\theta},    
\end{align*}
where $\theta=\frac{1}{m+1+s}$.
\end{lem}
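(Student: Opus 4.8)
The plan is to establish this Gagliardo--Nirenberg-type inequality by the standard frequency-splitting argument based on the Littlewood--Paley decomposition recalled in Section~2.1. Since $\dot{H}^m=\dot{B}^m_{2,2}$, one has the norm equivalences $\|\nabla^m g\|_{L^2}^2\sim\sum_{j\in\mathbb{Z}}2^{2mj}\|\dot{\Delta}_j g\|_{L^2}^2$ and $\|\nabla^{m+1}g\|_{L^2}^2\sim\sum_{j\in\mathbb{Z}}2^{2(m+1)j}\|\dot{\Delta}_j g\|_{L^2}^2$. Fixing an integer $N\in\mathbb{Z}$ to be chosen later, I would split the first sum into the low-frequency block $j\le N$ and the high-frequency block $j>N$ and estimate each block by a different elementary bound.

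For the low-frequency block I would use the definition of the negative Besov norm, writing $\|\dot{\Delta}_j g\|_{L^2}=2^{-sj}\bigl(2^{sj}\|\dot{\Delta}_j g\|_{L^2}\bigr)\le 2^{-sj}\|g\|_{\dot{B}^{-s}_{2,\infty}}$, so that $\sum_{j\le N}2^{2mj}\|\dot{\Delta}_j g\|_{L^2}^2\le\|g\|_{\dot{B}^{-s}_{2,\infty}}^2\sum_{j\le N}2^{2(m+s)j}\lesssim 2^{2(m+s)N}\|g\|_{\dot{B}^{-s}_{2,\infty}}^2$, where convergence of the geometric series uses $m+s>0$ (guaranteed by $m\ge0$, $s>0$). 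For the high-frequency block I would instead write $2^{2mj}=2^{-2j}\cdot 2^{2(m+1)j}$ and bound $\sum_{j>N}2^{2mj}\|\dot{\Delta}_j g\|_{L^2}^2\le 2^{-2N}\sum_{j>N}2^{2(m+1)j}\|\dot{\Delta}_j g\|_{L^2}^2\lesssim 2^{-2N}\|\nabla^{m+1}g\|_{L^2}^2$. Adding the two estimates gives, for every integer $N$,
\begin{align*}
\|\nabla^m g\|_{L^2}^2\lesssim 2^{2(m+s)N}\|g\|_{\dot{B}^{-s}_{2,\infty}}^2+2^{-2N}\|\nabla^{m+1}g\|_{L^2}^2.
\end{align*}

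The remaining step is to optimize in $N$. Assuming (the degenerate cases being trivial) that both norms are finite and nonzero, I would choose $N$ to be the nearest integer to the value balancing the two terms, namely $2^{2(m+s+1)N}\sim\|\nabla^{m+1}g\|_{L^2}^2/\|g\|_{\dot{B}^{-s}_{2,\infty}}^2$; then each term becomes comparable to $\|\nabla^{m+1}g\|_{L^2}^{\,2(m+s)/(m+s+1)}\|g\|_{\dot{B}^{-s}_{2,\infty}}^{\,2/(m+s+1)}$, and since $\tfrac{m+s}{m+s+1}=1-\theta$ and $\tfrac{1}{m+s+1}=\theta$ with $\theta=\tfrac{1}{m+1+s}$, taking square roots yields the asserted inequality. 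Rounding the optimal real value of $N$ to an integer only costs a harmless multiplicative constant. I do not anticipate a genuine obstacle here; the only mildly delicate points are the convergence of the low-frequency geometric series and the integer rounding of $N$, neither of which is serious.
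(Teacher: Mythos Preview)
The paper does not actually prove this lemma; it is quoted from \cite{SS-adv-2014} and stated in the appendix without argument, so there is no ``paper's proof'' to compare against. Your frequency-splitting argument is the standard one and is correct in substance: split at level $N$, control low frequencies by the $\dot B^{-s}_{2,\infty}$ norm, high frequencies by the $\dot H^{m+1}$ norm, then optimize in $N$.

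One small slip to fix: in the low-frequency step you write $\|\dot\Delta_j g\|_{L^2}\le 2^{-sj}\|g\|_{\dot B^{-s}_{2,\infty}}$, but since $\|g\|_{\dot B^{-s}_{2,\infty}}=\sup_j 2^{-sj}\|\dot\Delta_j g\|_{L^2}$ the correct bound is $\|\dot\Delta_j g\|_{L^2}\le 2^{sj}\|g\|_{\dot B^{-s}_{2,\infty}}$. Your very next line, with the sum $\sum_{j\le N}2^{2(m+s)j}$, is consistent with the \emph{correct} sign, so this is a typo rather than a real error; just make the intermediate inequality match.
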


\begin{lem}[\!\!{\cite[Section 2]{BCD-Book-2011}}]\label{LA.7}
The following statements hold:

$(i)$ Let $s>0$ and $1\leq p,r\leq \infty$. Then $\dot{B}^{s}_{p,r} \cap L^{\infty}$ is an algebra and
\begin{align*}
\|gh\|_{\dot{B}^{s}_{p,r}}\lesssim\|g\|_{L^{\infty}}\|h\|_{\dot{B}^{s}_{p,r}}+\|h\|_{L^{\infty}}\|g\|_{\dot{B}^{s}_{p,r}}.  
\end{align*}

$(ii)$ Let $s_1$, $s_2$ and $p$ satisfy $2\leq p \leq\infty$, $s_1\leq \frac{3}{p}$, $s_2\leq \frac{3}{p}$ and $s_1+s_2>0$. Then we have
\begin{align*}
\|gh\|_{\dot{B}^{s_1+s_2-\frac{3}{p}}_{p,1}}\lesssim\|g\|_{\dot{B}^{s_1}_{p,1}}\|h\|_{\dot{B}^{s_2}_{p,1}}.  \end{align*}

$(iii)$ Assume that $s_1$, $s_2$, $p$ and $r$ satisfy $2\leq p\leq\infty$ and
$1\leq r\leq \infty$,
$-\frac{3}{p}<s_1<\frac{3}{p}$, $-\frac{3}{p}< s_2<\frac{3}{p}$, $s_1+s_2>0$, or $r=\infty$, $-\frac{3}{p}\leq s_1<\frac{3}{p}$, $-\frac{3}{p}< s_2\leq \frac{3}{p}$, $s_1+s_2\geq 0$. Then it holds
\begin{align*}
\|gh\|_{\dot{B}^{s_1+s_2-\frac{3}{p}}_{p,r}}\lesssim\|g\|_{\dot{B}^{s_1}_{p,r}}\|h\|_{\dot{B}^{s_2}_{p,1}}.  \end{align*}

$(iv)$ The following real interpolation property is satisfied:
 for $1\leq p\leq \infty$, $s_1<s_2$ and $\theta\in(0,1)$, 
\begin{align*}
\|g\|_{\dot{B}_{p,1}^{\theta s_1+(1-\theta)s_2}}\lesssim\frac{1}{\theta(1-\theta)(s_2-s_1)}\|g\|_{\dot{B}_{p,\infty}^{ s_1}}^{\theta}\|g\|_{\dot{B}_{p,\infty}^{ s_2}}^{1-\theta}.  
\end{align*}
In particular, since $  \dot{H}^s=\dot{B}^{s}_{2,2}\hookrightarrow\dot{B}^{s}_{2,\infty}$, we have
\begin{align*}
\|g\|_{\dot{B}_{2,1}^{\theta s_1+(1-\theta)s_2}}\lesssim\frac{1}{\theta(1-\theta)(s_2-s_1)}\|g\|_{\dot{H}^{ s_1}}^{\theta}\|g\|_{\dot{H}^{ s_2}}^{1-\theta}.  
\end{align*}

\end{lem}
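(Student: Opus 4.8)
The statement collects standard facts about homogeneous Besov spaces, and the plan is to recover each of them from the Littlewood--Paley characterization $\|g\|_{\dot{B}^s_{p,r}}=\big\|(2^{js}\|\dot{\Delta}_j g\|_{L^p})_j\big\|_{\ell^r}$ together with Bernstein's inequalities and Bony's paraproduct decomposition $gh=T_gh+T_hg+R(g,h)$, where $T_gh=\sum_j \dot{S}_{j-1}g\,\dot{\Delta}_j h$ with $\dot{S}_{j-1}=\sum_{k\le j-2}\dot{\Delta}_k$, and $R(g,h)=\sum_{|j-k|\le1}\dot{\Delta}_j g\,\dot{\Delta}_k h$. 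The only structural input is spectral localization: $\dot{S}_{j-1}g\,\dot{\Delta}_j h$ is Fourier-supported in an annulus of size $\sim 2^j$, while $\dot{\Delta}_j g\,\dot{\Delta}_k h$ with $|j-k|\le1$ is supported in a ball of radius $\lesssim 2^j$. I would prove $(i)$--$(iii)$ by estimating $\dot{\Delta}_q$ of each of the three pieces of $gh$, and $(iv)$ by a direct splitting of the Besov series at an optimized frequency.

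For $(i)$, since $T_gh$ and $T_hg$ are spectrally supported near frequency $2^q$, only $j\sim q$ contribute to $\dot{\Delta}_q$, and $\|\dot{S}_{j-1}g\,\dot{\Delta}_j h\|_{L^p}\le\|g\|_{L^\infty}\|\dot{\Delta}_j h\|_{L^p}$ gives $2^{qs}\|\dot{\Delta}_q T_gh\|_{L^p}\lesssim\|g\|_{L^\infty}c_q\|h\|_{\dot{B}^s_{p,r}}$ with $(c_q)\in\ell^r$; for $R(g,h)$, whose $j$-th summand has ball spectrum, $\dot{\Delta}_q$ picks up all $j\gtrsim q$, and $2^{qs}\|\dot{\Delta}_q R(g,h)\|_{L^p}\lesssim\sum_{j\gtrsim q}2^{(q-j)s}\,2^{js}\|\dot{\Delta}_j g\|_{L^\infty}\|\dot{\Delta}_j h\|_{L^p}$, where $s>0$ makes the geometric weight $2^{(q-j)s}$ summable and $\|\dot{\Delta}_j g\|_{L^\infty}\le\|g\|_{L^\infty}$; a discrete Young inequality in $\ell^r$ then closes the bound. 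For $(ii)$--$(iii)$ I would run the same three-term split but now use Bernstein to trade regularity for integrability: $\|\dot{S}_{j-1}g\|_{L^\infty}\lesssim\sum_{k\le j-2}2^{3k/p}\|\dot{\Delta}_k g\|_{L^p}\lesssim 2^{j(3/p-s_1)}\|g\|_{\dot{B}^{s_1}_{p,\cdot}}$ provided $s_1<3/p$ (or $s_1\le3/p$ in the borderline-index case), so $T_gh$ lands in $\dot{B}^{s_1+s_2-3/p}_{p,\cdot}$; symmetrically $T_hg$ uses $s_2<3/p$; and for $R(g,h)$ one bounds $\|\dot{\Delta}_j g\,\dot{\Delta}_k h\|_{L^{p/2}}\le\|\dot{\Delta}_j g\|_{L^p}\|\dot{\Delta}_k h\|_{L^p}$, applies Bernstein on the ball spectrum to pass from $L^{p/2}$ to $L^p$ (this is exactly where the $-3/p$ loss appears), and sums $\sum_{j\gtrsim q}2^{(q-j)(s_1+s_2)}$, which converges precisely under $s_1+s_2>0$ (or $\ge0$ together with $r=\infty$). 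The index conditions listed are thus exactly the thresholds making each of the three pieces summable.

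For $(iv)$, with $\sigma=\theta s_1+(1-\theta)s_2$ and $s_1<s_2$, I would split $\|g\|_{\dot{B}^\sigma_{p,1}}=\sum_q 2^{q\sigma}\|\dot{\Delta}_q g\|_{L^p}$ at a level $N\in\mathbb{Z}$: for $q\le N$, $2^{q\sigma}\|\dot{\Delta}_q g\|_{L^p}\le 2^{q(1-\theta)(s_2-s_1)}\|g\|_{\dot{B}^{s_1}_{p,\infty}}$ and the geometric sum is $\sim\frac{2^{N(1-\theta)(s_2-s_1)}}{(1-\theta)(s_2-s_1)}\|g\|_{\dot{B}^{s_1}_{p,\infty}}$; for $q>N$, likewise $\sum_{q>N}2^{q(\sigma-s_2)}\|g\|_{\dot{B}^{s_2}_{p,\infty}}\sim\frac{2^{-N\theta(s_2-s_1)}}{\theta(s_2-s_1)}\|g\|_{\dot{B}^{s_2}_{p,\infty}}$. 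Choosing $N$ so that $2^{N(s_2-s_1)}\sim\|g\|_{\dot{B}^{s_2}_{p,\infty}}/\|g\|_{\dot{B}^{s_1}_{p,\infty}}$ balances the two contributions and yields $\|g\|_{\dot{B}^\sigma_{p,1}}\lesssim\frac{1}{\theta(1-\theta)(s_2-s_1)}\|g\|_{\dot{B}^{s_1}_{p,\infty}}^\theta\|g\|_{\dot{B}^{s_2}_{p,\infty}}^{1-\theta}$; the displayed special case follows from $\dot{H}^s=\dot{B}^s_{2,2}\hookrightarrow\dot{B}^s_{2,\infty}$. The one place requiring genuine care --- the \emph{main obstacle} --- is the low-frequency behaviour of the remainder $R(g,h)$ in $(ii)$--$(iii)$: because its Littlewood--Paley pieces have spectra reaching the origin, the series defining $\dot{\Delta}_q R(g,h)$ converges in $\dot{B}^{s_1+s_2-3/p}_{p,r}$ only under the sign condition $s_1+s_2>0$ (or $=0$ with $r=\infty$), and one must simultaneously verify that $gh$ is well defined in $\mathcal{S}'$ modulo polynomials, which is where the upper constraints $s_i\le 3/p$ on $T_gh,T_hg$ enter. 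Everything else reduces to Bernstein's inequalities and discrete Young/H\"older inequalities on the sequences $(2^{js}\|\dot{\Delta}_j\cdot\|_{L^p})_j$.
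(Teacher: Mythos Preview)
Your sketch is correct and follows exactly the standard paraproduct/Bernstein route used in the cited reference. Note that the paper itself does not prove this lemma: it is stated as a collection of known facts with a citation to \cite{BCD-Book-2011}, and no proof is given in the paper. Your outline---Bony's decomposition $gh=T_gh+T_hg+R(g,h)$ for $(i)$--$(iii)$ with the appropriate Bernstein gains and the frequency-splitting argument for $(iv)$---is precisely how these results are established in Chapter 2 of that book, so there is nothing to compare beyond observing that you have supplied the argument the paper chose to omit.
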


\begin{lem}[\!\!{\cite[Section 3]{BCD-Book-2011}}]\label{LA.8}
Let $1\leq p\leq\infty$ and $-\frac{3}{p}\leq s< 1+\frac{3}{p}$.
Then it holds
\begin{align*}
\sum_{j\in\mathbb{Z}}2^{js}\|[\dot{\Delta}_j,u\cdot\nabla]\varrho\|_{L^p}\lesssim\,&\|u\|_{\dot{B}^{\frac{3}{p}+1}_{p,1}}\|\varrho\|_{\dot{B}^{s}_{p,\infty}}.
\end{align*}
\end{lem}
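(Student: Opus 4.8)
The statement is a classical paradifferential commutator estimate, and the plan is to prove it by Bony's decomposition together with the integral-kernel (Taylor expansion) trick for the genuine commutator. Throughout, write $\dot{\Delta}_j g = 2^{3j}h(2^j\cdot)\star g$ for a fixed Schwartz kernel $h$, and recall Bernstein's inequalities and the embedding $\dot{B}^{3/p}_{p,1}\hookrightarrow L^\infty$, so that in particular $\|\nabla u\|_{L^\infty}\lesssim\|u\|_{\dot{B}^{\frac{3}{p}+1}_{p,1}}$. The first step is to apply Bony's decomposition to $u\cdot\nabla\varrho=\sum_m\big(T_{u_m}\partial_m\varrho+T_{\partial_m\varrho}u_m+R(u_m,\partial_m\varrho)\big)$ and the same decomposition with $\varrho$ replaced by $\dot{\Delta}_j\varrho$; subtracting, one writes
$$[\dot{\Delta}_j,u\cdot\nabla]\varrho=A_j+B_j+C_j,$$
where $A_j$ is the commutator carried by the low–high paraproduct $T_u$ (only $\dot{S}_{k-1}u$ with $k$ close to $j$ enters), $B_j$ collects the high–low pieces involving $T_{\partial_m\varrho}u_m$, and $C_j$ the high–high (remainder) pieces; all frequency supports in $A_j,B_j$ are concentrated in a fixed annulus of size $\sim 2^j$, while $C_j$ receives contributions from frequencies $\gtrsim 2^j$. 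The pieces coming from $u\cdot\nabla\dot{\Delta}_j\varrho$ (namely $T_{\partial_m\dot{\Delta}_j\varrho}u_m$ and $R(u_m,\partial_m\dot{\Delta}_j\varrho)$) lie in the same high-frequency regime and are absorbed into the analysis of $B_j$ and $C_j$.

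The second step is to estimate the genuine commutator $A_j$. Writing $A_j=\sum_{|k-j|\le N_0}[\dot{\Delta}_j,\dot{S}_{k-1}u_m\,\partial_m]\dot{\Delta}_k\varrho$ modulo harmless lower-order terms, the first-order Taylor formula $\dot{S}_{k-1}u_m(y)-\dot{S}_{k-1}u_m(x)=(y-x)\cdot\int_0^1\nabla\dot{S}_{k-1}u_m(x+\theta(y-x))\,d\theta$ inserted into the convolution, together with $\big\||y|\,2^{3j}h(2^jy)\big\|_{L^1_y}\lesssim 2^{-j}$ and Bernstein, gives
$$\big\|[\dot{\Delta}_j,\dot{S}_{k-1}u_m\,\partial_m]\dot{\Delta}_k\varrho\big\|_{L^p}\lesssim 2^{-j}\|\nabla\dot{S}_{k-1}u\|_{L^\infty}\,2^{k}\|\dot{\Delta}_k\varrho\|_{L^p}\lesssim\|\nabla u\|_{L^\infty}\|\dot{\Delta}_k\varrho\|_{L^p}\quad(|k-j|\le N_0),$$
so that $2^{js}\|A_j\|_{L^p}\lesssim\|u\|_{\dot{B}^{\frac{3}{p}+1}_{p,1}}\sum_{|k-j|\le N_0}2^{(j-k)s}\big(2^{ks}\|\dot{\Delta}_k\varrho\|_{L^p}\big)$; here the derivative is exactly compensated by the $2^{-j}$ gain, and summing over $j$ after exchanging the $j$- and $k$-sums, using $\sum_{|m|\le N_0}2^{ms}<\infty$, closes the $\ell^1_j$ bound.

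The third step handles $B_j$ and $C_j$ by direct product estimates. For $B_j$, with $k$ near $j$ one has $\big\|\dot{\Delta}_j(\dot{S}_{k-1}\partial_m\varrho\cdot\dot{\Delta}_k u_m)\big\|_{L^p}\lesssim\|\dot{S}_{k-1}\nabla\varrho\|_{L^\infty}\|\dot{\Delta}_k u\|_{L^p}$, and since $s<1+3/p$ the geometric series yields $\|\dot{S}_{k-1}\nabla\varrho\|_{L^\infty}\lesssim 2^{k(1+3/p-s)}\|\varrho\|_{\dot{B}^{s}_{p,\infty}}$; hence $2^{js}\|B_j\|_{L^p}\lesssim\|\varrho\|_{\dot{B}^{s}_{p,\infty}}\sum_{|k-j|\le N_0}2^{k(1+3/p)}\|\dot{\Delta}_k u\|_{L^p}$, which is $\ell^1_j$-summable to $\|u\|_{\dot{B}^{\frac{3}{p}+1}_{p,1}}\|\varrho\|_{\dot{B}^{s}_{p,\infty}}$. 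The remainder term $C_j$ is treated analogously, summing over the high frequencies $k\gtrsim j$, where now the lower bound $s\ge -3/p$ is what forces the corresponding series to converge, and once more the $\ell^1_j$-summability is supplied by $u\in\dot{B}^{\frac{3}{p}+1}_{p,1}$. Collecting the three bounds gives the claim; a complete argument is carried out in \cite{BCD-Book-2011}.

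The main obstacle is the bookkeeping of the frequency interactions so that the $\ell^1$-in-$j$ summability is genuinely produced: it must be generated by the $\dot{B}^{\frac{3}{p}+1}_{p,1}$-regularity of $u$ — the extra derivative cancelling the $\nabla$ and the $\ell^1$ Besov index providing the summation — rather than by $\varrho$, which is only controlled in the weaker $\dot{B}^{s}_{p,\infty}$ scale. This is precisely where the two-sided restriction $-3/p\le s<1+3/p$ enters: the strict upper bound makes the high–low and $T_u$-type geometric series converge, while the lower bound is needed for the remainder piece. Aligning these summability indices through the mismatched Littlewood–Paley projections ($\dot{\Delta}_k\dot{\Delta}_j$ versus $\dot{\Delta}_j(\dot{S}_{k-1}u\cdot\nabla\dot{\Delta}_k\varrho)$) is the only delicate, if routine, point of the proof.
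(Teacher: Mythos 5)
Your Bony-decomposition framework is the standard one, and your treatment of the high--low piece $B_j$ is sound: there the $\ell^1$ summation in $j$ is genuinely produced by the blocks of $u\in\dot{B}^{\frac{3}{p}+1}_{p,1}$, and the condition $s<1+\frac{3}{p}$ enters exactly as you say. The gap is in Step 2, the paraproduct commutator $A_j$. The bound you display, $2^{js}\|A_j\|_{L^p}\lesssim\|u\|_{\dot{B}^{\frac{3}{p}+1}_{p,1}}\sum_{|k-j|\le N_0}2^{(j-k)s}\bigl(2^{ks}\|\dot{\Delta}_k\varrho\|_{L^p}\bigr)$, is only \emph{uniform} in $j$: since $\varrho$ is merely in $\dot{B}^{s}_{p,\infty}$, the quantities $2^{ks}\|\dot{\Delta}_k\varrho\|_{L^p}$ are bounded but not summable, so ``exchanging the $j$- and $k$-sums'' yields $\sum_k 2^{ks}\|\dot{\Delta}_k\varrho\|_{L^p}$, i.e.\ it would require $\varrho\in\dot{B}^{s}_{p,1}$. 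Having already spent $\|\nabla\dot{S}_{k-1}u\|_{L^\infty}\le\|\nabla u\|_{L^\infty}$ at every $j$, no decay in $j$ remains, and your own closing remark that the summability ``must come from $u$'' is never implemented in the estimate. In fact it cannot be for this term: in the diagonal interaction $[\dot{\Delta}_j,\dot{S}_{j-1}u\cdot\nabla]\dot{\Delta}_j\varrho$ the commutator gain $2^{-j}$ is exactly cancelled by the derivative $2^{j}$ on $\dot{\Delta}_j\varrho$; taking $u$ a single low-frequency block and $\widehat{\dot{\Delta}_j\varrho}$ concentrated where $\nabla\phi_j\neq0$ makes these terms bounded below uniformly in $j$, so the left-hand $\ell^1$ sum diverges while the right-hand side stays finite. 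Thus the inequality in the $\ell^1$-in-$j$ form with $\varrho$ only in $\dot{B}^{s}_{p,\infty}$ is not obtainable by this route, and is false as printed.

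What is true --- and is what the cited reference actually states (BCD, Lemma 2.100, where the summability index of $\varrho$ is the same on both sides) --- is the $\ell^\infty$ version
\begin{align*}
\sup_{j\in\mathbb{Z}}2^{js}\|[\dot{\Delta}_j,u\cdot\nabla]\varrho\|_{L^p}\lesssim \|u\|_{\dot{B}^{\frac{3}{p}+1}_{p,1}}\|\varrho\|_{\dot{B}^{s}_{p,\infty}},
\end{align*}
and your Steps 2--3 do establish precisely this bound. That weaker form is also all the paper uses: in the proof of Proposition \ref{P3.10} the lemma is invoked after multiplying \eqref{G4.4} by $2^{-2sj}$ and taking the supremum over $j$. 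So the fix is to prove (or cite) the statement with $\sup_j$ in place of $\sum_j$; as written, the combination of an $\ell^1$ sum on the left with $\dot{B}^{s}_{p,\infty}$ on the right overstates the classical commutator estimate, and your proposed summation cannot close it.
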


\bigskip 

\section*{Acknowledgements}
F. Li was supported by NSFC (Grant No.  12331007) and the ``333 Project" of Jiangsu Province.
J.  Ni  was supported by NSFC (Grant No.  12331007).  
L.-Y. Shou was supported by NSFC (Grant No. 12301275).
D. Wang was supported in part by NSF grants DMS-2219384 and DMS-2510532.

%
%
%
%
%
%
%

\bibliographystyle{plain}

\end{document}